\newcommand{\Iwalg}{\Z_p[[\tilde{\Gamma}]]}
\newcommand{\unIw}{\mathcal{O}^{\mathrm ur}[[\tilde{\Gamma}]]}
\newcommand{\Sstar}{{(\star)}}
\newcommand{\Kinf}{\widetilde{K}_\infty}
\newcommand{\One}{{\mathrm{\uppercase\expandafter{\romannumeral 1}}}}
\newcommand{\Two}{{\mathrm{\uppercase\expandafter{\romannumeral 2}}}}
\newcommand{\rhodn}{{\rho_{{d},n}}}
\newcommand{\rhodnstar}{{\rho^\star_{{d},n}}}
\newcommand{\rhoDN}[2]{{\rho_{{#1},#2}}}
\newcommand{\rhoDNstar}[2]{{\rho^\star_{{#1},#2}}}
\newcommand{\XXX}{\mathfrak{X}}
\newcommand{\Qp}{\Q_p}
\newcommand{\rank}{\mathrm{rank}}
\newcommand{\Ann}{\mathrm{Ann}}
\newcommand{\Gr}{\mathrm{Gr}}
\newcommand{\Drho}{D_{\rhoDN{4}{2}}}
\newcommand{\tDN}[3]{\theta_{{#1},#2}^{#3}}
\newcommand{\fr}{\mathfrak{r}}
\newcommand{\ur}{\mathrm{ur}}
\newcommand{\cmmnt}[1]{}
\newcommand{\HIw}{H^1_{\rm Iw}}
\title{Codimension two cycles in Iwasawa theory and elliptic curves with supersingular reduction}
\subjclass[2010]{Primary 11R23; Secondary 11G05, 11G07, 11R34, 11S25}
\keywords{Pseudo-null modules, Iwasawa theory for elliptic curves at supersingular primes}
\author{Antonio Lei}
\address{(Lei) D\'epartement de math\'ematiques et de statistique, Universit\'e Laval, Pavillon Alexandre-Vachon, 1045 avenue de la M\'edecine, Qu\'ebec, Canada, G1V 0A6}
\email{antonio.lei@mat.ulaval.ca}
\author{Bharathwaj Palvannan}
\address{(Palvannan) Department of Mathematics, University of Pennsylvania, 4W1 DRL, 209 South 33rd Street,  Philadelphia, USA  19104-6395}
\email{pbharath@math.upenn.edu}
\thanks{The first named author's research is supported by the NSERC Discovery Grants Program 05710.}
\begin{document}
\maketitle

\begin{abstract}
A  result of Bleher, Chinburg, Greenberg, Kakde, Pappas, Sharifi and Taylor has initiated the topic of higher codimension Iwasawa theory. As a generalization of the classical Iwasawa main conjecture, they prove a relationship between analytic objects (a pair of Katz's $2$-variable $p$-adic $L$-functions) and algebraic objects (two ``everywhere unramified'' Iwasawa modules) involving codimension two cycles in a $2$-variable Iwasawa algebra. We prove a result by considering the restriction to an imaginary quadratic field $K$ (where an odd prime $p$ splits) of an elliptic curve $E$, defined over $\Q$, with good supersingular reduction at $p$. On the analytic side, we consider eight  pairs of  $2$-variable $p$-adic $L$-functions in this setup (four of the  $2$-variable $p$-adic $L$-functions have been constructed by Loeffler and a fifth $2$-variable $p$-adic $L$-function is due to Hida). On the algebraic side, we consider modifications of fine Selmer groups over the $\Z_p^2$-extension of $K$. We also provide numerical evidence, using algorithms of Pollack, towards a pseudo-nullity conjecture of Coates-Sujatha.
\end{abstract}
\thispagestyle{empty}
\tableofcontents

\section{Introduction}
Fix an odd prime $p$. Let $\RRR$ denote a Noetherian, complete, integrally closed, local domain of characteristic zero with Krull dimension $n+1$ and whose residue field has characteristic $p$. To a continuous Galois representation\footnote{The first subscript $d$ of $\rhodn$ indicates the dimension of the Galois representation, while the second subscript $n$ denotes a number one less than the Krull dimension of the ring $\RRR$. In the settings we are interested in, the number $n$ would denote the number of variables in the corresponding $p$-adic $L$-functions.}
\begin{align*}
  \rhodn: \Gal{\overline{\Q}}{\Q} \rightarrow \Gl_d(\RRR),
\end{align*}
satisfying the ``Panchishkin condition''\footnote{The Panchishkin condition is a type of ``ordinariness'' assumption, introduced by Greenberg, while formulating the Iwasawa main conjecture for Galois deformations. See Section 4 in \cite{greenberg1994iwasawa} for the precise definition.}, Ralph Greenberg \cite{greenberg1994iwasawa} has formulated a main conjecture in Iwasawa theory. The Iwasawa main conjecture provides a relation involving codimension one cycles in the divisor group of the ring $\RRR$, relating a $p$-adic $L$-function, satisfying suitable interpolation properties, to a Selmer group. The divisor group, denoted $Z^1(\RRR)$, is the free abelian group on the set of height $1$ prime ideals of the ring $\RRR$. \\

One could consider $Z^2(\RRR)$, the free abelian group on the set of height $2$ prime ideals of the ring $\RRR$. Many standard conjectures in Iwasawa theory predict that pseudo-null modules are
ubiquitous.  For example, see Conjecture 3.5 in Greenberg's article \cite{MR1846466}  and Conjecture B in the work of Coates-Sujatha \cite{MR2148798}. These pseudo-null $\RRR$-modules are supported in codimension at least two. One desirable extension of the Iwasawa main conjecture is an answer to the following question:
\begin{Question} \label{question-analytic}
Can we use codimension two cycles from $Z^2(\RRR)$ to associate analytic invariants  to pseudo-null modules in Iwasawa theory?
\end{Question}
The crucial insight in the seven-author paper \cite{bleher2015higher}, to obtain such an association, is to study a situation when the Galois representation $\rhodn$ satisfies two distinct Panchishkin conditions. We will use this insight and obtain a result by considering the restriction to an imaginary quadratic field, where an odd prime $p$ splits, of an elliptic curve defined over $\Q$ with good supersingular reduction at $p$. \\

Let $K$ denote an imaginary quadratic field where the prime $p$ splits. Let $\p$ and $\q$ denote the two prime ideals in $K$ containing $p$. We fix an isomorphism $i : \overline{\Q}_p \cong \mathbb{C}$ along with embeddings $K \hookrightarrow \overline{\Q}$, $\overline{\Q} \hookrightarrow \overline{\Q}_p$ and $\overline{\Q} \hookrightarrow \overline{\Q}_p \stackrel{i}{\cong}  \mathbb{C}$. The embedding $i_\p: K \hookrightarrow \overline{\Q}_p$ fixes a prime ideal, say $\p$, in $K$ lying above $p$. Let $G_\Q$ and $G_K$ denote the absolute Galois groups of $\Q$ and $K$ respectively. Let $\widetilde{K}_\infty$ denote the compositum of all the $\Z_p$-extensions of $K$. Let $\Q_{\Cyc}$ and $K_{\Cyc}$ denote the cyclotomic $\Z_p$-extensions of $\Q$ and $K$ respectively. Let $K(\p^\infty)_{\Z_p}$ denote the unique $\Z_p$-extension of $K$ that is unramified outside $\p$. Let $\widetilde{\Gamma}$, $\Gamma_{\Cyc}$ and $\Gamma_\p$  denote the Galois groups $\Gal{\widetilde{K}_\infty}{K}$, $\Gal{\Q_\Cyc}{\Q}$ and $\Gal{K(\p^\infty)_{\Z_p}}{K}$  respectively. To summarize, we have the following isomorphisms of topological groups:
\begin{align*}
  \widetilde{\Gamma} = \Gal{\widetilde{K}_\infty}{K} \cong \Z_p^2, \quad \Gamma_{\Cyc} \cong \Gal{K_\Cyc}{K} \cong \Z_p, \quad \Gamma_\p = \Gal{K(\p^\infty)_{\Z_p}}{K} \cong \Z_p.
\end{align*}

The natural restriction maps $\widetilde{\Gamma} \twoheadrightarrow \Gamma_\Cyc$ and $\widetilde{\Gamma} \twoheadrightarrow \Gamma_\p$  provide us the following isomorphism of topological groups: $$\widetilde{\Gamma} \cong \Gamma_\Cyc \times \Gamma_\p.$$
We shall consider the Iwasawa algebras $\Z_p[[\widetilde{\Gamma}]]$, $\Z_p[[\Gamma_\Cyc]]$ and $\Z_p[[\Gamma_\p]]$. Note that we have the following topological ring isomorphisms involving power series rings over $\Z_p$:
\begin{align*}
  \Z_p[[\widetilde{\Gamma}]] \cong \Z_p[[x_1,x_2]], \qquad \Z_p[[\Gamma_\Cyc]] \cong \Z_p[[x_{\Cyc}]], \qquad \Z_p[[\Gamma_\p]] \cong \Z_p[[x_\p]].
\end{align*}
We shall also consider the following tautological characters:
\begin{align*}
  \tilde{\kappa}: G_K \twoheadrightarrow \widetilde{\Gamma} \hookrightarrow \Gl_1\left(\Z_p[[\widetilde{\Gamma}]]\right), \quad \kappa_{\Cyc}: G_\Q \twoheadrightarrow \Gamma_\Cyc \hookrightarrow \Gl_1\left(\Z_p[[\Gamma_{\Cyc}]]\right), \quad \kappa_\p: G_K \twoheadrightarrow \Gamma_\p \hookrightarrow \Gl_1\left(\Z_p[[\Gamma_\p]]\right).
\end{align*}

Let $E$ denote an elliptic curve defined over $\Q$ with good supersingular reduction at $p$ with $a_p(E)=0$. Let $f_E$ denote the weight two cuspidal newform associated to the elliptic curve $E$. The $p$-adic Tate module, denoted $T_p(E)$, has a natural action of the Galois group $G_\Q$.  We let $\Res_{G_K}T_p(E)$ denote the  restriction of $T_p(E)$ to the Galois group $G_K$. We have a four dimensional Galois representation:
$$\rhoDN{4}{2}: G_\Q \rightarrow \Gl_4\left(\Z_p[[\tilde{\Gamma}]]\right)$$
given by the action of $G_\Q$ on the following free $\Z_p[[\tilde{\Gamma}]]$-module of rank four:
\begin{align*}
  T_{\rhoDN{4}{2}}:=T_p(E) \ \hotimes_{\Z_p} \ \Ind^{G_\Q}_{G_K} \left(\Z_p[[\Gamma_\p]] (\kappa_\p^{-1}) \right) \ \hotimes_{\Z_p} \ \Z_p[[\Gamma_\Cyc]](\kappa_{\Cyc}^{-1}).
\end{align*}
Here, $\hotimes_{\Z_p}$ denotes the completed tensor product over $\Z_p$. We will also consider the following discrete $\Z_p[[\widetilde{\Gamma}]]$-module:
\begin{align*}
  D_{\rhoDN{4}{2}}:= T_{\rhoDN{4}{2}} \otimes_{\Z_p[[\widetilde{\Gamma}]]} \Hom_{\cont}\left(\Z_p[[\widetilde{\Gamma}]],\frac{\Q_p}{\Z_p}\right).
\end{align*}
Let $\rho_{K}$ denote the two-dimensional Galois representation\footnote{Our convention is non-standard in that the kernel of a homomorphism $\varphi$ in $\Hom_{\cont}\left(\Z_p[[\Gamma_\p]],\overline{\Q}_p\right) = \Hom_{\cont}\left(\Gamma_\p , \overline{\Q}_p^\times\right)$ corresponds to a weight $k$ specialization if $\varphi$ is the $p$-adic character associated to an algebraic Hecke character of $K$, with conductor equal to a power of $\p$ and of infinity type $(1-k)i \circ i_\p$}, defined over the ring $\Z_p[[\Gamma_\p]]$, given by the action of $G_\Q$ on  $\Ind^{G_\Q}_{G_K} \left(\Z_p[[\Gamma_\p]] (\kappa_\p^{-1}) \right)$. \\

The Galois representation $\rhoDN{4}{2}$ satisfies Greenberg's Panchishkin condition. Following Greenberg's approach in \cite{greenberg1994iwasawa}, it is possible to define a discrete Selmer group, denoted $\Sel^{\Gr}(\Q,D_{\rhoDN{4}{2}})$, attached to the Galois representation $\rhoDN{4}{2}$. The Pontryagin dual of $\Sel^{\Gr}(\Q,D_{\rhoDN{4}{2}})$, denoted $\Sel^{\Gr}(\Q,D_{\rhoDN{4}{2}})^\vee$, is a finitely generated $\Z_p[[\widetilde{\Gamma}]]$-module. \\

Corresponding to the tensor product of the Galois representation associated to the newform $f_E$ and $\rho_K$, Hida has constructed a two-variable Rankin-Selberg $p$-adic $L$-function, denoted $\theta^{\Gr}_{\cmmnt{\cmmnt{\pmb}}{4},2}$, in the fraction field of the Iwasawa algebra $\Z_p[[\widetilde{\Gamma}]]$. For the $p$-adic $L$-function $\theta^{\Gr}_{\cmmnt{\pmb}{4},2}$, one can vary the weight variable and the cyclotomic variable. See Hida's works in \cite{Hidafourier} and \cite{hida1996search}. One can consider the Iwasawa main conjecture for $\rhoDN{4}{2}$ formulated by Greenberg.

\begin{restatable}[Conjecture 4.1 in \cite{greenberg1994iwasawa}]{conjecture}{greenbergMC}
  \label{greenberg-main-conjecture}
  The $\Z_p[[\widetilde{\Gamma}]]$-module $\Sel^{\Gr}(\Q,D_{\rhoDN{4}{2}})^\vee$ is torsion. Furthermore, we have the following equality in $Z^1\left(\Z_p[[\widetilde{\Gamma}]]\right)$:
  \begin{align*}
    \Div\left(\Sel^{\Gr}(\Q,D_{\rhoDN{4}{2}})^\vee\right) \stackrel{?}{=} \Div\left(\theta^{\Gr}_{\cmmnt{\pmb}{4},2}\right).
  \end{align*}
\end{restatable}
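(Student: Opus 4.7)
The plan is to establish the torsion statement and the divisibility equality separately, treating $\rhoDN{4}{2}$ as the Rankin-Selberg convolution of $f_E$ with the full CM Hida family over $K$. Although $E$ is supersingular at $p$, the Panchishkin filtration on $T_{\rhoDN{4}{2}}$ comes from the $\p$-branch of the induced representation $\Ind^{G_\Q}_{G_K}\Z_p[[\Gamma_\p]](\kappa_\p^{-1})$, since $p$ splits in $K$ and this branch is ordinary at $\p$. This ``borrowed ordinariness'' from the CM side is what makes Greenberg's framework available in our otherwise supersingular setting, and is the geometric source of the second Panchishkin condition that will later be used to access codimension two phenomena.

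For the torsion statement, I would first use Shapiro's lemma to rewrite $\Sel^{\Gr}(\Q,D_{\rhoDN{4}{2}})^\vee$ as a Selmer group over $K$ for $T_p(E) \otimes \Z_p[[\widetilde{\Gamma}]](\tilde\kappa^{-1})$. Specialising $\kappa_\p$ at a Zariski-dense set of critical Hecke characters reduces the torsion question along each fibre to the cyclotomic Iwasawa main conjecture for $E$ twisted by that character, which is accessible via Kobayashi's signed Selmer groups together with Wan's work on the supersingular main conjecture. A control theorem, combined with a Fitting-ideal or co-finiteness argument, then propagates fibrewise torsion to $\Z_p[[\widetilde{\Gamma}]]$-torsion of the full Selmer group.

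For the inclusion $\Char\left(\Sel^{\Gr}(\Q,D_{\rhoDN{4}{2}})^\vee\right) \supseteq \left(\theta^{\Gr}_{4,2}\right)$, I would apply the Beilinson-Flach Euler system of Lei-Loeffler-Zerbes and its two-variable refinement by Kings-Loeffler-Zerbes: their explicit reciprocity law identifies the image of the Beilinson-Flach Iwasawa class under a Perrin-Riou style regulator with Hida's $\theta^{\Gr}_{4,2}$, and standard Euler system machinery then bounds the Selmer group by this $p$-adic $L$-function. The supersingular nature of $E$ is not obstructive here since the local condition at $p$ is controlled by the ordinary CM factor. For the reverse divisibility, the natural route is via Eisenstein congruences on $U(2,2)_{/K}$ in the spirit of Skinner-Urban: one constructs cusp forms congruent modulo $\theta^{\Gr}_{4,2}$ to a Klingen-type Eisenstein series built from $f_E$ and the family of Hecke characters, and extracts elements of the dual Selmer group by a lattice argument.

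The main obstacle will be this reverse divisibility. Skinner-Urban's method assumes ordinariness of $f_E$ at $p$, and replacing this with supersingular data forces one to track the full local $\Gl_2$-representation at $p$ for $E$ via Kobayashi's plus/minus decomposition (or Pollack's half $p$-adic $L$-functions), while simultaneously handling the split behaviour of $p$ in $K$. Controlling the cuspidal lattice on $U(2,2)$ at the supersingular prime, and verifying the non-vanishing hypotheses needed to rule out spurious trivial zeros in Hida's $p$-adic $L$-function, are where the genuine work will lie; a possible shortcut is to exploit the functional-equation symmetry between the $\p$ and $\bar\p$ branches to reduce one divisibility to the other.
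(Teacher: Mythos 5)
The statement you were asked to ``prove'' is not a theorem in this paper at all: it is Conjecture~\ref{greenberg-main-conjecture}, Greenberg's Iwasawa main conjecture for the representation $\rhoDN{4}{2}$, and the paper neither proves it nor claims to. It appears as condition~(1) in the hypotheses of Theorem~\ref{maintheorem}, alongside Conjecture~\ref{kim-main-conjecture}, and the paper's contribution is to show what follows \emph{assuming} both main conjectures, together with the ``no common irreducible factor'' hypothesis. The only remark the authors make about the status of this conjecture is a citation to Xin Wan's work (\cite{wan2,wan}) for partial progress toward one divisibility. So there is no ``paper's own proof'' against which to measure your proposal.

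That said, to react to the substance of what you wrote: your observation that the Panchishkin filtration on $T_{\rhoDN{4}{2}}$ is supplied by the $\p$-branch of $\Ind^{G_\Q}_{G_K}\Z_p[[\Gamma_\p]](\kappa_\p^{-1})$ (rather than by any ordinariness of $f_E$) correctly identifies the feature the paper itself highlights --- the existence of \emph{two} Panchishkin conditions is what opens the door to codimension-two statements. Your sketch of the two divisibilities via Beilinson--Flach Euler systems on one side and Eisenstein congruences on $U(2,2)$ in the style of Skinner--Urban on the other is indeed the route pursued in the literature the paper cites (Wan, and in the signed setting Castella--\c{C}iperiani--Skinner--Sprung), but even there it is carried out under nontrivial technical hypotheses and the full equality is not established unconditionally. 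Your ``fibrewise torsion plus control'' argument for the torsion statement is also too optimistic as stated: specializing $\kappa_\p$ to critical Hecke characters lands you in genuinely supersingular one-variable settings where the naive Selmer group is \emph{not} cotorsion (this is precisely why the paper introduces the signed local conditions at $p$ in Section~\ref{S:plusminus}), so the reduction does not go through without additional structure. In short, nothing is wrong in spirit with your plan as a survey of where one would attack this conjecture, but you should be aware that you were being asked about a statement that the paper deliberately leaves as an assumption, and that the honest answer is ``this remains a conjecture.''
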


Though the Galois representation $\rhoDN{4}{2}$ satisfies the Panchishkin condition, it is also possible to consider  Selmer groups and $p$-adic $L$-functions, that are truly artifacts of working in the supersingular case. On the analytic side,  there are four ``2-variable $p$-adic $L$-functions'', which we denote $\theta_{\cmmnt{\pmb}{4},2}^{++}$, $\theta_{\cmmnt{\pmb}{4},2}^{+-}$, $\theta_{\cmmnt{\pmb}{4},{2}}^{-+}$ and $\theta_{\cmmnt{\pmb}{4},2}^{--}$, that are elements of the fraction field of the ring $\Z_p[[\tilde{\Gamma}]]$. The construction of these $p$-adic $L$-functions is essentially\footnote{As we explain in Remark \ref{remark:periodnormalization}, we have chosen to work with the normalizations of the $p$-adic $L$-functions adopted by Xin Wan instead of David Loeffler. The difference in normalizations is due to different choices of complex periods.} due to David Loeffler \cite{loefflerray}. On the algebraic side, Byoung du Kim \cite{MR3224266} has constructed four discrete Selmer groups\footnote{Note that our choice of signs $+$ and $-$ is the same as Loeffler and Pollack. This choice is opposite to that of Kim and Kobayashi.} $\Sel^{++}(\Q,D_{\rhoDN42})$, $\Sel^{+-}(\Q,D_{\rhoDN42})$, $\Sel^{-+}(\Q,D_{\rhoDN42})$ and $\Sel^{--}(\Q,D_{\rhoDN42})$. One can view Loeffler's construction of the $p$-adic $L$-functions $\theta_{\cmmnt{\pmb}{4},2}^{\pm,\pm}$  (and Kim's construction of the Selmer groups $\Sel^{\pm\pm}(\Q,D_{\rhoDN{4}{2}})$ respectively) as a generalization of the construction of the one variable $\pm$ $p$-adic $L$-functions in \cite{pollack} by Robert Pollack (and the Selmer groups in \cite{kobayashi} by Shinichi Kobayashi respectively).  We have the following conjecture of Wan, which is a modification\footnote{Kim's conjecture relates the $\pm,\pm$ Selmer groups to David Loeffler's $2$-variable $p$-adic $L$-functions.} of a conjecture of Kim (Conjecture 3.1 in \cite{MR3224266}):

\begin{restatable}[Conjecture 6.7 in \cite{wan}]{conjecture}{kimMC}
  \label{kim-main-conjecture} Let $\bullet,\circ\in\{+,-\}$. The $\Z_p[[\widetilde{\Gamma}]]$-module  $\Sel^{\bullet\circ}(\Q,D_{\rhoDN{4}{2}})^\vee$ is torsion. We have the following equality in $Z^1\left(\Z_p[[\widetilde{\Gamma}]]\right)$:
  \begin{align*}
    \Div\left(\Sel^{\bullet\circ}(\Q,D_{\rhoDN{4}{2}})^\vee\right) \stackrel{?}{=} \Div\left(\theta_{{4},2}^{\bullet\circ}\right).
  \end{align*}
\end{restatable}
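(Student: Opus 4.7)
The plan is to reduce Conjecture \ref{kim-main-conjecture} to the Greenberg main conjecture (Conjecture \ref{greenberg-main-conjecture}) for the same representation $\rhoDN{4}{2}$, via a two-variable Pollack--Kobayashi factorization adapted to the split supersingular setting. Analytically, I would show that Loeffler's four signed $p$-adic $L$-functions $\theta^{\bullet\circ}_{4,2}$ and Hida's $\theta^{\Gr}_{4,2}$ arise as images of a common Beilinson--Flach (or Rankin--Selberg) cohomology class under, respectively, the signed Coleman maps of Lei--Loeffler--Zerbes at the primes $\p, \bar\p$ above $p$ and the appropriate Perrin-Riou regulator. This would yield a factorization of $\theta^{\Gr}_{4,2}$ as a linear combination of the four $\theta^{\bullet\circ}_{4,2}$ with coefficients built from Pollack-type half-logarithm power series $\log^{\pm}_{\Cyc}\in \Z_p[[\Gamma_{\Cyc}]]$ and $\log^{\pm}_{\p}\in \Z_p[[\Gamma_\p]]$, one factor for each of the primes above $p$ in $K$.

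Algebraically, I would set up the mirror image of this decomposition at the level of Selmer groups. The Kim signed local conditions at $\p$ and $\bar\p$ fit into Poitou--Tate exact sequences comparing them to the Greenberg--Panchishkin condition, with cokernels cut out by the same half-logarithm distributions. A global duality computation \`a la Mazur should then produce, for each choice of signs $\bullet,\circ\in\{+,-\}$, a divisor relation in $Z^1\left(\Z_p[[\widetilde{\Gamma}]]\right)$ of the shape
\begin{align*}
\Div\left(\Sel^{\Gr}(\Q, D_{\rhoDN{4}{2}})^\vee\right) = \Div\left(\Sel^{\bullet\circ}(\Q, D_{\rhoDN{4}{2}})^\vee\right) + (\text{divisors of the relevant half-logarithms}).
\end{align*}
Combining this with the analytic factorization and Conjecture \ref{greenberg-main-conjecture} gives the desired equality.

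The $\Z_p[[\widetilde{\Gamma}]]$-torsion of $\Sel^{\bullet\circ}(\Q, D_{\rhoDN{4}{2}})^\vee$ should follow either from that of $\Sel^{\Gr}(\Q, D_{\rhoDN{4}{2}})^\vee$ through the same local comparison, or independently from a one-sided divisibility produced by the Beilinson--Flach Euler system of Kings--Loeffler--Zerbes together with the non-vanishing of $\theta^{\bullet\circ}_{4,2}$ built into Loeffler's construction. The hardest step will be the algebraic factorization: constructing genuine two-variable signed Coleman maps at a split supersingular prime, and controlling the pseudo-null kernels and cokernels that appear in the corresponding Poitou--Tate sequences. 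These pseudo-null modules are precisely the codimension-two phenomena that motivate the rest of the paper, and tracking them carefully is essential to obtain an equality---rather than merely a divisibility modulo pseudo-nulls---in $Z^1\left(\Z_p[[\widetilde{\Gamma}]]\right)$.
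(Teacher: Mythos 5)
This statement is a \emph{conjecture}, not a theorem, and the paper does not prove it. It is quoted verbatim from Wan's work (Conjecture~6.7 there) and then used as a \emph{hypothesis} in Theorem~\ref{maintheorem}; the only role the paper assigns to it is as an input. Indeed the text immediately after the conjecture says ``Progress towards Conjecture~\ref{kim-main-conjecture} has been made by various authors under various technical hypotheses'' and cites Wan for one divisibility, B\"uy\"ukboduk--Lei for the reverse under extra hypotheses, and Castella--\c{C}iperiani--Skinner--Sprung for a recently announced proof --- none of which is reproduced or established in this paper.

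Beyond the mismatch of genre, the strategy you propose has a concrete obstruction. You want to derive Conjecture~\ref{kim-main-conjecture} from Conjecture~\ref{greenberg-main-conjecture} via a Pollack-type factorization linking Hida's $\theta^{\Gr}_{4,2}$ to the four $\theta^{\bullet\circ}_{4,2}$. But in this paper that reduction would be circular: Theorem~\ref{maintheorem} assumes \emph{both} main conjectures, precisely because neither is known to imply the other. Moreover, the half-logarithm factorization in \eqref{eq:decomp} expresses the \emph{unbounded} $L_{\lambda,\mu}$ (for auxiliary roots $\lambda,\mu$ of $X^2+p$) in terms of the bounded $\theta^{\bullet\circ}_{4,2}$ --- Hida's Rankin--Selberg $\theta^{\Gr}_{4,2}$ never appears in that identity, and its critical interpolation set (weight $k\ge 3$ specializations in the $\Gamma_\p$ variable, as in Section~\ref{S:Panch}) is disjoint from the finite-conductor characters at which $\theta^{\bullet\circ}_{4,2}$ interpolate. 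The only factorization the paper actually proves relating signed and Rankin--Selberg objects is the restriction to the cyclotomic line in Proposition~\ref{prop:cyclosp} and Remark~\ref{rem:cyclotomicspecialization}, which is a one-variable statement over $\Z_p[[\Gamma_\Cyc]]$, not a two-variable divisor equality in $Z^1(\Z_p[[\widetilde{\Gamma}]])$. Finally, Remark~\ref{notconsidering} signals that the interplay between the various local conditions is delicate enough that even the auxiliary module $\XXX(\Q,D_{\rhoDN{4}{2}})$ fails to have rank one for some pairs of Selmer structures, which is another reason a na\"{\i}ve ``algebraic factorization'' has no chance to upgrade to equality of two-variable divisors without substantial new input.
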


Our main theorem is the following:

\begin{Theorem} \label{maintheorem}
  Let $\{\theta_\One, \theta_\Two\}$ denote one of the following unordered pairs:
  \begin{align*}
      & \{\tDN42{++}, \tDN42{+-}\},\quad   \{\tDN42{++}, \tDN42{-+}\},   \quad \{\tDN42{--},  \tDN42{+-}\}, \quad \{\tDN42{--}, \tDN42{-+}\}, \\ &\{\tDN42{++}, \tDN42{\Gr}\}, \quad \{\tDN42{+-}, \tDN42{\Gr}\} , \quad \{\tDN42{-+}, \tDN42{\Gr}\}, \quad \{\tDN42{--}, \tDN42{\Gr}\}.
  \end{align*}
  Suppose that the following two conditions hold:
  \begin{enumerate}
    \item Conjecture \ref{greenberg-main-conjecture} and Conjecture \ref{kim-main-conjecture} hold.
          \item\label{pn-hyp-main-theorem} The elements $\theta_{\One}$ and $\theta_{\Two}$ of the UFD $\Z_p[[\widetilde{\Gamma}]]$ have no common irreducible factor.
  \end{enumerate}
  Then, we have the following equality in $Z^2\left(\Z_p[[\tilde{\Gamma}]]\right)$:
  \begin{align*}
    c_2\left(\frac{\Z_p[[\tilde{\Gamma}]]}{\left(\theta_\One, \theta_\Two\right)}\right) = c_2\bigg(\mathcal{Z}(\Q,D_{\rhoDN{4}{2}})\bigg) + c_2\bigg(\mathcal{Z}^\Sstar(\Q,D_{\rhoDNstar{4}{2}})\bigg) + \sum_{l \in \Sigma \setminus \{p\}} c_2\left(\left(H^0\left(\Q_l, D_{\rhoDN{4}{2}}\right)^\vee\right)_{\pn}\right).
  \end{align*}
\end{Theorem}

Let us briefly review the notations used in Theorem \ref{maintheorem}. One can consider the Selmer groups $\Sel_{\One}(\Q,D_{\rhoDN42})$ and $\Sel_{\Two}(\Q,D_{\rhoDN42})$ appearing in Conjecture \ref{greenberg-main-conjecture} and Conjecture \ref{kim-main-conjecture} corresponding to the $p$-adic $L$-functions $\theta_\One$ and $\theta_\Two$ respectively.  Here, $\mathcal{Z}(\Q,D_{\rhoDN{4}{2}})$ denotes\footnote{To make the notation for $\mathcal{Z}(\Q,D_{\rhoDN{4}{2}})$ simpler, we have not included the indices $\One$ and $\Two$.} the Pontryagin dual of the intersection
$$ \Sel_{\One}(\Q,D_{\rhoDN{4}{2}}) \cap \Sel_{\Two}(\Q,D_{\rhoDN{4}{2}})$$
inside the appropriate first discrete global Galois cohomology group. One can also define modules $D_{\rhoDNstar{4}{2}}$ and $\mathcal{Z}^\Sstar(\Q,D_{\rhoDNstar{4}{2}})$ for the Tate dual $\rhoDNstar{4}{2}$ of the Galois representation $\rhoDN{4}{2}$. See Section \ref{S:pseudonulldesc} for the description of $\mathcal{Z}(\Q,D_{\rhoDN{4}{2}})$ and $\mathcal{Z}^\Sstar(\Q,D_{\rhoDNstar{4}{2}})$ respectively. The invariant $c_2$ associates an element in $Z^2\left(\Z_p[[\widetilde{\Gamma}]]\right)$ to a pseudo-null $\Z_p[[\widetilde{\Gamma}]]$-module. See Section \ref{sectiondefinition} for its definition. This invariant is the generalization of the ``characteristic divisor'' appearing in the Iwasawa main conjecture formulated by Greenberg \cite{greenberg1994iwasawa}.

For each finite prime $l$ in $\Sigma \setminus \{p\}$, the $\Z_p[[\widetilde{\Gamma}]]$-module $\left(H^0\left(\Q_l, D_{\rhoDN{4}{2}}\right)^\vee\right)_{\pn}$ denotes the maximal pseudo-null submodule of $H^0\left(\Q_l, D_{\rhoDN{4}{2}}\right)^\vee$. See section \ref{section:fudge} for a discussion of these fudge factors $ c_2\left(\left(H^0\left(\Q_l, D_{\rhoDN{4}{2}}\right)^\vee\right)_{\pn}\right)$ at primes $l \neq p$. The discussion is based on the criterion of N\'eron-Ogg-Shafarevich and the circle of ideas in Tate's algorithms \cite{MR0393039}.

\tocless \subsection{The pseudo-nullity conjecture of Coates and Sujatha}

Our main motivation in considering condition (\ref{pn-hyp-main-theorem}) of Theorem \ref{maintheorem}, which we later label ``\ref{gcd}'', is a conjecture of Coates and Sujatha involving the fine Selmer group. Let $\Sigma$ denote a finite set of primes in $\Q$ containing $p$, $\infty$, the set of primes dividing the conductor of the elliptic curve $E$ and the primes ramified in the extension $K/\Q$. Let $\Q_\Sigma$ denote the maximal extension of $\Q$ unramified outside $\Sigma$. Let $G_\Sigma$ denote $\Gal{\Q_\Sigma}{\Q}$. The fine Selmer group, denoted $\Sha^1\left(\Q,D_{\rhoDN{4}{2}}\right)$, is defined below:

\begin{align*}
  \Sha^1\left(\Q,D_{\rhoDN{4}{2}}\right) := \ker\left(H^1\left(G_\Sigma, D_{\rhoDN{4}{2}}\right) \rightarrow \bigoplus_{\nu \in \Sigma} H^1\left(\Gal{\overline{\Q}_\nu}{\Q_\nu},D_{\rhoDN{4}{2}}\right) \right).
\end{align*}
We now state the conjecture of Coates and Sujatha.
\begin{conjecture}[Conjecture B in \cite{MR2148798}] \label{coates-sujatha-conj}
  The $\Z_p[[\widetilde{\Gamma}]]$-module $\Sha^1\left(\Q,D_{\rhoDN{4}{2}}\right)^\vee$
  is pseudo-null.
\end{conjecture}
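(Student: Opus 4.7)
My strategy is to reduce Conjecture \ref{coates-sujatha-conj} to the two hypotheses of Theorem \ref{maintheorem}: the Iwasawa main conjectures (Conjectures \ref{greenberg-main-conjecture} and \ref{kim-main-conjecture}) together with the coprimality of some admissible pair $\{\theta_\One, \theta_\Two\}$. The starting observation is that the fine Selmer condition is the strictest possible local condition at every prime: at each $\nu \in \Sigma$, fine Selmer forces the local class to vanish, whereas the Greenberg and signed Selmer conditions only require it to lie in some subgroup of $H^1(\Gal{\overline{\Q}_\nu}{\Q_\nu}, D_{\rhoDN{4}{2}})$. Consequently
\[
\Sha^1\bigl(\Q, D_{\rhoDN{4}{2}}\bigr) \subseteq \Sel_\One\bigl(\Q, D_{\rhoDN{4}{2}}\bigr) \cap \Sel_\Two\bigl(\Q, D_{\rhoDN{4}{2}}\bigr)
\]
for every admissible pair $(\One, \Two)$, and dualizing this inclusion yields a surjection $\mathcal{Z}(\Q, D_{\rhoDN{4}{2}}) \twoheadrightarrow \Sha^1(\Q, D_{\rhoDN{4}{2}})^\vee$. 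Since a quotient of a pseudo-null module is pseudo-null, it suffices to prove that $\mathcal{Z}(\Q, D_{\rhoDN{4}{2}})$ is pseudo-null for at least one admissible pair.

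For the pseudo-nullity of $\mathcal{Z}$: under Conjectures \ref{greenberg-main-conjecture} and \ref{kim-main-conjecture}, the modules $\Sel_\One(\Q, D_{\rhoDN{4}{2}})^\vee$ and $\Sel_\Two(\Q, D_{\rhoDN{4}{2}})^\vee$ are torsion over $\Z_p[[\widetilde{\Gamma}]]$ with characteristic ideals $(\theta_\One)$ and $(\theta_\Two)$ respectively. The inclusions $\Sel_\One \cap \Sel_\Two \hookrightarrow \Sel_j$ for $j \in \{\One, \Two\}$ dualize to surjections $\Sel_j^\vee \twoheadrightarrow \mathcal{Z}$, and since the characteristic ideal of a quotient of a torsion module divides that of the source, $\Char(\mathcal{Z})$ divides both $(\theta_\One)$ and $(\theta_\Two)$. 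If the pair shares no common irreducible factor in the UFD $\Z_p[[\widetilde{\Gamma}]]$, then $\Char(\mathcal{Z}) = (1)$; a finitely generated torsion module with unit characteristic ideal has empty codimension-one support, hence is pseudo-null, and therefore so is its quotient $\Sha^1(\Q, D_{\rhoDN{4}{2}})^\vee$.

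The hardest part is not the cohomological packaging above, but checking the coprimality of a concrete admissible pair (on top of the Iwasawa main conjectures themselves, which remain deep open problems). To attack coprimality analytically, I would try to find, for each height-one prime $\mathfrak{p}$ of $\Z_p[[\widetilde{\Gamma}]]$ containing $\theta_\One$, a classical arithmetic specialization lying over $\mathfrak{p}$ at which $\theta_\Two$ has nonzero value by its interpolation formula, ruling out a common irreducible factor. For a pair involving $\tDN42{\Gr}$, I would lean on Hida's Rankin-Selberg interpolation along the weight direction together with non-vanishing results for central $L$-values; for a pair drawn entirely from $\{\tDN42{++}, \tDN42{+-}, \tDN42{-+}, \tDN42{--}\}$, I would compare the interpolation behavior along the $\p$ and $\overline{\p}$ directions separately, since a common irreducible factor would constrain the vanishing loci of two Loeffler $L$-functions in a way inconsistent with the sign-dependent shapes of Pollack's $\pm$-logarithms. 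Since the reduction only requires coprimality for \emph{one} of the eight admissible pairs, one has a good deal of flexibility in choosing the most tractable case; and when a direct analytic argument is out of reach for a given elliptic curve, the numerical evidence alluded to in the introduction (via Pollack's algorithms) should at least confirm the conjecture in concrete examples.
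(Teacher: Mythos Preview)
Your reduction argument is correct and is essentially the same as the paper's: the paper also observes the surjections $\Sel_j(\Q,D_{\rhoDN{4}{2}})^\vee \twoheadrightarrow \Sha^1(\Q,D_{\rhoDN{4}{2}})^\vee$ and deduces that, under the main conjectures, the height-one support of $\Sha^1(\Q,D_{\rhoDN{4}{2}})^\vee$ lies in the intersection of the supports of all five quotients $\Iwalg/(\theta_j)$. Your detour through $\mathcal{Z}$ is a harmless variation. Note, however, that this is a \emph{conjecture}: neither you nor the paper proves it in general; both give the same conditional reduction.

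The substantive difference is in how one attacks the coprimality hypothesis. Your proposal to exploit interpolation formulas and non-vanishing theorems is reasonable in spirit but remains speculative; you do not actually carry it out. The paper takes a different, concrete route: it specializes along the cyclotomic projection $\pi_{\Cyc}:\Iwalg\to\Z_p[[\Gamma_\Cyc]]$ and proves (Proposition~\ref{prop:cyclosp}) that, on the algebraic side, $\pi_{\Cyc}(\vartheta^{\bullet\circ}_{4,2})$ factors as an explicit combination of one-variable invariants $\vartheta_E^\pm,\vartheta_{E_K}^\pm$ attached to $E$ and its quadratic twist $E_K$. One then checks coprimality \emph{after} cyclotomic specialization using Pollack's numerical tables for Newton polygons of the one-variable $\pm$ $p$-adic $L$-functions, together with Kobayashi's inequality $\Div(\vartheta_E^\pm)\le\Div(\theta_E^\pm)$ (or the Pollack--Rubin equality in the CM case). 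Since coprimality on the cyclotomic line implies coprimality in $\Iwalg$, this yields \emph{unconditional} verification of Conjecture~\ref{coates-sujatha-conj} for specific curves (e.g.\ $32A$, $40A$, $56A$ at $p=3$), without assuming the two-variable main conjectures at all. Your approach, by contrast, remains conditional on Conjectures~\ref{greenberg-main-conjecture} and~\ref{kim-main-conjecture} even in examples.
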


Recall that a finitely generated torsion-module $M$ over the UFD $\Z_p[[\widetilde{\Gamma}]]$ is said to be pseudo-null if $\Div(M)$ equals zero. We have the following natural surjection of $\Z_p[[\widetilde{\Gamma}]]$-modules:
\[
  \Sel^{\pm,\pm}(\Q,D_{\rhoDN{4}{2}})^\vee \twoheadrightarrow \Sha^1\left(\Q,D_{\rhoDN{4}{2}}\right)^\vee, \qquad \Sel^{\Gr}(\Q,D_{\rhoDN{4}{2}})^\vee \twoheadrightarrow \Sha^1\left(\Q,D_{\rhoDN{4}{2}}\right)^\vee.
\]
Assume that Conjecture \ref{greenberg-main-conjecture} and Conjecture \ref{kim-main-conjecture} are valid.  Then\footnote{ $\mathrm{Supp}_{\Ht=1}(M)$ denotes the set of height one prime ideals of $\Iwalg$ in the support of a finitely generated $\Iwalg$-module~$M$.}, $\mathrm{Supp}_{\Ht=1}\left(\Sha^1\left(\Q,D_{\rhoDN{4}{2}}\right)^\vee\right)$ is a subset of
{\small \begin{align*}
  \mathrm{Supp}_{\Ht=1}\left(\frac{\Iwalg}{(\theta^{\Gr}_{\cmmnt{\pmb}{4},2})}\right) \bigcap  \mathrm{Supp}_{\Ht=1}\left(\frac{\Iwalg}{(\theta^{++}_{\cmmnt{\pmb}{4},2})}\right) & \bigcap \mathrm{Supp}_{\Ht=1}\left(\frac{\Iwalg}{(\theta^{+-}_{\cmmnt{\pmb}{4},2})}\right) \bigcap \mathrm{Supp}_{\Ht=1}\left(\frac{\Iwalg}{(\theta^{-+}_{\cmmnt{\pmb}{4},2})}\right) \bigcap \mathrm{Supp}_{\Ht=1}\left(\frac{\Iwalg}{(\theta^{--}_{\cmmnt{\pmb}{4},2})}\right).
  \end{align*}}

To investigate Conjecture \ref{coates-sujatha-conj}, it seems instructive to consider the setup when two of the above $p$-adic $L$-functions have no common irreducible factor in the UFD $\Z_p[[\widetilde{\Gamma}]]$. The motivation behind proving Theorem \ref{maintheorem} is to provide a partial answer to Question \ref{question-analytic} in this setup. See Section \ref{section-examples}, where we produce numerical evidence towards the existence of elliptic curves $E$ with good supersingular reduction at $p$, such that $\theta^{++}_{\cmmnt{\pmb}{4},2}$ and $\theta^{+-}_{\cmmnt{\pmb}{4},2}$ have no common irreducible factors in the UFD $\Z_p[[\widetilde{\Gamma}]]$. If $\theta^{++}_{\cmmnt{\pmb}{4},2}$ and $\theta^{+-}_{\cmmnt{\pmb}{4},2}$ have no common irreducible factor in $\Iwalg$, then the pseudo-nullity conjecture of Coates-Sujatha is also valid, assuming the validity of the Iwasawa main conjectures. Hence, these example provide evidence towards Conjecture \ref{coates-sujatha-conj} as well.

The examples in Section \ref{section-examples} are based on computations of Robert Pollack, given on his website \url{http://math.bu.edu/people/rpollack/Data/data.html}. These computations and the examples in Section \ref{section-examples} are based on the theory of overconvergent modular symbols of Stevens \cite{stevens1994rigid}, Pollack-Stevens \cite{MR2760194} and related to Pollack's work with Masato Kurihara \cite{kuriharapollack}.

\begin{remark}
The pseudo-nullity conjecture of Coates and Sujatha (Conjecture B in \cite{MR2148798}) is formulated in greater generality than the setting of Conjecture \ref{coates-sujatha-conj}. They formulate their conjecture for the fine Selmer group of an elliptic curve over certain (admissible) $p$-adic Lie extensions whose Galois group is a $p$-adic Lie group with dimension $\geq 2$.
We refer the reader to works of Ochi \cite{MR2830442},  Lim \cite{MR3440445} and Shekhar \cite{shekhar2018journal} where there are other examples verifying the pseudo-nullity conjecture of Coates-Sujatha.  The setups in their works and their approaches are completely different to ours.
\end{remark}

\tocless \subsection{Method of Proof}
Theorem \ref{maintheorem} is a consequence of Theorem \ref{general-main-theorem}, which is applicable in a fairly general setting.  To explain the method of proof of Theorem \ref{general-main-theorem} in the introduction, we will consider the setup of Theorem \ref{maintheorem}. A key ingredient in proving our results is the construction of an auxillary $\Z_p[[\widetilde{\Gamma}]]$-module\footnote{To make the notation for $\XXX(\Q,D_{\rhoDN{4}{2}})$ also simpler, we have not included the indices $\One$ and $\Two$.}, denoted $\XXX(\Q,D_{\rhoDN{4}{2}})$. Assuming the validity of Conjecture \ref{greenberg-main-conjecture} and Conjecture \ref{kim-main-conjecture}, this $\Z_p[[\widetilde{\Gamma}]]$-module turns out to be torsion-free and to have rank one. This module also fits into the following surjection of $\Z_p[[\widetilde{\Gamma}]]$-modules:
\begin{align*}
  \underbrace{H^1\left(G_\Sigma, D_{\rhoDN{4}{2}} \right)^\vee}_{\substack{\text{Conjecturally} \\ \text{has $\Z_p[[\widetilde{\Gamma}]]$-rank two}}} \twoheadrightarrow \underbrace{\XXX(\Q,D_{\rhoDN{4}{2}})}_{\substack{\text{Conjecturally} \\ \text{has $\Z_p[[\widetilde{\Gamma}]]$-rank one}}}  \twoheadrightarrow \underbrace{\substack{\Sel_{\One}(\Q,D_{\rhoDN{4}{2}})^\vee \\ \Sel_{\Two}(\Q,D_{\rhoDN{4}{2}})^\vee}}_{\substack{\text{Conjecturally} \\ \text{ $\Z_p[[\widetilde{\Gamma}]]$-torsion}}} \twoheadrightarrow \underbrace{\ZZZ(\Q,D_{\rhoDN{4}{2}})}_{\substack{\text{Pseudo-null, assuming}\\ \text{the hypotheses in Theorem \ref{maintheorem}}}} \twoheadrightarrow \underbrace{\Sha^{1}\left(\Q,D_{\rhoDN{4}{2}}\right)^\vee}_{\text{Conjecturally pseudo-null}}.
\end{align*}
To prove Theorem \ref{maintheorem}, we first show that for every height two prime ideal $\QQQ$ in the ring $\Z_p[[\widetilde{\Gamma}]]$, we have the following short exact sequence of $\Z_p[[\widetilde{\Gamma}]]_\QQQ$-modules:
\begin{align} \label{intro-ses}
  0 \rightarrow \ZZZ(\Q,D_{\rhoDN{4}{2}})_\QQQ  \rightarrow \frac{\Z_p[[\widetilde{\Gamma}]]_\QQQ}{(\theta_\One,\theta_\Two)} \rightarrow \coker\left(\XXX(\Q,D_{\rhoDN{4}{2}}) \rightarrow \XXX(\Q,D_{\rhoDN{4}{2}})^{**}\right) \otimes_{\Z_p[[\widetilde{\Gamma}]]} \Z_p[[\widetilde{\Gamma}]]_\QQQ \rightarrow 0.
\end{align}

Here, $\XXX(\Q,D_{\rhoDN{4}{2}})^{**}$ is the reflexive hull of the $\Z_p[[\widetilde{\Gamma}]]$-module $\XXX(\Q,D_{\rhoDN{4}{2}})$. The $\Iwalg$-module $\XXX(\Q,D_{\rhoDN{4}{2}})^{**}$ turns out to be free.  \\

The $\Z_p[[\widetilde{\Gamma}]]$-module $\coker\bigg(\XXX(\Q,D_{\rhoDN{4}{2}}) \rightarrow \XXX(\Q,D_{\rhoDN{4}{2}})^{**}\bigg)$ is pseudo-null. To study the invariant $c_2$ associated to this pseudo-null $\Z_p[[\widetilde{\Gamma}]]$-module and thus complete the proof of Theorem \ref{maintheorem}, we use the duality theorems developed by Jan Nekov{\'a}{\v{r}} in his work on Selmer complexes \cite{nekovar2006selmer}. We show that we have the following equality in $Z^2\left(\Z_p[[\widetilde{\Gamma}]]\right)$:
\begin{align} \label{intro-c2-equality}
  c_2\bigg(\coker\left(\XXX(\Q,D_{\rhoDN{4}{2}}) \rightarrow \XXX(\Q,D_{\rhoDN{4}{2}})^{**}\right)\bigg) =  c_2\bigg(\mathcal{Z}^\Sstar(\Q,D_{\rhoDNstar{4}{2}})\bigg) + \sum_{l \in \Sigma \setminus \{p\}} c_2\left(\left(H^0\left(\Q_l, D_{\rhoDN{4}{2}}\right)^\vee\right)_{\pn}\right).
\end{align}
A key idea in establishing (\ref{intro-c2-equality}) involves a careful study of $\Ext$ groups. This idea is based on the theory of Iwasawa adjoints, which was first conceived by Kenkichi Iwasawa \cite{MR0349627} and later developed in greater generality in works of Uwe Jannsen \cite{MR1097615,MR3271243}. Combining equations (\ref{intro-ses}) and (\ref{intro-c2-equality}) completes the proof of Theorem \ref{maintheorem}. \\

To apply our theorem in the general setup (Theorem \ref{general-main-theorem}) to the setting of Theorem \ref{maintheorem}, we use results of Kim \cite{MR3224266} and Kobayashi \cite{kobayashi}. These results of Kim and Kobayashi are in turn built on earlier works of Perrin-Riou \cite{MR1031902} and Rubin \cite{MR880958}.

\begin{remark} \label{notconsidering}
  Since it is important to our methods to construct the rank one $\Z_p[[\widetilde{\Gamma}]]$-module $\XXX(\Q,D_{\rhoDN{4}{2}})$, we do not  consider the case where $\{\theta_\One, \theta_\Two\}$ equals $\{\theta^{++}_{\cmmnt{\pmb}{4},2}, \theta^{--}_{\cmmnt{\pmb}{4},2}\}$ or $\{\theta^{+-}_{\cmmnt{\pmb}{4},2}, \theta^{-+}_{\cmmnt{\pmb}{4},2}\}$. A similar construction of $\XXX(\Q,D_{\rhoDN{4}{2}})$ in these cases would produce a $\Z_p[[\widetilde{\Gamma}]]$-module that has rank at least two (it is exactly two if the Pontryagin dual of any of the corresponding Selmer groups is torsion over $\Z_p[[\tilde\Gamma]]$). Another interesting point to note is that when the root number of the elliptic curve $E$ over $K$ equals $-1$, it is known that $\ker(\pi_{\mathrm{ac}})$ belongs to the support of the divisors $\Div\left(\tDN42{++}\right)$ and $\Div\left(\tDN42{--}\right)$. The map  $\pi_{\mathrm{ac}}:\Z_p[[\widetilde{\Gamma}]] \rightarrow \Z_p[[\Gamma_{\mathrm{ac}}]]$ denotes the natural projection map (which one can view as the ``anticyclotomic specialization''). Here, $\Gamma_{\mathrm{ac}}$ denotes $\Gal{K_{\mathrm{ac}}}{K}$, the Galois group of the anti-cyclotomic $\Z_p$-extension $K_{\mathrm{ac}}$ of $K$.
  See \cite{BL2,CW,LV}. However, at present, we do not know when the root number of the elliptic curve $E$ over $K$ equals $-1$ whether $\ker(\pi_{\mathrm{ac}})$ belongs to the support of $\Div\left(\tDN42{+-}\right)$ or $\Div\left(\tDN42{-+}\right)$.
\end{remark}

\begin{remark}
As a referee had pointed out to us, the construction of Hida's Rankin-Selberg $p$-adic $L$-function is in fact contingent on a choice of a prime above $p$. On the algebraic side, this choice amounts to defining the filtration $\Fil T_{\rhoDN{4}{2}}$ for the Selmer group in Section 5.1 involving either the character $\kappa_\p$ or $\kappa_\q$. One can thus consider another Rankin-Selberg $p$-adic $L$-function. We do not consider the pair of Rankin-Selberg $p$-adic $L$-functions in Theorem \ref{maintheorem} since, just as in Remark \ref{notconsidering}, the construction of $\XXX(\Q,D_{\rhoDN{4}{2}})$ does not yield a rank one $\Z_p[[\widetilde{\Gamma}]]$-module.  The discussion of Theorem \ref{maintheorem} for the pair of $p$-adic $L$-functions involving this additional Rankin-Selberg $p$-adic $L$-function and one of the $p$-adic $L$-functions $\tDN42{\pm,\pm}$ is completely analogous to the discussion of the pairs $\{\tDN42{\pm,\pm}, \tDN42{\Gr}\}$.
\end{remark}

\begin{remark}
Analogues of the $\pm\pm$ $p$-adic $L$-functions and Selmer groups have been constructed for cuspidal eigenforms with weight $\ge2$ when $p$ is a good non-ordinary prime (see \cite{BL,CCSS,lei14,sprung}).  In the proof of Theorem~\ref{maintheorem}, we rely on results of Kim \cite{MR3224266} to verify that the local Selmer conditions at $p$ satisfy certain freeness conditions (labeled \ref{loc-free} in Section \ref{generalsetup}). At present, we are not sure to what extent these freeness results are available and whether they can be extended to the more general settings.
\end{remark}

\begin{remark}
The purpose of developing Theorem \ref{general-main-theorem} in a general setting is to pursue applications to other arithmetic setups. In \cite{lei2019codimension}, we obtain one such application of Theorem \ref{general-main-theorem} in the setting of cyclotomic twist deformations of Hida families. We refer the interested reader to \cite{lei2019codimension} for the exact details. An added difficulty in this setting is that the (normalizations of) deformation rings, appearing in Hida theory, are not always known to be regular (see section 4 in \cite{palvannan2015homological} for examples when such rings are not even UFDs).
\end{remark}

\tocless\subsection{Organization of the paper}
Section \ref{some-homological-algebra} involves establishing preliminaries in commutative and homological algebra. Section \ref{generalsetup} describes the objects involved in the general setting of our main theorem. Section \ref{generalresults} involves the proof of our main theorem in the general setting.

Section \ref{S:Panch} describes the Iwasawa main conjecture formulated by Greenberg in the setting of Theorem \ref{maintheorem}. Section \ref{S:plusminus} describes the $\pm\pm$ Iwasawa main conjectures for elliptic curves with supersingular reduction at $p$, formulated by Kim. Section \ref{S:proofmaintheorem} describes the proof of Theorem \ref{maintheorem}. Section \ref{section-examples} deals with providing evidence for \ref{gcd}, and in turn the pseudo-nullity conjecture of Coates-Sujatha, in the setup of Theorem \ref{maintheorem}. \\

\tocless\subsection{Notations}

Here is a brief summary of some of the notations used in this paper.

\begin{itemize}
  \item If $\RRR$ is a Noetherian, complete, integrally closed, local domain of characteristic zero with Krull dimension $n+1$ and whose residue field has characteristic $p$, we let $\RRR^\vee$ denote $\Hom_{\cont}\left(R,\frac{\Q_p}{\Z_p}\right)$. For a finitely generated module $M$ over such a ring $\RRR$, we let $M^\vee$ denote the Pontryagin dual of $M$ and $M^*$ denote its reflexive dual $\Hom_\RRR(M,\RRR)$. Similarly, if $D$ is a discrete module over such a ring $\RRR$, we let $D^\vee$ denote the Pontryagin dual of $D$.
If $I$ is an ideal of $\RRR$, we write $D[I]$ for the $\RRR$-submodule of $D$ annihilated by all elements of the ideal $I$.
  \item Suppose $L$ is a field. If $M$ is a discrete module with a continuous action of the absolute Galois group $G_L$, we will let $H^i\left(L,M\right)$ denote the continuous cohomology group $H^i\left(G_L,M\right)$, for each $i \geq 0$.
  \item If $\hat{\mathcal{F}}$ is a formal group defined over the  integral closure of $\Z_p$ in an algebraic extension $L$ of $\Q_p$,  we let $\hat{\mathcal{F}}(L)$ denote the group of $L$-points on the corresponding formal group. That is, if $\m_L$ denotes the maximal ideal in the integral closure of $\Z_p$ in $L$, then $\hat{\mathcal{F}}(L)$ would equal  $\hat{\mathcal{F}}(\m_L)$.

  \item Let $L$ be an algebraic extension of $\Qp$. Let $\Gamma_1$ be a topological group isomorphic to $\Z_p$. Let $\gamma_1$ be a topological generator of $\Gamma_1$ and fix an isomorphism $\Z_p[[\Gamma_1]] \cong \Z_p[[T]]$ of rings by sending $\gamma_1$ to $T+1$. To emphasize the choice of the topological generator, we may write $\Z_p[[\gamma_1-1]]$ and $L[[\gamma_1-1]]$ instead of the power series rings $\Z_p[[T]]$ and $L[[T]]$. If $\mu$ is an $L$-valued distribution on the topological group $\Gamma_1$, we have the Amice transform
        \begin{align*}
          \int_{x\in \Gamma_1}(1+T)^x \mu(x) & \in  L[[T]].
        \end{align*}

        Suppose $\Gamma_2$ is another topological group isomorphic to $\Z_p$. Let $\gamma_2$ be a topological generator of $\Gamma_2$. Fix an isomorphism of the completed group ring $\Z_p[[\Gamma_1 \times \Gamma_2]]$ with $\Z_p[[S,T]]$ by identifying $\gamma_1-1$ and $\gamma_2-1$ with $S$ and $T$ respectively. Once again, to emphasize the choice of the topological generator, we may write $\Z_p[[\gamma_1-1,\gamma_2-1]]$ and $L[[\gamma_1-1,\gamma_2-1]]$ instead of the power series rings $\Z_p[[S,T]]$ and $L[[S,T]]$.  Suppose $\mu$ is an $L$-valued distribution on $\Gamma_1 \times \Gamma_2$, its Amice transform is given by $$\int_{x\in \Gamma_1,y\in \Gamma_2}(1+S)^{x}(1+T)^y\mu(x,y)\in L[[S,T]].$$
\end{itemize}

\section{Some Commutative and Homological Algebra} \label{some-homological-algebra}

Let $\RRR$ denote a Noetherian local ring. Let $\MMM$ be a finitely generated $\RRR$-module. In Section \ref{some-homological-algebra}, we will simply accumulate the general results in commutative and homological algebra needed for our purposes.

\begin{definition} The length of a finitely generated $\RRR$-module $\MMM$, denoted $\len_\RRR \MMM$, is the length of a(ny) composition series of $\MMM$ (if a composition series does not exist, we set the length to equal infinity).
\end{definition}

We recall a few results on lengths of modules.
\begin{lemma} [Proposition 6.9 in \cite{atiyah1969introduction}] \label{length-additive}
  Suppose we have a short exact sequence $$0 \rightarrow \MMM' \rightarrow \MMM \rightarrow \MMM'' \rightarrow 0$$ of finitely generated $\RRR$-modules. Then, we have the following equality of lengths: $$\len_{\RRR}\MMM  = \len_{\RRR}\MMM' + \len_{\RRR} \MMM''.$$
\end{lemma}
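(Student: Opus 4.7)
The plan is to prove this by the standard composition-series argument, additivity of length being essentially built into the definition. First I would handle the degenerate case: if either $\MMM'$ or $\MMM''$ has infinite length, then $\MMM$ must also have infinite length (since a composition series for $\MMM$ would, via intersection with $\MMM'$ and image in $\MMM''$, refine to composition series for the outer terms), so both sides of the claimed equality are $+\infty$ and there is nothing to prove. We may therefore assume $\len_\RRR \MMM' = m < \infty$ and $\len_\RRR \MMM'' = n < \infty$.

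Next I would build a composition series for $\MMM$ out of composition series for $\MMM'$ and $\MMM''$. Let
\[
0 = \MMM'_0 \subsetneq \MMM'_1 \subsetneq \cdots \subsetneq \MMM'_m = \MMM'
\]
be a composition series of $\MMM'$, and let
\[
0 = \MMM''_0 \subsetneq \MMM''_1 \subsetneq \cdots \subsetneq \MMM''_n = \MMM''
\]
be one for $\MMM''$. Identifying $\MMM'$ with its image in $\MMM$ under the injection, and writing $\pi \colon \MMM \twoheadrightarrow \MMM''$ for the surjection, I would set $N_i := \MMM'_i$ for $0 \le i \le m$ and $N_{m+j} := \pi^{-1}(\MMM''_j)$ for $0 \le j \le n$. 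These $N_i$ form an ascending chain from $0$ to $\MMM$ of length $m+n$. By construction $N_{i+1}/N_i \cong \MMM'_{i+1}/\MMM'_i$ for $i < m$ and $N_{m+j+1}/N_{m+j} \cong \MMM''_{j+1}/\MMM''_j$ for $j < n$ (the latter using the third isomorphism theorem applied to the preimages under $\pi$), and these successive quotients are simple by hypothesis.

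Hence $\{N_i\}_{0 \le i \le m+n}$ is a composition series for $\MMM$, and so $\len_\RRR \MMM = m+n = \len_\RRR \MMM' + \len_\RRR \MMM''$, as desired. The only subtle point is the well-definedness of $\len_\RRR$ itself (i.e.\ the Jordan--Hölder theorem ensuring that any two composition series have the same length), which justifies reading off $\len_\RRR \MMM$ from the specific series constructed above; this is what allows the argument to terminate in one step rather than requiring further refinement.
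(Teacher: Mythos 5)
Your proof is correct and is the standard composition-series argument; the paper itself gives no proof but simply cites Proposition~6.9 of Atiyah--Macdonald, whose proof is exactly this splicing of composition series together with Jordan--H\"older. Nothing to flag.
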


The following lemma follows from Propositions 6.3 and  6.8 in \cite{atiyah1969introduction}.
\begin{lemma}\label{artinian-length}
  Suppose the local Noetherian ring $\RRR$ is also Artinian. For every finitely generated $\RRR$-module $\MMM$, we have  $\len_\RRR(\MMM) < \infty$.
\end{lemma}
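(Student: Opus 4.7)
The plan is to exploit the fact that the maximal ideal $\mathfrak{m}$ of an Artinian local ring is nilpotent, thereby reducing to the case of finite-dimensional vector spaces over the residue field. The nilpotency $\mathfrak{m}^n = 0$ for some $n \geq 1$ follows because the descending chain $\mathfrak{m} \supseteq \mathfrak{m}^2 \supseteq \cdots$ must stabilize (by the Artinian hypothesis on $\RRR$) at some ideal $I$ satisfying $\mathfrak{m} I = I$. Nakayama's lemma, which applies because $I$ is finitely generated over the Noetherian ring $\RRR$, then forces $I = 0$.

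Armed with this, I would filter $\MMM$ by the submodules $\mathfrak{m}^i \MMM$, obtaining
$$\MMM = \mathfrak{m}^0 \MMM \supseteq \mathfrak{m} \MMM \supseteq \mathfrak{m}^2 \MMM \supseteq \cdots \supseteq \mathfrak{m}^n \MMM = 0.$$
Each successive quotient $\mathfrak{m}^i \MMM / \mathfrak{m}^{i+1} \MMM$ is annihilated by $\mathfrak{m}$, so it inherits the structure of a module over the residue field $\RRR/\mathfrak{m}$. It is also finitely generated over $\RRR$ (hence over $\RRR/\mathfrak{m}$), making it a finite-dimensional vector space, whose $\RRR$-length equals its dimension over $\RRR/\mathfrak{m}$. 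Applying Lemma \ref{length-additive} inductively to the short exact sequences $0 \to \mathfrak{m}^{i+1} \MMM \to \mathfrak{m}^i \MMM \to \mathfrak{m}^i \MMM / \mathfrak{m}^{i+1} \MMM \to 0$ then yields
$$\len_\RRR(\MMM) = \sum_{i=0}^{n-1} \dim_{\RRR/\mathfrak{m}}\bigl(\mathfrak{m}^i \MMM / \mathfrak{m}^{i+1} \MMM\bigr) < \infty,$$
as required. Since this is a standard textbook fact (indeed it is part of the content of Propositions 6.3 and 6.8 of Atiyah--Macdonald, as already cited in the lemma's statement), there is no substantive obstacle; the only ingredient requiring argument rather than direct quotation is the nilpotency of $\mathfrak{m}$, and that in turn rests only on the descending chain condition combined with Nakayama's lemma.
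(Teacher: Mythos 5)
Your proof is correct, and since the paper itself gives no argument but simply cites Propositions 6.3 and 6.8 of Atiyah--Macdonald, what you have written is precisely the standard textbook argument that lies behind that citation. The two ingredients you isolate --- nilpotency of $\mathfrak{m}$ via the descending chain condition plus Nakayama, and the $\mathfrak{m}$-adic filtration reducing to finite-dimensional vector spaces over $\RRR/\mathfrak{m}$ --- are exactly the content of the cited propositions, so there is no divergence in approach to report.
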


\subsection{Pseudo-null modules, Reflexive modules and $\Ext$ groups}\label{sectiondefinition}

\begin{definition} A finitely generated $\RRR$-module $\MMM$ is said to be a pseudo-null $\RRR$-module if
  \begin{align*}
    \MMM_\p = 0, \text{ for all }   \text{ prime ideals } \p \text{ in } \RRR \text{ such that } \mathrm{height}(\p) \leq 1.
  \end{align*}
\end{definition}

Let $\Ann_\RRR(\MMM)$ denote the annihilator of the $\RRR$-module $\MMM$. If the $\RRR$-module $\MMM$ is pseudo-null, then the height of the ideal $\Ann_\RRR(\MMM)$ is greater than or equal to two.

\begin{lemma} \label{finite-length}
  Suppose $\ZZZ$ is a pseudo-null $\RRR$-module. For every height two prime ideal $\QQQ$ in $\RRR$, we have $ \len_{\RRR_\QQQ} \ZZZ \otimes_\RRR \RRR_\QQQ < \infty$.
\end{lemma}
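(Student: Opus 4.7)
The plan is to show that $\ZZZ \otimes_\RRR \RRR_\QQQ$ is in fact a finitely generated module over an Artinian quotient of $\RRR_\QQQ$, so that finite length follows immediately from Lemma \ref{artinian-length}. Since the hypothesis says nothing more than that the support of $\ZZZ$ avoids all primes of height $\leq 1$, the proof will reduce entirely to a support computation after localizing at $\QQQ$.

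First I would note that $\ZZZ_\QQQ := \ZZZ \otimes_\RRR \RRR_\QQQ$ is a finitely generated $\RRR_\QQQ$-module since $\ZZZ$ is finitely generated over $\RRR$. Next I would pin down the support of $\ZZZ_\QQQ$ as an $\RRR_\QQQ$-module. The prime ideals of $\RRR_\QQQ$ correspond to primes $\mathfrak{p}$ of $\RRR$ with $\mathfrak{p} \subseteq \QQQ$, and under this correspondence the support of $\ZZZ_\QQQ$ is the image of $\mathrm{Supp}_\RRR(\ZZZ) \cap \{\mathfrak{p} : \mathfrak{p} \subseteq \QQQ\}$. Since $\ZZZ$ is pseudo-null, $\mathrm{Supp}_\RRR(\ZZZ)$ contains only primes of height at least two; combined with the constraint $\mathfrak{p} \subseteq \QQQ$ and the hypothesis $\mathrm{height}(\QQQ)=2$, the only possibility is $\mathfrak{p} = \QQQ$. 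Hence $\mathrm{Supp}_{\RRR_\QQQ}(\ZZZ_\QQQ) \subseteq \{\QQQ\RRR_\QQQ\}$.

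Having localized the support to the maximal ideal, the quotient ring $\RRR_\QQQ / \Ann_{\RRR_\QQQ}(\ZZZ_\QQQ)$ is a Noetherian local ring whose only prime ideal is the image of $\QQQ\RRR_\QQQ$; it is therefore zero-dimensional Noetherian, hence Artinian. Viewing $\ZZZ_\QQQ$ as a finitely generated module over this Artinian ring, Lemma \ref{artinian-length} gives $\len_{\RRR_\QQQ/\Ann(\ZZZ_\QQQ)}(\ZZZ_\QQQ) < \infty$, and this length coincides with $\len_{\RRR_\QQQ}(\ZZZ_\QQQ)$ because the two composition series considerations agree (any $\RRR_\QQQ$-submodule is automatically annihilated by $\Ann(\ZZZ_\QQQ)$). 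This completes the proof.

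There is no real obstacle here; the only subtle point to guard against is the bijection between $\mathrm{Supp}_{\RRR_\QQQ}(\ZZZ_\QQQ)$ and primes of $\mathrm{Supp}_\RRR(\ZZZ)$ contained in $\QQQ$, which is standard for finitely generated modules. The argument also makes transparent why height two is exactly the threshold: at a height one prime the localization already vanishes, and at primes of height greater than two the statement is vacuous since such primes do not survive in the localization.
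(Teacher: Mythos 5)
Your proof is correct and takes essentially the same route as the paper's: both reduce to observing that $\ZZZ_\QQQ$ is a finitely generated module over a zero-dimensional Noetherian (hence Artinian) quotient of $\RRR_\QQQ$, and then invoke Lemma \ref{artinian-length}. The paper phrases this via the height of $\Ann_\RRR(\ZZZ)\RRR_\QQQ$ and a surjection from a power of the quotient ring, while you phrase it via the support $\mathrm{Supp}_{\RRR_\QQQ}(\ZZZ_\QQQ)\subseteq\{\QQQ\RRR_\QQQ\}$; these are equivalent since $\mathrm{Supp}(\ZZZ)=V(\Ann(\ZZZ))$ for finitely generated modules.
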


\begin{proof}
  There exists a positive integer $m$ and a surjection $$\left(\frac{\RRR_\QQQ}{\Ann_\RRR(\ZZZ) \RRR_\QQQ}\right)^m \twoheadrightarrow \ZZZ \otimes_\RRR \RRR_\QQQ,$$ of $\RRR_\QQQ$-modules. Note that $\RRR_\QQQ$ is a Noetherian local ring with Krull dimension two. The $\RRR_\QQQ$-ideal $\Ann_\RRR(\ZZZ)\RRR_\QQQ$ has height two. As a result, the quotient ring $\frac{\RRR_\QQQ}{\Ann_\RRR(\ZZZ)\RRR_\QQQ}$, being Noetherian and having Krull dimension zero, is Artinian. See Theorem 8.5 in Atiyah-Macdonald \cite{atiyah1969introduction}. The proof of this lemma now follows from Lemma \ref{artinian-length}.
\end{proof}

Lemma \ref{finite-length} allows us to associate, to a pseudo-null $\RRR$-module,  an element in $Z^2(\RRR)$ (the free abelian group on the set of height two prime ideals of $\RRR$). If $\MMM$ is a pseudo-null $\RRR$-module, we define an element $c_2(\MMM)$ in $Z^2(\RRR)$ as the following formal sum:
\begin{align}
  c_2(\MMM) := \sum_{\substack{\QQQ \subset \RRR \\ \mathrm{height}(\QQQ)=2}} \left(\len_{\RRR_\QQQ} \MMM_\QQQ\right) \QQQ.
\end{align}
In the above formula, the summation is taken over all the height two prime ideals $\QQQ$ of $\RRR$. Since length is additive in exact sequences and since localization is exact, we have the following lemma:
\begin{lemma}
  Suppose we have a short exact sequence $$0 \rightarrow \MMM' \rightarrow \MMM \rightarrow \MMM'' \rightarrow 0$$ of finitely generated pseudo-null $\RRR$-modules. Then, we have the following equality in $Z^2(\RRR)$: $$c_2(\MMM)  = c_2(\MMM') + c_2(\MMM'').$$
\end{lemma}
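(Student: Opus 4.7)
The plan is to reduce the equality in $Z^2(\RRR)$, which is a formal sum indexed by height-two primes, to a coefficient-by-coefficient check, and then invoke the standard additivity of length over exact sequences after localization.

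Concretely, I would fix an arbitrary height-two prime $\QQQ \subset \RRR$ and apply the exact localization functor $(-) \otimes_\RRR \RRR_\QQQ$ to the given short exact sequence to obtain a short exact sequence of $\RRR_\QQQ$-modules
\begin{equation*}
0 \rightarrow \MMM'_\QQQ \rightarrow \MMM_\QQQ \rightarrow \MMM''_\QQQ \rightarrow 0.
\end{equation*}
Since each of $\MMM', \MMM, \MMM''$ is assumed to be a pseudo-null $\RRR$-module, Lemma \ref{finite-length} guarantees that each of $\MMM'_\QQQ, \MMM_\QQQ, \MMM''_\QQQ$ has finite length as an $\RRR_\QQQ$-module. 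Lemma \ref{length-additive}, applied in the Noetherian local ring $\RRR_\QQQ$, then yields
\begin{equation*}
\len_{\RRR_\QQQ} \MMM_\QQQ = \len_{\RRR_\QQQ} \MMM'_\QQQ + \len_{\RRR_\QQQ} \MMM''_\QQQ.
\end{equation*}

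Since this equality of coefficients holds at every height-two prime $\QQQ$, summing over all such $\QQQ$ in the definition
\begin{equation*}
c_2(\MMM) = \sum_{\substack{\QQQ \subset \RRR \\ \mathrm{height}(\QQQ) = 2}} \left(\len_{\RRR_\QQQ} \MMM_\QQQ\right) \QQQ
\end{equation*}
gives the desired equality $c_2(\MMM) = c_2(\MMM') + c_2(\MMM'')$ in $Z^2(\RRR)$. There is essentially no obstacle here: the proof is a routine combination of exactness of localization and the additivity of length recalled in Lemma \ref{length-additive}, the only mild point being the appeal to Lemma \ref{finite-length} to ensure all three lengths are finite so that the additivity formula is meaningful.
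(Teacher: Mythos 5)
Your proof is correct and matches the paper's approach exactly: the paper disposes of this lemma with the one-line remark that length is additive in exact sequences and localization is exact, and your writeup simply makes that explicit by localizing at each height-two prime and invoking Lemmas \ref{finite-length} and \ref{length-additive}.
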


Theorem 4.4.8 in Weibel's book \cite{weibel1995introduction} and Lemma \ref{finite-length} (see also Standard Facts 4.4.7 in \cite{weibel1995introduction}) automatically give us the following lemma:
\begin{lemma}
  Let $\ZZZ$ be a finitely generated pseudo-null $\RRR$-module. Suppose $\QQQ$ is a height two prime ideal in $\RRR$ such that $\ZZZ \otimes_\RRR \RRR_\QQQ  \neq 0$. Then, we have $\Depth_{\RRR_\QQQ} \left(\ZZZ \otimes_\RRR \RRR_\QQQ\right) = 0$.
\end{lemma}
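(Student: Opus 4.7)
The plan is to combine the finite-length conclusion of Lemma \ref{finite-length} with the $\Ext$-characterization of depth (Weibel's Theorem 4.4.8). The statement localizes at $\QQQ$, so I would first note that the hypotheses imply $\ZZZ \otimes_\RRR \RRR_\QQQ$ is a nonzero finitely generated module over the two-dimensional Noetherian local ring $(\RRR_\QQQ, \QQQ\RRR_\QQQ)$, and that by Lemma \ref{finite-length} it has finite length as an $\RRR_\QQQ$-module.

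Next I would produce a nonzero map from the residue field. Set $k := \RRR_\QQQ/\QQQ\RRR_\QQQ$. Since $\ZZZ \otimes_\RRR \RRR_\QQQ$ has finite positive length, it admits a composition series, and its bottom term is a nonzero simple $\RRR_\QQQ$-module. Over a Noetherian local ring any simple module is isomorphic to $k$, so there is an $\RRR_\QQQ$-linear injection $k \hookrightarrow \ZZZ \otimes_\RRR \RRR_\QQQ$. In particular, $\Hom_{\RRR_\QQQ}\bigl(k, \ZZZ \otimes_\RRR \RRR_\QQQ\bigr) \neq 0$.

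Finally, I would invoke Weibel's Theorem 4.4.8, which expresses the depth of a finitely generated module $M$ over a Noetherian local ring $(R,\m)$ as
\[
\Depth_R(M) = \inf\bigl\{\, i \geq 0 \ : \ \Ext^i_R(R/\m, M) \neq 0 \,\bigr\}.
\]
Applied to $R = \RRR_\QQQ$ and $M = \ZZZ \otimes_\RRR \RRR_\QQQ$, the previous paragraph shows that the $i=0$ term $\Hom_{\RRR_\QQQ}(k, M)$ is already nonzero, so the infimum is zero, giving the desired conclusion $\Depth_{\RRR_\QQQ}(\ZZZ \otimes_\RRR \RRR_\QQQ) = 0$.

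There is no real obstacle here; the only thing to be careful about is that one genuinely needs the nonvanishing hypothesis $\ZZZ \otimes_\RRR \RRR_\QQQ \neq 0$ in order to produce the simple submodule isomorphic to $k$ (otherwise the depth is conventionally $+\infty$, not $0$). Everything else is a direct reading of Lemma \ref{finite-length} through Weibel's $\Ext$-criterion, which is exactly how the authors have signposted the argument.
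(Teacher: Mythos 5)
Your argument is correct and is exactly the reading the paper intends: Lemma~\ref{finite-length} gives finite length, which supplies a simple submodule isomorphic to the residue field, and Weibel's Ext-criterion for depth then forces the depth to be zero. The paper simply cites Weibel's Theorem 4.4.8 and Lemma~\ref{finite-length} without spelling out the intermediate step of extracting the simple submodule; you have filled in precisely that detail.
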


Let us recall a theorem of Serre.

\begin{lemma}[Theorem 23.8 in \cite{matsumura1989commutative}] \label{serre-r1-s2}
  Let $\RRR$ be a domain. The domain $\RRR$ is integrally closed if and only if both the following conditions hold:
  \begin{itemize}
    \item $\RRR_\p$ is a discrete valuation ring, for every height one prime ideal $\p$ of $\RRR$,
    \item $\mathrm{Depth}_{\RRR_\QQQ}(\RRR_\QQQ)=2$, for all height two prime ideals $\QQQ$ of $\RRR$.
  \end{itemize}
\end{lemma}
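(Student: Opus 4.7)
The plan is to prove the two directions separately, viewing this as Serre's normality criterion $(R_1)+(S_2)$ specialized to a Noetherian domain (where normality coincides with being integrally closed in its fraction field).

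For the forward direction, assume $\RRR$ is integrally closed. Localization at a height one prime $\p$ yields a one-dimensional Noetherian local domain $\RRR_\p$ that is again integrally closed (since integral closedness is preserved under localization at multiplicatively closed subsets). A standard characterization, e.g.\ Proposition 9.2 in Atiyah--Macdonald, identifies such a ring as a discrete valuation ring. For a height two prime $\QQQ$, pick any nonzero $f \in \QQQ\RRR_\QQQ$; this is a nonzerodivisor and gives a regular element. The key input is that in an integrally closed Noetherian domain, every associated prime of a nonzero principal ideal has height exactly one (equivalently, $\RRR/f\RRR$ has no embedded components). Thus the associated primes of $f\RRR_\QQQ$ in $\RRR_\QQQ$ are all of height one, so $\QQQ\RRR_\QQQ$ is not among them. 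By prime avoidance applied to the finitely many associated primes of $f\RRR_\QQQ$, we find $g \in \QQQ\RRR_\QQQ$ that is a nonzerodivisor modulo $f\RRR_\QQQ$. The sequence $(f,g)$ is then $\RRR_\QQQ$-regular, giving $\mathrm{Depth}_{\RRR_\QQQ}(\RRR_\QQQ)\ge 2$, while the reverse inequality is automatic from $\dim \RRR_\QQQ = 2$.

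For the reverse direction, assume both local conditions. The plan is to prove that $\RRR = \bigcap_{\mathrm{ht}(\p)=1}\RRR_\p$ inside the fraction field $\mathrm{Frac}(\RRR)$; the integrally closedness of $\RRR$ then follows because each $\RRR_\p$ is a DVR, hence integrally closed, and the integral closure of $\RRR$ embeds in this intersection. Take $x \in \bigcap_{\mathrm{ht}(\p)=1}\RRR_\p$ and consider the conductor ideal $I := \{r \in \RRR : rx \in \RRR\}$. If $x \notin \RRR$ then $I \neq \RRR$; let $\QQQ$ be a prime of $\RRR$ minimal over $I$. Writing $x = a/b$ with $a,b \in \RRR$ and $b \neq 0$, one checks that $\QQQ$ must be an associated prime of $\RRR/b\RRR$ after localization. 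By hypothesis $\RRR_\p$ is a DVR for every height one $\p$, so the hypothesis $x \in \RRR_\p$ forces $I \not\subseteq \p$ for all such $\p$; therefore $\mathrm{ht}(\QQQ)\ge 2$. But $(S_2)$ at $\QQQ$, i.e.\ $\mathrm{Depth}_{\RRR_\QQQ}(\RRR_\QQQ) = 2$, contradicts $\QQQ\RRR_\QQQ$ being associated to a principal ideal (which would force depth at most one by the characterization of depth via associated primes of quotients). Hence $I = \RRR$ and $x \in \RRR$.

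The main obstacle I anticipate is the bookkeeping in the backward direction: carefully identifying $\QQQ$ as an associated prime of a principal ideal modulo localization and then extracting the depth contradiction. This is exactly the content that distinguishes Serre's $(S_2)$ from the weaker statement that height one primes detect elements of the fraction field. The forward direction is essentially a packaging of well-known facts (DVRs at height one, absence of embedded primes for principal ideals in normal domains), and the depth-$2$ claim reduces cleanly to a prime-avoidance argument once these are invoked.
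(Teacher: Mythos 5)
The paper itself provides no proof of this lemma: it cites Theorem~23.8 of Matsumura and then uses only the forward implication (integrally closed $\Rightarrow$ depth two at height-two primes), so there is no internal argument to compare against. Your outline is the standard route to Serre's normality criterion and the overall shape is right, but the backward direction has a genuine gap, inherited from the way the hypothesis is worded. You localize at a prime $\QQQ$ minimal over the conductor ideal $I=(\RRR : x)$, and the $(R_1)$ hypothesis forces only $\mathrm{ht}(\QQQ)\ge 2$, not $\mathrm{ht}(\QQQ)=2$. When you then invoke $\Depth_{\RRR_\QQQ}(\RRR_\QQQ)=2$ to rule out $\QQQ\RRR_\QQQ$ being an associated prime of $b\RRR_\QQQ$, that hypothesis is simply not available if $\mathrm{ht}(\QQQ)>2$. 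Matsumura's actual Theorem~23.8 assumes the full $(S_2)$ condition $\Depth_{\RRR_\p}(\RRR_\p)\ge\min\bigl(2,\mathrm{ht}(\p)\bigr)$ at every prime $\p$, which is what closes this gap; the condition at height-two primes alone does not obviously control primes of larger height. This does not affect the paper, which applies the lemma only in the forward direction and only to regular local rings, where depth equals height automatically.

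Two smaller remarks. In the forward direction, the ``key input'' you cite --- that in a Noetherian integrally closed domain every associated prime of a nonzero principal ideal has height one --- is precisely the $(S_2)$ content being extracted; it is a genuine theorem (a Noetherian integrally closed domain is a Krull domain), but the prime-avoidance wrapper around it is doing essentially none of the work, so you are largely citing the conclusion rather than proving it. And in the backward direction, the step ``one checks that $\QQQ$ is an associated prime of $\RRR/b\RRR$ after localization'' deserves to be spelled out: the annihilator $J$ of $\bar a\in\RRR_\QQQ/b\RRR_\QQQ$ contains $I\RRR_\QQQ$ and is proper, so it has radical $\QQQ\RRR_\QQQ$, and the (unique) minimal prime over $J$ lies in $\mathrm{Ass}(\RRR_\QQQ/b\RRR_\QQQ)$ --- that chain of observations is short but not automatic.
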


For the rest of section \ref{some-homological-algebra}, we will let $\RRR$ be an integrally closed, local, Noetherian domain. Let $\XXX$ be a finitely generated $\RRR$-module. The reflexive hull of $\XXX$ (denoted $\XXX^{**}$) is defined below:
\begin{align*}
  \XXX^{**} := \Hom_{\RRR}\left(\Hom_{\RRR}\left(\XXX,\RRR\right),\RRR\right).
\end{align*}

One can define a natural map $i_\XXX:\XXX \rightarrow \XXX^{**}$, as given below:
\begin{align*}
  i_\XXX: \XXX & \rightarrow \XXX^{**}                   \\
  x            & \rightarrow (\phi \rightarrow \phi(x)).
\end{align*}

\begin{definition} The $\RRR$-module $\XXX$ is said to be a \underline{reflexive $\RRR$-module} if the map $i_\XXX$ is an isomorphism.
\end{definition}

The following lemma is proved in Section 1 of Chapter V in the book by Neukirch-Winberg-Schmidt \cite{neukirch2008cohomology}.
\begin{lemma} \label{torfree-pn}
  \mbox{}
  \begin{enumerate}
    \item The $\RRR$-module $\ker(i_\XXX)$ equals $\XXX_{\mathrm{tor}}$, the maximal $\RRR$-torsion submodule of $\XXX$.
    \item The $\RRR$-module $\coker(i_\XXX)$ is a pseudo-null $\RRR$-module.
  \end{enumerate}
\end{lemma}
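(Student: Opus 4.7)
The plan is to verify both statements by a localization argument at primes of height at most one, leveraging the fact that $\RRR$, being integrally closed, has all height-one localizations equal to discrete valuation rings by Serre's criterion (Lemma~\ref{serre-r1-s2}).

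For part (1), the inclusion $\XXX_{\mathrm{tor}} \subseteq \ker(i_\XXX)$ is immediate: if $rx = 0$ for some nonzero $r \in \RRR$, then every $\phi \in \Hom_\RRR(\XXX, \RRR)$ satisfies $r\phi(x) = \phi(rx) = 0$, forcing $\phi(x) = 0$ since $\RRR$ is a domain. For the reverse inclusion, I will invoke the standard observation that any finitely generated torsion-free module over a Noetherian domain embeds into a free module $\RRR^n$: tensoring with the fraction field $K$ of $\RRR$ produces a finite-dimensional $K$-vector space, and clearing denominators of a generating set against a chosen $K$-basis gives the embedding. Applying this to $\XXX/\XXX_{\mathrm{tor}}$, every $x \in \XXX \setminus \XXX_{\mathrm{tor}}$ has a nonzero coordinate under such an embedding, so some coordinate projection lifts to a homomorphism $\XXX \to \RRR$ not vanishing on $x$; whence $i_\XXX(x) \neq 0$.

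For part (2), the goal is to show $\coker(i_\XXX)_\p = 0$ for every prime $\p$ of $\RRR$ with $\mathrm{height}(\p) \leq 1$. Since $\XXX$ admits a finite presentation $\RRR^a \to \RRR^b \to \XXX \to 0$, left-exactness of $\Hom_\RRR(-, \RRR)$ combined with exactness of localization yields $(\XXX^{**})_\p \cong (\XXX_\p)^{**}$, with the localized map being $i_{\XXX_\p}$. At the height-zero prime $\p = (0)$, $\XXX_\p$ is a finite-dimensional vector space over $K$, hence reflexive, so the cokernel vanishes. At $\p$ of height one, Lemma~\ref{serre-r1-s2} forces $\RRR_\p$ to be a DVR, and the structure theorem for finitely generated modules over a PID gives a decomposition $\XXX_\p \cong \RRR_\p^r \oplus T$ with $T$ a torsion $\RRR_\p$-module of finite length. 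A direct computation identifies $(\XXX_\p)^{**}$ with $\RRR_\p^r$ and $i_{\XXX_\p}$ with the projection onto the free summand, which is surjective.

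I expect no serious obstacle: both parts reduce to standard ingredients --- the embedding of finitely generated torsion-free modules into free modules, and Serre's criterion already recorded in the excerpt. The only point that repays explicit verification is the interchange of $\Hom_\RRR(-, \RRR)$ with localization for finitely generated $\XXX$, which is a routine diagram chase from a finite presentation.
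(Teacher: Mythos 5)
Your proof is correct. Note that the paper does not actually supply its own argument for this lemma: it simply cites Section~V.1 of Neukirch--Schmidt--Wingberg and moves on, so there is no in-paper proof to compare against. Your self-contained argument is the standard one that underlies that reference: part (1) via the embedding of the torsion-free quotient into a free module (legitimate over any Noetherian domain by the clear-denominators argument you give), and part (2) via localization at primes of height at most one, using the commuting of $\Hom_\RRR(-,\RRR)$ with localization for finitely presented modules, reflexivity of finite-dimensional vector spaces at $\p=(0)$, and the structure theorem over the DVRs $\RRR_\p$ guaranteed by Serre's criterion (the paper's Lemma~\ref{serre-r1-s2}) at height one. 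The one step you flag for verification --- that the localized comparison map $(i_\XXX)_\p$ identifies with $i_{\XXX_\p}$ under the isomorphism $(\XXX^{**})_\p \cong (\XXX_\p)^{**}$ --- is indeed a routine naturality check; it uses that $\XXX^*$ is itself finitely generated over the Noetherian ring $\RRR$, so the $\Hom$-localization isomorphism can be applied twice.
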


We also have the following useful result that follows from Propositions 1.2.12 and 1.4.1 in the book by Bruns and Herzog \cite{MR1251956}.
\begin{lemma}
  Suppose $\MMM$ is a finitely generated non-zero reflexive $\RRR$-module. Let $\QQQ$ be a height two prime ideal of $\RRR$. Then, $\Depth_{\RRR_\QQQ} \MMM_\QQQ$ equals $2$.
\end{lemma}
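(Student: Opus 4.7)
The plan is to reduce the statement to the characterization of reflexive modules over normal domains via Serre's condition $(S_2)$, and then read off the depth at the height-two localization.

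First I would pass to the localization at $\QQQ$. Since $\MMM$ is reflexive, Lemma~\ref{torfree-pn}(1) forces $\MMM_{\mathrm{tor}} = \ker(i_\MMM) = 0$, so $\MMM$ is torsion-free. Because $\RRR$ is Noetherian and $\MMM$ is finitely generated, $\Hom_\RRR(-,\RRR)$ commutes with localization at $\QQQ$, so $\MMM_\QQQ$ is again a nonzero torsion-free reflexive module over $\RRR_\QQQ$. By Lemma~\ref{serre-r1-s2}, the two-dimensional Noetherian local domain $\RRR_\QQQ$ satisfies $\Depth_{\RRR_\QQQ} \RRR_\QQQ = 2$; in particular, it is a Cohen--Macaulay normal local ring, the natural setting in which the $(S_2)$-criterion for reflexivity is available.

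The main step is then a direct appeal to Propositions 1.2.12 and 1.4.1 of \cite{MR1251956}: for a finitely generated reflexive module $N$ over a Noetherian normal domain $A$, one has the Serre-type bound $\Depth_{A_\p} N_\p \geq \min\bigl(2,\, \dim A_\p\bigr)$ for every prime $\p$ of $A$. Applied with $A = \RRR_\QQQ$ and $\p$ the maximal ideal, this yields $\Depth_{\RRR_\QQQ} \MMM_\QQQ \geq 2$. (Note that $\MMM_\QQQ$ being torsion-free embeds into a free $\RRR_\QQQ$-module, so its only associated prime is $(0)$, whence $\dim \MMM_\QQQ = \dim \RRR_\QQQ = 2$, confirming that the bound is not vacuous.) The opposite inequality $\Depth_{\RRR_\QQQ} \MMM_\QQQ \leq \dim \RRR_\QQQ = 2$ is the standard depth-versus-dimension bound for finitely generated modules over Noetherian local rings. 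Combining the two inequalities gives the required equality.

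The only genuinely nontrivial ingredient is the $(S_2)$-property for reflexive modules over normal domains, but this is the classical Auslander--Bridger circle of ideas, explicitly recorded in the cited Bruns--Herzog propositions. Everything else is routine bookkeeping with localization, Serre's criterion $(R_1)+(S_2)$, and the general inequality $\Depth \leq \dim$. I anticipate no substantive obstacle beyond correctly citing the Bruns--Herzog material.
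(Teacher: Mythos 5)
Your argument is correct and is precisely the unpacking of the Bruns--Herzog citations (Proposition 1.2.12 on depth of Hom-modules and Proposition 1.4.1 on Serre's condition $(S_2)$ for reflexive modules) that the paper invokes without further comment. One minor point worth making explicit: since $\MMM$ is a nonzero torsion-free module over the domain $\RRR$, its annihilator is zero, so $\Supp(\MMM)=\Spec(\RRR)$ and hence $\MMM_\QQQ\neq 0$; this is needed before dimension/depth of $\MMM_\QQQ$ can even be discussed, and you gesture at it only implicitly via the associated-primes remark.
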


\begin{lemma}[Proposition 3.3.10 in Weibel's book \cite{weibel1995introduction}]
  Let $\p$ be a prime ideal in $\RRR$. Let $\MMM$ be a finitely generated $\RRR$-module. Let $\NNN$ be an $\RRR$-module. We have the following natural isomorphism of $\RRR_\p$-modules, for all $i \geq 0$:
  \begin{align*}
    \Ext^{i}_\RRR \left(\MMM,\NNN \right) \otimes_\RRR \RRR_\p \cong \Ext^{i}_{\RRR_\p} \left(\MMM_\p, \NNN_\p\right).
  \end{align*}
\end{lemma}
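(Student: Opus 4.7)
The plan is to reduce the computation of both sides to a free resolution of $\MMM$ and then exploit the exactness of localization. Since $\RRR$ is Noetherian and $\MMM$ is finitely generated, $\MMM$ admits a resolution $F_\bullet \to \MMM$ by finitely generated free $\RRR$-modules $F_j \cong \RRR^{n_j}$. By definition, $\Ext^i_\RRR(\MMM,\NNN) = H^i(\Hom_\RRR(F_\bullet,\NNN))$.

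First, I would observe that localization at $\p$ is an exact functor from $\RRR$-modules to $\RRR_\p$-modules. Applying $-\otimes_\RRR \RRR_\p$ to $F_\bullet \to \MMM$ therefore produces an exact sequence $(F_\bullet)_\p \to \MMM_\p \to 0$, which is a resolution of $\MMM_\p$ by the finitely generated free $\RRR_\p$-modules $(F_j)_\p \cong \RRR_\p^{n_j}$. Hence
\begin{align*}
\Ext^i_{\RRR_\p}(\MMM_\p,\NNN_\p) \;\cong\; H^i\!\left(\Hom_{\RRR_\p}((F_\bullet)_\p,\NNN_\p)\right).
\end{align*}

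Next, I would establish, for each finitely generated free $F = \RRR^n$, the natural isomorphism
\begin{align*}
\Hom_\RRR(F,\NNN)\otimes_\RRR \RRR_\p \;\cong\; \Hom_{\RRR_\p}(F_\p,\NNN_\p),
\end{align*}
since both sides are naturally identified with $\NNN_\p^{\,n}$. Assembling these isomorphisms degreewise yields an isomorphism of cochain complexes $\Hom_\RRR(F_\bullet,\NNN)\otimes_\RRR \RRR_\p \cong \Hom_{\RRR_\p}((F_\bullet)_\p,\NNN_\p)$. Because localization is exact, it commutes with the formation of cohomology, so
\begin{align*}
\Ext^i_\RRR(\MMM,\NNN)\otimes_\RRR \RRR_\p \;=\; H^i(\Hom_\RRR(F_\bullet,\NNN))\otimes_\RRR \RRR_\p \;\cong\; H^i\!\left(\Hom_\RRR(F_\bullet,\NNN)\otimes_\RRR \RRR_\p\right),
\end{align*}
and combining with the isomorphism of complexes above gives the desired identification with $\Ext^i_{\RRR_\p}(\MMM_\p,\NNN_\p)$. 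A routine check (functoriality in $\MMM$ and $\NNN$ of both constructions) shows the isomorphism is natural.

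The only step that is not purely formal is the passage $\Hom_\RRR(F,\NNN)\otimes_\RRR \RRR_\p \cong \Hom_{\RRR_\p}(F_\p,\NNN_\p)$, which is the main obstacle since it is false for arbitrary $F$; it relies crucially on $F$ being finitely generated (indeed finitely presented, which is automatic here as $\RRR$ is Noetherian). This is exactly why the hypothesis that $\MMM$ be finitely generated over a Noetherian ring is indispensable, and why no hypothesis on $\NNN$ beyond being an $\RRR$-module is needed.
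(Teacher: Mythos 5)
Your proof is correct and is precisely the standard textbook argument for this fact (and essentially the one found in Weibel's Proposition 3.3.10, which the paper cites without proof): take a resolution of $\MMM$ by finitely generated free modules, use that localization is an exact functor so it both preserves the resolution and commutes with cohomology, and reduce to the elementary identity $\Hom_\RRR(\RRR^n,\NNN)\otimes_\RRR\RRR_\p\cong\Hom_{\RRR_\p}(\RRR_\p^n,\NNN_\p)$ for finite free modules. Your closing remark correctly identifies where the finite generation hypothesis on $\MMM$ (together with Noetherianness of $\RRR$) enters: it guarantees the terms $F_j$ of the resolution can be taken finitely generated, which is what makes the Hom--localization exchange valid degreewise.
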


In particular, if $\MMM$ is a finitely generated reflexive $\RRR$-module, then $\MMM_\p$ is a finitely generated reflexive $\RRR_\p$-module for every prime ideal $\p$ since
\begin{align*}
  \MMM_\p \cong \Hom_{\RRR}\left(\Hom_{\RRR}\left(\MMM,\RRR\right),\RRR\right) \otimes_\RRR \RRR_\p \cong \Hom_{\RRR_\p}\left(\Hom_{\RRR_\p}\left(\MMM_\p,\RRR_\p\right),\RRR_\p\right).
\end{align*}

\begin{lemma} \label{van-ext-reflexive}
  Suppose $\MMM$ is a finitely generated reflexive $\RRR$-module. Suppose $\QQQ$ is a height two prime ideal in $\RRR$. If the projective dimension of the $\RRR_\QQQ$-module $\MMM_\QQQ$ is finite, then $\MMM_\QQQ$ is a free $\RRR_\QQQ$-module. Furthermore, we have
  \begin{align} \label{van-ext-ref}
    \Ext^{i}_\RRR \left(\MMM,\RRR\right) \otimes_\RRR \RRR_\QQQ \cong \Ext^{i}_{\RRR_\QQQ} \left(\MMM_\QQQ,\RRR_\QQQ \right) = 0, \quad \forall i \geq 1.
  \end{align}
\end{lemma}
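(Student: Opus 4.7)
The plan is to apply the Auslander--Buchsbaum formula to the local ring $\RRR_\QQQ$ and the module $\MMM_\QQQ$. First I would invoke the previous lemma (Proposition 3.3.10 in Weibel) to identify $\Ext^{i}_\RRR(\MMM,\RRR) \otimes_\RRR \RRR_\QQQ$ with $\Ext^{i}_{\RRR_\QQQ}(\MMM_\QQQ,\RRR_\QQQ)$, so that the entire problem reduces to working over the Noetherian local ring $\RRR_\QQQ$ and showing that $\MMM_\QQQ$ is free.

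Next I would split into the trivial case $\MMM_\QQQ = 0$ (where the Ext groups vanish for free) and the nontrivial case $\MMM_\QQQ \neq 0$. In the nontrivial case, I would use the remark just before the lemma that localizing a reflexive module at any prime yields a reflexive module, so $\MMM_\QQQ$ is a finitely generated nonzero reflexive $\RRR_\QQQ$-module. The lemma attributed to Bruns--Herzog (Propositions 1.2.12 and 1.4.1) then gives $\Depth_{\RRR_\QQQ} \MMM_\QQQ = 2$. On the ring side, since $\RRR$ is an integrally closed Noetherian local domain, Serre's $R_1 + S_2$ criterion (Lemma \ref{serre-r1-s2}) yields $\Depth_{\RRR_\QQQ} \RRR_\QQQ = 2$.

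With both depths computed, the Auslander--Buchsbaum formula
\begin{align*}
\mathrm{pd}_{\RRR_\QQQ}(\MMM_\QQQ) + \Depth_{\RRR_\QQQ} \MMM_\QQQ = \Depth_{\RRR_\QQQ} \RRR_\QQQ,
\end{align*}
applicable precisely because we are assuming $\mathrm{pd}_{\RRR_\QQQ}(\MMM_\QQQ) < \infty$, forces $\mathrm{pd}_{\RRR_\QQQ}(\MMM_\QQQ) = 0$. A module of projective dimension zero over a Noetherian local ring is projective, and projective modules over local rings are free; hence $\MMM_\QQQ$ is a free $\RRR_\QQQ$-module. The vanishing $\Ext^{i}_{\RRR_\QQQ}(\MMM_\QQQ,\RRR_\QQQ) = 0$ for all $i \geq 1$ is then immediate from the freeness, and combining with the localization isomorphism from the first step finishes the proof.

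There is essentially no serious obstacle here: everything is a direct bookkeeping exercise once one recognizes that the two depth computations (for $\RRR_\QQQ$ via Serre's criterion, and for $\MMM_\QQQ$ via reflexivity) are the hypotheses needed for Auslander--Buchsbaum. The only mild subtlety is remembering to treat the case $\MMM_\QQQ = 0$ separately, since the reflexivity-implies-depth-two statement requires nonzero modules; but that case is vacuous.
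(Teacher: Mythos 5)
Your proof is correct and follows the same route as the paper: both reduce to $\RRR_\QQQ$, compute $\Depth_{\RRR_\QQQ}\RRR_\QQQ=2$ and $\Depth_{\RRR_\QQQ}\MMM_\QQQ=2$, and apply Auslander--Buchsbaum to get projective dimension zero, hence freeness, hence vanishing of higher $\Ext$. You simply spell out the depth computations and the trivial-zero-case split a bit more explicitly than the paper does.
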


\begin{proof}
If $\MMM_\QQQ$ equals zero, the lemma follows automatically. Let us work with the case when $\MMM_\QQQ$ is not zero. Using the Auslander-Buchsbaum equality (see Theorem 4.4.15 in Weibel's book \cite{weibel1995introduction}) over the local ring $\RRR_\QQQ$ gives us the following equality:
  \begin{align*}
    \mathrm{pd}_{\RRR_\QQQ} \MMM_\QQQ = \Depth_{\RRR_\QQQ}\RRR_\QQQ - \Depth_{\RRR_\QQQ}\MMM_\QQQ = 2-2 = 0
  \end{align*}
  Here, $\mathrm{pd}_{\RRR_\QQQ}$ denotes the projective dimension over the ring $\RRR_\QQQ$. Finitely generated projective modules over commutative Noetherian local rings are free.  Equation (\ref{van-ext-ref}) then follows from Lemma 2 in Chapter 19 of Matsumura's book \cite{matsumura1989commutative}.
\end{proof}

Suppose $\MMM$ and $\NNN$ are two finitely generated $\RRR$-modules. Since $\RRR$ is a commutative Noetherian ring, $\Ext^{i}_{\RRR}(\MMM,\NNN)$ is a finitely generated $\RRR$-module, for every non-negative integer $i$. See Lemma 3.3.6 in Weibel's book \cite{weibel1995introduction}. As indicated by Lemma \ref{van-ext-reflexive},  localization commutes with $\Ext$ for finitely generated modules over commutative rings. So, if $\ZZZ$ is a finitely generated pseudo-null $\RRR$-module, then $\Ext^{i}_{\RRR}(\ZZZ,\RRR)$ is also a finitely generated pseudo-null $\RRR$-module, for every non-negative integer $i$.

\begin{lemma} \label{van-ext0-1-pn}
  Let $\ZZZ$ be a finitely generated pseudo-null  $\RRR$-module. Let $\QQQ$ be a height two prime ideal in $\RRR$. We have
  \begin{align}\label{van-ext0-1-pn-eqn}
    \Ext^0_{\RRR_\QQQ}\left(\ZZZ \otimes_\RRR \RRR_\QQQ, \RRR_\QQQ \right) = \Ext^1_{\RRR_\QQQ}\left(\ZZZ \otimes_\RRR \RRR_\QQQ, \RRR_\QQQ \right)  = 0.
  \end{align}
\end{lemma}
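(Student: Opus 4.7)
The plan is to reduce both vanishing statements to a single depth computation for $\RRR_\QQQ$, which is supplied by Serre's criterion. First I would record that $\RRR_\QQQ$ is a local Noetherian integrally closed domain of Krull dimension two (since $\QQQ$ has height two in $\RRR$), so Lemma \ref{serre-r1-s2} applies and yields $\Depth_{\RRR_\QQQ}(\RRR_\QQQ) = 2$. In particular, writing $k_\QQQ := \RRR_\QQQ/\QQQ\RRR_\QQQ$ for the residue field, the standard characterization of depth via $\Ext$ (Theorem 1.2.5 of Bruns-Herzog, for instance) gives
\begin{align*}
  \Ext^0_{\RRR_\QQQ}(k_\QQQ, \RRR_\QQQ) = \Ext^1_{\RRR_\QQQ}(k_\QQQ, \RRR_\QQQ) = 0.
\end{align*}

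Next I would leverage Lemma \ref{finite-length}: since $\ZZZ$ is pseudo-null and $\QQQ$ has height two, $\ZZZ \otimes_\RRR \RRR_\QQQ$ is a finite-length $\RRR_\QQQ$-module. I would then argue by induction on $\ell := \len_{\RRR_\QQQ}(\ZZZ \otimes_\RRR \RRR_\QQQ)$. The base case $\ell = 0$ is trivial, and the case $\ell = 1$ is exactly the vanishing for $k_\QQQ$ just recorded. For the inductive step, a composition series furnishes a short exact sequence
\begin{align*}
  0 \rightarrow \MMM' \rightarrow \ZZZ \otimes_\RRR \RRR_\QQQ \rightarrow k_\QQQ \rightarrow 0
\end{align*}
of finite-length $\RRR_\QQQ$-modules, where $\len_{\RRR_\QQQ}(\MMM') = \ell - 1$. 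The long exact sequence of $\Ext^{\bullet}_{\RRR_\QQQ}(-, \RRR_\QQQ)$ then sandwiches $\Ext^{i}_{\RRR_\QQQ}(\ZZZ \otimes_\RRR \RRR_\QQQ, \RRR_\QQQ)$ for $i = 0, 1$ between $\Ext^{i}_{\RRR_\QQQ}(k_\QQQ, \RRR_\QQQ)$ and $\Ext^{i}_{\RRR_\QQQ}(\MMM', \RRR_\QQQ)$, both of which vanish by the induction hypothesis and the base case.

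An alternative, equivalent packaging that I may prefer for brevity is to simply invoke Ischebeck's theorem: for finitely generated modules $\MMM, \NNN$ over a Noetherian local ring, $\Ext^{i}(\MMM, \NNN) = 0$ whenever $i < \Depth(\NNN) - \dim(\MMM)$. Applying this to $\MMM = \ZZZ \otimes_\RRR \RRR_\QQQ$ (which has dimension zero, being of finite length) and $\NNN = \RRR_\QQQ$ (which has depth two), gives the vanishing for $i < 2$.

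I do not anticipate a genuine obstacle; the only non-formal input is the equality $\Depth_{\RRR_\QQQ}(\RRR_\QQQ) = 2$, and this is precisely what Serre's criterion (Lemma \ref{serre-r1-s2}) supplies under the running hypothesis that $\RRR$ is integrally closed. The remainder is a standard dévissage along a composition series, or a one-line appeal to Ischebeck.
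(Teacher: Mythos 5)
Your proposal is correct and follows essentially the same route as the paper: reduce to the residue field $k_\QQQ$ by dévissage along a composition series (using Lemma~\ref{finite-length} for finite length), then invoke Serre's criterion (Lemma~\ref{serre-r1-s2}) to get $\Depth_{\RRR_\QQQ}(\RRR_\QQQ) = 2$ and hence $\Ext^{i}_{\RRR_\QQQ}(k_\QQQ,\RRR_\QQQ)=0$ for $i<2$, which is exactly the depth--$\Ext$ characterization the paper cites from Weibel. Your alternative Ischebeck packaging is a compact way of saying the same thing.
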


\begin{proof}
  By Lemma \ref{finite-length}, one can consider a composition series for $\ZZZ \otimes_{\RRR} \RRR_\QQQ$ of finite length. Suppose the following chain of $\RRR_\QQQ$-modules is such a composition series for $\ZZZ \otimes_{\RRR} \RRR_\QQQ$:
  \begin{align*}
    \ZZZ \otimes_{\RRR} \RRR_\QQQ = \MMM_0 \supsetneq \MMM_1 \supsetneq \cdots \supsetneq \MMM_n =0.
  \end{align*}
  Each quotient $\frac{\MMM_i}{\MMM_{i+1}}$ in the composition series is isomorphic, as an $\RRR_\QQQ$-module, to $k_\QQQ$, the residue field of $\RRR_\QQQ$. One can then use a d\'evissage argument to reduce to the case when $\ZZZ \otimes_\RRR \RRR_\QQQ$ equals $k_\QQQ$. As Lemma \ref{serre-r1-s2} indicates, $\Depth_{\RRR_\QQQ} \RRR_\QQQ$ equals $2$. So, in this case when $\ZZZ \otimes_\RRR \RRR_\QQQ$ equals $k_\QQQ$, equation (\ref{van-ext0-1-pn-eqn}) follows from Theorem 4.4.8 in Weibel's book \cite{weibel1995introduction}.
\end{proof}

Let $\QQQ$ be a height two prime ideal in $\RRR$ such that the localization $\RRR_\QQQ$ is a Gorenstein local ring. In this case, the injective dimension of $\RRR_\QQQ$, as an $\RRR_\QQQ$-module, equals two. See Corollary 4.4.10 in Weibels' book \cite{weibel1995introduction}. As a result, we have
\begin{align*}
  \Ext^{3}_{\RRR_\QQQ}\left(\MMM,\RRR_\QQQ \right) =0,  \text{ for all $\RRR_\QQQ$-modules } \MMM.
\end{align*}
See Lemma 2 in Chapter 19 of Matsumura's book \cite{matsumura1989commutative}. These observations lets us deduce the following corollary to Lemma \ref{van-ext0-1-pn}.

\begin{corollary} \label{pseudo-null-ext2}
  Let $\QQQ$ be a height two prime ideal in $\RRR$ such that the localization $\RRR_\QQQ$ is a Gorenstein local ring. Suppose we have a short exact sequence $0 \rightarrow \ZZZ' \rightarrow \ZZZ \rightarrow \ZZZ'' \rightarrow 0$ of finitely generated pseudo-null $\RRR$-modules. Then, we have the following short exact sequence of finitely generated $\RRR_\QQQ$-modules:
  \begin{align*}
    0 \rightarrow \Ext^2_{\RRR_\QQQ}\left(\ZZZ'' \otimes_\RRR \RRR_\QQQ,\RRR_\QQQ \right) \rightarrow \Ext^2_{\RRR_\QQQ}\left(\ZZZ \otimes_\RRR \RRR_\QQQ,\RRR_\QQQ \right) \rightarrow \Ext^2_{\RRR_\QQQ}\left(\ZZZ' \otimes_\RRR \RRR_\QQQ,\RRR_\QQQ \right)  \rightarrow 0.
  \end{align*}
\end{corollary}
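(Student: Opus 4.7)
The plan is to deduce the four-term exact sequence directly from the long exact sequence of $\Ext$ groups, using the vanishing results established just before the statement. First, since localization at the prime $\QQQ$ is exact, the given short exact sequence of pseudo-null $\RRR$-modules gives rise to a short exact sequence
\begin{align*}
0 \to \ZZZ' \otimes_\RRR \RRR_\QQQ \to \ZZZ \otimes_\RRR \RRR_\QQQ \to \ZZZ'' \otimes_\RRR \RRR_\QQQ \to 0
\end{align*}
of finitely generated $\RRR_\QQQ$-modules, each of which has finite length by Lemma \ref{finite-length}.

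Next, I would apply the contravariant functor $\Hom_{\RRR_\QQQ}(-,\RRR_\QQQ)$ to this short exact sequence. This yields the standard long exact sequence of $\Ext$ groups, in which $\ZZZ'' \otimes_\RRR \RRR_\QQQ$ appears first and $\ZZZ' \otimes_\RRR \RRR_\QQQ$ appears last at each degree. By Lemma \ref{van-ext0-1-pn} applied to each of the three pseudo-null modules, the $\Ext^{0}_{\RRR_\QQQ}$ and $\Ext^{1}_{\RRR_\QQQ}$ terms all vanish. On the other end, the Gorenstein hypothesis on $\RRR_\QQQ$ forces its injective dimension as an $\RRR_\QQQ$-module to equal $2$ (as recalled in the paragraph preceding the statement, using Corollary 4.4.10 in Weibel's book), so $\Ext^{3}_{\RRR_\QQQ}(-,\RRR_\QQQ)$ vanishes on every $\RRR_\QQQ$-module and in particular on $\ZZZ'' \otimes_\RRR \RRR_\QQQ$.

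Splicing these vanishings into the long exact sequence collapses it to exactly the claimed four-term exact sequence at degree two. There is no real obstacle here; the statement is essentially a bookkeeping consequence of the two inputs (Lemma \ref{van-ext0-1-pn} plus the Gorenstein dimension bound), and the only minor point to verify is that the pseudo-null hypothesis on $\ZZZ', \ZZZ, \ZZZ''$ is preserved under localization and tensor product with $\RRR_\QQQ$, so that Lemma \ref{van-ext0-1-pn} applies to each of them. This is immediate since these modules have annihilators of height at least two in $\RRR$.
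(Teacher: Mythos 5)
Your proof is correct and is essentially the same argument the paper has in mind: localize, apply $\Hom_{\RRR_\QQQ}(-,\RRR_\QQQ)$ to get the long exact sequence of $\Ext$-groups, kill the low-degree terms with Lemma~\ref{van-ext0-1-pn} and the degree-three terms with the Gorenstein bound $\mathrm{injdim}\,\RRR_\QQQ = 2$, then read off the four-term sequence in degree two. The bookkeeping of which module ($\ZZZ''$) sits on the left and which ($\ZZZ'$) on the right is also handled correctly.
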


\begin{remark}
  A regular local ring is Gorenstein. See Corollary 4.4.17 in Weibel's book \cite{weibel1995introduction}.
\end{remark}

\subsection{Cokernels of maps defining reflexive hulls}\label{cokernelmap}

The ring $\RRR$ is assumed to be an integrally closed, Noetherian, local domain in Section \ref{cokernelmap}.

\begin{proposition} \label{Ext-second-Chern}
 Suppose the finitely generated $\RRR$-module $\XXX$ is torsion-free. Suppose also that for every height two prime ideal $\QQQ$ of $\RRR$, the $\RRR_\QQQ$-modules $\XXX_\QQQ$ and $\coker(i_\XXX)_\QQQ$ have finite projective dimension.
  Then, we have the following equality in $Z^2(\RRR)$:
  \begin{align*}
    c_2\big(\coker(i_\XXX)\big) = c_2\left(\Ext_\RRR^{1}\big(\XXX,\RRR\right)\big).
  \end{align*}
\end{proposition}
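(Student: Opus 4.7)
The plan is to localize at each height two prime $\QQQ$ of $\RRR$ and establish the length identity
\begin{align*}
\len_{\RRR_\QQQ}\bigl(\coker(i_\XXX)_\QQQ\bigr) = \len_{\RRR_\QQQ}\bigl(\Ext^1_\RRR(\XXX,\RRR)_\QQQ\bigr),
\end{align*}
from which the claimed equality in $Z^2(\RRR)$ follows by summation. Both modules are pseudo-null: the former by Lemma \ref{torfree-pn}, and the latter because $\XXX$ is torsion-free over the integrally closed domain $\RRR$, so $\XXX_\p$ is free for every height-one prime $\p$ (Serre's criterion makes $\RRR_\p$ a DVR), which forces $\Ext^1_{\RRR_\p}(\XXX_\p,\RRR_\p)=0$.

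Next I would use the short exact sequence
\begin{align*}
0 \to \XXX \xrightarrow{i_\XXX} \XXX^{**} \to \coker(i_\XXX) \to 0
\end{align*}
supplied by Lemma \ref{torfree-pn}. Since $\XXX^{**}$ is reflexive (being the dual of $\XXX^{*}$, and duals over normal Noetherian domains are reflexive) and has finite projective dimension locally at $\QQQ$ (two-out-of-three in the short exact sequence), Lemma \ref{van-ext-reflexive} forces $\XXX^{**}_\QQQ$ to be a free $\RRR_\QQQ$-module. Applying $\Hom_{\RRR_\QQQ}(-,\RRR_\QQQ)$ to the localized sequence and using the vanishings $\Hom(\coker(i_\XXX)_\QQQ,\RRR_\QQQ)=0$ (torsionness), $\Ext^{\geq 1}(\XXX^{**}_\QQQ,\RRR_\QQQ)=0$ (freeness), and $\Ext^1_{\RRR_\QQQ}(\coker(i_\XXX)_\QQQ,\RRR_\QQQ)=0$ (Lemma \ref{van-ext0-1-pn}), the long exact sequence of $\Ext$'s collapses to the natural isomorphism
\begin{align*}
\Ext^1_{\RRR_\QQQ}(\XXX_\QQQ,\RRR_\QQQ) \cong \Ext^2_{\RRR_\QQQ}\bigl(\coker(i_\XXX)_\QQQ,\RRR_\QQQ\bigr).
\end{align*}

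The remaining and principal step, which I expect to be the main obstacle, is the length equality
\begin{align*}
\len_{\RRR_\QQQ}\bigl(\Ext^2_{\RRR_\QQQ}(\coker(i_\XXX)_\QQQ,\RRR_\QQQ)\bigr) = \len_{\RRR_\QQQ}\bigl(\coker(i_\XXX)_\QQQ\bigr).
\end{align*}
By Serre's criterion (Lemma \ref{serre-r1-s2}) the ring $\RRR_\QQQ$ has depth two, and the Auslander--Buchsbaum formula forces $\coker(i_\XXX)_\QQQ$ to be perfect of grade two. Unlike the homological manipulations above, this identity is not purely formal. The cleanest route is Grothendieck's local duality for the two-dimensional local ring $\RRR_\QQQ$: on finite length modules, $\Ext^2_{\RRR_\QQQ}(-,\RRR_\QQQ)$ becomes identified with Matlis duality, which preserves length. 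This uses that $\RRR_\QQQ$ is Gorenstein, which holds automatically in the principal application $\RRR = \Z_p[[\tilde\Gamma]]$, where every localization is even regular. An equivalent route is the Iwasawa-adjoint formalism of Jannsen, in which $\Ext^2(-,\RRR)$ is shown to be an involutive, length-preserving anti-equivalence on the category of pseudo-null $\RRR$-modules locally at each height two prime. Summing the resulting local length identities over all height two primes yields the claimed equality $c_2(\coker(i_\XXX)) = c_2(\Ext^1_\RRR(\XXX,\RRR))$ in $Z^2(\RRR)$.
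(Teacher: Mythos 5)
Your proof follows essentially the same route as the paper: both use the short exact sequence $0\to\XXX\to\XXX^{**}\to\coker(i_\XXX)\to0$, conclude that $\XXX^{**}_\QQQ$ is free from Lemma~\ref{van-ext-reflexive} together with the finite--projective--dimension hypotheses, deduce $\Ext^1_{\RRR_\QQQ}(\XXX_\QQQ,\RRR_\QQQ)\cong\Ext^2_{\RRR_\QQQ}\bigl(\coker(i_\XXX)_\QQQ,\RRR_\QQQ\bigr)$ from the resulting long exact sequence, and then reduce to the length identity that the paper isolates as Lemma~\ref{part-two}. The only substantive difference is the justification of that last step: the paper carries out a d\'evissage reducing to $\Ext^2_{\RRR_\QQQ}(k_\QQQ,\RRR_\QQQ)\cong k_\QQQ$, whereas you invoke Grothendieck local duality and the length-preserving property of Matlis duality, which is the same computation packaged as a theorem. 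Your remark that the argument needs $\RRR_\QQQ$ to be Gorenstein is also correct and worth keeping: Lemma~\ref{part-two} assumes it explicitly while the proposition's stated hypotheses do not; in the paper's application the missing condition is supplied by \ref{Gor}.
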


\begin{proof}

  We will divide the proof into two parts:
  \begin{enumerate}
    \item[Part One:] For every height two prime ideal $\QQQ$ in $\RRR$, we have the following isomorphism of $\RRR_\QQQ$-modules:
          \begin{align} \label{firstpart-outline}
            \Ext^{1}_\RRR\big(\XXX,\RRR\big) \otimes_\RRR \RRR_\QQQ \cong \Ext^{2}_\RRR\big(\coker(i_\XXX),\RRR\big) \otimes_\RRR \RRR_\QQQ.
          \end{align}
    \item[Part Two:]  For every height two prime ideal $\QQQ$ in $\RRR$, we have the following equality of lengths of $\RRR_\QQQ$-modules:
          \begin{align} \label{secondpart-outline}
            \mathrm{Len}_{\RRR_\QQQ} \bigg(\Ext^{2}_\RRR\big(\coker(i_\XXX),\RRR\big) \otimes_\RRR \RRR_\QQQ \bigg) = \mathrm{Len}_{\RRR_\QQQ} \bigg(\coker(i_\XXX) \otimes_\RRR \RRR_\QQQ\bigg).
          \end{align}
  \end{enumerate}

  The first part of the proof follows from Lemma \ref{part-one}. As Lemma \ref{torfree-pn} indicates,  $\coker(i_\XXX)$ is a finitely generated pseudo-null $\RRR$-module. The second part of the proof would then follow from Lemma \ref{part-two}. It is clear that the proposition would then follow from equations (\ref{firstpart-outline}) and (\ref{secondpart-outline}).

  \begin{lemma} \label{part-one}
    Follow all the notations and hypotheses of Proposition \ref{Ext-second-Chern}. For every height two prime ideal $\QQQ$ in $\RRR$, we have the following isomorphism of $\RRR_\QQQ$-modules:
    \begin{align} \label{firstpart}
      \Ext^{1}_\RRR\big(\XXX,\RRR\big) \otimes_\RRR \RRR_\QQQ \cong \Ext^{2}_\RRR\big(\coker(i_\XXX),\RRR\big) \otimes_\RRR \RRR_\QQQ.
    \end{align}
  \end{lemma}

  \begin{proof}
     Let $\QQQ$ be a height two prime ideal in $\RRR$. Note that since $\XXX$ is a torsion-free $\RRR$-module, $\ker(i_\XXX)$ equals zero. Consider the localization of the short exact sequence $0 \rightarrow \XXX \rightarrow  \XXX^{**} \rightarrow \coker(i_\XXX) \rightarrow 0$ at the prime ideal $\QQQ$ of $\RRR$. We obtain the following short exact sequence of $\RRR_\QQQ$-modules:
    \begin{align} \label{xxx-dual}
      0 \rightarrow \XXX_\QQQ \rightarrow  \XXX^{**} \otimes_\RRR \RRR_\QQQ \rightarrow \coker(i_\XXX)\otimes_\RRR \RRR_\QQQ \rightarrow 0.
    \end{align}

  The hypotheses of the lemma tell us that the $\RRR_\QQQ$-module $\XXX^{**} \otimes_\RRR \RRR_\QQQ$ has finite projective dimension and hence is free (by Lemma \ref{van-ext-reflexive}). We have the following equalities, for all $i \geq 1$:
    \begin{align*}
      \Ext^{i}_{\RRR_\QQQ}\big(\XXX^{**} \otimes_\RRR \RRR_\QQQ,\RRR_\QQQ\big)=0.
    \end{align*}

    Applying the functor $\Hom_{\RRR_\QQQ}\left(-,\RRR_\QQQ\right)$ to the short exact sequence (\ref{xxx-dual}) given above, we get the following isomorphism of $\RRR_\QQQ$-modules, for all $i \geq 1$:
    \begin{align}
      \Ext^{i}_\RRR\big(\XXX,\RRR\big) \otimes_\RRR \RRR_\QQQ \cong \Ext^{i+1}_\RRR\big(\coker(i_\XXX),\RRR\big) \otimes_\RRR \RRR_\QQQ.
    \end{align}
    In particular, we have the following isomorphism of $\RRR_\QQQ$-modules:
    \begin{align*}
      \Ext^{1}_\RRR\big(\XXX,\RRR\big) \otimes_\RRR \RRR_\QQQ \cong \Ext^{2}_\RRR\big(\coker(i_\XXX),\RRR\big) \otimes_\RRR \RRR_\QQQ.
    \end{align*}
  \end{proof}

  \begin{lemma}\label{part-two}
    Let $\ZZZ$ be a finitely generated pseudo-null $\RRR$-module. Let $\QQQ$ be a height two prime ideal in $\RRR$ such that $\RRR_\QQQ$ is Gorenstein. Then, we have the following equality of lengths of $\RRR_\QQQ$-modules:
    \begin{align} \label{secondpart}
      \mathrm{Len}_{\RRR_\QQQ} \bigg(\Ext^{2}_{\RRR_\QQQ}\big(\ZZZ \otimes_\RRR \RRR_\QQQ,\RRR_\QQQ\big)  \bigg) = \mathrm{Len}_{\RRR_\QQQ} \bigg(\ZZZ \otimes_\RRR \RRR_\QQQ\bigg).
    \end{align}
  \end{lemma}

  \begin{proof}
Without loss of generality, assume $\ZZZ \otimes_{\RRR} \RRR_\QQQ$ is not zero.  As Lemma \ref{finite-length} indicates, we have
    \begin{align*}
      \len_{\RRR_\QQQ} \left(\ZZZ \otimes_{\RRR} \RRR_\QQQ \right)< \infty, \qquad \len_{\RRR_\QQQ} \left(\Ext^2_{\RRR_\QQQ}\left(\ZZZ \otimes_{\RRR} \RRR_\QQQ,\RRR_\QQQ \right) \right) < \infty.
    \end{align*}
    Suppose the following chain of $\RRR_\QQQ$-modules is  a composition series for $\ZZZ \otimes_{\RRR} \RRR_\QQQ$:
    \begin{align*}
      \ZZZ \otimes_{\RRR} \RRR_\QQQ = \MMM_0 \supsetneq \MMM_1 \supsetneq \cdots \supsetneq \MMM_n =0.
    \end{align*}

    Consider the following short exact sequence of finitely generated pseudo-null $\RRR_\QQQ$-modules:
    \begin{align} \label{pn-mod-secondpart}
      0 \rightarrow \MMM_1 \rightarrow \ZZZ \otimes_{\RRR} \RRR_\QQQ  \rightarrow \frac{\MMM_0}{\MMM_1}  \rightarrow 0.
    \end{align}
    By Corollary \ref{pseudo-null-ext2}, we have the following short exact sequence of finitely generated pseudo-null $\RRR_\QQQ$-modules:
    \begin{align} \label{ext2-pn-secondpart}
      0 \rightarrow \Ext^{2}_{\RRR_\QQQ}\left(\frac{\MMM_0}{\MMM_1} ,\RRR_\QQQ\right) \rightarrow \Ext^{2}_{\RRR_\QQQ}\left(\ZZZ \otimes_{\RRR} \RRR_\QQQ,\RRR_\QQQ\right)   \rightarrow \Ext^{2}_{\RRR_\QQQ}\left(\MMM_1,\RRR_\QQQ\right)  \rightarrow 0.
    \end{align}
    If $\MMM_1$ equals zero, then the $\RRR_\QQQ$-module $\ZZZ \otimes_{\RRR} \RRR_\QQQ$ is isomorphic to $k_\QQQ$. Otherwise, note that
    \begin{align*}
      \len_{\RRR_\QQQ} \left(\MMM_1\right) < \len_{\RRR_\QQQ} \ZZZ \otimes_\RRR \RRR_\QQQ, \qquad \len_{\RRR_\QQQ} \left(\frac{\MMM_0}{\MMM_1}\right) < \len_{\RRR_\QQQ} \ZZZ \otimes_\RRR \RRR_\QQQ.
    \end{align*}
    Suppose we could establish the following equality of lengths:
    \begin{align} \label{devissage-secondpart}
      \len_{\RRR_\QQQ} \left(\MMM_1\right) \stackrel{?}{=} \len_{\RRR_\QQQ} \left(\Ext^{2}_{\RRR_\QQQ}\left(\MMM_1,\RRR_\QQQ\right)\right), \qquad \len_{\RRR_\QQQ}\left(\frac{\MMM_0}{\MMM_1} \right) \stackrel{?}{=} \len_{\RRR_\QQQ} \left(\Ext^{2}_{\RRR_\QQQ}\left(\frac{\MMM_0}{\MMM_1} ,\RRR_\QQQ\right)\right).
    \end{align}
    As Lemma \ref{length-additive} indicates, length is additive in  exact sequences. Equality in (\ref{devissage-secondpart}), along with the short exact sequences (\ref{pn-mod-secondpart}) and (\ref{ext2-pn-secondpart}), would then let us obtain equation (\ref{secondpart}).

    These observations let us use a d\'evissage argument to reduce to the case when $\ZZZ \otimes_\RRR \RRR_\QQQ$ equals $k_\QQQ$. The $\RRR_\QQQ$-module $\RRR_\QQQ$ is a canonical module (sometimes also called the dualizing module) for the Gorenstein local ring $\RRR_\QQQ$. See Theorem 3.3.7 in the book by Bruns and Herzog \cite{MR1251956}. Consequently, we have the following isomorphism of $\RRR_\QQQ$-modules:

    \begin{align}\label{ext2-dualizing}
      \Ext^2_{\RRR_\QQQ} \left(k_\QQQ,\RRR_\QQQ\right) \cong k_\QQQ.
    \end{align}

    As a result, we have the following equality of lengths of $\RRR_\QQQ$-modules:
    \begin{align*}
      \len_{\RRR_\QQQ} \left(k_\QQQ\right) = \len_{\RRR_\QQQ} \left(\Ext^2_{\RRR_\QQQ}\left(k_\QQQ,\RRR_\QQQ \right)\right) = 1.
    \end{align*}
    This lets us deduce equation (\ref{secondpart}) when $\ZZZ \otimes_\RRR \RRR_\QQQ$ equals $k_\QQQ$. Hence, Lemma \ref{part-two} follows.
  \end{proof}
  Proposition \ref{Ext-second-Chern} follows.
\end{proof}

We would like to state one more application of Lemma \ref{part-two}. For each finitely generated $\RRR$-module $\MMM$, we let $\MMM_{\pn}$ denote the maximal pseudo-null $\RRR$-submodule of $\MMM$.

\begin{lemma} \label{ext2-is-PN}
For every finitely generated $\RRR$-module $\MMM$,  the $\RRR$-module $\Ext^2_{\RRR}\left(\MMM,\RRR\right)$ is pseudo-null. Furthermore, if we suppose that for every height two prime ideal $\QQQ$ in $\RRR$,
  \begin{itemize}\item  $\RRR_\QQQ$ is a Gorenstein local ring, and
    \item the $\RRR_\QQQ$-module $\frac{\MMM}{\MMM_{\pn}} \otimes_\RRR \RRR_\QQQ$ has finite projective dimension.
  \end{itemize}
  Then, we have the following equality in $Z^2\left(\RRR\right):$
  \begin{align}\label{c2equality-pn}
    c_2\left(\MMM_{\pn}\right) = c_2\left(\Ext^2_{\RRR}\left(\MMM,\RRR\right)\right).
  \end{align}
\end{lemma}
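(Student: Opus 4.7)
The plan is to handle the two assertions separately. For the first, the pseudo-nullity of $\Ext^2_\RRR(\MMM,\RRR)$, I will verify vanishing after localization at every prime of height at most one. Since $\RRR$ is an integrally closed Noetherian local domain, Serre's criterion (Lemma~\ref{serre-r1-s2}) guarantees that $\RRR_\p$ is a DVR for every height one prime $\p$, and of course $\RRR_{(0)}$ is a field. Both kinds of rings have global dimension at most one, so $\Ext^2_{\RRR_\p}(\MMM_\p,\RRR_\p) = 0$. Using the fact that $\Ext$ commutes with localization for finitely generated modules (Proposition~3.3.10 of Weibel's book, recalled earlier in the section), one concludes that $\Ext^2_\RRR(\MMM,\RRR) \otimes_\RRR \RRR_\p = 0$ at every such $\p$, proving pseudo-nullity.

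For the equality in $Z^2(\RRR)$, I will apply $\Hom_\RRR(-,\RRR)$ to the short exact sequence
\[ 0 \to \MMM_{\pn} \to \MMM \to \MMM/\MMM_{\pn} \to 0 \]
and localize the resulting long exact sequence of $\Ext$ groups at an arbitrary height two prime $\QQQ$ satisfying the hypotheses. The strategy is to kill off three consecutive terms to extract an isomorphism $\Ext^2_\RRR(\MMM,\RRR) \otimes_\RRR \RRR_\QQQ \cong \Ext^2_{\RRR_\QQQ}((\MMM_{\pn})_\QQQ, \RRR_\QQQ)$. The term $\Ext^1_{\RRR_\QQQ}((\MMM_{\pn})_\QQQ,\RRR_\QQQ)$ vanishes by Lemma~\ref{van-ext0-1-pn}, while $\Ext^3_{\RRR_\QQQ}((\MMM/\MMM_{\pn})_\QQQ,\RRR_\QQQ)$ vanishes because $\RRR_\QQQ$ is Gorenstein of Krull dimension two and hence has self-injective dimension two. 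Once the isomorphism is established, Lemma~\ref{part-two} applied to the pseudo-null module $\MMM_{\pn}$ gives $\len_{\RRR_\QQQ} \bigl(\Ext^2_\RRR(\MMM,\RRR) \otimes_\RRR \RRR_\QQQ\bigr) = \len_{\RRR_\QQQ}(\MMM_{\pn})_\QQQ$, and summing these local contributions over all height two primes produces the identity (\ref{c2equality-pn}).

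The main obstacle is the vanishing of the remaining middle term $\Ext^2_{\RRR_\QQQ}((\MMM/\MMM_{\pn})_\QQQ,\RRR_\QQQ)$, and this is where both hypotheses on $\QQQ$ are used. Since $\MMM_{\pn}$ is the maximal pseudo-null submodule, $\MMM/\MMM_{\pn}$ admits no non-zero pseudo-null $\RRR$-submodule. In particular $\QQQ$ cannot be an associated prime of $\MMM/\MMM_{\pn}$: otherwise $\RRR/\QQQ$ would embed in $\MMM/\MMM_{\pn}$, and $\RRR/\QQQ$ is itself pseudo-null because $\QQQ$ has height two. By the standard correspondence of associated primes under localization, this forces $\mathrm{Depth}_{\RRR_\QQQ} (\MMM/\MMM_{\pn})_\QQQ \geq 1$ whenever the localization is non-zero. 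Combined with the hypothesis that $(\MMM/\MMM_{\pn})_\QQQ$ has finite projective dimension over the Gorenstein local ring $\RRR_\QQQ$, the Auslander--Buchsbaum formula (Theorem~4.4.15 of Weibel) yields $\mathrm{pd}_{\RRR_\QQQ}(\MMM/\MMM_{\pn})_\QQQ \leq 2 - 1 = 1$. Therefore $\Ext^{i}_{\RRR_\QQQ}((\MMM/\MMM_{\pn})_\QQQ,\RRR_\QQQ) = 0$ for all $i \geq 2$, which completes the key vanishing and hence the proof.
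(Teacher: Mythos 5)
Your proof is correct and follows essentially the same route as the paper: localize at height $\le 1$ primes (where $\RRR$ is a DVR or the fraction field) for the pseudo-nullity, then localize at a height-two $\QQQ$, use the maximality of $\MMM_\pn$ to get $\mathrm{Depth}\ge 1$, invoke Auslander--Buchsbaum to bound $\pd\le 1$, kill the relevant $\Ext$ terms for $\MMM/\MMM_\pn$, and finish with Lemma~\ref{part-two}. The one cosmetic quirk is that the vanishing of $\Ext^1_{\RRR_\QQQ}\left((\MMM_\pn)_\QQQ,\RRR_\QQQ\right)$ plays no role in isolating the isomorphism $\Ext^2(\MMM)_\QQQ\cong\Ext^2(\MMM_\pn)_\QQQ$ -- the two terms that must die are $\Ext^2_{\RRR_\QQQ}\left((\MMM/\MMM_\pn)_\QQQ,\RRR_\QQQ\right)$ and $\Ext^3_{\RRR_\QQQ}\left((\MMM/\MMM_\pn)_\QQQ,\RRR_\QQQ\right)$, not three consecutive terms -- but since the extra vanishing is anyway true, this does not affect the argument.
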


\begin{proof}
  Let $\p$ be a height one prime ideal in the ring $\RRR$. Since $\RRR$ is a Noetherian integrally closed local domain, the localization $\RRR_\p$ is a DVR. As a result the localization $\Ext^i_{\RRR}\left(\MMM,\RRR\right) \otimes_\RRR \RRR_\p$, which is isomorphic to the $\RRR_\p$-module $\Ext^i_{\RRR_\p}\left(\MMM_\p,\RRR_\p\right)$, vanishes for all $i \geq 2$. In particular, $\Ext^2_{\RRR_\p}\left(\MMM_\p,\RRR_\p\right)$ vanishes. This lets us conclude that $\RRR$-module $\Ext^2_{\RRR}\left(\MMM,\RRR\right)$ is pseudo-null. \\

  Let $\QQQ$ be a height two prime ideal in $\RRR$. Since $\RRR_\QQQ$ is a Gorenstein ring, the injective dimension of the $\RRR_\QQQ$-module $\RRR_\QQQ$ must equal two. By Lemma 2 in Chapter 19 of Matsumura's book \cite{matsumura1989commutative} $$\Ext^3_{\RRR_\QQQ}\left(\frac{\MMM}{\MMM_{\pn}} \otimes_\RRR \RRR_\QQQ,\RRR_\QQQ\right)=0.$$

We will argue that $\pd_{\RRR_\QQQ} \frac{\MMM}{\MMM_{\pn}} \otimes_\RRR \RRR_\QQQ \leq 1$. This is straightforward if $\frac{\MMM}{\MMM_{\pn}} \otimes_\RRR \RRR_\QQQ$ equals zero. Let us consider the case when $\frac{\MMM}{\MMM_{\pn}} \otimes_\RRR \RRR_\QQQ$ is not zero.  Since $\MMM_{\pn}$ is the maximal pseudo-null $\RRR$-submodule of $\MMM$, the $\RRR$-module $\frac{\MMM}{\MMM_\pn}$ has no non-zero pseudo-null submodules. As a result the $\RRR_\QQQ$-module $\frac{\MMM}{\MMM_{\pn}} \otimes_\RRR \RRR_\QQQ$ has no non-zero pseudo-null submodules. Hence, we have $\Depth_{\RRR_\QQQ} \frac{\MMM}{\MMM_{\pn}} \otimes_\RRR \RRR_\QQQ \geq 1$. The hypotheses of the lemma tell us that the $\RRR_\QQQ$-module $\frac{\MMM}{\MMM_{\pn}} \otimes_\RRR \RRR_\QQQ$ has finite projective dimension. One can apply the Auslander-Buchsbaum equality over the two-dimensional local ring $\RRR_\QQQ$ to conclude that $\pd_{\RRR_\QQQ} \frac{\MMM}{\MMM_{\pn}} \otimes_\RRR \RRR_\QQQ \leq 1$.

  By Lemma 2 in Chapter 19 of Matsumura's book \cite{matsumura1989commutative}, we have
  $$\Ext^2_{\RRR_\QQQ}\left(\frac{\MMM}{\MMM_{\pn}} \otimes_\RRR \RRR_\QQQ,\RRR_\QQQ\right)=0.$$

  Applying the functor $\Hom_{\RRR_\QQQ}\left(-,\RRR_\QQQ\right)$ to the short exact sequence $0 \rightarrow \MMM_{\pn}  \otimes_\RRR \RRR_\QQQ \rightarrow \MMM \otimes_\RRR \RRR_\QQQ \rightarrow \frac{\MMM}{\MMM_{\pn}} \otimes_\RRR \RRR_\QQQ \rightarrow 0$ of $\RRR_\QQQ$-modules lets us obtain the following isomorphism of $\RRR_\QQQ$-modules:
  \begin{align} \label{ext-c2-pnsubmodule}
  \Ext^2_{\RRR_\QQQ}\left(\MMM \otimes_\RRR \RRR_\QQQ,\RRR_\QQQ\right) \cong   \Ext^2_{\RRR_\QQQ}\left(\MMM_{\pn} \otimes_\RRR \RRR_\QQQ,\RRR_\QQQ\right).
  \end{align}
  Using Lemma \ref{part-two}, we obtain the following equality in $Z^2\left(\RRR\right)$:
  \begin{align*}
    c_2\bigg(M_{\pn}\bigg) = c_2\bigg(\Ext^2_{\RRR}\left(\MMM_{\pn},\RRR\right)\bigg) = c_2\bigg(\Ext^2_{\RRR}\left(\MMM,\RRR\right)\bigg).
  \end{align*}
  This completes the proof of the Lemma.
\end{proof}

\section{The General setup} \label{generalsetup}

We will obtain our results from a general perspective in Section \ref{generalresults}. In this section, we will simply outline the various objects involved in describing the general results. Let $\RRR$ denote a Noetherian, complete, integrally closed, local domain,   with Krull dimension $n+1$, characteristic zero and whose residue field is finite with characteristic $p$. Cohen's structure theorems tell us that there exists a subring $\Lambda_n$ of $\RRR$ that is isomorphic to the power series $\Z_p[[x_1,\ldots,x_n]]$. Let $\m_\RRR$ denote the maximal ideal of $\RRR$. Let $\Sigma$ denote a finite set of primes of $\Q$, containing $p$, $\infty$ and a finite prime $l_0 \neq p$. Let $\Q_\Sigma$ denote the maximal extension of $\Q$ unramified outside $\Sigma$. Let $G_\Sigma$ denote the Galois group $\Gal{\Q_\Sigma}{\Q}$. \\

Consider a continuous Galois representation
\begin{align*}
  \rhodn: G_\Sigma \rightarrow \Gl_d(\RRR).
\end{align*}
Let $T_{\rhoDN{d}{n}}$ denote the underlying free $\RRR$-module of rank $n$ on which $G_\Sigma$-module acts to let us obtain $\rhodn$. We will let $d^{+}(\rhodn)$ be the rank of $\RRR$-submodule of $T_{\rhoDN{d}{n}}$ fixed by complex conjugation. We let $d^-(\rhodn)$ equal $d - d^+(\rhodn)$. Note that $\RRR^\vee$ denotes $\Hom_{\cont}\left(\RRR,\frac{\Q_p}{\Z_p}\right)$.

We shall also consider the Galois representation $\rhodnstar: G_\Sigma \rightarrow \Gl_d(\RRR)$, given by the action of $G_\Sigma$ on $T_{\rhoDNstar{d}{n}}:=\Hom_{\RRR}\left(T_{\rhoDN{d}{n}},\RRR(\chi_p)\right)$.  Here, $\RRR(\chi_p)$ is the free $\RRR$-module of rank one on which $G_\Sigma$ acts via the $p$-adic cyclotomic character $\chi_p:G_\Sigma \rightarrow \Z_p^\times$. To each of these Galois representation, one can attach the following discrete modules:

\begin{align*}
  D_{\rhodn} := T_{\rhodn} \otimes_\RRR \RRR^\vee, \qquad D_{\rhodnstar} := T_{\rhodnstar} \otimes_\RRR \RRR^\vee.
\end{align*}

The various modules appearing in the general results are described in Section \ref{variousmodules}. To define these modules, we will need to consider various subgroups of the first discrete global Galois cohomology group $H^1\left(G_\Sigma, D_{\rhodn}\right)$. In turn, to define the subgroups of the first global Galois cohomology group,  we will need to consider local Selmer conditions, which are subgroups of the first discrete local cohomology group at the prime $p$. We will simply suppose that we have two discrete $\RRR$-submodules, denoted $\Loc_{\One}\left(\Q_p,D_{\rhodn}\right)$ and $\Loc_{\Two}\left(\Q_p,D_{\rhodn}\right)$, inside the first local Galois cohomology group $H^1\left(\Q_p,D_{\rhodn}\right)$. That is, we have the following inclusions of $\RRR$-modules:
\begin{align*} \Loc_{\One}\left(\Q_p,D_{\rhodn}\right) \subset H^1\left(\Q_p,D_{\rhodn}\right), \qquad  \Loc_{\Two}\left(\Q_p,D_{\rhodn}\right) \subset H^1\left(\Q_p,D_{\rhodn}\right).
\end{align*}

The properties, that these local factors at $p$ need to satisfy, will be described in Section \ref{varioushypotheses}.

\subsection{Various modules, assumptions and hypotheses}

\subsubsection{Descriptions of the various modules} \label{variousmodules}

We let $\XXX(\Q,D_\rhodn)$ denote the Pontryagin dual of
\begin{align*}
  \ker\bigg(H^1\left(G_\Sigma, D_{\rhodn}\right) \xrightarrow {\phi_\XXX}  \frac{H^1\left(\Q_p,D_{\rhodn}\right)}{\Loc_{\One}\left(\Q_p,D_{\rhodn}\right)+\Loc_{\Two}\left(\Q_p,D_{\rhodn}\right)} \oplus \bigoplus_{l \in \Sigma \setminus \{p\}} H^1\left(\Q_l,D_{\rhodn}\right) \bigg).
\end{align*}

We let $\XXX(\Q,D_\rhodn)_{\tor}$ denote the $\RRR$-torsion submodule of $\XXX(\Q,D_\rhodn)$. \\

We shall define two discrete Selmer groups $\Sel_{\One}\left(\Q,D_{\rhodn}\right)$ and $\Sel_{\Two}\left(\Q,D_{\rhodn}\right)$ as follows:

\begin{align*}
  \Sel_{\One}\left(\Q,D_{\rhodn}\right):=\ker\bigg(H^1\left(G_\Sigma, D_{\rhodn}\right) & \xrightarrow {\phi_\One}  \frac{H^1\left(\Q_p,D_{\rhodn}\right)}{\Loc_{\One}\left(\Q_p,D_{\rhodn}\right)} \oplus \bigoplus_{l \in \Sigma \setminus \{p\}} H^1\left(\Q_l,D_{\rhodn}\right)  \bigg), \qquad \\
  \Sel_{\Two}(\Q,D_{\rhodn}):=\ker\bigg(H^1\left(G_\Sigma, D_{\rhodn}\right)            & \xrightarrow {\phi_\Two}  \frac{H^1\left(\Q_p,D_{\rhodn}\right)}{\Loc_{\Two}\left(\Q_p,D_{\rhodn}\right)} \oplus \bigoplus_{l \in \Sigma \setminus \{p\}} H^1\left(\Q_l,D_{\rhodn}\right)  \bigg).
\end{align*}

For each $i \in \{1,2\}$, we will define the following discrete subgroup, labeled $\Sha^i\left(\Q,D_{\rhodn}\right)$, of the  global Galois cohomology group $H^i\left(G_\Sigma, D_{\rhodn}\right)$:
\begin{align*}
  \Sha^i\left(\Q,D_{\rhodn}\right):=\ker\bigg(H^i\left(G_\Sigma, D_{\rhodn}\right) & \rightarrow    \bigoplus_{l \in \Sigma} H^i\left(\Q_l,D_{\rhodn}\right) \bigg).
\end{align*}

For each $i \in \{1,2\}$, one can similarly define $\Sha^i\left(\Q,D_{\rhodnstar}\right)$ inside the  global Galois cohomology group $H^i\left(G_\Sigma, D_{\rhodnstar}\right)$. \\

We let $\ZZZ(\Q,D_{\rhodn})$ denote the Pontryagin dual of
\begin{align} \label{ZD}
  \ker\bigg(H^1\left(G_\Sigma, D_{\rhodn}\right) \rightarrow   \frac{H^1\left(\Q_p,D_{\rhodn}\right)}{\Loc_{\One}\left(\Q_p,D_{\rhodn}\right) \bigcap \Loc_{\Two}\left(\Q_p,D_{\rhodn}\right)} \oplus \bigoplus_{l \in \Sigma \setminus \{p\}} H^1\left(\Q_l,D_{\rhodn}\right) \bigg).
\end{align}

It will be helpful to keep following surjections of $\RRR$-modules in mind:
\begin{align}
  H^1\left(G_\Sigma, D_{\rhodn} \right)^\vee \twoheadrightarrow\XXX(\Q,D_{\rhodn})  \twoheadrightarrow \substack{\Sel_{\One}\left(\Q,D_{\rhodn}\right)^\vee \\ \Sel_{\Two}\left(\Q,D_{\rhodn}\right)^\vee} \twoheadrightarrow \ZZZ(\Q,D_{\rhodn}) \twoheadrightarrow \Sha^{1}\left(\Q,D_{\rhodn}\right)^\vee
\end{align}

\begin{remark}
  If one follows Greenberg's definition of Selmer groups in \cite{greenberg1994iwasawa}, one requires the global cocycles to be unramified at primes $l \in \Sigma \setminus \{p\}$.   This point will not matter to us since for our applications, we will be considering Galois representations $\rhodn$ that are related to cyclotomic deformations (following the notations in Section 3 of \cite{greenberg1994iwasawa}). For such representations, the natural restriction map  $H^1\left(\Q_l,D_{\rhodn} \right) \rightarrow H^1\left(I_l,D_{\rhodn} \right)$ turns out to be injective, whenever $l \neq p$. Here, $I_l$ is the inertia subgroup inside the decomposition group $\Gal{\overline{\Q}_l}{\Q_l}$.
\end{remark}

\subsubsection{Statements of the various assumptions and hypotheses} \label{varioushypotheses}

We will deduce our results under various conditions. Some of these conditions will have the prefix ``Assumption''. Some of these conditions will have the prefix ``Hypothesis''. The conditions with the prefix ``Hypothesis'' are those conditions which we will be able to establish in the setting of Theorem \ref{maintheorem}.  In the setting of Theorem \ref{maintheorem}, the conditions labeled ``Assumptions'' are currently not known to always hold unconditionally.

\begin{enumerate}[leftmargin=4cm, style=sameline, align=left, label=\underline{$\mathrm{Assumption \ MC}$}, ref=$\mathrm{Assumption \ MC}$]
  \item\label{mc} We have the following equality of ranks
  \begin{align}\label{mc-rank-equality}
    \mathrm{Rank}_\RRR \ {\Sel_{\One}\left(\Q,D_{\rhodn}\right)^\vee}  = 0, \qquad \mathrm{Rank}_\RRR \ {\Sel_{\Two}\left(\Q,D_{\rhodn}\right)^\vee} = 0.
  \end{align}In addition,  there exist two elements $\theta_\One$ and $\theta_\Two$ in $\RRR$ such that we have the following equalities in $Z^1(\RRR)$:
  \begin{align}\label{mc-div-equality}
    \Div\left(\Sel_{\One}(\Q,D_{\rhodn})^\vee\right) = \Div(\theta_\One), \qquad
    \Div\left(\Sel_{\Two}(\Q,D_{\rhodn})^\vee\right) = \Div(\theta_\Two).
  \end{align}
\end{enumerate}

\begin{enumerate}[leftmargin=4cm, style=sameline, align=left, label=\underline{$\mathrm{Assumption\ GCD}$}, ref=$\mathrm{Assumption \ GCD }$]
  \item\label{gcd} The height of the ideal $(\theta_{\One},\theta_{\Two})$ in $\RRR$ is greater than or equal to two.
\end{enumerate}
In the statement of \ref{gcd}, $\theta_\One$ and $\theta_\Two$ are the elements appearing in the statement of \ref{mc}.

\begin{enumerate}[leftmargin=4cm, style=sameline, align=left, label=\underline{$\mathrm{Hypothesis \ Rank}$}, ref=$\mathrm{Hypothesis \ Rank}$]
  \item\label{rank} We have the following equality of ranks:
  \begin{align*}\mathrm{Rank}_\RRR \ {\Loc_{\One}\left(\Q_p,D_{\rhodn}\right)^\vee}  = \mathrm{Rank}_\RRR \ {\Loc_{\Two}\left(\Q_p,D_{\rhodn}\right)^\vee} = d^+, \\
    \mathrm{Rank}_{\RRR} \ \bigg( \Loc_{\One}\left(\Q_p,D_{\rhodn}\right) + \Loc_{\Two}\left(\Q_p,D_{\rhodn}\right) \bigg)^\vee = d^+ + 1.
  \end{align*}
\end{enumerate}

\begin{enumerate}[leftmargin=3cm, style=sameline, align=left, label=\underline{$\mathrm{Hypothesis \ LF}$}, ref=$\mathrm{Hypothesis \ LF}$]
  \item\label{loc-free} The following $\RRR$-modules are free:
  \begin{align*}
      & \Loc_{\One}\left(\Q_p,D_{\rhodn}\right)^\vee, && \Loc_{\Two}\left(\Q_p,D_{\rhodn}\right)^\vee, \\ & \bigg(\frac{\Loc_{\Two}\left(\Q_p,D_{\rhodn}\right)}{{\Loc_{\One}\left(\Q_p,D_{\rhodn}\right)\bigcap \Loc_{\Two}\left(\Q_p,D_{\rhodn}\right)}}\bigg)^\vee, \quad &&\bigg(\frac{\Loc_{\One}\left(\Q_p,D_{\rhodn}\right)}{{\Loc_{\One}\left(\Q_p,D_{\rhodn}\right)\bigcap \Loc_{\Two}\left(\Q_p,D_{\rhodn}\right)}}\bigg)^\vee.
  \end{align*}
\end{enumerate}

\begin{enumerate}[leftmargin=4cm, style=sameline, align=left, label=\underline{$\mathrm{Hypothesis \ Loc_p(0)}$}, ref=$\mathrm{Hypothesis \ Loc_p(0)}$]
  \item\label{locp0} The $\Gal{\overline{\Q}_p}{\Q_p}$-modules $D_{\rhodn}[\m_\RRR]$ and $D_{\rhodnstar}[\m_\RRR]$ have no quotient isomorphic to the trivial representation.
\end{enumerate}

\begin{enumerate}[leftmargin=4cm, style=sameline, align=left, label=\underline{$\mathrm{Hypothesis \ Loc(0)}$}, ref=$\mathrm{Hypothesis \ Loc(0)}$]
  \item\label{tor-loc-sigma} For every $l$ in $\Sigma$, we have $$\mathrm{Rank}_\RRR H^0\left(\Q_l,D_\rhodn\right)^\vee=\mathrm{Rank}_\RRR H^0\left(\Q_l,D_\rhodnstar\right)^\vee = 0.$$
\end{enumerate}

\begin{enumerate}[leftmargin=4cm, style=sameline, align=left, label=\underline{$\mathrm{Hypothesis \ Reg(0)}$}, ref=$\mathrm{Hypothesis \ Reg(0)}$]
  \item\label{Sigma0-pd}  For every prime $l \in \Sigma \setminus \{p\}$ and every height two prime ideal $\QQQ$ in $\RRR$, the $\RRR_\QQQ$-module $H^0\left(\Q_l,D_{\rhodn}\right)^\vee \otimes_\RRR \RRR_\QQQ$ has finite projective dimension.
\end{enumerate}

\begin{enumerate}[leftmargin=4cm, style=sameline, align=left, label=\underline{$\mathrm{Hypothesis \ Gor}$}, ref=$\mathrm{Hypothesis \ Gor}$]
  \item\label{Gor}  $\RRR$ is a Gorenstein local ring.
\end{enumerate}

\begin{remark}
  In the setup of Theorem~\ref{maintheorem}, the ring $\RRR$ (which is isomorphic to the power series ring $\Z_p[[x_1,x_2]]$) is a regular local ring. \ref{Gor} and \ref{Sigma0-pd} are automatically valid.
\end{remark}

\begin{remark}
  When the ring $\RRR$ is a UFD, \ref{gcd} is equivalent to the statement that the elements $\theta_\One$ and $\theta_\Two$ have no common irreducible factor.
\end{remark}

\begin{remark}
  We would like to make a few remarks concerning \ref{mc}. Since our approach towards proving Theorem \ref{maintheorem} only involves studying the module theory of Galois cohomology groups, we have not defined $p$-adic $L$-functions in the general setup. One must view the statements in \ref{mc} simply as abstract formulations of Iwasawa main conjectures (just as in Section 3 of the seven author paper \cite{bleher2015higher}). The content of equation (\ref{mc-div-equality}) in \ref{mc} is significant only when the ring $\RRR$ is not a UFD.

  When the Galois representation $\rhodn$ satisfies the Panchishkin condition, the Iwasawa main conjecture (formulated in \cite{greenberg1994iwasawa}) predicts an equality between the divisor associated to the $p$-adic $L$-function (say $\theta_\One$) and the divisor
  \begin{align*}
    \Div\left(\Sel_\One(\Q,D_\rhodn)^\vee\right) - \Div\left(H^0\left(G_\Sigma, D_{\rhodn}\right)^\vee\right) - \Div\left(H^0\left(G_\Sigma, D_{\rhodnstar}\right)^\vee\right)
  \end{align*}
  in $Z^1\left(\RRR\right)$. In our situation, \ref{locp0} lets us deduce that
  \begin{align*}
    H^0\left(G_\Sigma, D_{\rhodn}\right)=H^0\left(G_\Sigma, D_{\rhodnstar}\right)= 0.
  \end{align*}
\end{remark}

\begin{remark}
  While it seems reasonable to expect \ref{mc} to always hold in the setup of Theorem \ref{maintheorem}, we do not have any reason to believe that \ref{gcd} would always hold in this setup. However, we do produce some evidence towards the validity of \ref{gcd} in the setup of Theorem~\ref{maintheorem} in Section \ref{section-examples}.
\end{remark}

\subsection{Consequences of the various hypotheses and assumptions} \label{consequences-hyp-assumptions}

\begin{enumerate}[(a)]
  \item \label{obs-a} Note that we have a natural isomorphism :
        \begin{align*}
          \left(\frac{\Loc_{\One}\left(\Q_p,D_{\rhodn}\right) +  \Loc_{\Two}\left(\Q_p,D_{\rhodn}\right)}{\Loc_{\One}\left(\Q_p,D_{\rhodn}\right)}\right)^\vee \cong \bigg(\frac{\Loc_{\Two}\left(\Q_p,D_{\rhodn}\right)}{{\Loc_{\One}\left(\Q_p,D_{\rhodn}\right)\bigcap \Loc_{\Two}\left(\Q_p,D_{\rhodn}\right)}}\bigg)^\vee.
        \end{align*}
        This observation, \ref{loc-free} and the (split) short exact sequence
        {\small \begin{align*}
          & 0  \rightarrow \left(\frac{\Loc_{\One}\left(\Q_p,D_{\rhodn}\right) +  \Loc_{\Two}\left(\Q_p,D_{\rhodn}\right)}{\Loc_{\One}\left(\Q_p,D_{\rhodn}\right)}\right)^\vee \rightarrow \left(\Loc_{\One}\left(\Q_p,D_{\rhodn}\right) +  \Loc_{\Two}\left(\Q_p,D_{\rhodn}\right)\right)^\vee  \rightarrow \Loc_{\One}\left(\Q_p,D_{\rhodn}\right)^\vee \rightarrow 0,
          \end{align*}}
        let us conclude that the $\RRR$-module $\left(\Loc_{\One}\left(\Q_p,D_{\rhodn}\right) +  \Loc_{\Two}\left(\Q_p,D_{\rhodn}\right)\right)^\vee$ is free.

  \item As a result of local duality theorems (for example, see Section 0.3 in Nekov{\'a}{\v{r}}'s work on Selmer complexes \cite{nekovar2006selmer}), \ref{tor-loc-sigma} and Proposition 3.10 in \cite{MR2290593}, for all primes $l \in \Sigma$, we have
        \begin{align} \label{primes-consequence}
          H^0_\ct\left(\Q_l,T_{\rhodnstar}\right)= H^0_\ct \left(\Q_l,T_\rhodn\right) = 0, \qquad   H^2\left(\Q_l,D_\rhodn\right) = H^2\left(\Q_l,D_\rhodnstar\right) = 0.
        \end{align}
        As a result, for the zeroth global Galois cohomology groups, we also have
        \begin{align}\label{h0-global-compact-consequence}
          H^0_\ct\left(G_\Sigma,T_\rhodn\right) = H^0_\ct\left(G_\Sigma,T_\rhodnstar\right) = 0.
        \end{align}
  \item \ref{locp0} lets us conclude that
        \begin{align} \label{local-p-h0-consequence}
            & H^0\left(\Q_p, D_{\rhodn} \right) = H^0\left(\Q_p, D_{\rhodnstar} \right) = 0, \qquad
            & H^2\left(\Q_p, D_{\rhodn} \right) = H^2\left(\Q_p, D_{\rhodnstar} \right) = 0.
        \end{align}
        As a result, for the zeroth global Galois cohomology groups, we also have
        \begin{align} \label{global-p-h0-consequence}
          H^0\left(G_\Sigma, D_{\rhodn} \right) = H^0\left(G_\Sigma, D_{\rhodnstar} \right) = 0.
        \end{align}
  \item By studying the local and global Euler Poincar\'e characteristics and using equation (\ref{mc-rank-equality}) and \ref{rank}, one can conclude that the Weak Leopoldt conjecture for $\rhodn$ holds. That is, the $\RRR$-module $\Sha^2\left(\Q,D_{\rhodn}\right)^\vee$ is torsion (in fact, equal to zero). See Proposition 4.6 in \cite{palvannan2016algebraic}. In fact, in our situation, Proposition 6.1 in \cite{MR2290593}  lets us conclude that
        \begin{align} \label{weak-leo-consequence}
          H^2\left(G_\Sigma,D_{\rhodn}\right)=0.
        \end{align}
  \item The global to local maps $\Phi_\One$ and $\Phi_\Two$, defining the Selmer groups $\Sel_\One(\Q,D_\rhodn)$ and $\Sel_\Two(\Q,D_\rhodn)$, are surjective. This follows by applying Proposition 3.2.1 in \cite{greenberg2010surjectivity} and using \ref{locp0}.
  \item All the hypotheses in Proposition 4.1.1 listed in \cite{greenberg2014pseudonull} can be verified to let us conclude that $\RRR$-module $\XXX(\Q,D_{\rhodn})$ has no non-zero pseudo-null submodules. The fact, that the $\RRR$-module  $ \left(\Loc_{\One}\left(\Q_p,D_{\rhodn}\right) +  \Loc_{\Two}\left(\Q_p,D_{\rhodn}\right)\right)^\vee $ is free, comes into play.
\end{enumerate}

\subsubsection{Various commutative diagrams}

Let $J \in \{\One,\Two\}$. To relate the various modules in this general setup, it will be helpful to keep the following commutative diagrams in mind:

\begin{align} \tag{\underline{Commutative\ diagram\ A}}
  \xymatrix{
  H^1\left(G_\Sigma, D_\rhodn\right) \ar[d]^{\phi_J}\ar[r]^{\cong}                                                                                                            & H^1\left(G_\Sigma, D_\rhodn\right) \ar[d]^{\phi_\XXX }                                                                                                                                                           \\
  \frac{H^1\left(\Q_p,D_{\rhodn}\right)}{\Loc_{J}\left(\Q_p,D_{\rhodn}\right)} \oplus \bigoplus \limits_{l \in \Sigma \setminus \{p\}} H^1\left(\Q_l,D_{\rhodn}\right) \ar[r] & \frac{H^1\left(\Q_p,D_{\rhodn}\right)}{\Loc_{\One}\left(\Q_p,D_{\rhodn}\right)+\Loc_{\Two}\left(\Q_p,D_{\rhodn}\right)}  \oplus \bigoplus \limits_{l \in \Sigma \setminus \{p\}} H^1\left(\Q_l,D_{\rhodn}\right)
  }
\end{align}

{\small \begin{align}\tag{\underline{Commutative\ diagram\ B}}
  \xymatrix{
    H^1\left(G_\Sigma, D_\rhodn\right) \ar[d]^{\phi_\ZZZ}\ar[r]^{\cong} &   H^1\left(G_\Sigma, D_\rhodn\right) \ar[d]^{\phi_\XXX } \\
    \frac{H^1\left(\Q_p,D_{\rhodn}\right)}{\Loc_{\One}\left(\Q_p,D_{\rhodn}\right)\bigcap \Loc_{\Two}\left(\Q_p,D_{\rhodn}\right)} \oplus \bigoplus \limits_{l \in \Sigma \setminus \{p\}} H^1\left(\Q_l,D_{\rhodn}\right) \ar[r]& \frac{H^1\left(\Q_p,D_{\rhodn}\right)}{\Loc_{\One}\left(\Q_p,D_{\rhodn}\right)+\Loc_{\Two}\left(\Q_p,D_{\rhodn}\right)}  \oplus \bigoplus \limits_{l \in \Sigma \setminus \{p\}} H^1\left(\Q_l,D_{\rhodn}\right)
  }
  \end{align}}

{\small \begin{align}\tag{\underline{Commutative\ diagram\ C}}
  \xymatrix{
    H^1\left(G_\Sigma, D_\rhodn\right) \ar[d]^{\phi_\ZZZ}\ar[r]^{\cong} &   H^1\left(G_\Sigma, D_\rhodn\right) \ar[d]^{\phi_J } \\
    \frac{H^1\left(\Q_p,D_{\rhodn}\right)}{\Loc_{\One}\left(\Q_p,D_{\rhodn}\right)\bigcap \Loc_{\Two}\left(\Q_p,D_{\rhodn}\right)} \oplus \bigoplus \limits_{l \in \Sigma \setminus \{p\}} H^1\left(\Q_l,D_{\rhodn}\right) \ar[r]& \frac{H^1\left(\Q_p,D_{\rhodn}\right)}{\Loc_{J}\left(\Q_p,D_{\rhodn}\right)}  \oplus \bigoplus \limits_{l \in \Sigma \setminus \{p\}} H^1\left(\Q_l,D_{\rhodn}\right)
  }
  \end{align}}

\begin{lemma} \label{ses-Sel-X}
  Suppose all the hypotheses and assumptions in Section \ref{varioushypotheses} hold.

  Then, the map $\phi_\XXX$ is surjective. We also have the following short exact sequences of $\RRR$-modules:
  \begin{align}
    0 \rightarrow \bigg(\frac{\Loc_{\Two}\left(\Q_p,D_{\rhodn}\right)}{{\Loc_{\One}\left(\Q_p,D_{\rhodn}\right)\bigcap \Loc_{\Two}\left(\Q_p,D_{\rhodn}\right)}}\bigg)^\vee \rightarrow \X(D_\rhodn,\Q) \rightarrow \Sel_\One(\Q,D_\rhodn)^\vee \rightarrow 0, \label{ses-selmer-one}                             \\
    0 \rightarrow\bigg(\frac{\Loc_{\One}\left(\Q_p,D_{\rhodn}\right)}{{\Loc_{\One}\left(\Q_p,D_{\rhodn}\right)\bigcap \Loc_{\Two}\left(\Q_p,D_{\rhodn}\right)}}\bigg)^\vee \rightarrow \X(D_\rhodn,\Q) \rightarrow \Sel_\Two(\Q,D_\rhodn)^\vee \rightarrow 0, \label{ses-selmer-two}                              \\
    \bigg(\frac{\Loc_{\One}\left(\Q_p,D_{\rhodn}\right) + \Loc_{\Two}\left(\Q_p,D_{\rhodn}\right)}{{\Loc_{\One}\left(\Q_p,D_{\rhodn}\right) \bigcap \Loc_{\Two}\left(\Q_p,D_{\rhodn}\right)}}\bigg)^\vee \rightarrow \XXX (\Q,D_{\rhodn}) \rightarrow \ZZZ(\Q,D_{\rhodn}) \rightarrow 0, \label{ses-selmer-three} \\
    \bigg(\frac{\Loc_{\One}\left(\Q_p,D_{\rhodn}\right) }{{\Loc_{\One}\left(\Q_p,D_{\rhodn}\right) \bigcap \Loc_{\Two}\left(\Q_p,D_{\rhodn}\right)}}\bigg)^\vee \rightarrow  \Sel_\One(\Q,D_\rhodn)^\vee \rightarrow \ZZZ(\Q,D_{\rhodn}) \rightarrow 0, \label{ses-selmer-four}                                   \\
    \bigg(\frac{\Loc_{\Two}\left(\Q_p,D_{\rhodn}\right) }{{\Loc_{\One}\left(\Q_p,D_{\rhodn}\right) \bigcap \Loc_{\Two}\left(\Q_p,D_{\rhodn}\right)}}\bigg)^\vee \rightarrow  \Sel_\Two(\Q,D_\rhodn)^\vee \rightarrow \ZZZ(\Q,D_{\rhodn}) \rightarrow 0. \label{ses-selmer-five}
  \end{align}
\end{lemma}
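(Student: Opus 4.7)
The plan is to deduce all the stated facts from the three commutative diagrams by a diagram chase, followed by Pontryagin duality. For the surjectivity of $\phi_\XXX$, Commutative Diagram A with $J=\One$ exhibits $\phi_\XXX$ as the composition of $\phi_\One$ with the natural surjection acting on the $p$-component
\begin{align*}
\frac{H^1(\Q_p,D_\rhodn)}{\Loc_\One(\Q_p,D_\rhodn)} \twoheadrightarrow \frac{H^1(\Q_p,D_\rhodn)}{\Loc_\One(\Q_p,D_\rhodn)+\Loc_\Two(\Q_p,D_\rhodn)},
\end{align*}
and the identity elsewhere. Consequence (e) of Section~\ref{consequences-hyp-assumptions} supplies the surjectivity of $\phi_\One$, hence of $\phi_\XXX$.

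For (\ref{ses-selmer-one}) and (\ref{ses-selmer-two}), I would apply the snake lemma to Commutative Diagram A with $J=\One$. Since both $\phi_\One$ and $\phi_\XXX$ are surjective, the snake lemma produces the short exact sequence of discrete $\RRR$-modules
\begin{align*}
0 \to \Sel_\One(\Q,D_\rhodn) \to \ker(\phi_\XXX) \to \frac{\Loc_\Two(\Q_p,D_\rhodn)}{\Loc_\One(\Q_p,D_\rhodn) \cap \Loc_\Two(\Q_p,D_\rhodn)} \to 0,
\end{align*}
where the right-most term arises via the second isomorphism theorem identification $\tfrac{\Loc_\One+\Loc_\Two}{\Loc_\One} \cong \tfrac{\Loc_\Two}{\Loc_\One \cap \Loc_\Two}$. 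Taking Pontryagin duals (which is exact on the relevant locally compact abelian groups) and recalling $\ker(\phi_\XXX)^\vee = \XXX(\Q,D_\rhodn)$ yields (\ref{ses-selmer-one}). The identical argument with $J=\Two$ produces (\ref{ses-selmer-two}).

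For (\ref{ses-selmer-three}), (\ref{ses-selmer-four}), and (\ref{ses-selmer-five}), Commutative Diagrams B and C realize $\phi_\XXX$ and $\phi_J$ as compositions of $\phi_\ZZZ$ with the natural quotient maps on the $p$-component, whose kernels are $\tfrac{\Loc_\One+\Loc_\Two}{\Loc_\One \cap \Loc_\Two}$ and $\tfrac{\Loc_J}{\Loc_\One \cap \Loc_\Two}$ respectively. Since $\phi_\ZZZ$ is not a priori surjective, we cannot apply the snake lemma to obtain full short exact sequences. Instead, we obtain only the inclusions $\ker(\phi_\ZZZ) \subseteq \ker(\phi_\XXX)$ and $\ker(\phi_\ZZZ) \subseteq \ker(\phi_J)$, together with injections of the corresponding quotient kernels into the kernels of the bottom quotient maps (via $\phi_\ZZZ$ applied to a preimage). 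Dualizing gives the surjections $\XXX(\Q,D_\rhodn) \twoheadrightarrow \ZZZ(\Q,D_\rhodn)$ and $\Sel_J(\Q,D_\rhodn)^\vee \twoheadrightarrow \ZZZ(\Q,D_\rhodn)$, whose kernels in turn receive surjections from the Pontryagin duals of the two local quotients above. Composing yields precisely the right-exact sequences (\ref{ses-selmer-three})--(\ref{ses-selmer-five}).

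The main point to watch is the asymmetry between the two families: (\ref{ses-selmer-one}) and (\ref{ses-selmer-two}) are full short exact sequences because $\phi_\One$, $\phi_\Two$, and $\phi_\XXX$ are known to be surjective, whereas (\ref{ses-selmer-three})--(\ref{ses-selmer-five}) omit the initial zero precisely because we do not claim surjectivity of $\phi_\ZZZ$. Establishing that would require additional control on the image of $\phi_\ZZZ$ inside the local factor at $p$, which is not needed here. Hypothesis \ref{loc-free} and consequence (a) of Section~\ref{consequences-hyp-assumptions}, asserting freeness of the various duals of local quotients appearing as the leftmost modules, are the critical inputs that make these sequences well-suited for the later $Z^2$-cycle computations.
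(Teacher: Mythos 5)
Your proof is correct and follows essentially the same route as the paper, which simply invokes the surjectivity of $\phi_\One$ and $\phi_\Two$ (consequence (e) of Section~\ref{consequences-hyp-assumptions}) and a Snake Lemma applied to Commutative Diagrams A, B, and C. You have unpacked this one-line argument accurately, and your careful observation of the asymmetry between the full short exact sequences (\ref{ses-selmer-one})--(\ref{ses-selmer-two}) (available because $\phi_\One$, $\phi_\Two$, $\phi_\XXX$ are all surjective) versus the merely right-exact sequences (\ref{ses-selmer-three})--(\ref{ses-selmer-five}) (because $\phi_\ZZZ$ is not asserted to be surjective) is exactly the point that makes the statement of the lemma what it is.
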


\begin{proof}
  The global to local maps $\Phi_\One$ and $\Phi_\Two$, defining the Selmer groups $\Sel_\One(\Q,D_\rhodn)$ and $\Sel_\Two(\Q,D_\rhodn)$, are surjective. The lemma follows by applying the Snake Lemma to the commutative diagrams given above.
\end{proof}

\subsection{Consequences of duality theorems} \label{dualitysection}

Recall that the ring $\RRR$ is a Gorenstein local ring whose residue field is finite with characteristic $p$. The dualizing module, often denoted $\omega_\RRR$, is isomorphic to $\RRR$.

Let $\G$ denote a profinite group satisfying the following conditions:
\begin{enumerate}[label=(F)]
  \item\label{F-cond} {$H^i\left(\G,M\right)$} is finite for all $i \geq 0$ and for every finite  $\RRR$-module $M$ (that is, the cardinality of $M$ is finite) equipped with a continuous $\RRR$-linear action of the profinite group $\G$.
\end{enumerate}
\begin{enumerate}[label=(CD)]
  \item\label{CD-cond} The $p$-cohomological dimension of $\G$ is less than or equal to $2$.
\end{enumerate}

\begin{remark}
  Note that conditions \ref{F-cond} and \ref{CD-cond} are both valid when the profinite group $\G$ equals the local decomposition $\Gal{\overline{\Q}_l}{\Q_l}$ for any prime $l \in \Sigma$, or the global Galois group $G_\Sigma$.
\end{remark}
Let $\TTT$ denote a finitely generated $\RRR$-module with a continuous $\RRR$-linear $\G$-action. Let $\DDD$ denote the discrete $\RRR$-module $\TTT \otimes_{\RRR} \RRR^\vee$. Observe that $\DDD$ also has a natural continuous $\RRR$-linear action of $\G$. \\

Let $\DDD^+(\RRR-\mathrm{mod})$ denote the subcategory of the derived category of finitely generated $\RRR$-modules, whose objects are chain complexes that are bounded from below. Proposition 4.2.5 in Nekov{\'a}{\v{r}}'s work on Selmer complexes \cite{nekovar2006selmer} shows that we have
\begin{align*}
  D\left(\mathbf{R\Gamma}_\cont(\G,\DDD)\right) \in \mathbf{D}^{+}\left(\RRR-\mathrm{mod}\right), \qquad \mathbf{R\Gamma}_\cont(\G,\TTT) \in \mathbf{D}^{+}\left(\RRR-\mathrm{mod}\right),
\end{align*}
such that we have the following isomorphism of $\RRR$-modules, for all $j \geq 0$:
\begin{align}
  H^{j}\bigg(D\left(\mathbf{R\Gamma}_\cont(\G,\DDD)\right)\bigg) \cong H^j\left(\G,\DDD\right)^\vee, \quad H^{j}\bigg(\mathbf{R\Gamma}_\cont(\G,\TTT)\bigg) \cong H^j_{\ct}\left(\G,\TTT\right).
\end{align}
Here, $H^j\left(\G,\DDD\right)$ (and $H^j_{\ct}\left(\G,\TTT\right)$ respectively) denote the discrete (and compact respectively) Galois cohomology groups for the continuous action of $\G$ on $\DDD$ (and $\TTT$ respectively). \\

To state Nekov{\'a}{\v{r}}'s results, one needs to use the notion of \textit{hyperext} groups (denoted $\mathrm{\mathbb{E}xt}\left(-,-\right)$) in the derived category $\mathbf{D}^{+}\left(\RRR-\mathrm{mod}\right)$. See Section 6 in Chapter I of Hartshorne's book on Residues and Duality \cite{MR0222093} for the definition of hyperext groups. Nekov{\'a}{\v{r}} has deduced the following \textit{hyper-cohomology} spectral sequence:
\begin{align} \label{hyperext}
  \mathrm{\mathbb{E}xt}^{i}\bigg(H^j\left(\G,\DDD\right)^\vee,\RRR\bigg) \implies H^{i+j}_{\ct}\left(\G,\TTT\right).
\end{align}
See equation (4.3.1.2) in Section 4.3 of his work on Selmer complexes \cite{nekovar2006selmer}. Using Corollary 6.1 in Chapter I of Hartshorne's book \cite{MR0222093} (see also Corollary 10.7.5 in Weibel's book \cite{weibel1995introduction}), we have the following isomorphism of $\RRR$-modules:
\begin{align}
  \mathrm{\mathbb{E}xt}^{i}\bigg(H^j\left(\G,\DDD\right)^\vee,\RRR\bigg) \cong \Ext^i_{\RRR}\left(H^j\left(\G,\DDD\right)^\vee,\RRR\right).
\end{align}

We shall suppose the following conditions hold:
\begin{align}
  H^2\left(\G,\DDD\right)=0, \qquad H^0_\ct\left(\G,\TTT\right)=0.
\end{align}

The spectral sequence (\ref{hyperext}) lets us obtain the exact sequences of $\RRR$-modules:
\begin{align}\label{1-ext-spectral}
  0 \rightarrow \Ext^{1}_{\RRR}\left(H^0\left(\G,\DDD\right)^\vee,\RRR\right) & \rightarrow \\ \notag & \rightarrow H^1_\ct\left(\G,\TTT\right) \rightarrow \Ext^{0}_{\RRR}\left(H^1\left(\G,\DDD\right)^\vee,\RRR\right) \rightarrow    \Ext^{2}_{\RRR}\left(H^0\left(\G,\DDD\right)^\vee,\RRR\right),
\end{align}
and
\begin{align} \label{2-ext-spectral}
  \Ext^{0}_{\RRR}\left(H^1\left(\G,\DDD\right)^\vee,\RRR\right)  \rightarrow & \Ext^{2}_{\RRR}\left(H^0\left(\G,\DDD\right)^\vee,\RRR\right)  \rightarrow \\ \notag & \rightarrow H^2_\ct\left(\G,\TTT\right) \rightarrow  \Ext^{1}_{\RRR}\left(H^1\left(\G,\DDD\right)^\vee,\RRR\right) \rightarrow   \Ext^{3}_{\RRR}\left(H^0\left(\G,\DDD\right)^\vee,\RRR\right).
\end{align}

\subsubsection{Consequences for local Galois cohomology groups, $l =p$.} \label{subsub-lp}
By equation (\ref{primes-consequence}), we have
\begin{align*}
  H^2\left(\Q_p,D_{\rhodn} \right) =0, \qquad H^0_\ct\left(\Q_p,T_{\rhodn} \right)^\vee = 0.
\end{align*}

Equation (\ref{local-p-h0-consequence}) lets us deduce that
\begin{align*}
  H^0\left(\Q_p,D_{\rhodn} \right)^\vee = 0.
\end{align*}
\ref{locp0} along with Proposition 5.10 (see also Remark 5.10.1) in Greenberg's work \cite{MR2290593} lets us conclude that the $\RRR$-module $H^1\left(\Q_p,D_{\rhodn}\right)^\vee$ is free. So, we have
\begin{align}
  \Ext^i_\RRR\bigg(H^1\left(\Q_p,D_{\rhodn}\right)^\vee,\RRR\bigg)=0, \quad \forall i \geq 1.
\end{align}

We have the following lemma:
\begin{lemma}\label{ext1-locap}
  Suppose all the hypotheses and assumptions in Section \ref{varioushypotheses} hold. Then, the $\RRR$-module $\left(\frac{H^1\left(\Q_p,D_{\rhodn}\right)}{\Loc_{\One}\left(\Q_p,D_{\rhodn}\right)+\Loc_{\Two}\left(\Q_p,D_{\rhodn}\right)}\right)^\vee$ is free. Consequently, \begin{align*}
  \Ext_\RRR^{i}\left(\left(\frac{H^1\left(\Q_p,D_{\rhodn}\right)}{\Loc_{\One}\left(\Q_p,D_{\rhodn}\right)+\Loc_{\Two}\left(\Q_p,D_{\rhodn}\right)}\right)^\vee,\RRR\right) =0, \quad \forall i \geq 1.
  \end{align*}
\end{lemma}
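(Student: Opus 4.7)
The plan is to derive the freeness from the short exact sequence obtained by dualizing the inclusion $\Loc_{\One}(\Q_p,D_{\rhodn})+\Loc_{\Two}(\Q_p,D_{\rhodn}) \hookrightarrow H^1(\Q_p,D_{\rhodn})$. Since Pontryagin duality is exact on discrete $\RRR$-modules, this yields
\begin{align*}
0 \to \left(\frac{H^1(\Q_p,D_{\rhodn})}{\Loc_{\One}(\Q_p,D_{\rhodn})+\Loc_{\Two}(\Q_p,D_{\rhodn})}\right)^\vee \to H^1(\Q_p,D_{\rhodn})^\vee \to \bigl(\Loc_{\One}(\Q_p,D_{\rhodn})+\Loc_{\Two}(\Q_p,D_{\rhodn})\bigr)^\vee \to 0
\end{align*}
of finitely generated $\RRR$-modules.

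Next I would invoke the two freeness statements already available. First, observation (\ref{obs-a}) in Section \ref{consequences-hyp-assumptions}, which uses \ref{loc-free} together with the splitting of the natural quotient sequence, shows that the right-hand term $\bigl(\Loc_{\One}(\Q_p,D_{\rhodn})+\Loc_{\Two}(\Q_p,D_{\rhodn})\bigr)^\vee$ is a free $\RRR$-module. Second, Subsection \ref{subsub-lp}, which applies Greenberg's Proposition 5.10 from \cite{MR2290593} under \ref{locp0}, gives that the middle term $H^1(\Q_p,D_{\rhodn})^\vee$ is also a free $\RRR$-module.

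Since the right-hand term is free, hence projective, the above short exact sequence splits, exhibiting $\left(\frac{H^1(\Q_p,D_{\rhodn})}{\Loc_{\One}+\Loc_{\Two}}\right)^\vee$ as a direct summand of the finitely generated free $\RRR$-module $H^1(\Q_p,D_{\rhodn})^\vee$. A finitely generated projective module over the Noetherian local ring $\RRR$ is free, so the quotient dual is free as claimed. The consequence about $\Ext$ groups is then immediate: free modules are projective, so $\Ext^i_\RRR(-,\RRR)$ vanishes in all degrees $i \geq 1$.

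I do not expect a genuine obstacle here, as the lemma is essentially a bookkeeping consequence of the two preceding freeness results and the splitting of a short exact sequence whose quotient is projective. The only subtle point to flag is that one must appeal to observation (\ref{obs-a}) rather than \ref{loc-free} directly, since \ref{loc-free} only asserts freeness of $\Loc_{\One}^\vee$, $\Loc_{\Two}^\vee$ and the two quotients, and it is the combination of these with the split sequence in (\ref{obs-a}) that gives freeness of the sum.
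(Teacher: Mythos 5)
Your proof is correct and takes essentially the same route as the paper: you dualize the inclusion $\Loc_{\One}+\Loc_{\Two}\hookrightarrow H^1(\Q_p,D_{\rhodn})$ to get the short exact sequence the paper itself displays, invoke freeness of the middle term (from Subsection \ref{subsub-lp}) and of the right-hand term (from observation \ref{obs-a}), and conclude freeness of the kernel via the splitting/direct-summand argument. The paper simply says the lemma ``follows directly'' from that short exact sequence; you have filled in exactly the implicit step.
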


\begin{proof}
  The lemma follows directly using the following short exact sequence:
  \begin{align*}
    0  \rightarrow \left(\frac{H^1\left(\Q_p,D_{\rhodn}\right)}{\Loc_{\One}\left(\Q_p,D_{\rhodn}\right)+\Loc_{\Two}\left(\Q_p,D_{\rhodn}\right)}\right)^\vee & \rightarrow \underbrace{H^1\left(\Q_p,D_{\rhodn}\right)^\vee}_{\text{free over }\RRR} \rightarrow \\ & \rightarrow \underbrace{\left(\Loc_{\One}\left(\Q_p,D_{\rhodn}\right)+\Loc_{\Two}\left(\Q_p,D_{\rhodn}\right)\right)^\vee}_{\text{free over }\RRR}  \rightarrow  0
  \end{align*}
\end{proof}

The observations in Section \ref{consequences-hyp-assumptions} along with equation (\ref{1-ext-spectral}) lets us obtain the following isomorphisms of $\RRR$-modules:
\begin{align} \label{iso-local-tate-duality}
  \Hom_\RRR\bigg(H^1\left(\Q_p,D_{\rhodn}\right)^\vee,\RRR\bigg) & \cong H^1_\ct\left(\Q_p,T_{\rhodn}\right),                                    \\ \notag
                                                                 & \cong H^1\left(\Q_p,D_{\rhodnstar}\right)^\vee, \quad \text{(local duality)}.
\end{align}

The natural injection of $\RRR$-modules
\begin{align*}
  \Loc_\One(\Q_p,D_\rhodn)+ \Loc_\Two(\Q_p,D_\rhodn) \hookrightarrow H^1\left(\Q_p,\D_\rhodn\right)
\end{align*}
give us the following natural surjections of $\RRR$-modules (by considering Pontryagin duals)
\begin{align*}
  H^1\left(\Q_p,D_\rhodn\right)^\vee \twoheadrightarrow \left(\Loc_\One(\Q_p,D_\rhodn)+ \Loc_\Two(\Q_p,D_\rhodn)\right)^\vee,
\end{align*}
which in turn let us obtain the following natural injections of $\RRR$-modules (by considering reflexive duals and the isomorphism in equation (\ref{iso-local-tate-duality})):
\begin{align*}
  \left(\left(\Loc_\One(\Q_p,D_\rhodn)+ \Loc_\Two(\Q_p,D_\rhodn)\right)^\vee\right)^*  \hookrightarrow H^1_\ct\left(\Q_p,T_\rhodn\right).
\end{align*}

Under the perfect pairing given by local duality
\begin{align*}
  H^1\left(\Q_p, D_\rhodnstar\right) \times H^1_\ct\left(\Q_p,T_\rhodn\right) \rightarrow \frac{\Q_p}{\Z_p},
\end{align*}
we define
\begin{align*}
  \Loc_{\One,\Two}\left(\Q_p,D_\rhodnstar\right)  \subset H^1\left(\Q_p,D_\rhodnstar\right),
\end{align*}
to be the orthogonal complement of $\left(\left(\Loc_{\One}\left(\Q_p,D_\rhodn\right)+\Loc_{\Two}\left(\Q_p,D_\rhodn\right)\right)^\vee\right)^*$ under the pairing given above.

We define $\ZZZ^\Sstar(\Q,D_\rhodnstar)$ to be the Pontyagin dual of
\begin{align} \label{ZDstar}
  \ker\left(H^1\left(G_\Sigma,D_\rhodnstar\right) \rightarrow \frac{H^1\left(\Q_p,D_\rhodnstar\right)}{\Loc_{\One,\Two}\left(\Q_p,D_\rhodnstar\right)}\oplus \bigoplus_{l \in \Sigma \setminus \{p\}} H^1\left(\Q_l,D_\rhodnstar\right) \right).
\end{align}
Note that we have the following natural surjection of $\RRR$-modules:
\begin{align*}
  \ZZZ^\Sstar(\Q,D_\rhodnstar) \twoheadrightarrow \Sha^1\left(\Q,D_\rhodnstar\right)^\vee.
\end{align*}

\begin{remark}
This definition of $\ZZZ^\Sstar(\Q,D_\rhodnstar)$ in equation (\ref{ZDstar}) does not match the description given in equation (\ref{ZD}) for the Galois representation $\rhodnstar$ and, in fact, it need not in general.  See Section \ref{S:pseudonulldesc} for a precise description of $\ZZZ^\Sstar(\Q,D_\rhodnstar)$ in the setting of Theorem \ref{maintheorem} and Remark \ref{synopsis-match} for when these descriptions do match.
\end{remark}

\begin{lemma} \label{lem:insidecompactiso}
  Suppose all the hypotheses and assumptions in Section \ref{varioushypotheses} hold. We have the following natural isomorphism of $\RRR$-modules:
  \begin{align*}
    \frac{\left(H^1\left(\Q_p,D_{\rhodn}\right)^\vee\right)^*}{\left(\left(\Loc_\One(\Q_p,D_\rhodn) + \Loc_\Two(\Q_p,D_\rhodn)\right)^\vee\right)^*}  \cong \left( \left(\frac{H^1\left(\Q_p,D_{\rhodn}\right)}{\Loc_\One(\Q_p,D_\rhodn) + \Loc_\Two(\Q_p,D_\rhodn)} \right)^\vee\right)^*
  \end{align*}
\end{lemma}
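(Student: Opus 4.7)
The plan is to derive the isomorphism from the long exact sequence of $\Ext$'s obtained by dualizing a natural short exact sequence, using freeness properties already established in Section \ref{consequences-hyp-assumptions} and Section \ref{subsub-lp}.

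First I would start from the inclusion $\Loc_{\One}(\Q_p,D_{\rhodn}) + \Loc_{\Two}(\Q_p,D_{\rhodn}) \hookrightarrow H^1(\Q_p,D_{\rhodn})$ and take its Pontryagin dual, producing the short exact sequence of $\RRR$-modules
\begin{align*}
0 \to \left(\frac{H^1(\Q_p,D_{\rhodn})}{\Loc_{\One}(\Q_p,D_{\rhodn})+\Loc_{\Two}(\Q_p,D_{\rhodn})}\right)^\vee \to H^1(\Q_p,D_{\rhodn})^\vee \to \bigl(\Loc_{\One}(\Q_p,D_{\rhodn})+\Loc_{\Two}(\Q_p,D_{\rhodn})\bigr)^\vee \to 0.
\end{align*}
I would then apply the contravariant functor $\Hom_{\RRR}(-,\RRR) = (-)^*$ to this sequence and read off the long $\Ext$-sequence.

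The decisive input is the observation in Section \ref{consequences-hyp-assumptions}(\ref{obs-a}) that, under \ref{loc-free}, the $\RRR$-module $\bigl(\Loc_{\One}(\Q_p,D_{\rhodn})+\Loc_{\Two}(\Q_p,D_{\rhodn})\bigr)^\vee$ is free, hence projective, so that $\Ext^{1}_{\RRR}\bigl(\bigl(\Loc_{\One}(\Q_p,D_{\rhodn})+\Loc_{\Two}(\Q_p,D_{\rhodn})\bigr)^\vee,\RRR\bigr) = 0$. Consequently the dualized complex truncates to the short exact sequence
\begin{align*}
0 \to \bigl(\bigl(\Loc_{\One}(\Q_p,D_{\rhodn})+\Loc_{\Two}(\Q_p,D_{\rhodn})\bigr)^\vee\bigr)^* \to \bigl(H^1(\Q_p,D_{\rhodn})^\vee\bigr)^* \to \left(\left(\frac{H^1(\Q_p,D_{\rhodn})}{\Loc_{\One}(\Q_p,D_{\rhodn})+\Loc_{\Two}(\Q_p,D_{\rhodn})}\right)^\vee\right)^* \to 0,
\end{align*}
and the claimed isomorphism is obtained by identifying the quotient. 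There is no real obstacle here: the freeness of $H^1(\Q_p,D_{\rhodn})^\vee$ recorded in Section \ref{subsub-lp} (which comes from \ref{locp0} via Greenberg's \cite{MR2290593}) together with Lemma \ref{ext1-locap} makes all three modules in the original sequence free, but only the $\Ext^{1}$-vanishing for the rightmost term is needed. The main care point is simply to track the direction reversal caused by dualizing and to note that ``$\vee \circ *$'' is applied in the correct order, so that the free quotient $(\Loc_{\One}+\Loc_{\Two})^\vee$ becomes the submodule after applying $(-)^*$.
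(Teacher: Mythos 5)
Your proof is correct and takes essentially the same route as the paper: dualize the short exact sequence coming from the inclusion of local conditions, apply $\Hom_\RRR(-,\RRR)$, and use the freeness of $\bigl(\Loc_\One(\Q_p,D_\rhodn)+\Loc_\Two(\Q_p,D_\rhodn)\bigr)^\vee$ from observation (\ref{obs-a}) to kill the $\Ext^1$ obstruction. Your remark that only the freeness of the dual of the sum (rather than of all three terms) is strictly needed is a nice bit of precision, but it is the same argument.
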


\begin{proof}
  The lemma follows directly using the observation \ref{obs-a} in Section \ref{consequences-hyp-assumptions} and the following short exact sequence of free $\RRR$-modules:
  {\small \begin{align*}
    0  \rightarrow \left(\frac{H^1\left(\Q_p,D_{\rhodn}\right)}{\Loc_{\One}\left(\Q_p,D_{\rhodn}\right)+\Loc_{\Two}\left(\Q_p,D_{\rhodn}\right)}\right)^\vee & \rightarrow H^1\left(\Q_p,D_{\rhodn}\right)^\vee \rightarrow \left(\Loc_{\One}\left(\Q_p,D_{\rhodn},\Q_p\right)+\Loc_{\Two}\left(\Q_p,D_{\rhodn}\right)\right)^\vee  \rightarrow  0
    \end{align*}}
\end{proof}

\subsubsection{Consequences for local Galois cohomology groups, $l \neq p$.}
\ref{tor-loc-sigma} lets us conclude that
\begin{align} \label{h1-lneqp}
  \Hom_\RRR \bigg(H^1\left(\Q_l,D_{\rhodn}\right)^\vee, \RRR \bigg) = 0.
\end{align}

Let $\QQQ$ be a height two prime ideal in the ring $\RRR$. Since the ring $\RRR$ is Gorenstein, the localization $\RRR_\QQQ$ is also a Gorenstein ring with Krull dimension two. The injective dimension of the $\RRR_\QQQ$-module $\RRR_\QQQ$ equals two. By Lemma 2 in Chapter 19 of Matsumura's book \cite{matsumura1989commutative}, we have $$\Ext^{3}_{\RRR}\left(H^0\left(\G,\DDD\right)^\vee,\RRR\right) \otimes_\RRR \RRR_\QQQ \cong \Ext^{3}_{\RRR_\QQQ}\left(H^0\left(\G,\DDD\right)^\vee \otimes_\RRR \RRR_\QQQ,\RRR_\QQQ\right) = 0 $$

Using equation (\ref{2-ext-spectral}), we have the following short exact sequence of $\RRR_\QQQ$-modules:
\begin{align} \label{local-group-consequence}
  0 \rightarrow \Ext^{2}_{\RRR}\left(H^0\left(\Q_l,D_{\rhodn}\right)^\vee,\RRR\right)_\QQQ & \rightarrow H^2_\ct\left(\Q_l,T_{\rhodn}\right)_\QQQ  \rightarrow \Ext^{1}_{\RRR}\left(H^1\left(\Q_l,D_{\rhodn}\right)^\vee,\RRR\right)_\QQQ \rightarrow 0.
\end{align}

\subsubsection{Consequences for global Galois cohomology groups}
By equations (\ref{h0-global-compact-consequence}) and (\ref{weak-leo-consequence}), we have
\begin{align*}
  H^2\left(G_\Sigma,D_{\rhodn} \right) =0, \qquad H^0_\ct\left(G_\Sigma,T_{\rhodn} \right)^\vee = 0.
\end{align*}

Equation (\ref{global-p-h0-consequence}) lets us deduce that
\begin{align*}
  H^0\left(G_\Sigma,D_{\rhodn} \right)^\vee = 0.
\end{align*}

Using (\ref{1-ext-spectral}) and (\ref{2-ext-spectral}), we have the following isomorphisms of $\RRR$-modules:
\begin{align} \label{global-h1h2}
  H^1_\ct\left(G_\Sigma,T_{\rhodn} \right) \cong \Hom_{\RRR}\bigg(H^1\left(G_\Sigma,D_{\rhodn}\right)^\vee,\RRR\bigg), \qquad H^2_\ct\left(G_\Sigma,T_{\rhodn}\right) \cong \Ext^{1}_{\RRR}\bigg(H^1\left(G_\Sigma,D_{\rhodn}\right)^\vee,\RRR\bigg).
\end{align}

\subsubsection{Consequences for the module $\XXX(\Q,D_{\rhodn})$}
\begin{proposition} \label{star-ses}
  Suppose all the hypotheses and assumptions in Section \ref{varioushypotheses} hold. Then, the $\RRR$-module $\Ext^1_{\RRR}\left(\XXX(\Q,D_{\rhodn}),\RRR\right)$ is pseudo-null if and only if $\ZZZ^\Sstar(\Q,D_{\rhodnstar})$ is pseudo-null.
  Furthermore, if the $\RRR$-module $\ZZZ^\Sstar(\Q,D_{\rhodnstar})$ is pseudo-null, then we have the following equality in $Z^2\left(\RRR\right)$:
  \begin{align*}
    c_2\bigg(\Ext^1_{\RRR}\left(\XXX(\Q,D_{\rhodn}),\RRR\right) \bigg) = c_2\bigg(\ZZZ^\Sstar(\Q,D_{\rhodnstar}) \bigg)  + \sum_{l \in \Sigma \setminus \{p\}} c_2\bigg(\left(H^0\left(\Q_l,D_{\rhodn}\right)^\vee\right)_{\pn}\bigg).
  \end{align*}
\end{proposition}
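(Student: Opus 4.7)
The plan is to apply $\Hom_\RRR(-,\RRR)$ to the defining exact sequence of $\XXX(\Q,D_\rhodn)$ and identify the resulting $\Ext$ terms by combining the hyper-cohomology spectral sequence of Section~\ref{dualitysection}, local Tate duality at $p$, and global Poitou--Tate duality. The pseudo-null fudge factors at primes $l\neq p$ will arise from the exact sequence (\ref{local-group-consequence}) together with Lemma~\ref{ext2-is-PN}.

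\emph{Step 1.} Since $\phi_\XXX$ is surjective (consequence (f) in Section~\ref{consequences-hyp-assumptions}), Pontryagin duality gives a short exact sequence
$$0 \to L \to H^1(G_\Sigma,D_\rhodn)^\vee \to \XXX(\Q,D_\rhodn) \to 0,$$
where
$$L := \bigg(\frac{H^1(\Q_p,D_\rhodn)}{\Loc_\One(\Q_p,D_\rhodn)+\Loc_\Two(\Q_p,D_\rhodn)}\bigg)^\vee \oplus \bigoplus_{l\in\Sigma\setminus\{p\}} H^1(\Q_l,D_\rhodn)^\vee.$$
Applying $\Hom_\RRR(-,\RRR)$ and substituting the identifications of (\ref{global-h1h2}) yields the long exact sequence
$$H^1_\ct(G_\Sigma,T_\rhodn)\to \Hom_\RRR(L,\RRR)\to \Ext^1_\RRR(\XXX,\RRR)\to H^2_\ct(G_\Sigma,T_\rhodn)\to \Ext^1_\RRR(L,\RRR).$$

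\emph{Step 2.} Analyze the local pieces. Lemma~\ref{ext1-locap} makes the $p$-factor of $L$ into a free $\RRR$-module, so its higher $\Ext$ vanish; Lemma~\ref{lem:insidecompactiso} together with (\ref{iso-local-tate-duality}) identifies its $\Hom$-dual with $\Loc_{\One,\Two}(\Q_p,D_\rhodnstar)^\vee$. At each prime $l\in\Sigma\setminus\{p\}$, the $\Hom$-dual of $H^1(\Q_l,D_\rhodn)^\vee$ vanishes by (\ref{h1-lneqp}); and (\ref{local-group-consequence}), combined with the local Tate duality isomorphism $H^2_\ct(\Q_l,T_\rhodn)\cong H^0(\Q_l,D_\rhodnstar)^\vee$, identifies $\Ext^1_\RRR(H^1(\Q_l,D_\rhodn)^\vee,\RRR)$ after localization at each height-two prime with the quotient of this $H^0$ by the pseudo-null module $\Ext^2_\RRR(H^0(\Q_l,D_\rhodn)^\vee,\RRR)$.

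\emph{Step 3.} Match with Poitou--Tate. By construction of $\Loc_{\One,\Two}$ as the orthogonal complement of $((\Loc_\One+\Loc_\Two)^\vee)^*$ under local duality, the map $H^1_\ct(G_\Sigma,T_\rhodn)\to \Loc_{\One,\Two}(\Q_p,D_\rhodnstar)^\vee$ is Pontryagin-dual to the local restriction map at $p$ that enters the definition of $\ZZZ^\Sstar(\Q,D_\rhodnstar)$. Using the global-to-local Poitou--Tate nine-term exact sequence for the pair $(T_\rhodn, D_\rhodnstar)$ and absorbing the pseudo-null corrections at each $l\neq p$ produced in Step~2, I would derive an exact sequence of $\RRR$-modules
$$\ZZZ^\Sstar(\Q,D_\rhodnstar) \to \Ext^1_\RRR(\XXX(\Q,D_\rhodn),\RRR) \to \bigoplus_{l\in\Sigma\setminus\{p\}}\Ext^2_\RRR(H^0(\Q_l,D_\rhodn)^\vee,\RRR) \to 0,$$
in which the kernel of the left map is pseudo-null, controlled by $\Sha^1(\Q,D_\rhodnstar)^\vee$-type contributions that carry no $c_2$-term.

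\emph{Step 4.} Conclude. Each $\Ext^2_\RRR(H^0(\Q_l,D_\rhodn)^\vee,\RRR)$ is pseudo-null by Lemma~\ref{ext2-is-PN}, which also gives $c_2(\Ext^2_\RRR(H^0(\Q_l,D_\rhodn)^\vee,\RRR)) = c_2((H^0(\Q_l,D_\rhodn)^\vee)_\pn)$. The exact sequence of Step~3 then shows that $\Ext^1_\RRR(\XXX,\RRR)$ is pseudo-null if and only if $\ZZZ^\Sstar(\Q,D_\rhodnstar)$ is, and additivity of $c_2$ on short exact sequences of pseudo-null modules yields the claimed equality in $Z^2(\RRR)$. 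The hard part will be Step~3: carefully matching the $\Ext$ long exact sequence to Poitou--Tate and reconciling the fact that $\ZZZ^\Sstar$ imposes the vanishing condition at $l\neq p$ on the $D_\rhodnstar$ side, whereas the ``Nekov\'a\v{r}-dual'' Selmer complex for $\XXX$ imposes no condition at such $l$ --- the discrepancy being precisely what is picked up by the $\Ext^2_\RRR(H^0(\Q_l,D_\rhodn)^\vee,\RRR)$ terms.
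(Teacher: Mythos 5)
Your skeleton --- apply $\Hom_\RRR(-,\RRR)$ to the defining short exact sequence of $\XXX(\Q,D_\rhodn)$, plug in the duality identifications of Section~\ref{dualitysection}, and read off the local $\Ext^2$-fudge factors at $l\ne p$ from equation~(\ref{local-group-consequence}) and Lemma~\ref{ext2-is-PN} --- is precisely the paper's strategy, and Steps~1, 2 and~4 track the paper faithfully. The problem is in Step~3, which (as you yourself flag) is where the work is, and there the exact sequence you write down does not exist as stated: there is no natural map $\ZZZ^\Sstar(\Q,D_\rhodnstar)\to\Ext^1_\RRR(\XXX(\Q,D_\rhodn),\RRR)$. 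What the Poitou--Tate and cokernel computation actually produce is a short exact sequence
\begin{align*}
0 \to \ker\bigl(\ZZZ^\Sstar(\Q,D_\rhodnstar)\twoheadrightarrow \Sha^1(\Q,D_\rhodnstar)^\vee\bigr) \to \Ext^1_\RRR\bigl(\XXX(\Q,D_\rhodn),\RRR\bigr) \to \ker\Bigl(H^2_\ct(G_\Sigma,T_\rhodn)\to\bigoplus_l \Ext^1_\RRR\bigl(H^1(\Q_l,D_\rhodn)^\vee,\RRR\bigr)\Bigr)\to 0,
\end{align*}
so that only the kernel of the surjection $\ZZZ^\Sstar\twoheadrightarrow\Sha^1(\Q,D_\rhodnstar)^\vee$ sits inside $\Ext^1$, not $\ZZZ^\Sstar$ itself. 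A second exact sequence, obtained by localizing and running the snake lemma against (\ref{local-group-consequence}), presents the right-hand term above as an extension of $\bigoplus_l\Ext^2_\RRR(H^0(\Q_l,D_\rhodn)^\vee,\RRR)$ by $\Sha^1(\Q,D_\rhodnstar)^\vee$.

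The point you are hiding is that $c_2\bigl(\Sha^1(\Q,D_\rhodnstar)^\vee\bigr)$ need not be zero. Your remark that the $\Sha^1$-type contribution ``carries no $c_2$-term'' is not correct on its own; rather, $c_2(\Sha^1(\Q,D_\rhodnstar)^\vee)$ enters the computation twice with opposite signs --- once as a summand inside $c_2(\ZZZ^\Sstar)$ via $0\to\ker(\ZZZ^\Sstar\to\Sha^1)\to\ZZZ^\Sstar\to\Sha^1(\Q,D_\rhodnstar)^\vee\to0$, and once as an explicit subquotient of $\Ext^1_\RRR(\XXX,\RRR)$ via the second exact sequence --- and these cancel. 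Concretely, the submodule of $\Ext^1$ whose $c_2$ equals $c_2(\ZZZ^\Sstar)$ is an extension of $\Sha^1(\Q,D_\rhodnstar)^\vee$ by $\ker(\ZZZ^\Sstar\to\Sha^1)$ that is \emph{not} a priori isomorphic to $\ZZZ^\Sstar$; it merely has the same second Chern class. Replacing your asserted three-term sequence and the ``no $c_2$'' claim with this pair of exact sequences and the double bookkeeping of $c_2(\Sha^1(\Q,D_\rhodnstar)^\vee)$ closes the gap and recovers the paper's proof.
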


\begin{proof}
  To prove that an $\RRR$-module $\MMM$ is pseudo-null, it suffices to show that the $\RRR_\QQQ$-module $\MMM_\QQQ$ is pseudo-null for every height two prime ideal $\QQQ$ in $\RRR$.

  By Lemma \ref{ses-Sel-X}, the map $\Phi_\XXX$ is surjective. So, we have the following short exact sequence of $\RRR$-modules:
  \begin{align*}
    0 \rightarrow   \bigg(\frac{H^1\left(\Q_p,D_{\rhodn}\right)}{\Loc_{\One}\left(\Q_p,D_{\rhodn}\right)+\Loc_{\Two}\left(\Q_p,D_{\rhodn}\right)} \bigg)^\vee \oplus \bigoplus_{l \in \Sigma \setminus \{p\}} H^1\left(\Q_l,D_{\rhodn}\right)^\vee & \rightarrow H^1\left(G_\Sigma, D_{\rhodn} \right)^\vee\rightarrow \\ & \rightarrow \XXX(\Q,D_\rhodn) \rightarrow 0.
  \end{align*}
  Apply the functor $\Hom_{\RRR}\left(-,\RRR\right)$. Use Lemmas \ref{ext1-locap} and \ref{lem:insidecompactiso} along with the isomorphisms given in equations (\ref{h1-lneqp}) and (\ref{global-h1h2}). We obtain the following long exact sequence of $\RRR$-modules:
  \begin{align*}
    \rightarrow H^1_\ct\left(G_\Sigma,T_{\rhodn} \right) & \rightarrow \frac{H^1_\ct\left(\Q_p,T_{\rhodn}\right)}{\left(\left(\Loc_\One(\Q_p,D_\rhodn)+ \Loc_\Two(\Q_p,D_\rhodn)\right)^\vee\right)^*}  \rightarrow \\ \rightarrow &\Ext^1_{\RRR}\left(\XXX(\Q,D_{\rhodn}),\RRR\right) \rightarrow H^2_\ct\left(G_\Sigma,T_{\rhodn} \right) \rightarrow  \bigoplus_{l \in \Sigma \setminus \{p\}}\Ext^{1}_{\RRR}\left(H^1\left(\Q_l,D_{\rhodn}\right)^\vee,\RRR\right) \\ & \rightarrow \Ext^2_{\RRR}\left(\XXX(\Q,D_{\rhodn}),\RRR\right) \rightarrow .
  \end{align*}

  The observations in Section \ref{subsub-lp} and Poitou-Tate duality along with the arguments in Section 3.1 of Greenberg's work on the surjectivity of the global-to-local map defining Selmer groups \cite{greenberg2010surjectivity} let us deduce that the cokernel of the map
  \begin{align*}
    H^1_\ct\left(G_\Sigma,T_{\rhodn} \right) & \rightarrow \frac{H^1_\ct\left(\Q_p,T_{\rhodn}\right)}{\left(\left(\Loc_\One(\Q_p,D_\rhodn)+ \Loc_\Two(\Q_p,D_\rhodn)\right)^\vee\right)^*}
  \end{align*}
  is isomorphic, as an $\RRR$-module to $\ker\left(\ZZZ^\Sstar(\Q,D_{\rhodnstar}) \twoheadrightarrow \Sha^1\left(\Q,D_\rhodnstar\right)^\vee\right)$. Thus, one obtains the following short exact sequence of $\RRR$-modules:
  {\small \begin{align} \label{surjectivity-poitou-tate}
    0 \rightarrow \ker\bigg(\ZZZ^\Sstar(\Q,D_{\rhodnstar}) \twoheadrightarrow \Sha^1\left(\Q,D_\rhodnstar\right)^\vee\bigg) & \rightarrow  \Ext^1_{\RRR}\left(\XXX(\Q,D_{\rhodn}),\RRR\right) \rightarrow \\ & \notag \rightarrow \ker\bigg(H^2_\ct\left(G_\Sigma,T_{\rhodn} \right) \rightarrow \bigoplus_{l \in \Sigma \setminus \{p\}}\Ext^{1}_{\RRR}\left(H^1\left(\Q_l,D_{\rhodn}\right)^\vee,\RRR\right)\bigg) \rightarrow 0.
    \end{align}}

  One can consider the following $\RRR$-module:
  \begin{align*}
    \Sha^2_\ct\left(\Q,T_{\rhodn}\right) :=\ker\bigg(H^2_\ct\left(G_\Sigma,T_{\rhodn}\right) \xrightarrow {\phi_{\Sha^2_\ct}} \underbrace{H^2_\ct\left(\Q_p,T_{\rhodn}\right)}_{=0} \oplus \bigoplus \limits_{l \in \Sigma \setminus \{p\}} H^2_\ct\left(\Q_l,T_{\rhodn}\right) \bigg).
  \end{align*}

  Poitou-Tate duality provides us an isomorphism between the $\RRR$-modules $\Sha^1(\Q,D_{\rhodnstar})^\vee$ and $\Sha^2_\ct\left(\Q,T_{\rhodn}\right)$. Furthermore, Poitou-Tate duality also tells us that the cokernel of the map $\phi_{\Sha^2_\ct}$  is isomorphic to $H^0\left(G_\Sigma,D_\rhodnstar\right)^\vee$ and hence zero.

 Let us fix a height two prime ideal, say $\QQQ$, in the ring $\RRR$.  Using equation (\ref{local-group-consequence}), we have the following commutative diagram of $\RRR_\QQQ$-modules:

  {\tiny
    \begin{align*}
      \xymatrix{
      & & H^2_\ct\left(G_\Sigma,T_{\rhodn}\right)_\QQQ \ar[d]\ar[r]^{\cong} & H^2_\ct\left(G_\Sigma,T_{\rhodn}\right)_\QQQ \ar[d]\\
      0 \ar[r] & \bigoplus \limits_{l \in \Sigma \setminus \{p\}} \Ext^{2}_{\RRR}\left(H^0\left(\Q_l,D_{\rhodn}\right)^\vee,\RRR\right)_\QQQ \ar[r] & \bigoplus \limits_{l \in \Sigma \setminus \{p\}} H^2_\ct\left(\Q_l,T_{\rhodn}\right)_\QQQ  \ar[r] & \bigoplus \limits_{l \in \Sigma \setminus \{p\}} \Ext^{1}_{\RRR}\left(H^1\left(\Q_l,D_{\rhodn}\right)^\vee,\RRR\right)_\QQQ \ar[r] & 0.
      }
    \end{align*}
  }

  Applying the Snake Lemma, we obtain the following short exact sequence of $\RRR_\QQQ$-modules:
  \begin{align} \label{PN-equivalence}
    0 \rightarrow \left(\Sha^1(\Q,D_{\rhodnstar})^\vee\right)_\QQQ & \rightarrow \ker\left(H^2_\ct\left(G_\Sigma,T_{\rhodn}\right)  \rightarrow \Ext^{1}_{\RRR}\left(H^1\left(\Q_l,D_{\rhodn}\right)^\vee,\RRR\right) \right)_\QQQ \rightarrow \\ \notag & \rightarrow \bigoplus \limits_{l \in \Sigma \setminus \{p\}} \Ext^{2}_{\RRR}\left(H^0\left(\Q_l,D_{\rhodn}\right)^\vee,\RRR\right)_\QQQ \rightarrow 0.
  \end{align}

  By Lemma \ref{ext2-is-PN}, for each $l \in \Sigma \setminus \{p\}$, the $\RRR$-module $\Ext^{2}_{\RRR}\left(H^0\left(\Q_l,D_{\rhodn}\right)^\vee,\RRR\right)$ is pseudo-null. As a result, equation (\ref{PN-equivalence}) lets us deduce that the $\RRR_\QQQ$-module $$\ker\left(H^2_\ct\left(G_\Sigma,T_{\rhodn}\right)  \rightarrow \Ext^{1}_{\RRR}\left(H^1\left(\Q_l,D_{\rhodn}\right)^\vee,\RRR\right) \right)_\QQQ $$ is pseudo-null if and only if the $\RRR_\QQQ$-module $\left(\Sha^1(\Q,D_{\rhodnstar})^\vee\right)_\QQQ$ is pseudo-null. This observation along with equation (\ref{surjectivity-poitou-tate}) lets us deduce the following implications:
  \begin{align*}
      & \text{The $\RRR_\QQQ$-module } \left(\Ext^1_{\RRR}\left(\XXX(\Q,D_{\rhodn}),\RRR\right)^\vee\right)_\QQQ \text{ is pseudo-null} \\  \iff & \text{The $\RRR_\QQQ$-modules }  \left(\ker\bigg(\ZZZ^\Sstar(\Q,D_{\rhodnstar}) \twoheadrightarrow \Sha^1\left(\Q,D_\rhodnstar,\Q\right)^\vee\bigg)\right)_Q \text{ and } \left(\Sha^1(\Q,D_{\rhodnstar})^\vee\right)_\QQQ\\& \text{ are pseudo-null}, \\
    \iff & \text{The $\RRR_\QQQ$-module } \ZZZ^\Sstar(\Q,D_{\rhodnstar})_\QQQ \text{ is pseudo-null}.
  \end{align*}

  This lets us conclude that the $\RRR$-module $\Ext^1_{\RRR}\left(\XXX(\Q,D_{\rhodn}),\RRR\right)$ is pseudo-null if and only if the $\RRR$-module $\ZZZ^\Sstar(\Q,D_\rhodnstar)$ is pseudo-null. Now, suppose the $\RRR$-module $\ZZZ^\Sstar(\Q,D_\rhodnstar)$ is pseudo-null.  Equations (\ref{surjectivity-poitou-tate}) and (\ref{PN-equivalence}) let us deduce the following equalities in $Z^2\left(\RRR\right)$:
\begin{align*}
&c_2\bigg(\Ext^1_{\RRR}\left(\XXX(\Q,D_{\rhodn}),\RRR\right) \bigg)
\\ =& c_2\left(\ZZZ^\Sstar(\Q,D_\rhodnstar)\right) -  c_2\left(\Sha^1\left(\Q,D_\rhodnstar\right)^\vee\right) + c_2\left(\Sha^1\left(\Q,D_\rhodnstar\right)^\vee\right) +
\sum_{l \in \Sigma \setminus \{p\}} c_2\bigg(\Ext^{2}_{\RRR}\left(H^0\left(\Q_l,D_{\rhodn}\right)^\vee,\RRR\right)\bigg),
\\ =& c_2\left(\ZZZ^\Sstar(\Q,D_\rhodnstar)\right) +
\sum_{l \in \Sigma \setminus \{p\}} c_2\bigg(\Ext^{2}_{\RRR}\left(H^0\left(\Q_l,D_{\rhodn}\right)^\vee,\RRR\right)\bigg),
\\ =& c_2\left(\ZZZ^\Sstar(\Q,D_\rhodnstar)\right) +
\sum_{l \in \Sigma \setminus \{p\}} c_2\bigg(\left(H^0\left(\Q_l,D_{\rhodn}\right)^\vee\right)_\pn\bigg), \qquad \text{using \ref{Sigma0-pd} and by Lemma \ref{ext2-is-PN}}.
\end{align*}
  This completes the proof of the proposition.
\end{proof}

\section{The main theorem in the general setup} \label{generalresults}

\subsection{Alternative characterizations of \ref{gcd}}

Before proving the main theorem in the general setup, we would like to provide alternative characterizations of \ref{gcd}.

\begin{proposition} \label{gcd-proposition}
  Suppose all the hypotheses and assumptions in Section \ref{varioushypotheses} hold.

  Then, the following statements are equivalent:
  \begin{enumerate}
    \item \label{gcd-one} The height of the ideal $(\theta_{\One},\theta_{\Two})$ in $\RRR$ is greater than or equal to two.
    \item \label{gcd-two} The $\RRR$-modules $\ZZZ(\Q,D_{\rhodn})$ and $\XXX(\Q,D_{\rhodn})_{\tor}$ are pseudo-null.
    \item \label{gcd-three} The $\RRR$-modules $\ZZZ(\Q,D_{\rhodn})$ and $\ZZZ^\Sstar(\Q,D_{\rhodnstar})$ are pseudo-null.
  \end{enumerate}
\end{proposition}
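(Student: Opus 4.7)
The plan is to prove $(\ref{gcd-one}) \Rightarrow (\ref{gcd-two}) \Rightarrow (\ref{gcd-one})$ and then $(\ref{gcd-two}) \Leftrightarrow (\ref{gcd-three})$, relying mainly on Lemma \ref{ses-Sel-X} and Proposition \ref{star-ses}. Observe first that by \ref{rank}, \ref{loc-free} and \ref{mc}, the short exact sequences (\ref{ses-selmer-one}) and (\ref{ses-selmer-two}) realize $\XXX(\Q, D_\rhodn)$ as an extension of a torsion Selmer dual by a rank-one free module. In particular $\XXX(\Q, D_\rhodn)$ has $\RRR$-rank one, and since the free subobjects are torsion-free, $\XXX(\Q, D_\rhodn)_{\tor}$ injects into each of $\Sel_\One(\Q, D_\rhodn)^\vee$ and $\Sel_\Two(\Q, D_\rhodn)^\vee$. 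Dually, (\ref{ses-selmer-four}) and (\ref{ses-selmer-five}) exhibit $\ZZZ(\Q, D_\rhodn)$ as a quotient of each Selmer dual.

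For $(\ref{gcd-one}) \Rightarrow (\ref{gcd-two})$, by \ref{mc} the height-one support of $\Sel_J(\Q, D_\rhodn)^\vee$ consists of the height-one primes containing $\theta_J$ for $J \in \{\One, \Two\}$, and \ref{gcd} ensures that no height-one prime contains both $\theta_\One$ and $\theta_\Two$. Thus at every height-one prime $\QQQ$, at least one of the two Selmer duals localizes to zero. Via the injection and the surjection above, both $\XXX(\Q, D_\rhodn)_{\tor}$ and $\ZZZ(\Q, D_\rhodn)$ must then vanish after localizing at $\QQQ$, so both are pseudo-null.

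For $(\ref{gcd-two}) \Rightarrow (\ref{gcd-one})$, I argue prime-by-prime. Fix a height-one prime $\QQQ$; since $\XXX(\Q, D_\rhodn)_{\tor}$ is pseudo-null, $\XXX(\Q, D_\rhodn)_\QQQ$ is a rank-one torsion-free module over the DVR $\RRR_\QQQ$, hence free. Localizing (\ref{ses-selmer-one}) and (\ref{ses-selmer-two}), the two free rank-one subobjects map into $\XXX(\Q, D_\rhodn)_\QQQ$ with cyclic cokernels of lengths $e_1 := v_\QQQ(\theta_\One)$ and $e_2 := v_\QQQ(\theta_\Two)$ respectively, by \ref{mc}. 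Hence the images are generated by elements of $\QQQ$-adic valuation $e_1$ and $e_2$ in the rank-one free module $\XXX(\Q, D_\rhodn)_\QQQ$, and because $\RRR_\QQQ$ is a DVR, their sum is the cyclic submodule of valuation $\min(e_1, e_2)$. By (\ref{ses-selmer-three}) the quotient of $\XXX(\Q, D_\rhodn)_\QQQ$ by this sum equals $\ZZZ(\Q, D_\rhodn)_\QQQ$, so pseudo-nullity of $\ZZZ(\Q, D_\rhodn)$ forces $\min(e_1, e_2) = 0$ at every $\QQQ$, which is precisely \ref{gcd}.

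For $(\ref{gcd-two}) \Leftrightarrow (\ref{gcd-three})$, both conditions share pseudo-nullity of $\ZZZ(\Q, D_\rhodn)$, so it suffices to show that $\XXX(\Q, D_\rhodn)_{\tor}$ is pseudo-null if and only if $\ZZZ^\Sstar(\Q, D_\rhodnstar)$ is. Proposition \ref{star-ses} converts the latter into pseudo-nullity of $\Ext^1_\RRR(\XXX(\Q, D_\rhodn), \RRR)$. The remaining equivalence $\Ext^1_\RRR(M, \RRR)$ pseudo-null $\iff$ $M_{\tor}$ pseudo-null is the standard pseudo-isomorphism from the theory of Iwasawa adjoints \cite{MR1097615, MR3271243}; applying $\Hom_\RRR(-, \RRR)$ to $0 \to M_{\tor} \to M \to M/M_{\tor} \to 0$ and using that $M/M_{\tor}$ is locally free at every height-one prime shows that the long exact sequence isolates $\Ext^1_\RRR(M, \RRR)$ as pseudo-isomorphic to $\Ext^1_\RRR(M_{\tor}, \RRR)$, which in turn has the same divisor as $M_{\tor}$. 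I expect the local divisor computation in paragraph three to be the main obstacle; once $\XXX(\Q, D_\rhodn)_\QQQ$ is known to be free over $\RRR_\QQQ$, everything reduces to DVR bookkeeping.
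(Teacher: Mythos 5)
Your proof is correct and follows essentially the same approach as the paper: prime-by-prime localization at height-one primes, using the short exact sequences from Lemma \ref{ses-Sel-X} to control $\XXX(\Q,D_\rhodn)_{\tor}$ and $\ZZZ(\Q,D_\rhodn)$ in terms of the Selmer duals, and Proposition \ref{star-ses} to relate $\ZZZ^\Sstar$ to $\Ext^1_\RRR(\XXX,\RRR)$. The only cosmetic differences are that for (\ref{gcd-two}) $\Rightarrow$ (\ref{gcd-one}) you do the explicit valuation bookkeeping ($\min(e_1,e_2)=0$) where the paper argues "one of the two rank-one maps is an isomorphism," and for the adjoint equivalence in (\ref{gcd-two}) $\Leftrightarrow$ (\ref{gcd-three}) you cite the standard Iwasawa-adjoint pseudo-isomorphism whereas the paper just localizes at a DVR $\RRR_\p$ and reads it off from the structure theorem — these are the same computation.
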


\begin{proof}

  We will first show Condition (\ref{gcd-one}) $\implies$ Condition (\ref{gcd-two}). \\
  Suppose Condition (\ref{gcd-one}) holds. Let $\p$ be a height one prime ideal in $\RRR$. Without loss of generality, assume $\theta_\One \notin \p$. By equation (\ref{mc-div-equality}) in \ref{mc}, we can conclude that the prime ideal $\p$ does not belong to the support of the $\RRR$-module $\Sel_\One(\Q,D_\rhodn)^\vee$. As a result, $\Sel_\One(\Q,D_\rhodn)^\vee \otimes_\RRR \RRR_\p$ would equal zero. By (\ref{ses-selmer-four}), we have $$\ZZZ(\Q,D_{\rhodn}) \otimes_\RRR \RRR_\p=0.$$
  By equation (\ref{ses-selmer-one}) and \ref{loc-free}, we have the following isomorphism of free $\RRR_\p$-modules of rank one:
  \begin{align*}
    \underbrace{\bigg(\frac{\Loc_{\Two}\left(\Q_p,D_{\rhodn}\right)}{{\Loc_{\One}\left(\Q_p,D_{\rhodn}\right)\bigcap \Loc_{\Two}\left(\Q_p,D_{\rhodn}\right)}}\bigg)^\vee \otimes_\RRR \RRR_\p}_{\text{free $\RRR_\p$-module of rank one}} \cong \XXX(\Q,D_{\rhodn})\otimes_\RRR \RRR_\p.
  \end{align*}
  As a result, $\XXX(\Q,D_{\rhodn})_{\tor} \otimes_\RRR \RRR_p$ equals zero. \\

  Secondly, we will  show Condition (\ref{gcd-two}) $\implies$ Condition (\ref{gcd-one}). \\
  Suppose Condition (\ref{gcd-two}) holds. Let $\p$ be a height one prime ideal in $\RRR$. It suffices to show that $\theta_\One \notin \p$ or $\theta_\Two \notin \p$. We have $$\ZZZ(\Q,D_{\rhodn}) \otimes_\RRR \RRR_\p=0, \qquad \XXX(\Q,D_{\rhodn})_{\tor} \otimes_\RRR \RRR_p =0.$$
  By equation (\ref{ses-selmer-three}) and \ref{loc-free}, we have the following surjection of free $\RRR_\p$-modules:
  {\small \begin{align*}
    \underbrace{\bigg(\frac{\Loc_{\One}\left(\Q_p,D_{\rhodn}\right) }{{\Loc_{\One}\left(\Q_p,D_{\rhodn}\right) \bigcap \Loc_{\Two}\left(\Q_p,D_{\rhodn}\right)}}\bigg)^\vee \otimes_\RRR \RRR_\p}_{\text{free $\RRR_\p$-module of rank one}} & \oplus  \underbrace{\bigg(\frac{\Loc_{\Two}\left(\Q_p,D_{\rhodn}\right) }{{\Loc_{\One}\left(\Q_p,D_{\rhodn}\right) \bigcap \Loc_{\Two}\left(\Q_p,D_{\rhodn}\right)}}\bigg)^\vee \otimes_\RRR \RRR_\p}_{\text{free $\RRR_\p$-module of rank one }} \\ \twoheadrightarrow & \underbrace{\XXX(\Q,D_{\rhodn}) \otimes_\RRR \RRR_p}_{\text{free $\RRR_\p$-module of rank one}}.
    \end{align*}}

  As a result, at least one of the two maps
  \begin{align*}
    \bigg(\frac{\Loc_{\One}\left(\Q_p,D_{\rhodn}\right) }{{\Loc_{\One}\left(\Q_p,D_{\rhodn}\right) \bigcap \Loc_{\Two}\left(\Q_p,D_{\rhodn}\right)}}\bigg)^\vee \otimes_\RRR \RRR_\p \rightarrow \XXX(\Q,D_{\rhodn}) \otimes_\RRR \RRR_\p, \\ \bigg(\frac{\Loc_{\Two}\left(\Q_p,D_{\rhodn}\right) }{{\Loc_{\One}\left(\Q_p,D_{\rhodn}\right) \bigcap \Loc_{\Two}\left(\Q_p,D_{\rhodn}\right)}}\bigg)^\vee \otimes_\RRR \RRR_\p \rightarrow \XXX(\Q,D_{\rhodn}) \otimes_\RRR \RRR_\p.
  \end{align*}
  must be an isomorphism of free $\RRR_\p$-modules of rank one. Without loss of generality, assume that the first map is an isomorphism.
  By (\ref{ses-selmer-one}), we can conclude that $\Sel_\One(\Q,D_\rhodn)^\vee \otimes_\RRR \RRR_\p$ equals zero. That is, $\p$ does not belong to the support of the $\RRR$-module $\Sel_\One(\Q,D_\rhodn)^\vee$. By equation (\ref{mc-div-equality}) in \ref{mc}, we have $\theta_\One \notin \p$.  \\

  We will now show that Condition (\ref{gcd-two}) is equivalent to Condition (\ref{gcd-three}). To do so, we will need to show that the following statements are equivalent:
  \begin{enumerate}[(i)]
    \item\label{starpn} The $\RRR$-modules $\ZZZ^\Sstar(\Q,D_{\rhodnstar})$ is pseudo-null.
    \item\label{torpn} The $\RRR$-module $\XXX(\Q,D_{\rhodn})_{\tor}$ is pseudo-null.
  \end{enumerate}

  Let $\p$ be a height one prime ideal in $\RRR$. Note that $\RRR_\p$ is a discrete valuation ring. Let $\pi_\p$ denote a uniformizer in $\RRR_\p$. Every finitely generated $\RRR_\p$-module $M$ is isomorphic to $\RRR_\p^{r} \bigoplus \oplus_i\frac{\RRR_\p}{(\pi_\p^{r_i})}$, for some non-negative integers $r$, and $r_i$. In particular, $M$ is a torsion-free $\RRR_\p$-module if and only if $M$ is a free $\RRR_\p$-module. Also, the $\RRR_\p$-module $\Ext^{1}_{\RRR_\p}\left(\frac{\RRR_\p}{(\pi_\p^{a})},\RRR_\p\right)$ is non-canonically isomorphic to $ \frac{\RRR_\p}{(\pi_\p^{a})}$.

  The fact that Condition \ref{starpn} and Condition \ref{torpn} are equivalent follows from the above observations:
  \begin{align*}
    & \ZZZ^\Sstar(\Q,D_{\rhodnstar})\otimes_\RRR \RRR_\p =0, \\
    \iff & \Ext^1_{\RRR_\p}\left(\XXX(\Q,D_{\rhodn}) \otimes_\RRR \RRR_\p,\RRR_\p\right)=0, \qquad && (\text{by Proposition \ref{star-ses}}) \\
    \iff & \XXX(\Q,D_{\rhodn})_{\tor} \otimes_\RRR \RRR_\p =0.
  \end{align*}

\end{proof}

We have the following important corollary to Proposition \ref{gcd-proposition}.
\begin{corollary} \label{torsion-free-corollary}
  Suppose all the hypotheses and assumptions in Section \ref{varioushypotheses} hold. Then, the $\RRR$-module $\XXX(\Q,D_{\rhodn})_{\tor}$ equals zero.
\end{corollary}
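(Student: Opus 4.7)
The plan is to combine two facts that are already accumulated in the general setup. First, Proposition \ref{gcd-proposition} is applicable since we are assuming all the hypotheses and assumptions of Section \ref{varioushypotheses}, which include \ref{gcd}. In particular, the implication Condition \eqref{gcd-one} $\Rightarrow$ Condition \eqref{gcd-two} of that proposition tells us that the $\RRR$-module $\XXX(\Q,D_{\rhodn})_{\tor}$ is pseudo-null.

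Next, I would invoke observation (f) in Section \ref{consequences-hyp-assumptions}, which states that under the standing hypotheses the $\RRR$-module $\XXX(\Q,D_{\rhodn})$ has no non-zero pseudo-null submodules. This is where the freeness assertion in \ref{loc-free} for the local term $\bigl(\Loc_{\One}(\Q_p,D_{\rhodn}) + \Loc_{\Two}(\Q_p,D_{\rhodn})\bigr)^\vee$ (as derived in observation (a)) enters, since this freeness is precisely what is needed to apply Proposition 4.1.1 of \cite{greenberg2014pseudonull}.

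The conclusion is then immediate: $\XXX(\Q,D_{\rhodn})_{\tor}$ is a pseudo-null submodule of a module which has no non-zero pseudo-null submodules, so it must equal zero. There is no real obstacle here; the corollary is essentially a juxtaposition of the module-theoretic consequence of \ref{gcd} (a ``small'' torsion part) with Greenberg's pseudo-nullity theorem (no ``small'' submodules at all). The only care needed is to verify, once more, that the hypotheses of Greenberg's Proposition 4.1.1 really are consequences of the package assembled in Section \ref{varioushypotheses}; this was already noted in observation (f), so no new work is required.
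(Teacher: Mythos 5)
Your argument matches the paper's proof exactly: both invoke Proposition \ref{gcd-proposition} to conclude $\XXX(\Q,D_{\rhodn})_{\tor}$ is pseudo-null, then use the fact (recorded in Section \ref{consequences-hyp-assumptions}) that $\XXX(\Q,D_{\rhodn})$ has no non-zero pseudo-null submodules. The additional remarks tracing the freeness hypothesis through observation (a) into Greenberg's Proposition 4.1.1 are accurate but not new content beyond what the paper already notes.
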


\begin{proof}
  Proposition \ref{gcd-proposition} tells us that $\RRR$-module $\XXX(\Q,D_{\rhodn})_{\tor}$ is pseudo-null. We have already established that $\RRR$-module $\XXX(\Q,D_{\rhodn})$ has no non-zero pseudo-null submodules (see Section \ref{consequences-hyp-assumptions}). As a result, $\XXX(\Q,D_{\rhodn})_{\tor}$ equals zero.
\end{proof}

We will need to consider the map
\begin{align*}
  i_\XXX: \XXX(D_{\rhodn},\Q) \rightarrow \XXX(\Q,D_{\rhodn})^{**}.
\end{align*}
By Corollary \ref{torsion-free-corollary}, note that $\ker(i_\XXX)$ (which is equal to $\XXX(\Q,D_{\rhodn})_{\tor}$) equals zero.

\subsection{Proof of the main theorem}

\begin{theorem} \label{general-main-theorem}
  Suppose that the following conditions hold:
  \begin{enumerate}
    \item\label{hyp-one-theorem} All the hypotheses and assumptions in Section \ref{varioushypotheses} hold.
    \item\label{hyp-two-theorem} For every height two prime ideal $\QQQ$ of $\RRR$, the $\RRR_\QQQ$-module $\XXX(\Q,D_{\rhodn}) \otimes_\RRR \RRR_\QQQ$ has finite projective dimension.\end{enumerate}
      Then, we have the following equality in $Z^2\left(\RRR\right)$:
  \begin{align*}
    c_2\left(\frac{\Z_p[[\widetilde{\Gamma}]]}{\left(\theta_\One,\theta_\Two\right)}\right) = c_2\left(\ZZZ(\Q,D_{\rhodn})\right) + c_2\left(\ZZZ^\Sstar(\Q,D_{\rhodnstar})\right) +
    \sum_{l \in \Sigma \setminus \{p\}} c_2\bigg(\left(H^0\left(\Q_l,D_{\rhodn}\right)^\vee\right)_\pn\bigg).
  \end{align*}
\end{theorem}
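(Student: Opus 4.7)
The strategy is to produce, for each height-two prime ideal $\QQQ$ of $\RRR$, the short exact sequence of $\RRR_\QQQ$-modules
\begin{equation*}
0 \longrightarrow \ZZZ(\Q, D_{\rhodn})_\QQQ \longrightarrow \frac{\RRR_\QQQ}{(\theta_\One, \theta_\Two)} \longrightarrow \coker(i_\XXX)_\QQQ \longrightarrow 0,
\end{equation*}
and then to extract the equality $c_2\big(\RRR/(\theta_\One, \theta_\Two)\big) = c_2(\ZZZ(\Q, D_{\rhodn})) + c_2(\coker(i_\XXX))$ and combine it with Proposition \ref{Ext-second-Chern} and Proposition \ref{star-ses} to rewrite the last term as the desired sum. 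Once these are in place, the equality stated in the theorem drops out after one further invocation of \ref{gcd} through Proposition \ref{gcd-proposition}, which supplies the pseudo-nullity of $\ZZZ^\Sstar(\Q, D_{\rhodnstar})$ needed to apply Proposition \ref{star-ses}.

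The first task is to check that $\XXX := \XXX(\Q, D_{\rhodn})$ is torsion-free of rank one. Torsion-freeness is Corollary \ref{torsion-free-corollary}; rank one follows by localizing the exact sequence \eqref{ses-selmer-one} at any height-one prime, where \ref{mc}, \ref{rank} and \ref{loc-free} force the leftmost term to be free of rank one and $\Sel_\One^\vee$ to be torsion. Now fix a height-two prime $\QQQ$. Hypothesis (2) of the theorem gives $\XXX_\QQQ$ finite projective dimension over the Gorenstein local ring $\RRR_\QQQ$; combined with the short exact sequence $0 \to \XXX \to \XXX^{**} \to \coker(i_\XXX) \to 0$ and the pseudo-nullity of $\coker(i_\XXX)$ from Lemma \ref{torfree-pn}, this propagates to finite projective dimension of $\XXX^{**}_\QQQ$, and by Lemma \ref{van-ext-reflexive} the reflexive module $\XXX^{**}_\QQQ$ must then be free of rank one. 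Fixing an isomorphism $\XXX^{**}_\QQQ \cong \RRR_\QQQ$, the inclusion $i_\XXX$ identifies $\XXX_\QQQ$ with an ideal $I \subseteq \RRR_\QQQ$ and $\coker(i_\XXX)_\QQQ$ with $\RRR_\QQQ/I$. The free rank-one submodules appearing in \eqref{ses-selmer-one} and \eqref{ses-selmer-two}, being cyclic submodules of the ideal $\XXX_\QQQ$, correspond to principal subideals $(x_\One), (x_\Two) \subseteq I$. A length computation at each height-one prime $\p \subseteq \QQQ$, using $\coker(i_\XXX)_\p = 0$ (so $I_\p = \RRR_\p$ and $\Sel_J^\vee \otimes_\RRR \RRR_\p \cong \RRR_\p/(x_J)$) together with the divisor equality of \ref{mc}, gives $\mathrm{ord}_\p(x_J) = \mathrm{ord}_\p(\theta_J)$ for every height-one $\p \subseteq \QQQ$, and thus $(x_J) = (\theta_J)$ as ideals in the regular (hence UFD) local ring $\RRR_\QQQ$. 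The exactness of \eqref{ses-selmer-three} at $\XXX$ then reads $\ZZZ(\Q, D_{\rhodn})_\QQQ = I/(x_\One, x_\Two) = I/(\theta_\One, \theta_\Two)$, and splicing with $0 \to I \to \RRR_\QQQ \to \RRR_\QQQ/I \to 0$ produces the displayed sequence.

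After taking $c_2$ of the sequence, Proposition \ref{Ext-second-Chern} identifies $c_2(\coker(i_\XXX))$ with $c_2(\Ext^1_\RRR(\XXX, \RRR))$, and Proposition \ref{star-ses} in turn rewrites this as $c_2(\ZZZ^\Sstar(\Q, D_{\rhodnstar})) + \sum_{l \in \Sigma \setminus \{p\}} c_2\big((H^0(\Q_l, D_{\rhodn})^\vee)_\pn\big)$, which concludes the argument. The main obstacle is the middle paragraph: establishing freeness of $\XXX^{**}_\QQQ$ so that $\RRR_\QQQ$-ideal language becomes available, and then pinning down the comparison $(x_J) = (\theta_J)$ inside $\RRR_\QQQ$. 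Both rely crucially on $\RRR_\QQQ$ being a two-dimensional regular (hence UFD) local ring, which is automatic in the setup of Theorem \ref{maintheorem} where $\RRR = \Z_p[[x_1, x_2]]$; verifying the finite projective dimension hypothesis needed for Proposition \ref{Ext-second-Chern} to apply to the pseudo-null cokernel $\coker(i_\XXX)_\QQQ$ is the other technical point.
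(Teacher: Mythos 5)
Your overall strategy mirrors the paper's: construct the short exact sequence $0 \to \ZZZ(\Q,D_{\rhodn})_\QQQ \to \RRR_\QQQ/(\theta_\One,\theta_\Two) \to \coker(i_\XXX)_\QQQ \to 0$ at each height-two prime, take $c_2$, and rewrite $c_2(\coker(i_\XXX))$ via Propositions \ref{Ext-second-Chern} and \ref{star-ses}. However, there is a genuine gap at the step where you conclude that $\XXX(\Q,D_{\rhodn})^{**}_\QQQ$ is free. You deduce this by claiming that finite projective dimension of $\XXX(\Q,D_{\rhodn})_\QQQ$ together with pseudo-nullity of $\coker(i_\XXX)_\QQQ$ ``propagates to finite projective dimension of $\XXX^{**}_\QQQ$''. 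That implication does not hold: in a short exact sequence $0\to A\to B\to C\to 0$, finite projective dimension of $A$ does not transfer to $B$ unless one already knows $C$ has finite projective dimension, and pseudo-nullity alone does not give this --- over a two-dimensional Gorenstein non-regular local ring, finite-length modules (e.g.\ the residue field) have infinite projective dimension. Since $\XXX^{**}_\QQQ$ is reflexive, hence maximal Cohen--Macaulay, it is free as soon as it has finite projective dimension (Lemma \ref{van-ext-reflexive}), but MCM modules over Gorenstein non-regular local rings can have infinite projective dimension without being free, so the conclusion does not follow from the information you have supplied. In the setting of Theorem \ref{maintheorem}, where $\RRR=\Z_p[[x_1,x_2]]$ is regular, the gap is invisible because every module has finite projective dimension; but Theorem \ref{general-main-theorem} is stated for general Gorenstein $\RRR$.

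The paper avoids this by proving, in Lemma \ref{freeness-X-reflexivehull}, that $\XXX(\Q,D_{\rhodn})^{**}$ is \emph{globally} free over $\RRR$ --- and this is done without appealing to hypothesis (2) of the theorem at all. One dualizes the exact sequence \eqref{ses-selmer-one} to realize $\XXX(\Q,D_{\rhodn})^{*}$ as a reflexive ideal of $\RRR$ whose divisor equals $\Div(\theta_\One)$ (using \ref{mc}), hence equals the principal ideal $(\theta_\One)$, hence is free. This then feeds both into the ideal comparison $A_J = u_J\theta_J$ in $\RRR_\QQQ$ and into Step One (finite projective dimension of $\coker(i_\XXX)_\QQQ$, now obtained from the SES with $\XXX^{**}_\QQQ$ free and $\XXX_\QQQ$ of finite projective dimension). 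You would need to reprove or cite this lemma to make your argument complete in the general case. A smaller side remark: you invoke the UFD property of $\RRR_\QQQ$ to pass from equality of divisors to equality of ideals $(x_J)=(\theta_J)$; localizations of a general Gorenstein $\RRR$ need not be UFDs, but this step is salvageable because in any integrally closed Noetherian local domain two nonzero elements with the same divisor differ by a unit.
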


\begin{proof}

  To prove the theorem, we proceed in three steps.

  \begin{enumerate}
 \item[Step One:]For every height two prime ideal $\QQQ$ in $\RRR$, the $\RRR_\QQQ$-module $\coker(i_\XXX)_\QQQ$ has finite projective dimension.

\item[Step Two:] For every height two prime ideal $\QQQ$ in $\RRR$, we have the following short exact sequence of $\RRR_\QQQ$-modules:
          \begin{align*}
            0 \rightarrow \ZZZ(\Q,D_{\rhodn}) \otimes_\RRR \RRR_\QQQ \rightarrow \frac{ \RRR_\QQQ}{(\theta_\One,\theta_\Two)} \rightarrow \coker(i_\XXX) \otimes_{\RRR}\RRR_\QQQ \rightarrow 0.
          \end{align*}
    \item[Step Three:] We have the following equality in $Z^2\left(\RRR\right)$:
          \begin{align*}
            c_2\left(\coker(i_\XXX) \right) = c_2\left(\ZZZ^\Sstar(\Q,D_{\rhodnstar})\right) +
            \sum_{l \in \Sigma \setminus \{p\}} c_2\bigg(\left(H^0\left(\Q_l,D_{\rhodn}\right)^\vee\right)_\pn\bigg).
          \end{align*}
  \end{enumerate}

Step One follows from Corollary \ref{torsion-free-corollary}, Lemma \ref{freeness-X-reflexivehull} and condition (\ref{hyp-two-theorem}). Step Two follows from Lemma \ref{two-coker}. Step Three follows from Lemma \ref{c2-general}. Theorem \ref{general-main-theorem} would following from these lemmas. While proving these lemmas, we assume all the conditions stated in the theorem.

\begin{lemma} \label{freeness-X-reflexivehull}
The $\RRR$-module $\XXX(\Q,D_{\rhodn})^{**}$ is free.
\end{lemma}

\begin{proof}
It will be enough to show that the $\RRR$-module $\XXX(\Q,D_{\rhodn})^{*}$ is free. By applying the functor $\Hom_\RRR(\mbox{--},\RRR)$ to the first short exact sequence in Lemma \ref{ses-Sel-X}, we obtain the following exact sequence of $R$-modules:
\begin{align} \label{eq:Xdagger}
0 \rightarrow \XXX(\Q,D_{\rhodn})^{*} \rightarrow \RRR \rightarrow  \Ext^1_{\RRR}\left(\Sel_\One(\Q,D_{\rhodn})^\vee,\RRR\right) \rightarrow \Ext^1_{\RRR}\left(\XXX(\Q,D_{\rhodn}),\RRR\right)
\end{align}

To obtain equation  (\ref{eq:Xdagger}), we have identified the free $\RRR$-module $\bigg(\frac{\Loc_{\Two}\left(\Q_p,D_{\rhodn}\right)}{{\Loc_{\One}\left(\Q_p,D_{\rhodn}\right)\bigcap \Loc_{\Two}\left(\Q_p,D_{\rhodn}\right)}}\bigg)^\vee$ of rank one with $\RRR$. This allows us to identify $\XXX(\Q,D_{\rhodn})^{*}$ with an ideal inside $\RRR$. This ideal must be reflexive over $\RRR$. The $\RRR$-module $\XXX(\Q,D_{\rhodn})$ is torsion-free. Since $\RRR$ is integrally closed, for every height one prime ideal $\p$ in $\RRR$, the localization $\RRR_\p$ must be a DVR and consequently the $\RRR_\p$-module $\XXX(\Q,D_{\rhodn})_\p$ must be free. Since localization commutes with $\Ext$, the $\RRR$-module $\Ext^1_{\RRR}\left(\XXX(\Q,D_{\rhodn}),\RRR\right)$ must be pseudo-null. Using this trick of localizing at every height one prime ideal of $\RRR$, we have
$\Div\bigg( \Ext^1_{\RRR}\left(\Sel_\One(\Q,D_{\rhodn})^\vee,\RRR\right)\bigg)=\Div(\theta_\One)$, an equality of divisors in $Z^1(\RRR)$. Combining these observations, for every height one prime ideal $\p$ in $\RRR$, we obtain the following short exact sequence of $\RRR_\p$-modules:
\begin{align}  \label{eq:XdaggerP}
0 \rightarrow \XXX(\Q,D_{\rhodn})^{*} \otimes_\RRR \RRR_\p \rightarrow \RRR_\p \rightarrow \frac{\RRR_\p}{(\theta_\One)} \rightarrow 0.
\end{align}
Since $\RRR$ is integrally closed, equation (\ref{eq:XdaggerP}) lets us deduce that under the inclusion $\XXX(\Q,D_{\rhodn})^{*} \hookrightarrow \RRR$ given in equation (\ref{eq:Xdagger}), we have a natural inclusion map $\XXX(\Q,D_{\rhodn})^{*} \hookrightarrow (\theta_\One)$, whose cokernel is pseudo-null. Since the ideal $\XXX(\Q,D_{\rhodn})^{*}$ is reflexive, this natural inclusion must be an equality. This shows that the $\RRR$-module $\XXX(\Q,D_{\rhodn})^{*}$ is free.
\end{proof}

  \begin{lemma} \label{two-coker}
    For every height two prime ideal $\QQQ$ in $\RRR$, we have the following short exact sequence of $\RRR_\QQQ$-modules:
    \begin{align*}
      0 \rightarrow \ZZZ(\Q,D_{\rhodn}) \otimes_\RRR \RRR_\QQQ \rightarrow \frac{ \RRR_\QQQ}{(\theta_\One,\theta_\Two)} \rightarrow \coker(i_\XXX) \otimes_{\RRR}\RRR_\QQQ \rightarrow 0.
    \end{align*}
  \end{lemma}
  \begin{proof}[Proof of Lemma \ref{two-coker}]
    Let $\QQQ$ be a height two prime ideal in $\RRR$. By Lemma \ref{freeness-X-reflexivehull}, the $\RRR_\QQQ$-module $\XXX(\Q,D_{\rhodn})^{**} \otimes_\RRR \RRR_\QQQ$ is free.     By Lemma \ref{ses-Sel-X}, we can conclude that the $\RRR_\QQQ$-module $\XXX(\Q,D_{\rhodn})$ has rank one. \\

    \ref{loc-free} lets us consider the following two maps of free $\RRR_\QQQ$-modules:
    \begin{align*}
      \underbrace{\bigg(\frac{\Loc_{\One}\left(\Q_p,D_{\rhodn}\right)  }{{\Loc_{\One}\left(\Q_p,D_{\rhodn}\right) \bigcap \Loc_{\Two}\left(\Q_p,D_{\rhodn}\right)}}\bigg)^\vee  \otimes_\RRR \RRR_\QQQ}_{\text{free $\RRR_\QQQ$-module of rank one}} \xrightarrow {A_\One} \underbrace{\XXX(\Q,D_{\rhodn})^{**} \otimes_\RRR \RRR_\QQQ}_{\text{free $\RRR_\QQQ$-module of rank one}}, \\ \underbrace{\bigg(\frac{\Loc_{\Two}\left(\Q_p,D_{\rhodn}\right)  }{{\Loc_{\One}\left(\Q_p,D_{\rhodn}\right) \bigcap \Loc_{\Two}\left(\Q_p,D_{\rhodn}\right)}}\bigg)^\vee  \otimes_\RRR \RRR_\QQQ}_{\text{free $\RRR_\QQQ$-module of rank one}} \xrightarrow {A_\Two} \underbrace{\XXX(\Q,D_{\rhodn})^{**} \otimes_\RRR \RRR_\QQQ}_{\text{free $\RRR_\QQQ$-module of rank one}}
    \end{align*}
    Here, $A_\One$ and $A_\Two$ are elements of $\RRR_\QQQ$. For every height one prime ideal $\p$ in $\RRR$, the natural map $$\XXX(\Q,D_{\rhodn}) \otimes_{\RRR} \RRR_\p \xrightarrow {\cong} \XXX(\Q,D_{\rhodn})^{**} \otimes_\RRR \RRR_\p$$ is an isomorphism. Equation (\ref{mc-div-equality}) and the short exact sequences in (\ref{ses-selmer-one}) and (\ref{ses-selmer-two}) provide us the following equalities of divisors in $Z^1\left(\RRR_\QQQ\right)$:
    \begin{align*}
      \Div(A_\One) = \Div(\theta_\One), \qquad \Div(A_\Two)=\Div(\theta_\Two).
    \end{align*}
    As a result, there exists two units $u_\One$ and $u_\Two$ in the ring $\RRR_\QQQ$, such that we have the following equality of elements in the ring $\RRR_\QQQ$:
    \begin{align*}
      A_\One = u_\One\theta_\One, \qquad A_\Two= u_\Two \theta_\Two .
    \end{align*}

    The cokernel of the map
    \begin{align*}
      \underbrace{\bigg(\frac{\Loc_{\One}\left(\Q_p,D_{\rhodn}\right) + \Loc_{\Two}\left(\Q_p,D_{\rhodn}\right) }{{\Loc_{\One}\left(\Q_p,D_{\rhodn}\right) \bigcap \Loc_{\Two}\left(\Q_p,D_{\rhodn}\right)}}\bigg)^\vee  \otimes_\RRR \RRR_\QQQ}_{\text{free $\RRR_\QQQ$-module of rank two}} \xrightarrow {\left[\begin{array}{cc} A_\One, & A_\Two\end{array}\right]} \underbrace{\XXX(\Q,D_{\rhodn})^{**} \otimes_\RRR \RRR_\QQQ}_{\text{free $\RRR_\QQQ$-module of rank one}}
    \end{align*}
    is, thus, isomorphic to $\frac{ \RRR_\QQQ}{(\theta_\One,\theta_\Two)}$.

    By (\ref{ses-selmer-three}), the cokernel of the map
    \begin{align*}
      \bigg(\frac{\Loc_{\One}\left(\Q_p,D_{\rhodn}\right) + \Loc_{\Two}\left(\Q_p,D_{\rhodn}\right) }{{\Loc_{\One}\left(\Q_p,D_{\rhodn}\right) \bigcap \Loc_{\Two}\left(\Q_p,D_{\rhodn}\right)}}\bigg)^\vee  \otimes_\RRR \RRR_\QQQ \rightarrow \XXX(\Q,D_{\rhodn}) \otimes_\RRR \RRR_\QQQ
    \end{align*}
    is isomorphic to $\ZZZ(\Q,D_{\rhodn})$. \\

    Consider the following commutative diagram:
    \begin{align*}
      \xymatrix{
      & \bigg(\frac{\Loc_{\One}\left(\Q_p,D_{\rhodn}\right) + \Loc_{\Two}\left(\Q_p,D_{\rhodn}\right) }{{\Loc_{\One}\left(\Q_p,D_{\rhodn}\right) \bigcap \Loc_{\Two}\left(\Q_p,D_{\rhodn}\right)}}\bigg)^\vee  \otimes_\RRR \RRR_\QQQ \ar[d] \ar[rd]  \\
      0 \ar[r] & \XXX(\Q,D_{\rhodn}) \otimes_\RRR \RRR_\QQQ \ar[r] & \XXX(\Q,D_{\rhodn})^{**} \otimes_\RRR \RRR_\QQQ  \ar[r] & \coker\left(i_\XXX\right) \otimes_\RRR \RRR_\QQQ  \ar[r] & 0.
      }
    \end{align*}

    Now, applying the Snake lemma to the commutative diagram given above, we have the following short exact sequence of $\RRR_\QQQ$-modules:
    \begin{align*}
      0 \rightarrow \ZZZ(\Q,D_{\rhodn}) \otimes_\RRR \RRR_\QQQ \rightarrow \frac{ \RRR_\QQQ}{(\theta_\One,\theta_\Two)} \rightarrow \coker(i_\XXX) \otimes_{\RRR}\RRR_\QQQ \rightarrow 0.
    \end{align*}
  \end{proof}

  \begin{lemma} \label{c2-general}
    We have the following equality in $Z^2\left(\Z_p[[\widetilde{\Gamma}]]\right)$:
    \begin{align*}
      c_2\left(\coker(i_\XXX) \right) = c_2\left(\ZZZ^\Sstar(\Q,D_{\rhodnstar})\right) +
      \sum_{l \in \Sigma \setminus \{p\}} c_2\bigg(\left(H^0\left(\Q_l,D_{\rhodn}\right)^\vee\right)_\pn\bigg).
    \end{align*}
  \end{lemma}

  \begin{proof}[Proof of Lemma \ref{c2-general}]
    Note that Proposition \ref{star-ses} is applicable. By Corollary \ref{torsion-free-corollary}, the $\RRR$-module $\XXX(\Q,D_{\rhodn})$ is torsion-free.  As a result of Step One and the hypotheses of the theorem, Proposition \ref{Ext-second-Chern} is also applicable. The lemma follows from the following equalities in $Z^2\left(\Z_p[[\widetilde{\Gamma}]]\right)$:
    \begin{align*}
      c_2\left(\coker(i_\XXX) \right) & = c_2\bigg(\Ext^1_{\RRR}\left(\XXX(\Q,D_{\rhodn}),\RRR\right)\bigg), \quad & (\text{by Proposition \ref{Ext-second-Chern}}) \\
      & = c_2\left(\ZZZ^\Sstar(\Q,D_{\rhodnstar})\right) +
      \sum_{l \in \Sigma \setminus \{p\}} c_2\bigg(\left(H^0\left(\Q_l,D_{\rhodn}\right)^\vee\right)_\pn\bigg), & (\text{by Proposition \ref{star-ses}}).
    \end{align*}
  \end{proof}

  Theorem \ref{general-main-theorem} follows.
\end{proof}

\section{The Iwasawa main conjecture and the Panchishkin condition}\label{S:Panch}

\subsection{Greenberg's Selmer groups}

We recall some of the notations from the introduction. We let $K(\p^\infty)_{\Z_p}$ (and $K(\q^\infty)_{\Z_p}$ respectively) denote the unique $\Z_p$-extension of $K$ that is unramified outside $\p$ (and $\q$ respectively). Let $\Gamma_\p$ and $\Gamma_\q$  denote the Galois groups $\Gal{K(\p^\infty)_{\Z_p}}{K}$ and $\Gal{K(\q^\infty)_{\Z_p}}{K}$ respectively. We let  $\kappa_\p: G_K \twoheadrightarrow \Gamma_\p \hookrightarrow \Gl_1(\Z_p[[\Gamma_\p]])$ and $\kappa_{\q}: G_K \twoheadrightarrow \Gamma_{\q} \hookrightarrow \Gl_1({\Z_p[[\Gamma_\q]]})$ denote the associated tautological characters. Consider the two-dimensional Galois representation
\begin{align*}
  \rho_{\p}: G_\Q \rightarrow \Gl_2\left(\Z_p[[\Gamma_\p]]\right)
\end{align*}
given by the action of $G_\Q$ on the following free $\Z_p[[\Gamma_\p]]$-module of rank two:
\begin{align*}
  T_\p:=\Ind^{G_\Q}_{G_K} \left(\Z_p[[\Gamma_\p]](\kappa_\p^{-1})\right).
\end{align*}

Since the prime $p$ splits in the imaginary quadratic field, we have the following decomposition of $\Z_p[[\Gamma_\p]]$-modules which is $\Gal{\overline{\Q}_p}{\Q_p}$-equivariant:
\begin{align} \label{decomposition-p-splits}
  T_{\p} \cong \Z_p[[\Gamma_\p]](\kappa_\p^{-1}) \oplus \Z_p[[\Gamma_\p]](\kappa_{\q}^{-1}).
\end{align}

Let $K_\p$ denote the completion of the imaginary quadratic field $K$ with respect to the prime ideal $\p$. Since the prime $p$ splits in the imaginary quadratic field, the embedding $K \hookrightarrow \overline{\Q}_p$ (fixed in our introduction) then gives us an isomorphism $\Q_p \stackrel{\cong}{\hookrightarrow} K_\p$. This embedding then gives us the following isomorphism of Galois groups:
\begin{align}
  \Gal{\overline{\Q}_p}{K_\p} \stackrel{\cong}{\hookrightarrow} \Gal{\overline{\Q}_p}{\Q_p}.
\end{align}
Throughout this paper, we will use this isomorphism to identify $\Gal{\overline{\Q}_p}{K_\p}$ with $\Gal{\overline{\Q}_p}{\Q_p}$.

{Recall from the introduction that $E$ is an elliptic curve over $\Q$ with good supersingular reduction at $p$ with $a_p(E)=0$. {We refer the reader to the introduction for the precise details on the construction of the $\Z_p[[\tilde{\Gamma}]]$-module $T_{\rhoDN42}$, which is free of rank four, that in turn} gives rise to a $G_\Q$-representation $\rhoDN{4}{2}:G_\Q\rightarrow\GL_4\left(\Z_p[[\tilde\Gamma]]\right)$. \\}
We can consider the following $\Z_p[[\widetilde{\Gamma}]]$-modules that have a continuous action of $\Gal{\overline{\Q}_p}{\Q_p}$:
\begin{align*}
T_{\rhoDN{4}{2}} &=T_p(E) \ \hotimes_{\Z_p} T_{\p} \ \hotimes_{\Z_p} \ \Z_p[[\Gamma_\Cyc]](\kappa_{\Cyc}^{-1}),
                   \\ \Fil T_{\rhoDN{4}{2}} &:=T_p(E) \ \hotimes_{\Z_p} \ \Z_p[[\Gamma_\p]] (\kappa_\p^{-1})  \ \hotimes_{\Z_p} \ \Z_p[[\Gamma_\Cyc]](\kappa_{\Cyc}^{-1}).
\end{align*}
One has the following discrete $\Z_p[[\widetilde{\Gamma}]]$-modules:
\begin{align*}
  D_{\rhoDN{4}{2}} = T_{\rhoDN{4}{2}} \otimes_{\Z_p[[\widetilde{\Gamma}]]} \Z_p[[\widetilde{\Gamma}]]^\vee,  \qquad  \Fil D_{\rhoDN{4}{2}} = \Fil T_{\rhoDN{4}{2}} \otimes_{\Z_p[[\widetilde{\Gamma}]]} \Z_p[[\widetilde{\Gamma}]]^\vee.
\end{align*}
The decomposition given in (\ref{decomposition-p-splits}) lets us deduce the following short exact sequences of $\Z_p[[\widetilde{\Gamma}]]$-modules that is $\Gal{\overline{\Q}_p}{\overline{\Q}_p}$-equivariant:
\begin{align*}
  0 \rightarrow \Fil T_{\rhoDN{4}{2}}  \rightarrow T_{\rhoDN{4}{2}} \rightarrow \frac{T_{\rhoDN{4}{2}}}{\Fil T_{\rhoDN{4}{2}}}  \rightarrow 0, \qquad 0 \rightarrow \Fil D_{\rhoDN{4}{2}}  \rightarrow D_{\rhoDN{4}{2}} \rightarrow \frac{D_{\rhoDN{4}{2}}}{\Fil D_{\rhoDN{4}{2}}}  \rightarrow 0.
\end{align*}

The local condition at $p$, denoted $\Loc_\Gr(\Q_p,\Drho)$, is given below.
$$\Loc_\Gr(\Qp,\Drho):= \ker \left(H^1(\Q_p,\Drho)\rightarrow H^1\left(I_p,\frac{D_{\rhoDN{4}{2}}}{\Fil D_{\rhoDN{4}{2}}}\right)\right).
$$
Here, $I_p$ denotes the inertia subgroup inside the decomposition group $G_{\Qp}$. The discrete Selmer group, following Greenberg's construction in \cite{greenberg1994iwasawa}, is defined below:
\[
  \Sel^{\Gr}(\Q,D_{\rhoDN42}):=\ker\left(H^1(G_\Sigma,\Drho)\rightarrow\frac{H^1(\Q_p,\Drho)}{\Loc_\Gr(\Qp,\Drho)}\oplus\bigoplus_{\nu\in \Sigma\setminus\{p\}}H^1(\Q_\nu,\Drho)\right).
\]

\subsection{Hida's Rankin-Selberg $p$-adic $L$-function and the Iwasawa main conjecture}

The two-variable $p$-adic $L$-function $\theta^{\Gr}_{\cmmnt{\pmb}{4},2}$ in this setup has been constructed by Hida. We avoid stating the precise interpolation property satisfied by $\theta^{\Gr}_{\cmmnt{\pmb}{4},2}$. Instead, we refer the reader to Hida's  works\footnote{One must modify the $p$-adic $L$-function constructed in \cite{Hidafourier} by multiplying it with a one-variable $p$-adic $L$-function associated to the 3-dimensional adjoint representation $\Ad^0(\rho_F)$ (see Conjecture 1.0.1 in \cite{hida1996search}). A discussion surrounding the need to introduce this modification, which is related to the choice of a certain period (N\'eron period versus a period involving the Peterson inner product), is carefully explained in \cite{hida1996search}. See the introduction and Section 6 in Hida's article  \cite{hida1996search}.}  in \cite{Hidafourier} and \cite{hida1996search}. We will content ourselves with describing the critical set of specializations (following Greenberg's terminology in \cite{greenberg1994iwasawa}). As mentioned in the introduction, the natural restriction maps $\widetilde{\Gamma} \twoheadrightarrow \Gamma_\Cyc$ and $\widetilde{\Gamma} \twoheadrightarrow \Gamma_\p$  provide us the following isomorphism of topological groups: $$\widetilde{\Gamma} \cong \Gamma_\Cyc \times \Gamma_\p.$$

Note that $\Z_p[[\widetilde{\Gamma}]]$ is a coproduct in the category of complete semi-local Noetherian $\Z_p$-algebras (with respect to the maps $\Z_p[[\Gamma_\p]] \hookrightarrow \Z_p[[\widetilde{\Gamma}]]$ and  $\Z_p[[\Gamma_\Cyc]] \hookrightarrow \Z_p[[\widetilde{\Gamma}]]$). Note also that we have the following equalities of the group of continuous homomorphisms:
\begin{align*}
  \Hom_{\mathrm{ring}}\left(\Z_p[[\Gamma_\p]] , \overline{\Q}_p\right) = \Hom_{\mathrm{gp}}\left(\Gamma_\p , \overline{\Q}_p^\times\right), \  \  \Hom_{\mathrm{ring}}\left(\Z_p[[\Gamma_\Cyc]] , \overline{\Q}_p\right) = \Hom_{\mathrm{gp}}\left(\Gamma_\Cyc , \overline{\Q}_p^\times\right).
\end{align*}
Taking this point of view, one can consider the set $C \subset \Hom_{\cont}\left(\Z_p[[\widetilde{\Gamma}]] , \overline{\Q}_p\right)$ obtained via the following continuous group homomorphisms:
\begin{align*}
  \varphi_k: \Gamma_\p \rightarrow  \overline{\Q}_p^\times,  \qquad \varphi_{\Cyc}\epsilon_p : \Gamma_{\Cyc} \rightarrow \overline{\Q}_p^\times,
\end{align*}
\begin{itemize}
  \item where we allow the group homomorphism $\phi_k:\Gamma_\p \rightarrow \overline{\Q}_p^\times$ to vary over $p$-adic Galois characters, obtained via algebraic Hecke characters of $K$, with conductor equal to a power of $\p$ and of infinity type $(1-k)i \circ i_\p$ subject to the following restriction on its weight $k$:
        \begin{align*}
          k \geq 3,
        \end{align*}
  \item and where we allow $\varphi_{\Cyc}$ to vary over all the continuous group homomorphisms $\Gamma_{\Cyc} \rightarrow \overline{\Q}_p^\times$ of finite order. (The character $\epsilon_p: \Gamma_\Cyc \stackrel{\cong}{\rightarrow} 1+p\Z_p$ is given by the product $\chi_p\omega^{-1}$, where $\chi_p:G_\Q \rightarrow \Z_p^\times$ is the $p$-adic cyclotomic character and $\omega:G_\Q \rightarrow \Z_p^\times$ is the Teichmuller character giving the action of $G_\Q$ on the $p$-th roots of unity.)
\end{itemize}
This set $C$ is the critical set of specializations, corresponding to the fact that the Galois representation $\rhoDN{4}{2}$ satisfies the Panchishkin condition.\\

We recall Conjecture \ref{greenberg-main-conjecture} stated in the introduction.

\greenbergMC*

Progress towards establishing  Conjecture \ref{greenberg-main-conjecture} has been made by Xin Wan. See his works \cite{wan2,wan}.

\section{$\pm\pm$ Iwasawa main conjectures for elliptic curves with supersingular reduction at $p$}
\label{S:plusminus}

We shall recall the formulation of Kim's conjectures and the corresponding local conditions for the signed Selmer groups\footnote{Note that, when the elliptic curve $E$ has good supersingular reduction at $p$, the classical $p$-Selmer group of $E$ defined over the field $\tilde K_\infty$, is not well suited for the study of Iwasawa theory since its Pontryagin dual is not $\Iwalg$-torsion and will not satisfy a ``control theorem''.} at $p$. Consider the following discrete $\Iwalg$-modules that have a continuous action of $\Gal{\overline{\Q}_p}{\Q_p}$:
\begin{align*}
  D_{E,\kappa_\p^{-1}\kappa_\Cyc^{-1}} & := \bigg(T_p(E) \hotimes \Z_p[[\Gamma_\p]](\kappa_\p^{-1}) \hotimes \Z_p[[\Gamma_\Cyc]](\kappa_\Cyc^{-1}) \bigg) \otimes_{\Z_p[[\widetilde{\Gamma}]]} \Z_p[[\widetilde{\Gamma}]]^\vee, \\
  D_{E,\kappa_\q^{-1}\kappa_\Cyc^{-1}} & := \bigg(T_p(E) \hotimes \Z_p[[\Gamma_\p]](\kappa_\q^{-1}) \hotimes \Z_p[[\Gamma_\Cyc]](\kappa_\Cyc^{-1}) \bigg) \otimes_{\Z_p[[\widetilde{\Gamma}]]} \Z_p[[\widetilde{\Gamma}]]^\vee.
\end{align*}

The decomposition in equation (\ref{decomposition-p-splits}) gives us the following isomorphism of discrete $\Z_p[[\widetilde{\Gamma}]]$-modules that is $\Gal{\overline{\Q}_p}{\Q_p}$-equivariant:
\begin{align*}
  D_{\rhoDN{4}{2}} \cong D_{E,\kappa_\p^{-1}\kappa_\Cyc^{-1}} \oplus D_{E,\kappa_\q^{-1}\kappa_\Cyc^{-1}}.
\end{align*}
We obtain the following isomorphisms of discrete $\Z_p[[\widetilde{\Gamma}]]$-modules:
\begin{align} \label{first-dec}
  H^1\left(\Gal{\overline{\Q}_p}{\Q_p}, D_{\rhoDN{4}{2}} \right) & \cong H^1\left(\Q_p,D_{E,\kappa_\p^{-1}\kappa_\Cyc^{-1}}\right) \oplus H^1\left(\Q_p,D_{E,\kappa_\q^{-1}\kappa_\Cyc^{-1}}\right) \\  \notag &\cong H^1\left(K_\p,D_{E,\kappa_\p^{-1}\kappa_\Cyc^{-1}}\right) \oplus H^1\left(K_\p,D_{E,\kappa_\q^{-1}\kappa_\Cyc^{-1}}\right).
\end{align}

If we let $\delta$ denote the non-trivial element of $\Gal{K}{\Q}$ and $\tilde{\delta}$ some lift in $G_\Sigma$ of $\delta$, we have the following natural isomorphism inside $\Gal{\overline{\Q}}{\Q}$:
$$\tilde{\delta}^{-1}\Gal{\overline{\Q}_p}{K_\q}\tilde{\delta} \cong \Gal{\overline{\Q}_p}{K_\p}.$$
This lets us obtain the following isomorphism of discrete $\Z_p[[\widetilde{\Gamma}]]$-modules:
\begin{align} \label{second-dec}
  H^1(K_\p,D_{E,\kappa_\q^{-1}\kappa_\Cyc^{-1}}) \cong H^1\left(K_\q,D_{E,\kappa_\p^{-1}\kappa_\Cyc^{-1}}\right).
\end{align}

Shapiro's lemma (see the discussion in the introduction of Greenberg's work \cite{MR2290593}) provides us the following isomorphism of discrete $\Z_p[[\widetilde{\Gamma}]]$-modules:
\begin{align} \label{third-dec}
  H^1\left(K_\p,D_{E,\kappa_\p^{-1}\kappa_\Cyc^{-1}}\right) \cong  \bigoplus_{\P|\p}H^1\left((\tilde K_\infty)_\P,E[p^\infty]\right), \qquad H^1\left(K_\q,D_{E,\kappa_\p^{-1}\kappa_\Cyc^{-1}}\right) \cong \bigoplus_{\QQ|\q}H^1\left((\tilde K_\infty)_\QQ,E[p^\infty]\right).
\end{align}

Here, if we let $G_\P$ denote the decomposition group for the prime $\P$ inside $\Gal{\overline{\Q}_p}{\Q_p}$, then $(\widetilde{K}_\infty)_\P$ denotes the fixed field $\overline{\Q}_p^{G_\P}$. Note that the isomorphism in equation (\ref{third-dec}) crucially relies on identifying $\widetilde{\Gamma}$ with $\Gamma_\Cyc \times \Gamma_\p$. Note also that there are only finitely many primes $\P$ (and $\QQ$ respectively) in the field $\widetilde{K}_\infty$ lying above the prime $\p$ (and $\q$ respectively) of the imaginary quadratic field $K$. \\

Combining equations (\ref{first-dec}), (\ref{second-dec}) and (\ref{third-dec}), we have the following isomorphism of discrete $\Z_p[[\widetilde{\Gamma}]]$-modules:
\begin{align}\label{eq:shapirolocal}
  H^1\left(\Q_p,D_{\rhoDN42}\right) \cong \bigoplus_{\P|\p}H^1\left((\tilde K_\infty)_\P,E[p^\infty]\right) \oplus \bigoplus_{\QQ|\q}H^1\left((\tilde K_\infty)_\QQ,E[p^\infty]\right).
\end{align}
Keeping this isomorphism in mind, to define the local conditions inside the local cohomology group $H^1(\Q_p,D_{\rhoDN42})$, it will be sufficient to define local conditions inside the local cohomology groups $H^1\left((\tilde K_\infty)_\P,E[p^\infty]\right)$ and $H^1\left((\tilde K_\infty)_\QQ,E[p^\infty]\right)$ for all the primes $\P$ and $\QQ$ in $\tilde K_\infty$ lying above the primes $\p$ and $\q$ in $K$ respectively.

\subsection{The $\pm\pm$ Selmer groups of Kim}\label{S:Kim}

Let $\hat E$ denote the formal group attached to the minimal Weierstrass model of the elliptic curve $E$ over $\Z_p$. The height of the formal group $\hat E$ equals two.

Given an ideal $\mathfrak{n}$ of the ring of integers of $K$, we write $K(\mathfrak{n})$ for the ray class field of $K$ with modulus $\mathfrak{n}$. Let  $\P$ be a fixed prime of $K(p^\infty)$ lying above $\p$. By an abuse of notation, we denote the prime $L\cap \P$ again by $\P$ whenever $L$ is a sub-extension of $K(p^\infty)/K$.  For non-negative integers $m$ and $n$, Kim \cite{MR3224266} defined
\begin{align*}
  E^+(K(\p^m\q^n)_\P) & =\left\{P\in \hat E(K(\p^m\q^n)_\P):\Tr_{m/l+1,n}P\in \hat E(K(\p^l\q^n)_\P), \text{ for   odd } l<m\right\};  \\
  E^-(K(\p^m\q^n)_\P) & =\left\{P\in \hat E(K(\p^m\q^n)_\P):\Tr_{m/l+1,n}P\in \hat E(K(\p^l\q^n)_\P), \text{ for   even } l<m\right\},
\end{align*}
where $\Tr_{m/l+1,n}:\hat E(K(\p^m\q^n)_\P) \rightarrow \hat E(K(\p^{l+1}\q^n)_\P)$ denotes the trace map. We define $E^\pm(K(p^\infty)_\P)$ to be the union $\bigcup_{m,n\ge 0} E^\pm(K(\p^m\q^n)_\P)$. Let $\Delta$ be the finite Galois group $\mathrm{Gal}\left(K(p^\infty)_\P/(\Kinf)_\P\right)$. Then, we may define the  corresponding plus/minus subgroups by
\[
  E^\pm((\Kinf)_\P):=E^\pm(K(p^\infty)_\P)^\Delta\subset E((\Kinf)_\P)
\]
Given a prime $\QQ$ of $K(p^\infty)$ lying above $\q$, we may define $E^\pm((\Kinf)_\QQ)$ in a similar manner. \\

For each prime $\P$ (and $\QQ$ respectively) in $\widetilde{K}_\infty$, lying above $\p$ (and $\q$ respectively), we consider the following discrete $\Z_p[[\widetilde{\Gamma}]]$-submodules of $H^1((\tilde K_\infty)_\P,E[p^\infty])$ (and $H^1((\tilde K_\infty)_\QQ,E[p^\infty])$ respectively):
\begin{align*}
  \Loc_\pm\left((\tilde K_\infty)_\P,E[p^\infty]\right)   & := \mathrm{Image}\left(E^\pm\left((\Kinf)_\P\right)\otimes \Q_p/\Z_p  \stackrel{\varkappa_\P}{\hookrightarrow} H^1((\tilde K_\infty)_\P,E[p^\infty])\right),   \\
  \Loc_\pm\left((\tilde K_\infty)_\QQ, E[p^\infty]\right) & := \mathrm{Image}\left(E^\pm\left((\Kinf)_\QQ\right)\otimes \Q_p/\Z_p \stackrel{\varkappa_\QQ}{\hookrightarrow} H^1((\tilde K_\infty)_\QQ,E[p^\infty])\right).
\end{align*}

Here, $\varkappa_\P$ and $\varkappa_\QQ$ denote the usual Kummer map. Note that the restriction of the Kummer map to the groups $E^{\pm}((\tilde K_\infty)_{\P}) \otimes \Q_p/\Z_p$ and $E^{\pm}((\tilde K_\infty)_{\QQ}) \otimes \Q_p/\Z_p$  are injections since the natural maps $E^{\pm} ((\tilde K_\infty)_{\P})\otimes \Q_p/\Z_p \rightarrow E((\tilde K_\infty)_{\P}) \otimes \Q_p/\Z_p$ and $E^{\pm} ((\tilde K_\infty)_{\QQ})\otimes \Q_p/\Z_p \rightarrow E((\tilde K_\infty)_{\QQ}) \otimes \Q_p/\Z_p$ are injections. This fact follows from an argument similar to the one given in Lemma 8.17 of Kobayashi's work \cite{kobayashi}.

Let  $\bullet,\circ\in\{+,-\}$. Via \eqref{eq:shapirolocal}, we define the following local condition:
\[
  \Loc_{\bullet\circ}(\Qp,\Drho):= \bigoplus_{\P \mid \p} \Loc_\bullet\left((\tilde K_\infty)_\P,E[p^\infty]\right) \oplus  \bigoplus_{\QQ \mid \q} \Loc_\circ \left((\tilde K_\infty)_\QQ,E[p^\infty]\right)  \subset H^1(\Q_p,\Drho).
\]

This, in turn, lets us define the following discrete Selmer group:
\[
  \Sel^{\bullet\circ}(\Q,D_{\rhoDN42})=\ker\left(H^1\left(G_\Sigma,D_{\rhoDN42}\right)\rightarrow\frac{H^1(\Q_p,\Drho)}{\Loc_{\bullet\circ}(\Qp,\Drho)} \oplus \bigoplus_{v\in \Sigma\setminus\{p\}}H^1(\Q_v,\Drho) \right).
\]

\begin{remark}
  Note that our choice of signs $+$ and $-$ is opposite to that of Kim since we would like to formulate the Iwasawa main conjecture using the convention for the signs $+$ and $-$ chosen by Loeffler in \cite{loefflerray}. In particular, the Selmer groups $\Sel^{++}(\Q,D_{\rhoDN42})$, $\Sel^{+-}(\Q,D_{\rhoDN42})$, $\Sel^{-+}(\Q,D_{\rhoDN42})$ and $\Sel^{--}(\Q,D_{\rhoDN42})$ respectively correspond to the $--$, $-+$, $+-$ and $++$ Selmer groups respectively appearing in Kim's work \cite{MR3224266}.
\end{remark}

\subsection{Two-variable main conjectures and two variable $p$-adic $L$-functions}

Following Loeffler's work \cite{loefflerray}, we recall the construction of the four $\pm,\pm$ $2$-variable $p$-adic $L$-functions attached to $E$. Suppose we are given a choice of $\lambda,\mu\in\left\{\pm\sqrt{-p}\right\}$. There exists an unbounded $\Qp(\sqrt{-p})$-valued  distribution $L_{\lambda,\mu}$ on $\tilde{\Gamma}$, satisfying the following interpolation formula:
\begin{equation}\label{eq:interpolation1}
  L_{\lambda,\mu}(\psi)=\frac{p^{m+n}}{\lambda^m\mu^n\tau_K(\psi^{-1})}\times\frac{L(E/K,\psi^{-1},1)}{\Omega_{f_E}^+\Omega_{f_E}^-},
\end{equation}
for every finite Galois character $\psi:\Gal{\overline{K}}{K}\rightarrow \overline{\Q}^\times$ with conductor $\p^m\q^n$ for some $m,n>0$. Here $\tau_K(\psi^{-1})$ is the Gauss sum of $\psi^{-1}$ (see, for example, the work of Skinner-Urban \cite[\S8.1.3]{SU}). The periods $\Omega_{f_E}^\pm$ are the real and imaginary periods  associated to the newform $f_E$ constructed by Shimura (\cite{MR0463119}). See also Theorem 3.5.4 in the work of Greenberg-Stevens \cite{MR1279610}. These periods are well defined up to units in $\overline{\Z}_p^\times$.

\begin{remark} \label{remark:periodnormalization}
  There are three choices of normalizations (depending on the choice of complex periods) available to us for the construction of these unbounded measures. These choices, denoted  $\Omega_{f_E}^{\mathrm{cong}}$, $\Omega_\Pi$ and $\Omega_{f_E}^+\Omega_{f_E}^-$ respectively, appear in works of Castella-Wan \cite{CW}, Loeffler \cite{loefflerray} and Wan \cite{wan} respectively. We refer the reader to these works for the precise definition of $\Omega_{f_E}^{\mathrm{cong}}$ and $\Omega_\Pi$.

  Lemma 9.5 in the work of Skinner-Zhang \cite{SkinnerZhang} asserts that the $p$-adic valuation of the ratio $\frac{\Omega_{f_E}^{\mathrm{cong}}}{\Omega_{f_E}^+\Omega_{f_E}^-}$ is zero.  Castella-Wan \cite[Remark 2.4]{CW} assert that the ratio of the periods $\frac{\Omega_\Pi}{\Omega_{f_E}^+\Omega_{f_E}^-}$ is an element of $\Q^\times$. However, it is not clear to us whether the ratio $\frac{\Omega_\Pi}{\Omega_{f_E}^+\Omega_{f_E}^-}$ is a $p$-adic unit.

  We have chosen to work with the complex period  given in Wan's work \cite{wan} because, as we indicate in Remark \ref{rem:cyclotomicspecialization}, this choice makes it transparent how the cyclotomic specializations of the $2$-variable $++$ and $--$ $p$-adic $L$-functions are related to certain one-variable $p$-adic $L$-functions of Rob Pollack. See Remark \ref{rem:cyclotomicspecialization} for more details.
\end{remark}

Fix a topological generator $\gamma_\p$ of $\Gamma_\p$. We choose $\gamma_\q$ to be the image of $\gamma_\p$ under the ring automorphism $\Z_p[[\widetilde{\Gamma}]] \rightarrow \Z_p[[\widetilde{\Gamma}]]$ induced by complex conjugation. We identify $L_{\lambda,\mu}$ with its Amice transform in $\Qp(\sqrt{-p})[[\gamma_\p-1,\gamma_\q-1]]$.  As power series, we may decompose these $p$-adic $L$-functions to obtain four bounded $\Q_p$-valued measures $\tDN42{\bullet\circ}$ on $\tilde{\Gamma}$, for $\bullet,\circ\in\{+,-\}$:
\begin{equation}\label{eq:decomp}
  L_{\lambda,\mu}=\log_\p^+\log_\q^+\tDN42{++}+\lambda\log_\p^+\log_\q^-\tDN42{+-}+\mu\log_\p^-\log_\q^+\tDN42{-+}+\lambda\mu\log_\p^-\log_\q^-\tDN42{--},
\end{equation}
where $\log_\p^\pm$ and $\log_\q^\pm$ are defined using  Pollack's $\pm$ logarithms in \cite{pollack}. As power series, we have
\[
  \log_\fr^+={\frac{1}{p}}\prod_{i\ge1}\frac{\Phi_{p^{2i}}(\gamma_{\fr})}{p};\quad
  \log_\fr^-={\frac{1}{p}}\prod_{i\ge1}\frac{\Phi_{p^{2i-1}}(\gamma_{\fr})}{p},
\]
where $\Phi_{p^m}$ denotes the $p^m$-th cyclotomic polynomial.  The corresponding distributions under Amice transform are described in \cite{dionlei}.   \\

If $\psi$ is a finite Hecke character on $\widetilde\Gamma$ of conductor $\mathfrak{f}$ for $\mathfrak{r}\in\{\p,\q\}$, then $\log_\mathfrak{r}^+(\psi)$ (respectively, $\log_\mathfrak{r}^-(\psi)$) vanishes if and only if $\mathrm{ord}_{\mathfrak{r}}(\mathfrak{f})>0$ is odd (respectively, even).
On combining equations \eqref{eq:interpolation1} and \eqref{eq:decomp}, we may deduce the following interpolation formulae for $\tDN42{\bullet\circ}$. Let $\bullet,\circ\in\{+,-\}$. Let $\psi$ be a finite Hecke character on $\tilde\Gamma$ of conductor $\p^m\q^n$ such that $\log_\p^\bullet(\psi)\log_\q^\circ(\psi)\ne0$. Then,
\begin{align}
  \tDN42{++}(\psi) & =\frac{(-1)^{\frac{m+n}{2}}p^{m+n}}{\tau_K(\psi^{-1}){\displaystyle\prod_{\substack{1 \leq k<m-1   \\ k \text{ even}}}}  \Phi_{p^k}(\zeta_\p) {\displaystyle\prod_{\substack{1 \leq l < n-1 \\ l \text{  even}}}}  \Phi_{p^l}(\zeta_\q)}\times \frac{L(E/K,\psi^{-1},1)}{\Omega_{f_E}^+\Omega_{f_E}^-},\label{eq:loeffler--}\\
  \tDN42{+-}(\psi) & =\frac{(-1)^{\frac{m+n+1}{2}}p^{m+n}}{\tau_K(\psi^{-1}){\displaystyle\prod_{\substack{1 \leq k<m-1 \\ k \text{ even}}}}  \Phi_{p^k}(\zeta_\p) {\displaystyle\prod_{\substack{1 \leq l < n-1 \\ l \text{  odd}}}}  \Phi_{p^l}(\zeta_\q)}\times \frac{L(E/K,\psi^{-1},1)}{\Omega_{f_E}^+\Omega_{f_E}^-},\\
  \tDN42{-+}(\psi) & =\frac{(-1)^{\frac{m+n+1}{2}}p^{m+n}}{\tau_K(\psi^{-1}){\displaystyle\prod_{\substack{1 \leq k<m-1 \\ k \text{ odd}}}}  \Phi_{p^k}(\zeta_\p) {\displaystyle\prod_{\substack{1 \leq l < n-1 \\ l \text{  even}}}}  \Phi_{p^l}(\zeta_\q)}\times \frac{L(E/K,\psi^{-1},1)}{\Omega_{f_E}^+\Omega_{f_E}^-},\\
  \tDN42{--}(\psi) & =\frac{(-1)^{\frac{m+n}{2}+1}p^{m+n}}{\tau_K(\psi^{-1}){\displaystyle\prod_{\substack{1 \leq k<m-1 \\ k \text{ odd}}}}  \Phi_{p^k}(\zeta_\p) {\displaystyle\prod_{\substack{1 \leq l < n-1 \\ l \text{  odd}}}}  \Phi_{p^l}(\zeta_\q)}\times \frac{L(E/K,\psi^{-1},1)}{\Omega_{f_E}^+\Omega_{f_E}^-},\label{eq:loeffler++}
\end{align}
where $\zeta_\mathfrak{r}=\psi(\gamma_\mathfrak{r})$ for $\mathfrak{r}=\p,\q$. We now recall Conjecture \ref{kim-main-conjecture} stated in the introduction.

\kimMC*

\begin{remark}\label{rk:choices}
  For $\fr \in \{\p,\q\}$,  if a different topological generator of $\Gamma_\fr$ is chosen, then $\log_\fr^\pm$ would change by a factor $u_\fr^\pm$, for some unit $u_\fr^\pm$ in the ring $\Z_p[[\Gamma_\fr]]$. The $p$-adic $L$-functions $\tDN{4}{2}{\bullet\circ}$ would then change by the factor $\left(u_\p^{\bullet}u_\q^{\circ}\right)^{-1}$. As a result, the divisors of the $p$-adic $L$-functions are independent of the choice of topological generators for $\Gamma_\p$ and $\Gamma_\q$.
\end{remark}

Progress towards Conjecture~\ref{kim-main-conjecture} has been made by various authors under various technical hypotheses. We will cite the relevant works, referring the interested reader instead to these works for the statement of the hypotheses. Wan \cite[Theorem 8.5]{wan} has made progress towards establishing the inequality $\Div\left(\tDN42{++}\right) \leq \Div\left(\Sel^{++}(\Q,D_{\rhoDN42})^\vee\right)$ in $Z^1\left(\Z_p[[\widetilde{\Gamma}]]\right)$. For the reverse inequality, one can employ an argument involving Euler systems of Beilinson-Flach elements constructed by Loeffler-Zerbes in \cite{LZColeman}. Under some technical hypotheses, such an Euler system is constructed and used to prove this direction of the main conjecture in \cite[Theorem 1.3]{BL}. Recently, a proof of Conjecture~\ref{kim-main-conjecture} has been announced in the work of Castella-\c{C}iperiani-Skinner-Sprung \cite[Theorem~A]{CCSS}.

\section{Proof of Theorem~\ref{maintheorem}} \label{S:proofmaintheorem}

Theorem~\ref{maintheorem} follows from Theorem \ref{general-main-theorem}. We will need to verify all the conditions stated in Theorem \ref{general-main-theorem}. Since we are working over the regular local ring $\Z_p[[\widetilde{\Gamma}]]$, condition (\ref{hyp-two-theorem}) is automatically satisfied.

One has to verify all the hypotheses in Section \ref{varioushypotheses}. \ref{tor-loc-sigma} follows from Proposition 4.1 in \cite{palvannan2016algebraic}. Since we are working over the regular local ring $\Z_p[[\widetilde{\Gamma}]]$, \ref{Sigma0-pd} and \ref{Gor} automatically hold.

It remains to verify \ref{rank}, \ref{loc-free} and \ref{locp0}. \ref{locp0} follows from Proposition \ref{prop:residually-irred}. \ref{rank} follows from Corollaries \ref{cor:kimfree}, \ref{cor:locgr} and \ref{cor:union}.  \ref{loc-free} follows from Corollaries \ref{cor:kimfree}, \ref{cor:locgr} and  Proposition \ref{prop:verlf}. Note that
\begin{align*}
  d(\rhoDN{4}{2})=4, \qquad d^+(\rhoDN{4}{2})=2, \qquad d^-(\rhoDN{4}{2})=2
\end{align*}

For the remainder of this section, it will be helpful to keep the following picture in mind:
\begin{center}
  \begin{tikzpicture}[auto]
    \node (H1) {$H^1\left(\Gal{\overline{\Q}_p}{\Q_p}, D_{\rhoDN{4}{2}} \right)$};
    \node (H1p) [right of=H1, node distance = 5cm]{$H^1\left(K_\p,D_{E,\kappa_\p^{-1}\kappa_\Cyc^{-1}}\right)$};

    \node(sumpq) [right of =H1p, node distance = 2.7cm] {$\bigoplus$};

    \node (H1q)[right of =H1p, node distance = 5.5cm] {$H^1\left(K_\q,D_{E,\kappa_\p^{-1}\kappa_\Cyc^{-1}}\right)$};

    \node(H1P)[below of = H1p, node distance = 1.7cm]{$\bigoplus \limits_{\P|\p}H^1\left((\tilde K_\infty)_\P,E[p^\infty]\right)$};

    \node(sumPQ) [right of =H1P, node distance = 2.7cm] {$\bigoplus$};

    \node(H1Q)[below of = H1q, node distance = 1.7cm]{$\bigoplus \limits_{\QQ|\q}H^1\left((\tilde K_\infty)_\QQ,E[p^\infty]\right)$};

    \node (circbulletP)[below of = H1P, node distance = 1.7 cm] {$\bigoplus \limits_{\P \mid \p} \left(E^\bullet\left((\Kinf)_\P\right)\otimes \Q_p/\Z_p\right) $};

    \node(sumPQcircbullet) [right of =circbulletP, node distance = 2.7cm] {$\bigoplus$};

    \node (circbulletQ)[below of = H1Q, node distance = 1.7 cm] {$\bigoplus \limits_{\QQ \mid \q} \left(E^\circ\left((\Kinf)_\QQ\right)\otimes \Q_p/\Z_p \right)$};

    \node (circbullet)[left of = circbulletP, node distance = 5 cm] {$\Loc_{\bullet\circ}(\Qp,\Drho)$};

    \node (LocGr) [above of=H1p, node distance = 1.4cm] {$\Loc_\Gr (\Q_p,D_{\rhoDN42})$};
    \draw[->] (H1) to node  {$\cong$} (H1p);
    \draw[right hook->] (circbulletP) to node  {} (H1P);
    \draw[right hook->] (circbulletQ) to node  {} (H1Q);

    \draw[->] (H1P) to node  {$\cong$} (H1p);
    \draw[->] (H1Q) to node  {$\cong$} (H1q);

    \draw[->] (H1p) to node  [left] {$\cong$} node [right] {(Corollary \ref{cor:locgr})}(LocGr);
    \draw[->] (circbullet) to node  {$\cong$} (circbulletP);
  \end{tikzpicture}
  \captionof{figure}{} \label{fig:1}
\end{center}
The identification of  $H^1\left(K_\p,D_{E,\kappa_\p^{-1}\kappa_\Cyc^{-1}}\right)$  with $\Loc_\Gr (\Qp,D_{\rhoDN42})$ is established in Corollary \ref{cor:locgr}. All the remaining inclusions and isomorphisms given in Figure \ref{fig:1} follow from the discussions in Section~\ref{S:plusminus}.

\subsection{Verifying the various hypotheses}

Let $\m$ denote the maximal ideal of local ring $\Iwalg$. Let $\mathbb{F}_p(\omega)$ denote the one-dimensional $\mathbb{F}_p$ vector on which $G_\Sigma$ acts via the Techm\"uller character. The residual representation $\overline{\rho}_E:\Gal{\overline{\Q}_p}{\Q_p} \rightarrow \Gl_2(\mathbb{F}_p)$ associated to the elliptic curve $E$ for the local decomposition group $\Gal{\overline{\Q}_p}{\Q_p}$ is absolutely irreducible (see \cite[Theorem~2.6]{Edix}). This uses the fact that $E$ is an elliptic curve defined over $\Q$ with good supersingular reduction at the prime $p$. As a result, we obtain the following proposition:

\begin{proposition} \label{prop:residually-irred} \mbox{}
  \begin{enumerate}
    \item The $\Gal{\overline{\Q}_p}{\Q_p}$-modules $D_{\rhoDN{4}{2}}[\m]$ and $D_{\rhoDNstar{4}{2}}[\m]$ have no quotient isomorphic to the trivial representation.
    \item The $\Gal{\overline{\Q}_p}{K_\p}$-module $D_{E,\kappa_\p^{-1}\kappa_\Cyc^{-1}}[\m]$  has no quotient isomorphic to the trivial representation or $\mathbb{F}_p(\omega)$.
    \item The $\Gal{\overline{\Q}_p}{K_\q}$-module $D_{E,\kappa_\p^{-1}\kappa_\Cyc^{-1}}[\m]$  has no quotient isomorphic to the trivial representation or $\mathbb{F}_p(\omega)$.
  \end{enumerate}
\end{proposition}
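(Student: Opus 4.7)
The plan is to reduce all three statements to the absolute irreducibility of $\overline{\rho}_E$ as a $\Gal{\overline{\Q}_p}{\Q_p}$-representation, which has already been cited from \cite{Edix} in the paragraph preceding the proposition.

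First, I will observe that $T_{\rhoDN{4}{2}}$, $T_{\rhoDNstar{4}{2}}$ and the underlying rank-two free $\Iwalg$-lattice of $D_{E,\kappa_\p^{-1}\kappa_\Cyc^{-1}}$ are free over $\Iwalg$, so the standard identification $\Iwalg^\vee[\m] \cong \mathbb{F}_p$ (with trivial $\Iwalg$-action) yields Galois-equivariant isomorphisms
\[
D_{\rhoDN{4}{2}}[\m] \cong T_{\rhoDN{4}{2}}/\m T_{\rhoDN{4}{2}}, \qquad D_{\rhoDNstar{4}{2}}[\m] \cong T_{\rhoDNstar{4}{2}}/\m T_{\rhoDNstar{4}{2}},
\]
and analogously for $D_{E,\kappa_\p^{-1}\kappa_\Cyc^{-1}}[\m]$. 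Since the tautological characters $\kappa_\p$, $\kappa_\q$, $\kappa_\Cyc$ all land in $1+\mathfrak{a}$ where $\mathfrak{a}$ is the augmentation ideal of the relevant Iwasawa subalgebra and $\mathfrak{a}\subseteq\m$, all three characters reduce to the trivial character modulo $\m$.

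Combining these facts with the $\Gal{\overline{\Q}_p}{\Q_p}$-equivariant decomposition \eqref{decomposition-p-splits} then gives
\[
D_{\rhoDN{4}{2}}[\m] \big|_{\Gal{\overline{\Q}_p}{\Q_p}} \cong \overline{\rho}_E \oplus \overline{\rho}_E,
\]
after identifying $\Gal{\overline{\Q}_p}{K_\p}$ with $\Gal{\overline{\Q}_p}{\Q_p}$ via the fixed embedding $i_\p$. For the Tate dual, the same computation combined with the Weil pairing isomorphism $E[p]^\vee \cong E[p](\omega^{-1})$ shows that the $\omega$-twist introduced by $\chi_p$ cancels, so that $D_{\rhoDNstar{4}{2}}[\m]|_{\Gal{\overline{\Q}_p}{\Q_p}}$ is again isomorphic to $\overline{\rho}_E\oplus\overline{\rho}_E$. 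Finally, $D_{E,\kappa_\p^{-1}\kappa_\Cyc^{-1}}[\m]$ is isomorphic to $E[p]$ as a $G_K$-module, and its restriction to each of $\Gal{\overline{\Q}_p}{K_\p}$ and $\Gal{\overline{\Q}_p}{K_\q}$ becomes $\overline{\rho}_E|_{\Gal{\overline{\Q}_p}{\Q_p}}$ after the identification of decomposition groups (for $\q$, via the conjugation $\tilde{\delta}^{-1}\Gal{\overline{\Q}_p}{K_\q}\tilde{\delta}\cong\Gal{\overline{\Q}_p}{K_\p}$ already recorded in Section~\ref{S:plusminus}).

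Since $\overline{\rho}_E|_{\Gal{\overline{\Q}_p}{\Q_p}}$ is a two-dimensional absolutely irreducible $\mathbb{F}_p$-representation, it admits no one-dimensional quotient at all; in particular, no quotient isomorphic to the trivial representation or to $\mathbb{F}_p(\omega)$. This property is preserved under direct sums, so all three parts of the proposition follow. The argument is essentially bookkeeping; the only point requiring genuine care will be the cancellation of $\omega$-twists in the Tate-dual computation, and this should be a routine application of the Weil pairing rather than a serious obstacle.
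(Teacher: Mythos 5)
Your proof is correct and follows essentially the same reasoning as the paper: both reduce everything to the absolute irreducibility of $\overline{\rho}_E|_{\Gal{\overline{\Q}_p}{\Q_p}}$ cited from \cite[Theorem~2.6]{Edix}, using that the various Iwasawa-theoretic twists are trivial modulo $\m$ and that the Weil pairing cancels the cyclotomic twist on the Tate dual. The paper simply states the conclusion directly after the citation, whereas you have spelled out the bookkeeping that the paper leaves implicit.
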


As an immediate corollary to Proposition \ref{prop:residually-irred}, we have the following equalities:

\begin{align*}
  H^0\left(\Gal{\overline{\Q}_p}{\Q_p},D_{\rhoDN{4}{2}}\right)^\vee = H^2\left(\Gal{\overline{\Q}_p}{\Q_p},D_{\rhoDN{4}{2}}\right)^\vee =0.
\end{align*}
\begin{align*}
  H^0\left(K_\p,D_{E,\kappa_\p^{-1}\kappa_\Cyc^{-1}}\right) = H^2\left(K_\p,D_{E,\kappa_\p^{-1}\kappa_\Cyc^{-1}}\right) = 0. \qquad
  H^0\left(K_\p,D_{E,\kappa_\q^{-1}\kappa_\Cyc^{-1}}\right) = H^2\left(K_\p,D_{E,\kappa_\q^{-1}\kappa_\Cyc^{-1}}\right) =0.
\end{align*}

The local Euler Poincar\'e characteristics (Proposition 4.2 in \cite{MR2290593}) along with Proposition 5.10 and  Remark 5.10.1 in \cite{MR2290593} lets us immediately deduce the following corollary:

\begin{corollary} \label{cor:freeness-four-two}
  The $\Iwalg$-module $H^1\left(\Gal{\overline{\Q}_p}{\Q_p},D_{\rhoDN{4}{2}}\right)^\vee$ is free of rank four. The $\Iwalg$-modules $H^1\left(K_\p,D_{E,\kappa_\p^{-1}\kappa_\Cyc^{-1}}\right)^\vee$ and $H^1\left(K_\p,D_{E,\kappa_\q^{-1}\kappa_\Cyc^{-1}}\right)^\vee$ are free of rank two.
\end{corollary}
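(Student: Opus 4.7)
The plan is to read both assertions as direct structural consequences of the residual calculations already recorded, combined with two standard tools from Greenberg's framework for Iwasawa-theoretic local cohomology. First, I would invoke \cite{MR2290593}, Proposition 5.10 and Remark 5.10.1: for a Galois representation $T$ over a Noetherian complete local Iwasawa ring and any prime $\nu$, if the residual representation $D[\m]$ satisfies a suitable ``no trivial quotient'' condition then $H^1(\Q_\nu, D)^\vee$ is a free $\Iwalg$-module. This hypothesis is precisely what Proposition \ref{prop:residually-irred} supplies in each of the three cases at hand (for parts (2) and (3) I use the identification $\Gal{\overline{\Q}_p}{K_\p}\cong \Gal{\overline{\Q}_p}{\Q_p}$ coming from the splitting of $p$ in $K$, already fixed in the setup). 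The freeness assertion for the three $H^1$-duals is then immediate.

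To compute the $\Iwalg$-ranks, I would apply the local Euler--Poincar\'e characteristic formula \cite[Prop.~4.2]{MR2290593}, which for $\G$ the absolute Galois group of a finite extension of $\Q_p$ and $D = T \otimes_\Iwalg \Iwalg^\vee$ gives
\[
\mathrm{corank}_\Iwalg H^0(\G,D) - \mathrm{corank}_\Iwalg H^1(\G,D) + \mathrm{corank}_\Iwalg H^2(\G,D) = -\rank_\Iwalg(T).
\]
The boundary terms vanish thanks to the $H^0$ and $H^2$ vanishing deduced just above the statement of the corollary (which themselves follow from Proposition \ref{prop:residually-irred} via local duality in the $H^2$ case). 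Hence $\rank_\Iwalg H^1(\G,D)^\vee = \rank_\Iwalg T$ in each case.

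Finally I would carry out the routine rank count of the underlying compact modules. The module $T_{\rhoDN{4}{2}} = T_p(E)\ \hotimes_{\Z_p}\ T_\p\ \hotimes_{\Z_p}\ \Z_p[[\Gamma_\Cyc]](\kappa_\Cyc^{-1})$ is free of $\Iwalg$-rank $2 \cdot 2 \cdot 1 = 4$, while each of $D_{E,\kappa_\p^{-1}\kappa_\Cyc^{-1}}$ and $D_{E,\kappa_\q^{-1}\kappa_\Cyc^{-1}}$ arises from the rank-two underlying modules $T_p(E)\hotimes_{\Z_p}\Z_p[[\Gamma_\p]](\kappa_\p^{-1})\hotimes_{\Z_p}\Z_p[[\Gamma_\Cyc]](\kappa_\Cyc^{-1})$ and its $\kappa_\q^{-1}$ analogue (the decomposition \eqref{decomposition-p-splits} makes this splitting visible as an $\Iwalg$-module with $\Gal{\overline{\Q}_p}{K_\p}$-action). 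Combining freeness with these rank counts yields exactly the ranks $4$, $2$, $2$ claimed.

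There is essentially no ``hard step'' here: everything is a mechanical combination of residual irreducibility of $\bar\rho_E|_{G_{\Q_p}}$ (used in Proposition \ref{prop:residually-irred}), Greenberg's freeness criterion, and the Euler--Poincar\'e formula. The only point requiring minor care is the passage between $\Gal{\overline{\Q}_p}{K_\p}$ and $\Gal{\overline{\Q}_p}{\Q_p}$ cohomology of the split characters $\kappa_\p^{-1}$ and $\kappa_\q^{-1}$, which is handled by the canonical identification fixed at the outset of Section \ref{S:Panch}.
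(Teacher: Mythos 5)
Your proposal is correct and follows exactly the paper's route: Proposition~\ref{prop:residually-irred} gives the vanishing of $H^0$ and $H^2$, Greenberg's Proposition 5.10 (and Remark 5.10.1) in \cite{MR2290593} gives freeness, and the local Euler--Poincar\'e characteristic (Proposition 4.2 of \cite{MR2290593}) combined with the rank counts $\rank T_{\rhoDN{4}{2}}=4$ and $\rank=2$ for the two rank-two factors pins down the ranks. The only cosmetic quibble is that the needed input is the pair of $H^0/H^2$ vanishings recorded immediately after Proposition~\ref{prop:residually-irred}, rather than parts (2)--(3) of the proposition verbatim, but this is the same content.
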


Let us recall the following result of Kim.

\begin{proposition}\label{prop:freekim}
  Let $\bullet, \circ \in \{+,-\}$. The following $\Iwalg$-modules are free of rank one:
  $$\bigoplus_{\P \mid \p} \Loc_\bullet\left((\tilde K_\infty)_\P,E[p^\infty]\right)^\vee,  \qquad \bigoplus_{\QQ \mid \q} \Loc_\circ \left((\tilde K_\infty)_\QQ,E[p^\infty]\right)^\vee.$$
\end{proposition}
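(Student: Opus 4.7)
My plan is to deduce this essentially as a citation of Kim's work \cite{MR3224266}, combined with the Shapiro-type reformulation already set up in Section~\ref{S:plusminus}. First, I would replace the direct sum on the left by a single local cohomology group. Via equations (\ref{second-dec}) and (\ref{third-dec}), there is a natural $\Iwalg$-module isomorphism
\[
\bigoplus_{\P \mid \p} H^1\bigl((\tilde K_\infty)_\P, E[p^\infty]\bigr) \ \cong\ H^1\bigl(K_\p, D_{E,\kappa_\p^{-1}\kappa_\Cyc^{-1}}\bigr),
\]
and analogously for $\q$. Under this isomorphism the submodule $\bigoplus_{\P \mid \p} \Loc_\bullet\bigl((\tilde K_\infty)_\P, E[p^\infty]\bigr)$ corresponds to the image of $\varprojlim_{m,n} E^{\bullet}\bigl(K(\p^m\q^n)_\P\bigr) \otimes \Q_p/\Z_p$ under the Kummer map into the induced local cohomology, by the definition of $\Loc_\bullet$ at the start of Section \ref{S:Kim}. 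Thus it suffices to show that the Pontryagin dual of this single module is free of rank one over $\Iwalg$.

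Next I would invoke Kim's two-variable generalization of Kobayashi's plus/minus construction. Using that $E$ has good supersingular reduction at $p$ with $a_p(E)=0$ and that the formal group $\hat E$ has height two, Kim constructs in \cite{MR3224266} plus/minus Coleman-type maps in the two-variable setting over $K_\p$. After passing to Pontryagin duals via local Tate duality, these maps fit into short exact sequences whose outer terms are identified with free $\Iwalg$-modules of rank one. More concretely, the orthogonal complement of $\Loc_\bullet$ under local Tate duality is cut out by vanishing against a single Coleman map, which yields both the surjection onto $\Iwalg$ and the freeness of rank one for the dual of the plus/minus image. The same argument applied to $\q$ handles the second statement, completing the proof.

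The main obstacle in Kim's underlying construction—which I would not reprove here but only cite—is verifying that the plus/minus norm groups behave well simultaneously in both $\Z_p$-directions (cyclotomic and anticyclotomic at $\p$); this requires a careful analysis of distribution relations over the tower $K(\p^m\q^n)_\P/K_\p$ that goes beyond Kobayashi's one-variable argument. Once these distribution relations are in place, the freeness statement reduces, as in Kobayashi's original approach, to computing the cokernel of a Coleman-type pairing and showing it is cyclic of rank one over $\Iwalg$, which is the content of the statement being cited.
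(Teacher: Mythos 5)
Your proposal follows essentially the same route as the paper. The paper fixes a single prime $\P_0\mid\p$, uses the isomorphism
\[
\bigoplus_{\P\mid\p}\Loc_\bullet\bigl((\tilde K_\infty)_\P,E[p^\infty]\bigr)^\vee \cong \Ind^{\widetilde\Gamma}_{\widetilde\Gamma_{\P_0}}\left(E^\bullet\bigl((\Kinf)_{\P_0}\bigr)\otimes\Q_p/\Z_p\right)^\vee
\]
to reduce to a rank-one freeness statement over the sub-Iwasawa algebra $\Z_p[[\widetilde\Gamma_{\P_0}]]$, and closes by citing \cite[Proposition~2.11]{MR3224266}. Your Shapiro-lemma reformulation via $H^1\bigl(K_\p,D_{E,\kappa_\p^{-1}\kappa_\Cyc^{-1}}\bigr)$ is the induced-module picture in a different guise, and the final appeal to Kim is the same citation. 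Two small cautions: the plus/minus subgroup $E^\pm\bigl((\Kinf)_\P\bigr)$ is defined as a union over finite levels, $\bigcup_{m,n\ge 0}E^\pm\bigl(K(\p^m\q^n)_\P\bigr)$, invariant under $\Delta$, not an inverse limit as you wrote; and the extra narrative about two-variable Coleman maps fitting into short exact sequences, while consistent with Kim's techniques, is not actually what the paper's proof uses or what needs to be argued — the citation to \cite[Proposition~2.11]{MR3224266} directly provides the freeness.
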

\begin{proof}
  We will simply show that the $\Iwalg$-module $\bigoplus_{\P \mid \p} \Loc_\bullet\left((\tilde K_\infty)_\P,E[p^\infty]\right)^\vee$ is free of rank one. One can similarly show that the $\Iwalg$-module $\bigoplus_{\QQ \mid \q} \Loc_\circ \left((\tilde K_\infty)_\QQ,E[p^\infty]\right)^\vee$ is free of rank one.  Let us fix a prime $\P _0$ above $\p$ in $\widetilde{K}_\infty$. Let $\widetilde{\Gamma}_{\P_0}$ denote the decomposition group of $\widetilde{\Gamma}$ for the prime $\P_0$ lying above the prime $\p$ in $K$. Note that we have the following isomorphism of $\Iwalg$-modules:
  \begin{align*}
    \bigoplus_{\P \mid \p} \Loc_\bullet\left((\tilde K_\infty)_\P,E[p^\infty]\right)^\vee \cong \Ind^{\widetilde{\Gamma}}_{\widetilde{\Gamma}_{\P_0}} \left(E^\bullet\left((\Kinf)_{\P_0}\right)\otimes \Q_p/\Z_p\right)^\vee
  \end{align*}
  It suffices to show that the $\Z_p[[\widetilde{\Gamma}_{\P_0}]]$-module $\left(E^\bullet\left((\Kinf)_{\P_0}\right)\otimes \Q_p/\Z_p\right)^\vee$ is free of rank one. This follows from a result of Kim \cite[Proposition~2.11]{MR3224266}.
\end{proof}

Let $\bullet, \circ \in \{+,-\}$.  Since we have the following equality of $\Iwalg$-modules:
$$ \Loc_{\bullet\circ}(\Qp,\Drho) = \bigoplus_{\P \mid \p} \Loc_\bullet\left((\tilde K_\infty)_\P,E[p^\infty]\right) \oplus  \bigoplus_{\QQ \mid \q} \Loc_\circ \left((\tilde K_\infty)_\QQ,E[p^\infty]\right),$$
we immediately have the following corollary:

\begin{corollary} \label{cor:kimfree}
  Let $\bullet, \circ \in \{+,-\}$. The $\Iwalg$-module $\Loc_{\bullet\circ}(\Qp,\Drho)^\vee$ is free of rank two.
\end{corollary}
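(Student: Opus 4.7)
The plan is to deduce this corollary directly from the displayed decomposition immediately preceding its statement and from Proposition~\ref{prop:freekim}. First I would apply Pontryagin duality to the identification
\begin{align*}
\Loc_{\bullet\circ}(\Qp,\Drho) = \bigoplus_{\P \mid \p} \Loc_\bullet\!\left((\tilde K_\infty)_\P, E[p^\infty]\right) \,\oplus\, \bigoplus_{\QQ \mid \q} \Loc_\circ\!\left((\tilde K_\infty)_\QQ, E[p^\infty]\right),
\end{align*}
using the standard fact that Pontryagin duality converts direct sums of discrete modules into direct products (which, since the index sets are finite, coincide with direct sums) of compact modules. This yields an isomorphism of $\Iwalg$-modules
\begin{align*}
\Loc_{\bullet\circ}(\Qp,\Drho)^\vee \cong \left(\bigoplus_{\P \mid \p} \Loc_\bullet\!\left((\tilde K_\infty)_\P, E[p^\infty]\right)\right)^{\!\vee} \oplus \left(\bigoplus_{\QQ \mid \q} \Loc_\circ\!\left((\tilde K_\infty)_\QQ, E[p^\infty]\right)\right)^{\!\vee}.
\end{align*}

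Next, I would invoke Proposition~\ref{prop:freekim}, which states that each of the two direct summands on the right hand side is a free $\Iwalg$-module of rank one. Since the direct sum of two free $\Iwalg$-modules of rank one is a free $\Iwalg$-module of rank two, this gives the desired conclusion. There is no genuine obstacle here: the entire content of the corollary has been absorbed into Proposition~\ref{prop:freekim} (whose proof in turn rests on the Shapiro-type induction identity and Kim's result \cite[Proposition~2.11]{MR3224266}), and the role of the corollary is essentially bookkeeping that packages the $\p$-part and the $\q$-part together into the single local condition at $p$ needed to verify \ref{loc-free} in the general framework of Section~\ref{generalsetup}.
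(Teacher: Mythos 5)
Your proof is correct and follows exactly the same route as the paper: the paper likewise passes to Pontryagin duals of the displayed decomposition of $\Loc_{\bullet\circ}(\Qp,\Drho)$ into its $\p$-part and $\q$-part and then cites Proposition~\ref{prop:freekim} for the rank-one freeness of each summand. There is nothing further to add.
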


\begin{lemma}\label{lem:torfree}
  Let $\QQ$ (and $\P$ respectively) be a prime of $\widetilde{K}_\infty$ lying above the prime $\q$ (and $\p$ respectively) of $K$. Then, we have
  \begin{align*}
    H^0\left(I_\QQ,E[p^\infty]\right) = H^0\left(I_\P,E[p^\infty]\right) = H^0\left(G_\QQ,E[p^\infty]\right) = H^0\left(G_\P,E[p^\infty]\right) = 0.
  \end{align*}
\end{lemma}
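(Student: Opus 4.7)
The plan is to combine the pro-$p$ nature of the local extensions $(\widetilde{K}_\infty)_\P/\Q_p$ and $(\widetilde{K}_\infty)_\QQ/\Q_p$ with Serre's description of $E[p]$ as an inertia representation via the fundamental characters of niveau~$2$. Since $p$ splits in $K$, both $K_\p$ and $K_\q$ coincide with $\Q_p$, and the Galois groups $\Gal{(\widetilde{K}_\infty)_\P}{\Q_p}$ and $\Gal{(\widetilde{K}_\infty)_\QQ}{\Q_p}$ are closed subgroups of $\widetilde{\Gamma} \cong \Z_p^2$; in particular, they are pro-$p$.

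Next, I would invoke the following well-known fact: since $E$ has good supersingular reduction at $p$ with $a_p(E) = 0$, the restriction of $E[p]$ to $I_{\Q_p}$ becomes, after extension of scalars to $\mathbb{F}_{p^2}$, isomorphic to $\mathbb{F}_{p^2}(\psi) \oplus \mathbb{F}_{p^2}(\psi^p)$, where $\psi, \psi^p \colon I_{\Q_p} \twoheadrightarrow \mathbb{F}_{p^2}^\times$ are the two fundamental characters of niveau~$2$. These characters factor through the tame quotient $I_{\Q_p}^{\mathrm{t}} = I_{\Q_p}/I_{\Q_p}^{\mathrm{w}}$ and surject onto the cyclic group $\mathbb{F}_{p^2}^\times$, whose order $p^2 - 1$ is coprime to $p$. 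Moreover, $E[p]$ is absolutely irreducible as an $\mathbb{F}_p[I_{\Q_p}]$-module, since $\psi$ and $\psi^p$ are interchanged nontrivially by $\Gal{\mathbb{F}_{p^2}}{\mathbb{F}_p}$.

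The crux of the argument is to verify that $I_\P$ still surjects onto $I_{\Q_p}^{\mathrm{t}}$. Using the identification $I_{\Q_p}/I_\P \cong \Gal{(\widetilde{K}_\infty)_\P \cdot \Q_p^{\ur}}{\Q_p^{\ur}}$, this quotient is itself a quotient of the pro-$p$ group $\Gal{(\widetilde{K}_\infty)_\P}{\Q_p}$, hence pro-$p$. Since $I_{\Q_p}^{\mathrm{t}}$ has profinite order coprime to $p$, the subgroup $I_\P \cdot I_{\Q_p}^{\mathrm{w}}$ must have index in $I_{\Q_p}$ simultaneously a power of $p$ and coprime to $p$, forcing $I_\P \cdot I_{\Q_p}^{\mathrm{w}} = I_{\Q_p}$ and giving the desired surjection $I_\P \twoheadrightarrow I_{\Q_p}^{\mathrm{t}}$; the identical argument applies to $I_\QQ$. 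Combined with the absolute irreducibility of $E[p]$ and the nontriviality of $\psi$ on $I_{\Q_p}^{\mathrm{t}}$, this yields $H^0(I_\P, E[p]) = H^0(I_\QQ, E[p]) = 0$, whence $H^0(I_\P, E[p^\infty]) = H^0(I_\QQ, E[p^\infty]) = 0$ (any nonzero element of $E[p^\infty]^{I_\P}$ would produce, via a suitable power of $p$, a nonzero element of $E[p]^{I_\P}$). The vanishing for $G_\P$ and $G_\QQ$ is then immediate from $I_\P \subseteq G_\P$ and $I_\QQ \subseteq G_\QQ$. The main obstacle is precisely the surjection $I_\P \twoheadrightarrow I_{\Q_p}^{\mathrm{t}}$, which rests on the pro-$p$ nature of the local extensions established at the outset.
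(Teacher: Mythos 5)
Your proposal takes essentially the same route as the paper, and the core argument is correct: reduce to vanishing of $E[p]^{I_\p}$ over $\Q_p$ via the pro-$p$-ness of $\Gal{(\widetilde K_\infty)_\P}{\Q_p}$, then observe that the two fundamental characters of level $2$ are nontrivial. The paper organizes the reduction step slightly differently -- it invokes the fixed-point theorem (a pro-$p$ group acting on a nonzero finite $p$-group has a nonzero fixed vector) to deduce that $E[p]^{I_\QQ}\ne 0$ would force $E[p]^{I_\q}\ne 0$ -- whereas you argue that $I_\P$ surjects onto the tame quotient $I_{\Q_p}^t$ because a pro-$p$ cokernel and a prime-to-$p$ cokernel can only share the trivial group. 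Both are correct, elementary, and rely on the same ideas; your version has the small advantage of isolating a structural statement ($I_\P\twoheadrightarrow I_{\Q_p}^t$) that could be reused elsewhere.

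One side remark in your proof is inaccurate: $E[p]$ is \emph{not} absolutely irreducible as an $\mathbb{F}_p[I_{\Q_p}]$-module. The inertia action factors through the abelian tame quotient, and a two-dimensional absolutely irreducible representation of an abelian group does not exist; indeed $E[p]\otimes_{\mathbb{F}_p}\mathbb{F}_{p^2}\cong \mathbb{F}_{p^2}(\psi)\oplus\mathbb{F}_{p^2}(\psi^p)$ explicitly splits. What you can say is that $E[p]$ is irreducible over $\mathbb{F}_p[I_{\Q_p}]$ (since neither character is $\mathbb{F}_p$-valued), and absolute irreducibility does hold for the full decomposition group $\Gal{\overline{\Q}_p}{\Q_p}$ as in Proposition~\ref{prop:residually-irred}, where Frobenius swaps $\psi$ and $\psi^p$. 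This slip is harmless here, since the vanishing $E[p]^{I_{\Q_p}}=0$ needs only the nontriviality of the fundamental characters, as you also correctly note.
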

Here, the groups $I_\QQ$, $G_\QQ$ (and $I_\P$, $G_\P$ respectively) denote the inertia and decomposition subgroups for the primes $\QQ$ (and $\P$ respectively) inside $\Gal{\overline{\Q}_p}{K_\q}$ (and $\Gal{\overline{\Q}_p}{K_\p}$ respectively).

\begin{proof}
  Note that if we show $H^0\left(I_\QQ,E[p^\infty]\right)=H^0\left(I_\P,E[p^\infty]\right)=0$, then we would have $H^0\left(G_\QQ,E[p^\infty]\right)= H^0\left(G_\P,E[p^\infty]\right)=0$.  We will simply show that $H^0\left(I_\QQ,E[p^\infty]\right) =0$. One can similarly prove that $H^0\left(I_\P,E[p^\infty]\right) = 0$.\\

  Since the quotient $\frac{I_\q}{I_\QQ}$ is a pro-p group and since $E[p]$ is a discrete torsion $p$-group, we have
  \begin{align*}
    H^0\left(I_\QQ,E[p^\infty]\right) = 0 \iff H^0\left(I_\QQ,E[p]\right)  = 0 \iff H^0\left(I_\q,E[p]\right) = 0.
  \end{align*}
  Therefore, to complete the proof of the lemma, it suffices to show that $H^0\left(I_\q,E[p]\right)=0$. By \cite[Theorem~2.6]{Edix}, we have the following isomorphism of $F_p[I_q]$-modules:
  \begin{align*}
    E[p] \cong \F_p(\psi) \oplus \F_p(\psi'),
  \end{align*}
  where $\psi:I_q \rightarrow \mathbb{F}_{p^2}^\times$ and $\psi':I_q \rightarrow \mathbb{F}_{p^2}^\times$ are two fundamental characters of level two. That is, $\mathrm{Image}(\psi)$ and $\mathrm{Image}(\psi')$ lie inside $\mathbb{F}_{p^2}$ but not inside $\mathbb{F}_p$. As a result,
  \begin{align*}
    H^0\left(I_\q,E[p]\right) \cong H^0\left(I_\q,\F_p(\psi)\right) \oplus H^0\left(I_\q,\F_p(\psi)\right)  =0.
  \end{align*}
  This completes the proof of the lemma. \\

  Alternatively, to deduce the lemma, we can adopt the techniques of the proof appearing in Kobayashi's work \cite[Proposition~8.7]{kobayashi}, which relies on the fact the $E[p]$  is isomorphic to the $p$-torsion points on a formal group of height two over the local ring $\Z_p$.  See also Kim's work \cite[discussion towards the end of page 829]{MR3224266}).
\end{proof}

\begin{corollary}\label{cor:locgr}
  Under the isomorphism given in equation (\ref{first-dec}), we can identify  $\Loc_\Gr (\Q_p,D_{\rhoDN42})$  with  $H^1\left(K_\p,D_{E,\kappa_\p^{-1}\kappa_\Cyc^{-1}}\right)$. Consequently, the $\Iwalg$-module $\Loc_\Gr (\Q_p,D_{\rhoDN42})^\vee$ is free of rank two.
\end{corollary}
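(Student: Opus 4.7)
The plan is to exploit the fact that the short exact sequence
\[
0 \rightarrow \Fil D_{\rhoDN42} \rightarrow D_{\rhoDN42} \rightarrow \frac{D_{\rhoDN42}}{\Fil D_{\rhoDN42}} \rightarrow 0
\]
splits as a sequence of $G_{\Q_p}$-modules, thanks to the decomposition in equation (\ref{decomposition-p-splits}); identifying $G_{\Q_p}$ with $G_{K_\p}$ via the chosen embedding, this identifies $\Fil D_{\rhoDN42}$ with $D_{E,\kappa_\p^{-1}\kappa_\Cyc^{-1}}$ and the quotient with $D_{E,\kappa_\q^{-1}\kappa_\Cyc^{-1}}$ as $G_{K_\p}$-modules. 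Taking cohomology yields a splitting
\[
H^1(\Q_p, D_{\rhoDN42}) \cong H^1(K_\p, D_{E,\kappa_\p^{-1}\kappa_\Cyc^{-1}}) \oplus H^1(K_\p, D_{E,\kappa_\q^{-1}\kappa_\Cyc^{-1}})
\]
which is precisely the isomorphism (\ref{first-dec}), and under this splitting the map appearing in the definition of $\Loc_\Gr(\Q_p, D_{\rhoDN42})$ vanishes on the first summand and coincides on the second summand with the natural restriction map $\mathrm{res}\colon H^1(K_\p, D_{E,\kappa_\q^{-1}\kappa_\Cyc^{-1}}) \rightarrow H^1(I_p, D_{E,\kappa_\q^{-1}\kappa_\Cyc^{-1}})$.

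Therefore, the desired identification reduces to checking that this restriction map $\mathrm{res}$ is injective. By the inflation-restriction exact sequence, the kernel is $H^1(G_{\Q_p}/I_p, H^0(I_p, D_{E,\kappa_\q^{-1}\kappa_\Cyc^{-1}}))$, so it suffices to prove that $H^0(I_p, D_{E,\kappa_\q^{-1}\kappa_\Cyc^{-1}}) = 0$. Here I would invoke Shapiro's lemma (as applied in (\ref{third-dec})) to obtain
\[
H^0(I_p, D_{E,\kappa_\q^{-1}\kappa_\Cyc^{-1}}) \cong \bigoplus_{\QQ \mid \q} H^0(I_\QQ, E[p^\infty]),
\]
and then cite Lemma \ref{lem:torfree}, which states exactly that each summand vanishes. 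This shows $\Loc_\Gr(\Q_p,D_{\rhoDN42}) = H^1(K_\p, D_{E,\kappa_\p^{-1}\kappa_\Cyc^{-1}})$.

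The freeness assertion is then immediate from Corollary \ref{cor:freeness-four-two}, which establishes that $H^1(K_\p, D_{E,\kappa_\p^{-1}\kappa_\Cyc^{-1}})^\vee$ is free of rank two over $\Iwalg$. The only genuinely substantive step is the injectivity of the local restriction map; all the other pieces (the splitting of the filtration, the Shapiro identification, and the freeness of the local cohomology) have already been set up earlier in the paper. Since both Shapiro and Lemma \ref{lem:torfree} are already available, no obstacle should appear beyond unwinding the compatibility of the splittings.
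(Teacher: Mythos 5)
Your proposal is correct and follows the same line as the paper's proof: split $D_{\rhoDN42}$ as a $G_{\Q_p}$-module (via \eqref{decomposition-p-splits}) so that $\Loc_\Gr$ is cut out by the restriction map on one summand, reduce the injectivity of that restriction to the vanishing of an inertia-invariants group via inflation-restriction and Shapiro, and invoke Lemma~\ref{lem:torfree}; freeness then comes from Corollary~\ref{cor:freeness-four-two}. The only cosmetic difference is that you keep the second summand as $H^1(K_\p, D_{E,\kappa_\q^{-1}\kappa_\Cyc^{-1}})$ and apply Shapiro there directly, whereas the paper first passes to $H^1(K_\q, D_{E,\kappa_\p^{-1}\kappa_\Cyc^{-1}})$ via \eqref{second-dec} and then invokes \eqref{third-dec}; these are the same identification packaged differently.
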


\begin{proof}
  Under the isomorphism
  \begin{align*}
    D_{\rhoDN{4}{2}} \cong D_{E,\kappa_\p^{-1}\kappa_\Cyc^{-1}} \oplus D_{E,\kappa_\q^{-1}\kappa_\Cyc^{-1}},
  \end{align*}
  we can identify
  $\Fil D_{\rhoDN{4}{2}}$ with $D_{E,\kappa_\p^{-1}\kappa_\Cyc^{-1}} $.  As a result, under the isomorphism given in equation (\ref{first-dec}), we can identify $\Loc_\Gr (\Q_p,D_{\rhoDN42})$ with
  \begin{align*} \label{eq:iso-coincide}
    \Loc_\Gr (\Q_p,D_{\rhoDN42})\cong H^1\left(K_\p,D_{E,\kappa_\p^{-1}\kappa_\Cyc^{-1}}\right) \oplus \ker \left( H^1\left(K_\q,D_{E,\kappa_\p^{-1}\kappa_\Cyc^{-1}}\right) \rightarrow H^1\left(I_\q,D_{E,\kappa_\p^{-1}\kappa_\Cyc^{-1}}\right) \right).
  \end{align*}
  Here, $I_\q$ denotes the inertia group inside $\Gal{\overline{\Q}_p}{K_\q}$. The inflation-restriction exact sequence gives us the following isomorphism of $\Iwalg$-modules:
  \[
    H^1\left(G_{K_\q}/I_\q,H^0\left(I_\q, D_{E,\kappa_\p^{-1}\kappa_\Cyc^{-1}}\right)\right) \cong \ker \left( H^1\left(K_\q,D_{E,\kappa_\p^{-1}\kappa_\Cyc^{-1}}\right) \rightarrow H^1\left(I_\q,D_{E,\kappa_\p^{-1}\kappa_\Cyc^{-1}}\right) \right).
  \]
  Note that we have the following isomorphism of $\Iwalg$-modules:
  \begin{align*}
    H^0\left(I_\q,D_{E,\kappa_\p^{-1}\kappa_\Cyc^{-1}}\right) \cong \bigoplus \limits_{\QQ \mid \q} H^0\left(I_\QQ, E[p^\infty]\right)
  \end{align*}
  Here, the direct sum is taken over all primes $\QQ$ of  $\widetilde{K}_\infty$ lying above the prime $\q$ of $K$. The groups $I_{\QQ}$ and $G_{\QQ}$ denote the inertia and decomposition subgroups of $\QQ$ inside $\Gal{\overline{\Q}_p}{K_\q}$.
  By Lemma~\ref{lem:torfree}, we have $$H^0\left(I_\QQ, E[p^\infty]\right)=0, \quad \forall \ \QQ \mid \q.$$
  These observations now give us following isomorphism of $\Iwalg$-modules: $$\Loc_\Gr (\Q_p,D_{\rhoDN42}) \cong H^1\left(K_\p,D_{E,\kappa_\p^{-1}\kappa_\Cyc^{-1}}\right).$$
  The last part of the corollary follows from Corollary \ref{cor:freeness-four-two}.
\end{proof}

\begin{proposition}\label{prop:sumquotient} Let $\bullet,\circ\in\{+,-\}$. The Pontraygin dual of the $\Iwalg$-modules
  \begin{align} \label{al:quot}
    \frac{\Loc_{+\circ}(\Qp,\Drho)+\Loc_{-\circ}(\Qp,\Drho)}{\Loc_{\bullet\circ}(\Qp,\Drho)}, \quad \frac{\Loc_{\bullet+}(\Qp,\Drho)+\Loc_{\bullet-}(\Qp,\Drho)}{\Loc_{\bullet\circ}(\Qp,\Drho)}
  \end{align}
  are free of rank one.
\end{proposition}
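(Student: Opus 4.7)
The key observation is that the local condition $\Loc_{\bullet\circ}(\Qp,\Drho)$ splits as an internal direct sum into a $\p$-part and a $\q$-part. Writing
\[ A_\bullet := \bigoplus_{\P \mid \p} \Loc_\bullet\!\left((\tilde K_\infty)_\P, E[p^\infty]\right), \qquad B_\circ := \bigoplus_{\QQ \mid \q} \Loc_\circ\!\left((\tilde K_\infty)_\QQ, E[p^\infty]\right), \]
we have $\Loc_{\bullet\circ}(\Qp,\Drho) = A_\bullet \oplus B_\circ$. Since $B_\circ$ appears in both numerator and denominator of $\frac{\Loc_{+\circ} + \Loc_{-\circ}}{\Loc_{\bullet\circ}}$, it cancels, giving a canonical isomorphism of $\Iwalg$-modules
\[ \frac{\Loc_{+\circ} + \Loc_{-\circ}}{\Loc_{\bullet\circ}} \cong \frac{A_+ + A_-}{A_\bullet}, \qquad \frac{\Loc_{\bullet+} + \Loc_{\bullet-}}{\Loc_{\bullet\circ}} \cong \frac{B_+ + B_-}{B_\circ}. \]
By the symmetry of the $\p$ and $\q$ arguments (both treated on equal footing in Kim's construction recalled in Section~\ref{S:Kim}), and the symmetry in the sign $\bullet$, it suffices to establish that $\bigl(\frac{A_+ + A_-}{A_+}\bigr)^\vee$ is a free $\Iwalg$-module of rank one.

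\textbf{Step 1.} Use the standard identification
\[ \frac{A_+ + A_-}{A_+} \;\cong\; \frac{A_-}{A_+ \cap A_-}, \]
so that taking Pontryagin duals gives
\[ \left(\frac{A_+ + A_-}{A_+}\right)^{\!\vee} \;\cong\; \ker\!\left( A_-^\vee \twoheadrightarrow (A_+ \cap A_-)^\vee\right). \]
By Proposition~\ref{prop:freekim}, $A_-^\vee$ is a free $\Iwalg$-module of rank one.

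\textbf{Step 2} (main obstacle). Show that $A_+ \cap A_- = 0$ inside $\bigoplus_{\P \mid \p} H^1\!\bigl((\tilde K_\infty)_\P, E[p^\infty]\bigr)$. Equivalently, the Kummer images of $E^+((\Kinf)_\P) \otimes \Q_p/\Z_p$ and $E^-((\Kinf)_\P) \otimes \Q_p/\Z_p$ meet trivially in the local Galois cohomology. The plan is to reduce this to Kim's analysis in \cite{MR3224266}: the defining trace-compatibility conditions for $E^\pm$ pin down disjoint ``even'' and ``odd'' layers of the $\p$-tower, and one can show (following the strategy of Kobayashi's Lemma~8.17 in \cite{kobayashi}, adapted to the two-variable setting by Kim) that any simultaneous element of $E^+ \cap E^-$ descends to the bounded submodule $\hat E(K_\p) \otimes \Q_p/\Z_p$, which is divisible and hence vanishes upon Kummer injection into the Iwasawa tower.

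\textbf{Step 3.} Once $A_+ \cap A_- = 0$ is established, we obtain $\bigl(\frac{A_+ + A_-}{A_+}\bigr)^\vee \cong A_-^\vee$, free of rank one. Applying the same argument with roles swapped yields the statement for $\bullet = -$, and the parallel analysis over $\q$ handles the second quotient in \eqref{al:quot}. The principal difficulty lies entirely in Step 2, where one must invoke the sign-decomposition structural results built in \cite{MR3224266,kobayashi} rather than just the rank-one freeness recalled in Proposition~\ref{prop:freekim}; the rest of the proof is formal manipulation of Pontryagin duals together with the crucial direct-sum splitting at the two places above $p$.
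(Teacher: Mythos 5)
Your reduction to the intersection is where the argument breaks. You want to show
\[
  \bigoplus_{\P\mid\p}\Loc_+\bigl((\tilde K_\infty)_\P,E[p^\infty]\bigr)\ \cap\ \bigoplus_{\P\mid\p}\Loc_-\bigl((\tilde K_\infty)_\P,E[p^\infty]\bigr)\ =\ 0,
\]
but this intersection is \emph{not} zero. In fact, the exact sequence \eqref{eq:inter} extracted from Kim's Proposition 2.6 in \cite{MR3224266},
\[
  0 \rightarrow \hat E\bigl(K(\p^\infty)_{\Z_p,\QQ}\bigr) \rightarrow E^+\bigl((\tilde K_\infty)_\QQ\bigr)\oplus E^-\bigl((\tilde K_\infty)_\QQ\bigr) \rightarrow \hat E\bigl((\tilde K_\infty)_\QQ\bigr) \rightarrow 0,
\]
identifies $E^+\cap E^-$ with the formal-group points over the \emph{unramified} $\Z_p$-extension, not with $\hat E(K_\p)$ or $\hat E(K_\q)$. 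After tensoring with $\Q_p/\Z_p$ and passing through the (injective) Kummer map, Proposition~\ref{int-description} of the paper computes the intersection at the $\q$-place as $\bigoplus_{\QQ\mid\q}\bigl(\hat E(K(\p^\infty)_{\Z_p,\QQ})\otimes\Q_p/\Z_p\bigr)$, a nonzero $\Iwalg$-torsion module whose Pontryagin dual has a nontrivial divisor supported on the height-one prime corresponding to the unramified $\Z_p$-direction. The same holds at $\p$ by symmetry. Your heuristic that an element of $E^+\cap E^-$ ``descends to $\hat E(K_\p)\otimes\Q_p/\Z_p$, which is divisible and hence vanishes'' is doubly incorrect: the descent goes to the unramified tower (not the ground field), and neither $\hat E(K_\p)\otimes\Q_p/\Z_p$ nor the unramified-tower version vanishes under the Kummer injection.

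The correct handle, as in the paper, is the \emph{sum}, not the intersection. Kim's $E^++E^-=\hat E$ (Proposition 2.6 in \cite{MR3224266}) together with the Coates--Greenberg result that the Kummer image of $E((\tilde K_\infty)_\QQ)\otimes\Q_p/\Z_p$ is all of $H^1((\tilde K_\infty)_\QQ,E[p^\infty])$ gives
\[
  \bigoplus_{\QQ\mid\q}\Loc_+\oplus\bigoplus_{\QQ\mid\q}\Loc_-\ =\ H^1\bigl(K_\q,D_{E,\kappa_\p^{-1}\kappa_\Cyc^{-1}}\bigr),
\]
the full local cohomology. From there, the short exact sequence
\[
  0\to\Bigl(\tfrac{H^1}{\Loc_\circ}\Bigr)^{\vee}\to H^1\bigl(K_\q,D_{E,\kappa_\p^{-1}\kappa_\Cyc^{-1}}\bigr)^{\vee}\to\Loc_\circ^{\vee}\to 0
\]
has middle term free of rank two (Corollary~\ref{cor:freeness-four-two}) and right term free of rank one (Proposition~\ref{prop:freekim}); it therefore splits, and the left term is free of rank one. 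No information about $A_+\cap A_-$ is needed. Your Steps 1 and 3 are formally fine, and the $\p$/$\q$ splitting in your ``proof plan'' is indeed the right way to begin; but Step 2, which you correctly flag as the main obstacle, has a false target and an incorrect argument, so the proposal does not go through.
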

\begin{proof}
  We will show that the Pontryagin dual of the $\Iwalg$-module $\frac{\Loc_{\bullet+}(\Qp,\Drho)+\Loc_{\bullet-}(\Qp,\Drho)}{\Loc_{\bullet\circ}(\Qp,\Drho)}$ is free of rank one. The proof that the Pontryagin dual of the $\Iwalg$-module $\frac{\Loc_{+\circ}(\Qp,\Drho)+\Loc_{-\circ}(\Qp,\Drho)}{\Loc_{\bullet\circ}(\Qp,\Drho)}$ is free of rank one would follow similarly.

  Let $\QQ$ be any prime of $\tilde K_\infty$ lying above $p$ and write $\varkappa_\QQ$ for the Kummer map as before. Note that the $p$-power torsion points on the elliptic curve coincides with the $p$-power torsion points on the formal group $\hat{E}$.

  By Proposition 4.3 in the work of Coates-Greenberg \cite{coates1996kummer} (see also Section 2 of Rubin's work \cite{rubin85}), we have
  \[
    \varkappa_\QQ\left(E((\tilde K_\infty)_\QQ)\otimes \Qp/\Z_p\right)=H^1\left((\tilde K_\infty)_\QQ,E[p^\infty]\right).
  \]
  A result of Kim \cite[Proposition~2.6]{MR3224266} says that
  \begin{align} \label{eq:plusminushat}
    E^+((\tilde K_\infty)_\QQ)+E^-((\tilde K_\infty)_\QQ)=\hat E((\tilde K_\infty)_\QQ).
  \end{align}
  Recall from \S\ref{S:Kim} that we have injections $E^\pm((\tilde K_\infty)_\QQ)\otimes\Qp/\Z_p\hookrightarrow \hat E((\tilde K_\infty)_\QQ)\otimes\Qp/\Z_p$.  This fact along with equation (\ref{eq:plusminushat}) now implies the following:
  \begin{align}\label{eq:sum}
    \bigoplus \limits_{\QQ \mid \q} \left(E^+\left((\Kinf)_\QQ\right)\otimes \Q_p/\Z_p \right)\  + \ \bigoplus \limits_{\QQ \mid \q} \left(E^-\left((\Kinf)_\QQ\right)\otimes \Q_p/\Z_p \right) & = \bigoplus_{\QQ \mid \q} \left(\hat E((\tilde K_\infty)_\QQ) \otimes \Q_p/\Z_p\right) \\ \notag  & \cong \bigoplus \limits_{\QQ|\q}H^1\left((\tilde K_\infty)_\QQ,E[p^\infty]\right), \\
    & \notag \cong H^1\left(K_\q,D_{E,\kappa_\p^{-1}\kappa_\Cyc^{-1}}\right).
  \end{align}
  Therefore, the quotient appearing in equation (\ref{al:quot}) is isomorphic to $\frac{ H^1\left(K_\q,D_{E,\kappa_\p^{-1}\kappa_\Cyc^{-1}}\right)}{\bigoplus \limits_{\QQ \mid \q} E^\circ\left((\Kinf)_\QQ\right)\otimes \Q_p/\Z_p }$.

  Consider the short exact sequence
  \begin{align*}
    0 \rightarrow \left(\frac{ H^1\left(K_\q,D_{E,\kappa_\p^{-1}\kappa_\Cyc^{-1}}\right)}{\bigoplus \limits_{\QQ \mid \q} E^\circ\left((\Kinf)_\QQ\right)\otimes \Q_p/\Z_p }\right)^\vee \rightarrow H^1\left(K_\q,D_{E,\kappa_\p^{-1}\kappa_\Cyc^{-1}}\right)^\vee \rightarrow \bigoplus \limits_{\QQ \mid \q} \left(E^\circ\left((\Kinf)_\QQ\right)\otimes \Q_p/\Z_p \right)^\vee \rightarrow 0.
  \end{align*}
  By Proposition \ref{prop:freekim}, the $\Iwalg$-module $\bigoplus \limits_{\QQ \mid \q} \left(E^\circ\left((\Kinf)_\QQ\right)\otimes \Q_p/\Z_p \right)^\vee$ is free of rank one. By Corollary \ref{cor:freeness-four-two}, the $\Iwalg$-module $H^1\left(K_\q,D_{E,\kappa_\p^{-1}\kappa_\Cyc^{-1}}\right)^\vee$ is free of rank two. The proposition now follows.
\end{proof}

\begin{corollary}\label{cor:union}
  Let $\{\One,\Two\}$ be any one of the following unordered pairs
  $$
  \{++,+-\},\quad\{++,-+\},\quad\{--,+-\},\quad\{--,-+\},\quad\{++,\Gr\},\quad\{+-,\Gr\},\quad\{-+,\Gr\},\quad\{--,\Gr\}.
  $$
  Then, the $\Iwalg$-module $\left(\Loc_\One(\Q_p,D_{\rhoDN42}) + \Loc_\Two(\Q_p,D_{\rhoDN42}) \right)^\vee$ is free of rank three.
\end{corollary}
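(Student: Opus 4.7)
The plan is to treat the eight unordered pairs uniformly by exploiting the decomposition
\[
H^1(\Q_p, D_{\rhoDN42}) \cong H^1\!\left(K_\p, D_{E,\kappa_\p^{-1}\kappa_\Cyc^{-1}}\right) \oplus H^1\!\left(K_\q, D_{E,\kappa_\p^{-1}\kappa_\Cyc^{-1}}\right)
\]
obtained by combining equations (\ref{first-dec}) and (\ref{second-dec}). Under this identification, each local condition $\Loc_\bullet\!\left(\Qp, \Drho\right)$ respects the direct sum structure: the $\pm\pm$ conditions split as a $\p$-part (living in the first summand) and a $\q$-part (living in the second summand) via the Shapiro isomorphism (\ref{third-dec}), while by Corollary \ref{cor:locgr} the Greenberg condition $\Loc_{\Gr}(\Q_p, \Drho)$ is exactly the first summand $H^1(K_\p, D_{E,\kappa_\p^{-1}\kappa_\Cyc^{-1}})$.

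First I would handle the four pairs $\{\tDN42{\bullet+},\tDN42{\bullet-}\}$ and $\{\tDN42{+\circ},\tDN42{-\circ}\}$ of the form $\pm\pm$. Consider for example $\{\tDN42{++},\tDN42{+-}\}$; the other three are analogous. Here the sum $\Loc_{++} + \Loc_{+-}$ decomposes as
\[
\bigoplus_{\P \mid \p} \Loc_{+}\!\left((\Kinf)_\P, E[p^\infty]\right) \ \oplus\ \Bigl(\bigoplus_{\QQ \mid \q} \Loc_{+}\!\left((\Kinf)_\QQ, E[p^\infty]\right) + \bigoplus_{\QQ \mid \q} \Loc_{-}\!\left((\Kinf)_\QQ, E[p^\infty]\right)\Bigr).
\]
By equation (\ref{eq:sum}) inside the proof of Proposition \ref{prop:sumquotient}, the $\q$-summand equals all of $H^1(K_\q, D_{E,\kappa_\p^{-1}\kappa_\Cyc^{-1}})$, whose Pontryagin dual is free of rank two by Corollary \ref{cor:freeness-four-two}. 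The $\p$-summand has Pontryagin dual free of rank one by Proposition \ref{prop:freekim}. Since Pontryagin duality turns the direct sum into a direct sum, the total dual is a free $\Iwalg$-module of rank three.

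Next I would address the four pairs $\{\tDN42{\bullet\circ}, \tDN42{\Gr}\}$. Under our decomposition, $\Loc_{\Gr}$ already equals the full first summand $H^1(K_\p, D_{E,\kappa_\p^{-1}\kappa_\Cyc^{-1}})$, which contains the $\p$-part of $\Loc_{\bullet\circ}$. Hence
\[
\Loc_{\bullet\circ} + \Loc_{\Gr} = H^1\!\left(K_\p, D_{E,\kappa_\p^{-1}\kappa_\Cyc^{-1}}\right) \ \oplus\ \bigoplus_{\QQ \mid \q} \Loc_{\circ}\!\left((\Kinf)_\QQ, E[p^\infty]\right),
\]
whose Pontryagin dual is again a direct sum of free modules of ranks two (by Corollary \ref{cor:freeness-four-two}) and one (by Proposition \ref{prop:freekim}), giving a free $\Iwalg$-module of rank three.

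The only potential obstacle is bookkeeping: one must check that the Shapiro-type identifications of the local conditions at $p$ with their $\p$-parts and $\q$-parts are compatible across all three sources (the $\pm\pm$ conditions of Kim, the Greenberg filtration $\Fil D_{\rhoDN42}$, and the direct-sum decomposition (\ref{first-dec})). This compatibility, however, is already supplied by the definitions in Section \ref{S:Kim} together with Corollary \ref{cor:locgr}, so no new work is required and the verification reduces to the two computations carried out above.
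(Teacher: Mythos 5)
Your proposal is correct and follows essentially the same route as the paper: both arguments decompose $H^1(\Q_p, D_{\rhoDN42})$ into its $K_\p$- and $K_\q$-summands (via equations (\ref{first-dec})--(\ref{third-dec})), identify $\Loc_{\Gr}$ with the full first summand (Corollary \ref{cor:locgr}), use equation (\ref{eq:sum}) — i.e.\ that the two plus/minus conditions over $\q$ sum to all of $H^1(K_\q,D_{E,\kappa_\p^{-1}\kappa_\Cyc^{-1}})$ — and then invoke Corollary \ref{cor:freeness-four-two} and Proposition \ref{prop:freekim} to conclude freeness of rank $2+1$. The paper likewise treats one $\pm\pm$ pair and one $\Gr$ pair explicitly and declares the rest analogous, so there is no meaningful divergence.
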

\begin{proof}
  We will prove the corollary when $\{\One,\Two\}$ equals $\{++,+-\}$ and when $\{\One,\Two\}$ equals $\{+-,\Gr\}$. The remaining cases of the corollary would follow similarly.

  If $\{\One,\Two\}$ equals $\{++,+-\}$, then
  \begin{align} \label{eq:sum1}
    \Loc_{++}(\Qp,\Drho) + \Loc_{+-}\left(\Qp,\Drho\right) & = \bigoplus \limits_{\P \mid \p} \left(E^+\left((\Kinf)_\P\right)\otimes \Q_p/\Z_p \right) \oplus H^1\left(K_\q,D_{E,\kappa_\p^{-1}\kappa_\Cyc^{-1}}\right).
  \end{align}
  If $\{\One,\Two\}$ equals $\{+-,\Gr\}$, then
  \begin{align}\label{eq:sum2}
    \Loc_{+-}(\Qp,\Drho) + \Loc_\Gr\left(\Qp,\Drho \right) & = H^1\left(K_\p,D_{E,\kappa_\p^{-1}\kappa_\Cyc^{-1}}\right) \oplus \bigoplus \limits_{\QQ \mid \q} \left(E^-\left((\Kinf)_\QQ\right)\otimes \Q_p/\Z_p \right).
  \end{align}

  By Corollary \ref{cor:freeness-four-two} and Proposition \ref{prop:freekim}, the Pontryagin dual of the $\Iwalg$-modules appearing in equations (\ref{eq:sum1}) and (\ref{eq:sum2}) are free of rank three.
\end{proof}

\begin{proposition} \label{prop:verlf}
  Let $\{\One,\Two\}$ be one of the pairs appearing in the statement of Corollary~\ref{cor:union}. Then, the Pontryagin duals of
  \begin{align*}
    \frac{\Loc_{\Two}\left(\Q_p,D_{\rhodn}\right)}{{\Loc_{\One}\left(\Q_p,D_{\rhodn}\right)\bigcap \Loc_{\Two}\left(\Q_p,D_{\rhodn}\right)}}, \quad \frac{\Loc_{\One}\left(\Q_p,D_{\rhodn}\right)}{{\Loc_{\One}\left(\Q_p,D_{\rhodn}\right)\bigcap \Loc_{\Two}\left(\Q_p,D_{\rhodn}\right)}}
  \end{align*}
  are free $\Iwalg$-modules of rank one.
\end{proposition}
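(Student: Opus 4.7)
The plan is to reduce both quotients to objects whose Pontryagin duals have already been shown to be free of rank one, via the second isomorphism theorem $A/(A\cap B)\cong (A+B)/B$. Applying this to $A=\Loc_\One$, $B=\Loc_\Two$ (and vice versa) gives
\begin{align*}
\frac{\Loc_\One}{\Loc_\One\cap\Loc_\Two}\cong\frac{\Loc_\One+\Loc_\Two}{\Loc_\Two},\qquad
\frac{\Loc_\Two}{\Loc_\One\cap\Loc_\Two}\cong\frac{\Loc_\One+\Loc_\Two}{\Loc_\One}.
\end{align*}
For the four ``Kim--Kim'' pairs $\{++,+-\},\{++,-+\},\{--,+-\},\{--,-+\}$, each of $\Loc_\One,\Loc_\Two$ differs from the other in exactly one sign, so $\Loc_\One+\Loc_\Two$ has the form $\Loc_{\bullet+}+\Loc_{\bullet-}$ or $\Loc_{++}+\Loc_{-\circ}$ in the notation of Proposition~\ref{prop:sumquotient}. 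Dividing out by either $\Loc_\One$ or $\Loc_\Two$ then matches one of the two quotients in that proposition, whose Pontryagin dual is already known to be free of rank one.

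For the four ``Gr--Kim'' pairs $\{\bullet\circ,\Gr\}$, I will instead work directly with the decomposition
\begin{align*}
H^1(\Q_p,D_{\rhoDN42})\cong\bigoplus_{\P\mid\p}H^1((\tilde K_\infty)_\P,E[p^\infty])\oplus\bigoplus_{\QQ\mid\q}H^1((\tilde K_\infty)_\QQ,E[p^\infty])
\end{align*}
from \eqref{eq:shapirolocal}. Under this identification, Corollary~\ref{cor:locgr} shows that $\Loc_\Gr$ is the entire $\P$-summand (with trivial $\QQ$-component), while $\Loc_{\bullet\circ}$ is $\bigoplus_\P E^\bullet((\tilde K_\infty)_\P)\otimes\Q_p/\Z_p$ in the $\P$-summand and $\bigoplus_\QQ E^\circ((\tilde K_\infty)_\QQ)\otimes\Q_p/\Z_p$ in the $\QQ$-summand. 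Consequently, the intersection $\Loc_\Gr\cap\Loc_{\bullet\circ}$ equals $\bigoplus_\P E^\bullet((\tilde K_\infty)_\P)\otimes\Q_p/\Z_p$, so
\begin{align*}
\frac{\Loc_{\bullet\circ}}{\Loc_\Gr\cap\Loc_{\bullet\circ}}\cong\bigoplus_{\QQ\mid\q}E^\circ((\tilde K_\infty)_\QQ)\otimes\Q_p/\Z_p,
\end{align*}
whose Pontryagin dual is free of rank one by Proposition~\ref{prop:freekim}.

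The remaining quotient in the Gr--Kim case is $\displaystyle\frac{\Loc_\Gr}{\Loc_\Gr\cap\Loc_{\bullet\circ}}\cong\frac{\bigoplus_\P H^1((\tilde K_\infty)_\P,E[p^\infty])}{\bigoplus_\P E^\bullet((\tilde K_\infty)_\P)\otimes\Q_p/\Z_p}$. Dualizing the associated short exact sequence gives
\begin{align*}
0\to\Bigg(\frac{\Loc_\Gr}{\Loc_\Gr\cap\Loc_{\bullet\circ}}\Bigg)^{\!\vee}\to\bigoplus_{\P\mid\p}H^1((\tilde K_\infty)_\P,E[p^\infty])^\vee\to\bigoplus_{\P\mid\p}\bigl(E^\bullet((\tilde K_\infty)_\P)\otimes\Q_p/\Z_p\bigr)^\vee\to 0.
\end{align*}
By Corollary~\ref{cor:freeness-four-two} the middle term is free of rank two, and by Proposition~\ref{prop:freekim} the right-hand term is free of rank one; since a surjection onto a free (hence projective) module splits, the kernel is free of rank one, as required. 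No step here is really an obstacle since all the hard work — the freeness statements for the individual local conditions and for the sum $\Loc_\bullet+\Loc_{-\bullet}$ — has already been carried out in Corollaries~\ref{cor:kimfree}, \ref{cor:locgr}, \ref{cor:union} and Propositions~\ref{prop:freekim}, \ref{prop:sumquotient}; the only point requiring some attention is matching the various quotients to the precise forms appearing in those results, which is essentially bookkeeping.
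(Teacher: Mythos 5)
Your proof is correct and follows essentially the same route as the paper: for the Kim--Kim pairs you invoke the second isomorphism theorem to reduce to Proposition~\ref{prop:sumquotient}, and for the Gr--Kim pairs you compute $\Loc_\Gr\cap\Loc_{\bullet\circ}$ explicitly using the Shapiro decomposition and then split the resulting short exact sequence, which is exactly the structure of the paper's argument. The one cosmetic difference is that you spell out the splitting argument (a surjection onto a free, hence projective, direct summand splits, so the kernel is projective of the complementary rank, hence free over the local ring $\Iwalg$) for the quotient $\bigoplus_{\P\mid\p} H^1((\tilde K_\infty)_\P,E[p^\infty])\big/\bigoplus_{\P\mid\p} E^\bullet((\Kinf)_\P)\otimes\Q_p/\Z_p$, whereas the paper states its freeness without elaboration, implicitly relying on the identical argument already carried out in the proof of Proposition~\ref{prop:sumquotient}; making this explicit is a small improvement rather than a different idea.
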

\begin{proof}
  We will only consider the cases when $\{\One,\Two\}$ equals $\{\Gr,+-\}$ and $\{++,+-\}$. The remaining cases follow similarly.

  The identifications in Figure \ref{fig:1} lets us conclude that
  $$\Loc_\Gr(\Qp,\Drho)\cap\Loc_{+-}(\Qp,\Drho)=\bigoplus \limits_{\P \mid \p} \left(E^+\left((\Kinf)_\P\right)\otimes \Q_p/\Z_p\right).$$ Consequently,
  $$\frac{\Loc_\Gr(\Qp,\Drho)}{\Loc_\Gr(\Qp,\Drho)\cap\Loc_{+-}(\Qp,\Drho)}\cong \bigoplus \limits_{\P|\p} \frac{H^1\left((\tilde K_\infty)_\P,E[p^\infty]\right)}{ \left(E^+\left((\Kinf)_\P\right)\otimes \Q_p/\Z_p\right) }.$$
  The Pontryagin dual of this $\Iwalg$-module is free of rank one.

  By Proposition~\ref{prop:freekim}, the Pontryagin dual of the quotient
  $$\frac{\Loc_{+-}(\Qp,\Drho)}{\Loc_\Gr(\Qp,\Drho)\cap\Loc_{+-}(\Qp,\Drho)} \cong \bigoplus_{\QQ \mid \q} \Loc_- \left((\tilde K_\infty)_\QQ,E[p^\infty]\right)$$
  is free of rank one over $\Iwalg$.

  The case when $\{\One,\Two\}$ equals  $\{++,+-\}$ follows from Proposition~\ref{prop:sumquotient} and the isomorphisms
  \[
    \frac{\Loc_{++}(\Qp,D_{\rhoDN42})}{\Loc_{++}(\Q_p,D_{\rhoDN42})\cap\Loc_{+-}(\Q_p,D_{\rhoDN42})}\cong \frac{\Loc_{++}(\Q_p,D_{\rhoDN42})+\Loc_{+-}(\Q_p,D_{\rhoDN42})}{\Loc_{+-}(\Q_p,D_{\rhoDN42})},
  \]

  \[\frac{\Loc_{+-}(\Q_p,D_{\rhoDN42})}{\Loc_{++}(\Q_p,D_{\rhoDN42})\cap\Loc_{+-}(\Q_p,D_{\rhoDN42})}\cong \frac{\Loc_{++}(\Qp,D_{\rhoDN42})+\Loc_{+-}(\Qp,D_{\rhoDN42})}{\Loc_{++}(\Qp,D_{\rhoDN42})}.
  \]
  \end{proof}

\subsection{Description of the modules $\ZZZ(\Q,D_{\rhoDN42})$ and $\ZZZ^\Sstar(\Q,D_{\rhoDNstar42})$} \label{S:pseudonulldesc}

We will only describe the  modules $\ZZZ(\Q,D_{\rhoDN42})$ and $\ZZZ^\Sstar(\Q,D_{\rhoDNstar42})$ when $\{\One,\Two\}$ equals $\{\Gr,+-\}$ and $\{++,+-\}$. The remaining cases follow similarly. Throughout the description, we will keep in mind the  identifications provided by the inclusions and isomorphisms in Figure \ref{fig:1}.

Recall that the module $\ZZZ(\Q,D_{\rhoDN42})$ was defined to be the Pontryagin dual of
\begin{align*}
  \ker\bigg(H^1\left(G_\Sigma, D_{\rhoDN42}\right) \rightarrow   \frac{H^1\left(\Q_p,D_{\rhoDN42}\right)}{\Loc_{\One}\left(\Q_p,D_{\rhoDN42}\right) \bigcap \Loc_{\Two}\left(\Q_p,D_{\rhoDN42}\right)} \times \bigoplus_{l \in \Sigma \setminus \{p\}} H^1\left(\Q_l,D_{\rhoDN42}\right) \bigg).
\end{align*}
To describe $\ZZZ(\Q,D_{\rhoDN42})$, it suffices to identify $\Loc_{\One}\left(\Q_p,D_{\rhoDN42}\right) \bigcap \Loc_{\Two}\left(\Q_p,D_{\rhoDN42}\right)$ in Figure \ref{fig:1}.

When $\{\One,\Two\}$ equals $\{\Gr,+-\}$,
\begin{align*}
  \Loc_{\One}\left(\Q_p,D_{\rhoDN42}\right) \bigcap \Loc_{\Two}\left(\Q_p,D_{\rhoDN42}\right) = \bigoplus \limits_{\P \mid \p} \left(E^+\left((\Kinf)_\P\right)\otimes \Q_p/\Z_p\right)\  \bigoplus  \ 0.
\end{align*}

\begin{proposition} \label{int-description}
  When $\{\One,\Two\}$ equals $\{++,+-\}$,
  \begin{align*}
    \Loc_{\One}\left(\Q_p,D_{\rhoDN42}\right) \bigcap \Loc_{\Two}\left(\Q_p,D_{\rhoDN42}\right) = \bigoplus \limits_{\P \mid \p} \left(E^+\left((\Kinf)_\P\right)\otimes \Q_p/\Z_p\right)\  \bigoplus_{\QQ|\q}\left(\hat E\left(K(\p^\infty)_{\Z_p,\QQ}\right)\otimes\Q_p/\Z_p\right).
  \end{align*}
\end{proposition}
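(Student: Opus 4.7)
The plan is to decompose the intersection prime by prime. From the direct-sum description of $\Loc_{\bullet\circ}(\Q_p,\Drho)$ given in Section~\ref{S:Kim}, the intersection $\Loc_{++}(\Q_p,\Drho)\cap\Loc_{+-}(\Q_p,\Drho)$ splits as $\bigoplus_{\P\mid\p}\Loc_+((\Kinf)_\P,E[p^\infty])$ (which already matches the $\p$-component of the statement) plus, for each prime $\QQ\mid\q$ of $\Kinf$, the intersection $\Loc_+((\Kinf)_\QQ,E[p^\infty])\cap\Loc_-((\Kinf)_\QQ,E[p^\infty])$ sitting inside $H^1((\Kinf)_\QQ,E[p^\infty])$. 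So the task reduces to identifying this $\q$-local intersection with $\hat E(K(\p^\infty)_{\Z_p,\QQ})\otimes\Q_p/\Z_p$ for each $\QQ\mid\q$.

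For a fixed $\QQ\mid\q$, I would first prove the purely formal-group identity
\[
E^+((\Kinf)_\QQ)\cap E^-((\Kinf)_\QQ)=\hat E(K(\p^\infty)_{\Z_p,\QQ}).
\]
The inclusion ``$\supseteq$'' follows directly from the trace-compatibility definition: any $P\in\hat E(K(\p^\infty)_{\Z_p,\QQ})$ is fixed by the ramified $\q$-direction $\Z_p$-extension $(\Kinf)_\QQ/K(\p^\infty)_{\Z_p,\QQ}$, so each trace $\Tr_{m,n/l+1}P$ equals $p^{n-l-1}P$ and already lies in $\hat E(K(\p^m\q^l)_\QQ)$ for every $l<n$. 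The reverse inclusion is the two-variable analog of Kobayashi's intersection formula in \cite{kobayashi}, applied to the ramified $\Z_p$-extension $(\Kinf)_\QQ/K(\p^\infty)_{\Z_p,\QQ}$: a point lying simultaneously in $E^+$ and $E^-$ is a universal norm in the $\q$-tower, and therefore must descend to the base field of that tower.

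To transfer this identity through the Kummer map and the tensor with $\Q_p/\Z_p$, I would apply $-\otimes_\Z\Q_p/\Z_p$ to the short exact sequence
\[
0\to E^+((\Kinf)_\QQ)\cap E^-((\Kinf)_\QQ)\to E^+((\Kinf)_\QQ)\oplus E^-((\Kinf)_\QQ)\to\hat E((\Kinf)_\QQ)\to 0,
\]
where surjectivity on the right is exactly Kim's sum identity recalled in \eqref{eq:plusminushat}. The only obstruction to the tensored sequence being short exact is $\mathrm{Tor}_1^\Z(\hat E((\Kinf)_\QQ),\Q_p/\Z_p)\cong\hat E((\Kinf)_\QQ)[p^\infty]$; since $E$ has supersingular reduction the $p$-power torsion lives entirely on the formal group, i.e. $\hat E[p^\infty]=E[p^\infty]$, and Lemma~\ref{lem:torfree} then gives the vanishing. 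Combined with the injectivity $E^\pm((\Kinf)_\QQ)\otimes\Q_p/\Z_p\hookrightarrow\hat E((\Kinf)_\QQ)\otimes\Q_p/\Z_p$ already recorded in Section~\ref{S:Kim}, a short diagram chase in the resulting short exact sequence yields $\Loc_+((\Kinf)_\QQ,E[p^\infty])\cap\Loc_-((\Kinf)_\QQ,E[p^\infty])=\hat E(K(\p^\infty)_{\Z_p,\QQ})\otimes\Q_p/\Z_p$, as required.

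The hard part will be the formal-group intersection identity $E^+\cap E^-=\hat E(K(\p^\infty)_{\Z_p,\QQ})$. While the sum identity $E^++E^-=\hat E$ is stated explicitly in \cite{MR3224266}, its intersection counterpart is the two-variable analog of Kobayashi's result and may require a direct adaptation: either by specializing through the unramified $\p$-direction tower $K(\p^\infty)_{\Z_p,\QQ}/K_\q$ to reduce to Kobayashi's one-variable statement in the ramified $\q$-direction, or by using Kim's norm-compatible sequences directly to identify the universal-norm subgroup.
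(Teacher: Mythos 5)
Your proposal is correct and follows essentially the same strategy as the paper: reduce to the formal-group intersection identity $E^+((\Kinf)_\QQ)\cap E^-((\Kinf)_\QQ)=\hat E(K(\p^\infty)_{\Z_p,\QQ})$ (which the paper packages as exactness at the middle of a short exact sequence and, just as you anticipate, also defers to an argument ``similar to the one in the proof of \cite[Proposition~8.12(ii)]{kobayashi}''), and then pass to $\Q_p/\Z_p$-coefficients using $p$-torsion-freeness. The only cosmetic difference is that you tensor with $\Q_p/\Z_p$ directly and invoke the vanishing of $\mathrm{Tor}_1$, while the paper tensors the sequence with $\Q_p$ and applies the snake lemma; these are the same underlying argument.
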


Here, $K(\p^\infty)_{\Z_p,\QQ}$ is the unique unramified $\Z_p$-extension of $K_\q$. Note that we have the following inclusions: $$K_\q \subset K(\p^\infty)_{\Z_p,\QQ} \subset (\widetilde{K}_\infty)_\QQ.$$

\begin{proof} Fix a prime $\QQ$ in $\tilde K_\infty$ above $\q$. First note that there is a short exact sequence
  \begin{align} \label{eq:inter}
    0 \rightarrow \hat E\left(K(\p^\infty)_{\Z_p,\QQ}\right) \rightarrow E^+\left((\tilde K_\infty)_\QQ\right) \oplus E^-\left((\tilde K_\infty)_\QQ\right) \rightarrow \hat E\left((\tilde K_\infty)_\QQ\right) \rightarrow 0.
  \end{align}
  The first map is given by the diagonal map (obtained from the natural inclusions) and the second map is defined by $(a,b)\mapsto a-b$. The exactness at the middle term is given by an argument similar to the one in the proof of  \cite[Proposition~8.12(ii)]{kobayashi}. The exactness at the last term is given in equation  \eqref{eq:plusminushat}. Now, consider the following commutative diagram.
  \begin{align*}
    \xymatrix{
    0 \ar[r] & \hat E\left(K(\p^\infty)_{\Z_p,\QQ}\right) \ar[d] \ar[r]        & E^+\left((\tilde K_\infty)_\QQ\right) \oplus E^-\left((\tilde K_\infty)_\QQ\right) \ar[d] \ar[r]                      & \hat E\left((\tilde K_\infty)_\QQ\right) \ar[d] \ar[r]      & 0 \\
    0 \ar[r] & \hat E\left(K(\p^\infty)_{\Z_p,\QQ}\right) \otimes \Q_p  \ar[r] & \left(E^+\left((\tilde K_\infty)_\QQ\right)   \oplus E^-\left((\tilde K_\infty)_\QQ\right)\right) \otimes\Q_p  \ar[r] & \hat E\left((\tilde K_\infty)_\QQ\right)\otimes \Q_p \ar[r] & 0
    }
  \end{align*}
  Using equation (\ref{eq:inter}), one can conclude that both the rows are exact. All vertical maps turn out to be injective. This is because the kernel of each of the vertical maps is a subgroup of $\hat E\left((\tilde K_\infty)_\QQ\right)[p^\infty]$, which equals zero by Proposition~\ref{prop:residually-irred}. The snake lemma then gives the following short exact sequence:
  \[
    0 \rightarrow \hat E\left(K(\p^\infty)_{\Z_p,\QQ}\right) \otimes \Q_p /\Z_p \rightarrow\left(E^+\left((\tilde K_\infty)_\QQ\right)   \oplus E^-\left((\tilde K_\infty)_\QQ\right)\right) \otimes\Q_p /\Z_p  \rightarrow \hat E\left((\tilde K_\infty)_\QQ\right)\otimes \Q_p/\Z_p \rightarrow0,
  \]
  This lets us conclude that we have the following equality in $\hat{E}((K_\infty)_{\QQ}) \otimes \Q_p/\Z_p$:
  \[
    \left(E^+((\tilde K_\infty)_\QQ)\otimes\Q_p/\Z_p\right)\cap \left(E^-((\tilde K_\infty)_\QQ)\otimes\Q_p/\Z_p\right)
    = \hat E\left(K(\p^\infty)_{\Z_p,\QQ}\right)\otimes \Q_p/\Z_p.
  \]
  The proposition follows.
\end{proof}

To describe $\ZZZ^\Sstar(\Q,D_{\rhoDNstar42})$, we will need to consider the following isomorphism of $\Iwalg$-modules induced by the Weil pairing:
\begin{align} \label{eq:weiliso}
T_{\rhoDNstar42} \cong \Ind_{G_K}^{G_\Q} T_p(E) \otimes_{\Z_p} \Z_p[[\widetilde{\Gamma}]](\kappa).
\end{align}
Let $\iota:\Iwalg \rightarrow \Iwalg$ denote the involution defined by sending every element $\gamma$ of $\tilde{\Gamma}$ to $\gamma^{-1}$. If $M$ is  a $\Iwalg$-module, then we let $M^\iota$ denote the $\Iwalg$-module which is equal to $M$ as a set (and as a $\Z_p$-module) and on which $\gamma \in \widetilde{\Gamma}$ acts as $\iota(\gamma)$. The isomorphism in equation (\ref{eq:weiliso}), in turn, lets us deduce that the discrete $\Iwalg$-module $H^1\left(\Q_p, D_{\rhoDNstar42}\right)$ is isomorphic to $H^1\left(\Q_p, D_{\rhoDN42}\right)^\iota$. For the rest of this section, using this isomorphism induced by the Weil pairing, we will identify $H^1\left(\Q_p, D_{\rhoDNstar42}\right)$  with $H^1\left(\Q_p, D_{\rhoDN42}\right)^\iota$. We have the following pairing given by local duality:
\begin{align}\label{eq:localpairing}
\xymatrix{
H^1_\ct\left(\Q_p, T_{\rhoDN42}\right) \times \underbrace{H^1\left(\Q_p, D_{\rhoDNstar42}\right)}_{\cong H^1\left(\Q_p,D_{\rhoDN42}\right)^\iota} \rightarrow \Q_p/\Z_p
}
\end{align}

The $\Iwalg$-module $\ZZZ^\Sstar(\Q,D_{\rhoDNstar42})$ is defined in \eqref{ZDstar} as the Pontryagin dual of
\[
  \ker\left(H^1\left(G_\Sigma,D_\rhoDNstar42\right) \rightarrow \frac{H^1\left(\Q_p,D_\rhoDNstar42\right)}{\Loc_{\One,\Two}\left(\Q_p,D_\rhoDNstar42\right)}\oplus \bigoplus_{l \in \Sigma \setminus \{p\}} H^1\left(\Q_l,D_\rhoDNstar42\right) \right),
\]
where $
\Loc_{\One,\Two}\left(\Q_p,D_\rhoDNstar42\right)  \subset H^1\left(\Q_p,D_\rhoDNstar42\right)$,
is  the orthogonal complement of $$\left(\left(\Loc_{\One}\left(\Q_p,D_\rhoDN42\right)+\Loc_{\Two}\left(\Q_p,D_\rhoDN42\right)\right)^\vee\right)^*$$
under the pairing given in equation (\ref{eq:localpairing}). We will also keep in mind all the identifications in Figure \ref{fig:1}. \\

If $\{\One,\Two\}$ equals $\{+-,\Gr\}$, then
  \begin{align}\label{eqn:complementdeduction1}
  \notag  \Loc_{+-}(\Qp,\Drho) + \Loc_\Gr\left(\Qp,\Drho \right) &= H^1\left(K_\p,D_{E,\kappa_\p^{-1}\kappa_\Cyc^{-1}}\right) \oplus \bigoplus \limits_{\QQ \mid \q} \left(E^-\left((\Kinf)_\QQ\right)\otimes \Q_p/\Z_p \right), \\
\implies \Loc_{\One,\Two}\left(\Q_p,D_\rhoDNstar42\right) & = \bigoplus_{\P \mid \p} 0  \oplus \left(\bigoplus \limits_{\QQ \mid \q} E^-\left((\Kinf)_\QQ\right)\otimes \Q_p/\Z_p \right)^\iota .
\end{align}

If $\{\One,\Two\}$ equals $\{++,+-\}$, then
  \begin{align}\label{eqn:complementdeduction2}
 \notag   \Loc_{++}(\Qp,\Drho) + \Loc_{+-}\left(\Qp,\Drho\right) & = \bigoplus \limits_{\P \mid \p} \left(E^+\left((\Kinf)_\P\right)\otimes \Q_p/\Z_p \right) \oplus H^1\left(K_\q,D_{E,\kappa_\p^{-1}\kappa_\Cyc^{-1}}\right), \\
\implies \Loc_{\One,\Two}\left(\Q_p,D_\rhoDNstar42\right) &= \left(\bigoplus \limits_{\P \mid \p} E^+\left((\Kinf)_\P\right)\otimes \Q_p/\Z_p \right)^\iota   \oplus  \bigoplus_{\QQ \mid \q} 0 .
\end{align}
Equations (\ref{eqn:complementdeduction1}) and (\ref{eqn:complementdeduction2}) follow from Proposition \ref{prop:identifying}. Note that we have the following perfect pairings:
\begin{align}\label{eq:anotherquadraticpairing1}
H^1_\ct\left(K_\p,T_{E,\widetilde{\kappa}^{-1}}\right) \times H^1\left(K_\p, D_{E,\widetilde{\kappa}^{-1}}\right)^{\iota} &\rightarrow \Q_p/\Z_p
\end{align}
\begin{align}\label{eq:anotherquadraticpairing2}
H^1_\ct\left(K_\p, T_{E,\widetilde{\kappa}^{-1}}\right)^{\iota} \times H^1\left(K_\p,D_{E,\widetilde{\kappa}^{-1}}\right) &\rightarrow \Q_p/\Z_p.
\end{align}

\begin{proposition} \label{prop:identifying} Let $\bullet,\circ \in \{+,-\}$.
\begin{itemize}
\item $\left(\bigoplus \limits_{\P \mid \p} E^\bullet\left((\Kinf)_\P\right)\otimes \Q_p/\Z_p \right)^\iota$ equals the orthogonal complement of $\left(\bigoplus \limits_{\P \mid \p} \left(E^\bullet\left((\Kinf)_\P\right)\otimes \Q_p/\Z_p \right)^\vee\right)^*$ under the pairing given in equation (\ref{eq:anotherquadraticpairing1}).
\item $\left(\bigoplus \limits_{\QQ \mid \q} E^\circ\left((\Kinf)_\QQ\right)\otimes \Q_p/\Z_p \right)^\iota$ equals the orthogonal complement of $\left(\bigoplus \limits_{\QQ \mid \q} \left(E^\circ\left((\Kinf)_\QQ\right)\otimes \Q_p/\Z_p \right)^\vee\right)^*$, under the pairing for the field $K_\q$ similar to the one given in equation (\ref{eq:anotherquadraticpairing1}).
\end{itemize}
\end{proposition}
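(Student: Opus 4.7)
The plan is to decompose the pairing (\ref{eq:anotherquadraticpairing1}) via Shapiro's lemma into a direct sum of Iwasawa-theoretic local Tate pairings, one for each prime $\P \mid \p$ in $\Kinf$, and then to invoke the local self-orthogonality of Kim's plus and minus subgroups prime by prime. I will treat the first bullet; the second is identical after interchanging $\p$ and $\q$. As a $G_K$-module, $T_{E,\widetilde{\kappa}^{-1}}$ is the completed induction $\mathrm{Ind}_{G_{\Kinf}}^{G_K} T_p(E)$, so Shapiro's lemma yields
\begin{align*}
H^1_\ct(K_\p, T_{E,\widetilde{\kappa}^{-1}}) &\cong \bigoplus_{\P \mid \p} H^1_{\mathrm{Iw}}((\Kinf)_\P/K_\p,\, T_p(E)), \\
H^1(K_\p, D_{E,\widetilde{\kappa}^{-1}}) &\cong \bigoplus_{\P \mid \p} H^1((\Kinf)_\P,\, E[p^\infty]);
\end{align*}
via the Weil pairing, these identifications intertwine (\ref{eq:anotherquadraticpairing1}) with the orthogonal direct sum, over $\P \mid \p$, of the corresponding Iwasawa-theoretic local Tate pairings $\langle\,\cdot\,,\,\cdot\,\rangle_\P$. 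Hence the orthogonal complement can be computed one prime at a time.

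Next I would apply Kim's generalization of Kobayashi's self-orthogonality theorem (Proposition 2.4 of \cite{MR3224266}, after Theorem 7.3 of \cite{kobayashi}): at each $\P \mid \p$, the exact annihilator of $E^\bullet((\Kinf)_\P) \otimes \Q_p/\Z_p$ under $\langle\,\cdot\,,\,\cdot\,\rangle_\P$ is a certain compact plus or minus subgroup, obtained as an inverse limit of the finite-layer groups $E^\bullet(K(\p^m\q^n)_\P)$, and by local Tate duality this compact subgroup is canonically identified with the reflexive hull $\bigl((E^\bullet((\Kinf)_\P) \otimes \Q_p/\Z_p)^\vee\bigr)^*$. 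Assembling this identification over all $\P \mid \p$ produces the required equality of subgroups.

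The main subtlety, and the source of the involution $\iota$ appearing in the conclusion, is that the Iwasawa local Tate pairing intertwines the $\Iwalg$-action on the compact side with that on the discrete side through the involution $\gamma \mapsto \gamma^{-1}$ of $\widetilde{\Gamma}$; equivalently, the natural pairing becomes $\Iwalg$-bilinear only after twisting one factor by $\iota$, which is precisely the setup of (\ref{eq:anotherquadraticpairing1}). Consequently, the orthogonal complement -- as an abelian subgroup of $H^1(K_\p, D_{E,\widetilde{\kappa}^{-1}})$ -- equals $\bigoplus_{\P \mid \p} E^\bullet((\Kinf)_\P) \otimes \Q_p/\Z_p$, but viewed as an $\Iwalg$-submodule of $H^1(K_\p, D_{E,\widetilde{\kappa}^{-1}})^\iota$ it acquires precisely the $\iota$-twisted structure claimed on the right-hand side. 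The hardest part will be the bookkeeping: verifying that the Shapiro isomorphisms and the local duality identifications are compatible with the $\iota$-twist in exactly the right way, so that Kim's compact, inverse-limit form of $E^\bullet$ is paired via local duality with the reflexive-dual description $\bigl((-)^\vee\bigr)^*$ appearing in the proposition statement.
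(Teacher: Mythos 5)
Your route is genuinely different from the paper's, and it outsources the decisive step to a self-orthogonality theorem that you cannot quite lay your hands on: neither Proposition~2.4 of Kim \cite{MR3224266} nor Theorem~7.3 of Kobayashi \cite{kobayashi} is where the ``exact annihilator'' statement lives, and more to the point, the self-orthogonality of the \emph{plus} subgroup is not a freebie --- in the two-variable setting it is only known under a freeness hypothesis on the dual of $E^{+}\otimes\Q_p/\Z_p$ (this is why the authors derive freeness first, via Proposition~\ref{prop:freekim}, rather than cite an annihilator theorem). In addition, your claim that ``by local Tate duality this compact subgroup is canonically identified with the reflexive hull $\bigl((E^\bullet((\Kinf)_\P)\otimes\Q_p/\Z_p)^\vee\bigr)^{*}$'' is not bookkeeping; it is exactly the content one has to establish, and it requires unwinding how $(-)^{\vee,*}$ sits inside $H^1_\ct(K_\p,T_{E,\widetilde{\kappa}^{-1}})$ under the duality identification --- which is what the commutative diagrams in Figures~\ref{fig:2} and~\ref{fig:3} of the paper are doing.

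The paper instead proves only \emph{one} inclusion, and it gets that inclusion essentially for free: the pairing induced from the cup product on local $H^1$ is skew-symmetric, so any $y$ in the orthogonal complement $H_+$ of $\bigl(\bigoplus_\P E^+\otimes\Q_p/\Z_p\bigr)^\iota$ must kill $H_+^\iota$, i.e.\ $H_+\hookrightarrow\bigl((\bigoplus_\P E^+\otimes\Q_p/\Z_p)^\vee\bigr)^{*}$ (equation~\eqref{eq:dualityinclusion}). They then observe that both source and target of this injection are free rank-one direct summands of $H^1_\ct(K_\p,T_{E,\widetilde{\kappa}^{-1}})$ (by Proposition~\ref{prop:freekim} and Corollary~\ref{cor:freeness-four-two}), so the injection is an equality. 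This is cheaper than your plan: no appeal to an external annihilator theorem, no separate identification of a compact inverse-limit group with a reflexive hull, and no prime-by-prime Shapiro decomposition of the pairing. If you want to rescue your proposal you would need to (i) locate and correctly state the self-orthogonality result you need including its hypotheses (Proposition~3.18 of Kim's parity paper, plus the freeness input from \cite{MR3224266}), and (ii) actually prove the reflexive-hull identification, at which point you will find yourself reconstructing most of Figure~\ref{fig:2} anyway.
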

Here, $T_{E,\widetilde{\kappa}^{-1}}$ denotes $\bigg(T_p(E) \otimes_{\Z_p} \Z_p[[\widetilde{\Gamma}]](\widetilde{\kappa}^{-1})  \bigg)$ and  $D_{E,\widetilde{\kappa}^{-1}}$ denotes $T_{E,\widetilde{\kappa}^{-1}} \otimes_{\Z_p[[\widetilde{\Gamma}]]} \Z_p[[\widetilde{\Gamma}]]^\vee$.

\begin{proof}

We will show that $\left(\bigoplus \limits_{\P \mid \p} E^+\left((\Kinf)_\P\right)\otimes \Q_p/\Z_p \right)^\iota$, under the pairing given in equation (\ref{eq:anotherquadraticpairing1}), equals the orthogonal complement of $\left(\bigoplus \limits_{\P \mid \p} \left(E^+\left((\Kinf)_\P\right)\otimes \Q_p/\Z_p \right)^\vee\right)^*$. The rest of the proposition would follow similarly. Let $$H_+ \subset H^1_\ct\left(K_\p,T_{E,\widetilde{\kappa}^{-1}}\right), \qquad H_+^\iota \subset H^1_\ct\left(K_\p,T_{E,\widetilde{\kappa}^{-1}}\right)^\iota,$$ respectively denote the orthogonal complements of $$\left(\bigoplus \limits_{\P \mid \p} E^+\left((\Kinf)_\P\right)\otimes \Q_p/\Z_p\right)^\iota, \qquad \bigoplus \limits_{\P \mid \p} E^+\left((\Kinf)_\P\right)\otimes \Q_p/\Z_p$$  under the pairings given in equation (\ref{eq:anotherquadraticpairing1}) and equation (\ref{eq:anotherquadraticpairing2}) respectively. We will keep the following isomorphisms in mind:
\begin{align*}
\widetilde{\Gamma} \cong \varprojlim \limits_\alpha \widetilde{\Gamma}_\alpha, \qquad \Lambda \cong \varprojlim_{m,\alpha} \frac{\Z_p}{p^m\Z_p}[\widetilde{\Gamma}_\alpha].
\end{align*}
Here, $m$ varies over all the positive integers and $\widetilde{\Gamma}_\alpha$ varies over all finite quotients of $\widetilde{\Gamma}$. By abuse of notation, we let $\iota: \frac{\Z_p}{p^m\Z_p}[\widetilde{\Gamma}_\alpha] \rightarrow \frac{\Z_p}{p^m\Z_p}[\widetilde{\Gamma}_\alpha]$ also denote the $\Z_p$-linear involution obtained by sending every element $\gamma \in  \widetilde{\Gamma}_\alpha$ to $\gamma^{-1}$.

To identify $\left(\bigoplus \limits_{\P \mid \p} \left(E^+\left((\Kinf)_\P\right)\otimes \Q_p/\Z_p \right)^\vee\right)^*$ inside $H^1_\ct\left(K_\p, T_{E,\widetilde{\kappa}^{-1}}\right)$, it will be helpful to expand on the isomorphism $\Hom_\Lambda\bigg(H^1_\ct\left(K_\p, T_{E,\widetilde{\kappa}^{-1}}\right)^{\iota}, \Lambda\bigg) \cong H^1_\ct\left(K_\p, T_{E,\widetilde{\kappa}^{-1}}\right)$, which we obtain by combining equations (\ref{iso-local-tate-duality}) and  (\ref{eq:anotherquadraticpairing2}), using the commutative diagram below\footnote{For a more explicit description of this identification, see works of Perrin-Riou \cite[\S3.6.1]{PR94} along with Loeffler and Zerbes \cite[Definition~2.3]{LZZp2}.}.

\begin{tikzcd}
\Hom_\Lambda\bigg(H^1_\ct\left(K_\p, T_{E,\widetilde{\kappa}^{-1}}\right)^{\iota}, \Lambda\bigg) \arrow[d,"\cong"] \ar[r, "\cong"] & H^1_\ct\left(K_\p, T_{E,\widetilde{\kappa}^{-1}}\right) \ar[d, "\cong"] \\
\varprojlim \limits_{m,\alpha} \Hom_{\frac{\Z_p}{p^m\Z_p}[[\widetilde{\Gamma}_\alpha]]}\bigg(H^1\left(K_\p, T_{E,\widetilde{\kappa}^{-1}} \otimes_{\Lambda} \frac{\Z_p}{p^m\Z_p}[[\widetilde{\Gamma}_\alpha]] \right)^{\iota},\frac{\Z_p}{p^m\Z_p}[[\widetilde{\Gamma}_\alpha]]\bigg) \arrow[d,"\cong", "\substack{\text{Frobenius} \\ \text{reciprocity}}"'] \arrow[r,"\cong"]  & \varprojlim \limits_{m,\alpha} H^1\left(K_\p,T_{E,\widetilde{\kappa}^{-1}} \otimes_{\Lambda} \frac{\Z_p}{p^m\Z_p}[[\widetilde{\Gamma}_\alpha]]\right) \arrow[d,"\cong"] \\
\varprojlim \limits_{m,\alpha} \Hom_{\frac{\Z_p}{p^m\Z_p}}\bigg(H^1\left(K_\p, T_{E,\widetilde{\kappa}^{-1}} \otimes_{\Lambda} \frac{\Z_p}{p^m\Z_p}[[\widetilde{\Gamma}_\alpha]] \right)^{\iota},\frac{\Z_p}{p^m\Z_p}\bigg) \arrow[r, "\cong"] & \varprojlim \limits_{m,\alpha} H^1\left(K_\p,T_{E,\widetilde{\kappa}^{-1}} \otimes_{\Lambda} \frac{\Z_p}{p^m\Z_p}[[\widetilde{\Gamma}_\alpha]]\right)
\end{tikzcd}
 \captionof{figure}{} \label{fig:2}

The isomorphism in the second row of Figure \ref{fig:2}, in turn, relies on the isomorphism $H^1_\ct\left(K_\p, T_{E,\widetilde{\kappa}^{-1}}\right) \otimes_\Lambda \frac{\Z_p}{p^m\Z_p}[[\widetilde{\Gamma}_\alpha]] \cong H^1\left(K_\p, T_{E,\widetilde{\kappa}^{-1}} \otimes_\Lambda \frac{\Z_p}{p^m\Z_p}[[\widetilde{\Gamma}_\alpha]]\right)$, where \ref{locp0}  comes in to play. The isomorphism in the last row of Figure \ref{fig:2} is given by local duality as indicated in the commutative diagram below.

\begin{tikzpicture}[auto]
    \node (H1iota) {$H^1\left(K_\p,T_{E,\widetilde{\kappa}^{-1}} \otimes_{\Lambda} \frac{\Z_p}{p^m\Z_p}[[\widetilde{\Gamma}_\alpha]]\right)
^{\iota} \times $};

    \node (H1) [right of=H1iota, node distance = 6cm]{$H^1\left(K_\p,T_{E,\widetilde{\kappa}^{-1}} \otimes_{\Lambda} \frac{\Z_p}{p^m\Z_p}[[\widetilde{\Gamma}_\alpha]]\right)$};

    \node(zpm) [right of =H1, node distance = 6cm] {$\frac{\Z_p}{p^m\Z_p}$};

    \node(H1dual)[below of = H1iota, node distance = 1.7cm]{$H^1\left(K_\p,\Hom_{\Z_p}\left(T_{E,\widetilde{\kappa}^{-1}} \otimes_{\Lambda} \frac{\Z_p}{p^m\Z_p}[[\widetilde{\Gamma}_\alpha]]\right), \mu_{p^m}
\right) \times $};

    \node(H1again) [right of =H1dual, node distance = 6.8cm] {$H^1\left(K_\p,T_{E,\widetilde{\kappa}^{-1}} \otimes_{\Lambda} \frac{\Z_p}{p^m\Z_p}[[\widetilde{\Gamma}_\alpha]]\right)$};

    \node(H2brauer)[below of = zpm, node distance = 1.7cm]{$H^2\left(K_\p,\mu_{p^m}\right)$};

    \draw[->] (H1iota) to node  {$\cong$} node[pos=.5,left] {$\substack{\text{Weil} \\ \text{pairing}}$}  (H1dual);
     \draw[->] (H1) to node  {} (zpm);
     \draw[->] (zpm) to node  {$\cong$}  (H2brauer);
\draw[->] (H1again) to node  {$\bigcup$} (H2brauer);
  \end{tikzpicture}
 \captionof{figure}{} \label{fig:3}

To prove the proposition, we need to show that under the identification given in Figure \ref{fig:2}, we have
\begin{align}\label{eq:neededequality}
H_+ \stackrel{?}{=} \left(\bigoplus \limits_{\P \mid \p} \left(E^+\left((\Kinf)_\P\right)\otimes \Q_p/\Z_p \right)^\vee\right)^*.
\end{align}

Observe that we have a short exact sequence of $\Lambda$-modules:
\begin{align*}
0 \rightarrow H^\iota_+ \rightarrow H^1_\ct\left(K_\p, T_{E,\widetilde{\kappa}^{-1}} \right)^\iota \rightarrow \bigoplus \limits_{\P \mid \p} \left(E^+\left((\Kinf)_\P\right)\otimes \Q_p/\Z_p \right)^\vee \rightarrow 0.
\end{align*}
This allows us to describe $\left(\bigoplus \limits_{\P \mid \p} \left(E^+\left((\Kinf)_\P\right)\otimes \Q_p/\Z_p \right)^\vee\right)^*$ as the following:
\begin{align} \label{eq:desc_hom}
\left\{\phi \in \Hom_\Lambda\bigg(H^1_\ct\left(K_\p, T_{E,\widetilde{\kappa}^{-1}}\right)^{\iota}, \Lambda\bigg) \text{ such that } \phi(x) =0 \text{ for all $x$ in } H^\iota_+\right\}.
\end{align}

For each $y \in H^1_\ct\left(K_\p, T_{E,\widetilde{\kappa}^{-1}}\right)$, let $\phi_y$ denote the element in $\Hom_\Lambda\left(H^1_\ct\left(K_\p, T_{E,\widetilde{\kappa}^{-1}}\right)^\iota, \Lambda \right)$ corresponding to isomorphism in Figure \ref{fig:2}. Disregarding the involution $\iota$, the pairing in the first row of Figure \ref{fig:3} is skew-symmetric since, as indicated by the pairing in the second row of Figure \ref{fig:3}, it is induced by the cup-product on the first Galois cohomology groups. This lets us conclude that if $y \in H_+$, then $\phi_y(x^\iota)$ equals zero\footnote{Note however that if  $x,y$  are two elements in $H^1_\ct\left(K_\p, T_{E,\widetilde{\kappa}^{-1}}\right)$, then in general $\phi_y(x^\iota)$ need not equal $-\phi_{x}(y^\iota)$. That is, the pairing $H^1_\ct\left(K_\p, T_{E,\widetilde{\kappa}^{-1}}\right)^\iota \times H^1_\ct\left(K_\p, T_{E,\widetilde{\kappa}^{-1}}\right) \rightarrow \Lambda$,  described using the identification in Figure \ref{fig:2}, need not be skew-symmetric (disregarding the involution $\iota$).} for all $x^\iota$ in $H^{\iota}_+$. As a result, using the description in equation (\ref{eq:desc_hom}), we have the following inclusion of $\Lambda$-modules inside $H^1_\ct\left(K_\p,T_{E,\widetilde{\kappa}^{-1}}\right)$:
\begin{align} \label{eq:dualityinclusion}
H_+ \hookrightarrow \left(\bigoplus \limits_{\P \mid \p} \left(E^+\left((\Kinf)_\P\right)\otimes \Q_p/\Z_p \right)^\vee\right)^*
\end{align}
The $\Lambda$-modules $H_+$ and $\left(\bigoplus \limits_{\P \mid \p} \left(E^+\left((\Kinf)_\P\right)\otimes \Q_p/\Z_p \right)^\vee\right)^*$ are free of rank one and are direct summands of the $\Lambda$-module $H^1_\ct\left(K_\p,T_{E,\widetilde{\kappa}^{-1}}\right)$. As a result, the inclusion in equation (\ref{eq:dualityinclusion}) must be an equality. This lets us assert the validity of  equation (\ref{eq:neededequality}). The Proposition follows.\end{proof}

{\begin{remark} \label{synopsis-match}

\mbox{}
The following table summarizes our discussion for the local Selmer conditions at $p$ for $\ZZZ(\Q,D_{\rhoDN42})$  and $\ZZZ^\Sstar(\Q,D_{\rhoDNstar42})$ associated to the pairs $\{+-,\Gr\}$ and $\{++,+-\}$.

\begin{center}
  {\renewcommand{\arraystretch}{1.2}
\captionof{table}{Summary of local Selmer conditions at $p$}\label{table:descZZstar}
\begin{tabular}{rccccc}\toprule
& \multicolumn{2}{c}{$\{+-,\Gr\}$} & \phantom{abc}& \multicolumn{2}{c}{$\{++,+-\}$}  \\
\cmidrule{2-3} \cmidrule{5-6}
&   Loc. cond. at $\p$     & Loc. cond. at    $\q$   && Loc. cond. at  $\p$   & Loc. cond. at    $\q$  \\ \midrule
$\ZZZ(\Q,D_{\rhoDN42})$ &  Plus  & Empty   && Plus & See Prop. \ref{int-description}   \\
$\ZZZ^\Sstar(\Q,D_{\rhoDNstar42})$
& Empty & Minus   &&  Plus & Empty    \\
\bottomrule
\end{tabular}
}
\end{center}

The discussion of the local Selmer conditions at $p$ for the remaining pairs is very similar. For the pairs $\{++,\Gr\}, \{+-,\Gr\}, \{-+,\Gr\}, \{--,\Gr\}$, the description of the module $\ZZZ^\Sstar(\Q,D_{\rhoDNstar42})$ associated to the Tate dual $\rhoDNstar42$ is analogous to the description of $\ZZZ(\Q,D_{\rhoDN42})$ associated to $\rhoDN42$ and matches the description given in equation (\ref{ZD}). For the remaining pairs however, the description of the module $\ZZZ^\Sstar(\Q,D_{\rhoDNstar42})$ for $\rhoDNstar42$  does not match the description as given in equation (\ref{ZD}).
\end{remark}
}

\subsection{Local fudge factors at primes $l \neq p$ when $p \geq 5$} \label{section:fudge}

Throughout Section \ref{section:fudge}, we will assume that $p \geq 5$ and that $l \neq p$ is a prime number. Consider the Galois representation $\rho_{E,\widetilde{\kappa}^{-1}}:G_K \rightarrow \Gl_2\left(\Iwalg\right)$ given by the action of $G_K$ on the free rank two $\Iwalg$-module $T_{E,\widetilde{\kappa}^{-1}}:=\bigg(T_p(E) \otimes_{\Z_p} \Z_p[[\widetilde{\Gamma}]](\widetilde{\kappa}^{-1})  \bigg)$.  The discrete $\Iwalg$-module associated to $\rho_{E,\widetilde{\kappa}^{-1}}$ is  $D_{E,\widetilde{\kappa}^{-1}} := T_{E,\widetilde{\kappa}^{-1}} \otimes_{\Z_p[[\widetilde{\Gamma}]]} \Z_p[[\widetilde{\Gamma}]]^\vee$. We have the following isomorphism of Galois representations over $\Iwalg$:
\begin{align*}
  \rhoDN42 \cong \Ind^\Q_K\left(\rho_{E,\widetilde{\kappa}^{-1}}\right).
\end{align*}
Let $l \neq p$ be a prime number. If the prime number $l$ splits into two primes $\eta_1$ and $\eta_2$ in $K$, then we have the following isomorphism of discrete $\Iwalg$-modules:
\begin{align*}
  H^0\left(\Q_l, D_{\rhoDN42}\right) \cong H^0\left(K_{\eta_1},D_{E,\widetilde{\kappa}^{-1}}\right) \oplus H^0\left(K_{\eta_2},D_{E,\widetilde{\kappa}^{-1}}\right).
\end{align*}

If there exists a unique prime $\eta$ in $K$ lying above $l$, then we have the following isomorphism of $\Iwalg$-modules:
\begin{align*}
  H^0\left(\Q_l, D_{\rhoDN42}\right) \cong H^0\left(K_{\eta},D_{E,\widetilde{\kappa}^{-1}}\right).
\end{align*}
Here, $K_\eta$ denotes the completion of the imaginary quadratic field $K$ at the prime $\eta$. Combining these observations, we have the following equality in $Z^2\left(\Iwalg\right)$:
\begin{align}
  \sum_{l \in \Sigma \setminus \{p\}} c_2\bigg(\left(H^0\left(\Q_l, D_{\rhoDN42}\right)^\vee\right)_{\pn}\bigg) = \sum_{l \in \Sigma \setminus \{p\}} \sum_{\substack{\eta \mid l \\ \eta \text{ in } K}} c_2\bigg(\left(H^0\left(K_\eta, D_{E,\widetilde{\kappa}^{-1}}\right)^\vee\right)_{\pn}\bigg)
\end{align}
The calculation of the invariant $c_2\bigg(\left(H^0\left(K_\eta, D_{E,\widetilde{\kappa}^{-1}}\right)^\vee\right)_{\pn}\bigg)$ depends on the reduction type of the elliptic curve $E$ at the prime $\eta$ in $K$. See Propositions \ref{fudge-good}, \ref{fudge-additive}, \ref{fudge-nonsplit} and \ref{fudge-split} below. \\

Let $K_{\eta}^{ur}$ denote the maximal unramified extension of $K_{\eta}$. Let $I_{\eta,ur} :=\Gal{\overline{\Q}_l}{K_{\eta,\ur}}$ denote the corresponding inertia group. The Galois group $\Gal{K^{ur}_\eta}{K_\eta}$, which is isomorphic to $\hat{\Z}$, is topologically generated by the Frobenius element $\Frob_\eta$. We make the following observations:
\begin{enumerate}[style=sameline, align=left, label=(\thesection\roman*), ref=(\thesection\roman*)]
  \item\label{ob1} The restriction of the character $\widetilde{\kappa}$ to the inertia group $I_{\eta,ur}$ is trivial.
  \item\label{ob2} Since the prime $\eta$ does not split completely in the cyclotomic $\Z_p$-extension $K_{\Cyc}/K$, the image of the character $\widetilde{\kappa}$ lies inside $\Z_p[[\widetilde{\Gamma}]]$ but not inside $\Z_p$.
  \item\label{ob3} The residual representation, associated to the character $\widetilde{\kappa}:G_K \twoheadrightarrow \widetilde{\Gamma} \hookrightarrow \Gl_1(\Iwalg)$, is trivial.
  \item\label{ob4} Let $\chi_p:\Gal{\overline{\Q}_l}{K_\eta} \rightarrow \Z_p^\times$ denote the $p$-adic cyclotomic character given by the action of $\Gal{\overline{\Q}_l}{K_\eta}$ on $\mu_{p^\infty}$. The restriction of the $p$-adic cyclotomic character $\chi_p$ to the inertia group $I_{\eta,ur}$ is trivial.
\end{enumerate}

\subsubsection{$E$ has good reduction at $\eta$}

\begin{proposition} \label{fudge-good}
  Suppose the elliptic curve $E$ has good reduction at the prime $\eta$. Then,
  \begin{align*}
    \pd_{\Iwalg}  H^0\left(\Gal{\overline{\Q}_l}{K_\eta},D_{E,\widetilde{\kappa}^{-1}}\right)^\vee \leq 1.
  \end{align*}
  Consequently, we have
  \begin{align*}
    \bigg(H^0\left(\Gal{\overline{\Q}_l}{K_\eta},D_{E,\widetilde{\kappa}^{-1}}\right)^\vee\bigg)_{\pn}=0.
  \end{align*}
\end{proposition}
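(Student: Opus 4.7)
The plan is to construct an explicit free $\Iwalg$-resolution of length one of $H^0\left(\Gal{\overline{\Q}_l}{K_\eta}, D_{E,\widetilde{\kappa}^{-1}}\right)^\vee$. First, by the criterion of N\'eron-Ogg-Shafarevich, the inertia group $I_{\eta,\ur}$ acts trivially on $T_p(E)$ since $E$ has good reduction at $\eta$. Combining this with observation \ref{ob1}, which gives that $I_{\eta,\ur}$ acts trivially on $\Iwalg(\widetilde{\kappa}^{-1})$, we conclude that $I_{\eta,\ur}$ acts trivially on the whole module $D_{E,\widetilde{\kappa}^{-1}}$. Cohomology therefore reduces to the cohomology of the procyclic quotient $\Gal{K_\eta^{\ur}}{K_\eta} \cong \widehat{\Z}$, topologically generated by $\Frob_\eta$, yielding the four-term exact sequence
\begin{align*}
0 \to H^0(K_\eta, D_{E,\widetilde{\kappa}^{-1}}) \to D_{E,\widetilde{\kappa}^{-1}} \xrightarrow{1-\Frob_\eta} D_{E,\widetilde{\kappa}^{-1}} \to H^1(K_\eta, D_{E,\widetilde{\kappa}^{-1}}) \to 0.
\end{align*}

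Next I would take Pontryagin duals. Using the adjunction computation $D_{E,\widetilde{\kappa}^{-1}}^\vee \cong \Hom_{\Iwalg}(T_{E,\widetilde{\kappa}^{-1}}, \Iwalg)$, which is free of rank two over $\Iwalg$ since $T_p(E) \cong \Z_p^2$, the dualized sequence reads
\begin{align*}
0 \to H^1(K_\eta, D_{E,\widetilde{\kappa}^{-1}})^\vee \to \Iwalg^2 \xrightarrow{(1-\Frob_\eta)^\vee} \Iwalg^2 \to H^0(K_\eta, D_{E,\widetilde{\kappa}^{-1}})^\vee \to 0.
\end{align*}
It then suffices to verify that $(1-\Frob_\eta)^\vee$ is injective, for this produces a length-one free resolution of $H^0(K_\eta, D_{E,\widetilde{\kappa}^{-1}})^\vee$ and so $\mathrm{pd}_{\Iwalg} \leq 1$.

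The main technical step, and the expected hard part, is this injectivity. I would compute directly that
\begin{align*}
\det(1 - \Frob_\eta) = \det(I_2 - \gamma_\eta^{-1} A) = \gamma_\eta^{-2}\bigl(\gamma_\eta^{2} - a_\eta \gamma_\eta + N\eta\bigr),
\end{align*}
where $\gamma_\eta := \widetilde{\kappa}(\Frob_\eta) \in \widetilde{\Gamma}$ and $A \in \mathrm{GL}_2(\Z_p)$ is the matrix of Frobenius on $T_p(E)$, having trace $a_\eta$ and determinant $N\eta$. By observation \ref{ob2}, $\gamma_\eta$ has non-trivial image in $\Gamma_\Cyc$, hence is a non-trivial element of the torsion-free group $\widetilde{\Gamma}$. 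Under an identification $\Iwalg \cong \Z_p[[x_1, x_2]]$, it follows that $\gamma_\eta = (1+x_1)^a(1+x_2)^b$ with $(a,b)\neq (0,0)$, a non-constant unit in the power series ring. The roots $\alpha, \beta \in \overline{\Q}_p$ of $X^2 - a_\eta X + N\eta$ are constants in $\overline{\Q}_p[[x_1, x_2]]$, so $\gamma_\eta^{2} - a_\eta \gamma_\eta + N\eta = (\gamma_\eta - \alpha)(\gamma_\eta - \beta)$ is a nonzero product of nonzero elements in this power series domain. Consequently $\det(1-\Frob_\eta)$ is a nonzero element of the domain $\Iwalg$, and since $\det$ of the Pontryagin dual agrees with that of the original map (transpose), $(1-\Frob_\eta)^\vee: \Iwalg^2 \to \Iwalg^2$ is injective as claimed.

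Finally, the vanishing of the maximal pseudo-null submodule follows from the Auslander-Buchsbaum formula. Writing $M := H^0(K_\eta, D_{E,\widetilde{\kappa}^{-1}})^\vee$, for any associated prime $\mathfrak{p}$ of $M$ one has $\mathrm{depth}_{\Iwalg_\mathfrak{p}} M_\mathfrak{p} = 0$, while $\mathrm{pd}_{\Iwalg_\mathfrak{p}} M_\mathfrak{p} \leq \mathrm{pd}_{\Iwalg} M \leq 1$. Auslander-Buchsbaum forces $\mathrm{depth}\,\Iwalg_\mathfrak{p} \leq 1$, and regularity of $\Iwalg$ then gives $\mathrm{height}\,\mathfrak{p} \leq 1$. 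Thus every associated prime of $M$ has height at most one, so the maximal pseudo-null submodule $M_\pn$ (which would contribute associated primes of height $\geq 2$) must be zero.
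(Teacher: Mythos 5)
Your proof is correct, and it takes a genuinely different route from the paper's. The paper invokes the Coleman--Edixhoven theorem to guarantee that $\Frob_\eta$ has distinct eigenvalues $a,b$ on $T_p(E)$, passes to the ring $O[[\widetilde\Gamma]]$ where $\Frob_\eta$ can be diagonalized, observes that the dual module then splits as
\begin{align*}
\frac{O[[\widetilde{\Gamma}]]}{(a^{-1}\widetilde{\kappa}(\Frob_\eta)-1)} \oplus \frac{O[[\widetilde{\Gamma}]]}{(b^{-1}\widetilde{\kappa}(\Frob_\eta)-1)},
\end{align*}
with each cyclic summand manifestly of projective dimension $\le 1$, and finally uses faithful flatness of $\Z_p\to O$ to descend the bound to $\Iwalg$. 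You instead read off a length-one free resolution directly from the $\widehat{\Z}$-cohomology complex and reduce everything to showing that $\det(1-\Frob_\eta)$ is a nonzero element of the domain $\Iwalg$, which you achieve by factoring the Hecke polynomial $X^2-a_\eta X+N\eta$ evaluated at the non-constant unit $\gamma_\eta=\widetilde\kappa(\Frob_\eta)$. Your argument is notably more elementary: it does not require semi-simplicity of Frobenius (it works just as well with a repeated eigenvalue, since $(\gamma_\eta-\alpha)^2\ne0$ for $\gamma_\eta$ non-constant), and it avoids both the citation to Coleman--Edixhoven and the base change / descent step. The paper's approach, by contrast, yields an explicit description of the module as a direct sum of cyclic pieces, which is a stronger structural statement than a mere projective dimension bound. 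Your Auslander--Buchsbaum argument for the vanishing of the maximal pseudo-null submodule is also correct, and it spells out the reasoning the paper leaves implicit in the word ``consequently''.
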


\begin{proof}
  Since $\eta$ is a prime not lying above $p$ and since the elliptic curve $E$ has good reduction at the prime $\eta$, we can conclude that the action of the inertia group $I_{\eta,ur}$ on $D_{E,\widetilde{\kappa}^{-1}}$ is trivial. As a result, we have
  \begin{align*}
    H^0\left(\Gal{\overline{\Q}_l}{K_\eta},D_{E,\widetilde{\kappa}^{-1}}\right) = H^0\left(\Gal{K^{ur}_{\eta}}{K_\eta}, D_{E,\widetilde{\kappa}^{-1}}\right).
  \end{align*}
  The eigenvalues of $\Frob_\eta$, say $a$ and $b$, on the $p$-adic Tate module $T_p(E)$ are distinct (see Theorem 4.1 in Coleman-Edixhoven's work \cite{MR1600034}). Hence, the action of $\Frob_\eta$ on the free $\Iwalg$-module $\left(D_{E,\widetilde{\kappa}^{-1}}\right)^\vee$ is semi-simple. Let $O$ denote the ring of integers in a finite extension of $\Q_p$, containing the eigenvalues $a$ and $b$. This lets us deduce the following isomorphisms of $O[[\widetilde{\Gamma}]]$-modules:
  \begin{align*}
    H^0\left(\Gal{K^{ur}_{\eta}}{K_\eta}, D_{E,\widetilde{\kappa}^{-1}}\right)^\vee \otimes_{\Z_p}O & \cong \coker\left(O[[\widetilde{\Gamma}]]^2 \xrightarrow{\left[\begin{array}{cc} a^{-1}\widetilde{\kappa}(\Frob_\eta)-1 & 0 \\ 0 & b^{-1}\widetilde{\kappa}(\Frob_\eta) -1 \end{array}\right]} O[[\widetilde{\Gamma}]]^2\right), \\ &  \cong \left(\frac{O[[\widetilde{\Gamma}]]}{(a^{-1}\widetilde{\kappa}(\Frob_\eta)-1)}\right)  \oplus \left(\frac{O[[\widetilde{\Gamma}]]}{(b^{-1}\widetilde{\kappa}(\Frob_\eta)-1)}\right).
  \end{align*}
  As a result, we have
  \begin{align*}
    \pd_{O[[\widetilde{\Gamma}]]} H^0\left(\Gal{K^{ur}_{\eta}}{K_\eta}, D_{E,\widetilde{\kappa}^{-1}}\right)^\vee \otimes_{\Z_p}O \leq 1.
  \end{align*}
  These observations, along with Corollary 3.2.10 in \cite{weibel1995introduction} and the fact that the extension $\Z_p \rightarrow O$ is faithfully flat, let us deduce that
  \begin{align*}
    \pd_{\Iwalg}  H^0\left(\Gal{\overline{\Q}_l}{K_\eta},D_{E,\widetilde{\kappa}^{-1}}\right)^\vee \leq 1
  \end{align*}
  and the proposition follows.
\end{proof}

\subsubsection{$E$ has additive reduction at $\eta$}

\begin{proposition} \label{fudge-additive}
  Suppose the elliptic curve $E$ has additive reduction at the prime $\eta$. Then, we have
  \begin{align*}
    H^0\left(\Gal{\overline{\Q}_l}{K_\eta},D_{E,\widetilde{\kappa}^{-1}}\right)^\vee=0.
  \end{align*}
\end{proposition}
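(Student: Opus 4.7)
The plan is to reduce the computation of $H^0\left(G_{K_\eta}, D_{E,\widetilde{\kappa}^{-1}}\right)$ to a statement about the inertia invariants $T_p(E)^{I_{\eta,\ur}}$. First, I note that as a $\Z_p$-module (ignoring the Galois action), we have a natural isomorphism
\[
  D_{E,\widetilde{\kappa}^{-1}} \;\cong\; T_p(E) \otimes_{\Z_p} \Lambda^\vee,
\]
where $\Lambda = \Iwalg$, and the diagonal $G_K$-action is given by the natural action on $T_p(E)$ and by the character $\widetilde{\kappa}^{-1}$ on $\Lambda^\vee$. By observation \ref{ob1}, the character $\widetilde{\kappa}$ is trivial on $I_{\eta,\ur}$, so as an $I_{\eta,\ur}$-module, $D_{E,\widetilde{\kappa}^{-1}}$ is $T_p(E) \otimes_{\Z_p} \Lambda^\vee$ with trivial $I_{\eta,\ur}$-action on the second factor. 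Since $H^0\left(G_{K_\eta}, D_{E,\widetilde{\kappa}^{-1}}\right) \subseteq H^0\left(I_{\eta,\ur}, D_{E,\widetilde{\kappa}^{-1}}\right)$, it suffices to prove that the latter vanishes.

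The next step is to observe that the $I_{\eta,\ur}$-action on $T_p(E)$ factors through a finite quotient of order prime to $p$. Since $\eta \nmid p$, the wild inertia subgroup acts trivially on $T_p(E)$ (a classical result), so the action factors through the tame quotient, and the image has order dividing a fixed integer determined by the Kodaira type (at most $24$ or bounded in terms of the Tamagawa number, which for additive reduction is at most $4$). As $p \geq 5$, this finite order is coprime to $p$, so averaging provides an idempotent projector; in particular, taking $I_{\eta,\ur}$-invariants is exact and commutes with the tensor product by an untwisted module:
\[
  \bigl(T_p(E) \otimes_{\Z_p} \Lambda^\vee\bigr)^{I_{\eta,\ur}} \;=\; T_p(E)^{I_{\eta,\ur}} \otimes_{\Z_p} \Lambda^\vee.
\]

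The final step is to show $T_p(E)^{I_{\eta,\ur}} = 0$ for additive reduction. For this I would invoke Grothendieck's description of inertia invariants via the Néron model: $T_p(E)^{I_{\eta,\ur}}$ corresponds to $T_p\bigl(\mathcal{E}^{0}_{\overline{k_\eta}}\bigr)$, where $\mathcal{E}^{0}$ is the identity component of the special fiber of the Néron model of $E/K_\eta$. For additive reduction, $\mathcal{E}^{0}_{\overline{k_\eta}} \cong \mathbb{G}_a$, which has no non-trivial $p$-power torsion when $p \neq \mathrm{char}\,k_\eta$; equivalently, $V_p(E)^{I_{\eta,\ur}} = 0$ for additive reduction by the standard classification of local $\ell$-adic Galois representations of elliptic curves (good $\Leftrightarrow$ rank $2$, multiplicative $\Leftrightarrow$ rank $1$, additive $\Leftrightarrow$ rank $0$). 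Combining the three steps yields $H^0\left(I_{\eta,\ur}, D_{E,\widetilde{\kappa}^{-1}}\right) = 0$, and hence the proposition follows.

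The only point requiring real justification is the last one --- namely, the vanishing $T_p(E)^{I_{\eta,\ur}} = 0$ for additive reduction, together with the coprimality of the inertia image order to $p$. Both are classical consequences of Néron--Ogg--Shafarevich and Tate's algorithm in the regime $p \geq 5$, so this is a matter of citing the appropriate references (e.g., Silverman's \emph{Advanced Topics}, or Serre--Tate) rather than a technical difficulty.
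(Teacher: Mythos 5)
There is a genuine gap at the second step of your argument. You assert that for additive reduction at $\eta \nmid p$, ``the $I_{\eta,\ur}$-action on $T_p(E)$ factors through a finite quotient of order prime to $p$.'' This is true precisely when $E$ has \emph{potentially good} reduction at $\eta$; it is the Serre--Tate characterization. But additive reduction also includes the potentially multiplicative types $I^*_n$ with $n\geq1$, and there the image of $I_{\eta,\ur}$ in $\Gl(T_p(E))$ is \emph{infinite}: after a quadratic twist $E$ becomes a Tate curve, and the pro-$p$ part of tame inertia acts on $T_p(E)$ through an infinite unipotent image. The bound you invoke (order $\leq 24$, or ``bounded by the Tamagawa number'') measures the size of the component group or the degree of semistability, not the size of the monodromy image in $\Gl(T_p(E))$; those are different quantities. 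Once the inertia image is infinite, the averaging idempotent in your third step is not available and the commutation $(T_p(E)\otimes_{\Z_p}\Lambda^\vee)^{I_{\eta,\ur}}=T_p(E)^{I_{\eta,\ur}}\otimes_{\Z_p}\Lambda^\vee$ is no longer justified --- and in general $T_p(E)^{I_{\eta,\ur}}=0$ alone does not force $(T_p(E)\otimes\Lambda^\vee)^{I_{\eta,\ur}}=0$ over an infinite group, since $\Lambda^\vee$ is divisible and invariants do not commute with such coefficient extensions. The paper's proof sidesteps exactly this difficulty: it passes to the residual representation $D_{E,\widetilde\kappa^{-1}}[\m]\cong E[p]$, splits into the two cases (potentially good vs.~potentially multiplicative), shows that in either case the \emph{semi-simplification} of $E[p]$ as an $\F_p[I_{\eta,\ur}]$-module contains no trivial constituent (for the potentially multiplicative case this comes from the twist by a ramified quadratic character), and then invokes a Nakayama-style argument (Greenberg's Corollary~3.1.1) to conclude for the full divisible module. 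Your strategy works verbatim in the potentially good case; to repair the proof you would need an alternate argument for the potentially multiplicative case, for which the filtration $0\to\Z_p(\chi_p\chi)\to T_p(E)\to\Z_p(\chi)\to 0$ with $\chi$ a ramified quadratic character and $p\geq 5$ (so $2$ is a unit) is the natural input.
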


\begin{proof}
  Let $\m$ denote the maximal ideal of $\Z_p[[\widetilde{\Gamma}]]$. Note that  we have the following isomorphism of $\mathbb{F}_p[\Gal{\overline{\Q}_l}{K_\eta}]$-modules (see Observation \ref{ob3}):
  \begin{align*}
    D_{E,\widetilde{\kappa}^{-1}}[\m] \cong E[p]
  \end{align*}
  We now claim that the trivial character is not a component of $E[p]^{s.s.}$, the semi-simplification of the $\mathbb{F}_p[I_{\eta,ur}]$-representation $E[p]$. As a result, we would have
  \begin{align}\label{inertia-semi=simplification}
    H^0\left(I_{\eta,ur}, \ E[p]^{s.s.}\right) \stackrel{?}{=}0.
  \end{align}
  The proposition would follow from this claim (see Corollary 3.1.1 in Greenberg's work \cite{MR2290593}). We consider two cases.\\

  \noindent
  \underline{Case 1: $E$ has potentially good reduction at $\eta$.}  The action of $I_{\eta,ur}$ factors through a non-trivial finite group $\Delta$, such that the only prime factors\footnote{When $l \neq 3$, then the elliptic curve $E$ attains good reduction over the field $K_\eta(E[3])$. See Proposition 10.3(b) in Chapter 10 of Silverman's book \cite{MR1312368}. Note that $|\Gal{K_\eta(E[3])}{K_\eta}|$ divides $|\Gl_2(\mathbb{F}_3)|$ (which equals $48$). \\ When $l =3$, then $E$ attains good reduction over the fields $K_\eta(E[5])$ and $K_\eta(E[7])$. The ramification degrees of the field extensions $K_\eta(E[5])/K_\eta$ and $K_\eta(E[7])/K_\eta$ are equal and hence must divide $|\Gal{K_\eta(E[5])}{K_\eta}|$ (which divides  $|\Gl_2(\mathbb{F}_5)|=2^5*3*5$) and $|\Gal{K_\eta(E[7])}{K_\eta}|$ (which divides $|\Gl_2(\mathbb{F}_7)|=2^5*3^2*7$). See Problem 7.9(a) in \cite{MR817210}.} dividing $\Delta$ are in the set $\{2,3\}$. Choose an element $\varsigma$ in $\Delta$ that acts non-trivially on $T_p(E)$. The action of $\varsigma$ on $T_p(E)$ is via a $2 \times 2$ matrix, say $B$, with values in $\Z_p$. Since $\det(B)$ must equal $1$ (see Observation \ref{ob4}), the two eigenvalues of $B$ are of the form $b$ and $b^{-1}$.  Both eigenvalues must be non-trivial roots of unity in $\overline{\Q}_p$, of order prime to $p$ (since $p \geq 5$). As a result, we have $b \not\equiv 1 \mod p$ and $b^{-1} \not\equiv 1 \mod p$. Let $\overline{B}$ denote the $2 \times 2$ matrix with values in $\mathbb{F}_p$ which gives us the action of $\varsigma$ on $E[p]$. Our observations let us conclude that the eigenvalues of $\overline{B}$, which are $\overline{b}$ and $\overline{b}^{-1}$, are both not equal to one. This lets us establish the validity of equation (\ref{inertia-semi=simplification}) in this case.

  \noindent
  \underline{Case 2: $E$ has potentially split multiplicative reduction at $\eta$.} In this case, we have the following short exact sequence of $\Z_p$-modules, that is $\Gal{\overline{\Q}_l}{K_\eta}$-equivariant:
  \begin{align*}
    0 \rightarrow \Z_p(\chi_p\chi) \rightarrow T_p(E) \rightarrow \Z_p(\chi) \rightarrow 0.
  \end{align*}
  Here, $\chi$ is a quadratic ramified character of $\Gal{\overline{\Q}_l}{K_\eta}$. As a result, we have the following isomorphism of $\mathbb{F}_p[I_{\eta,ur}]$-modules:
  \begin{align*}
    E[p]^{s.s.} \cong \mathbb{F}_p(\chi) \oplus \mathbb{F}_p(\chi).
  \end{align*}
  Since the restriction of $\chi$ to the inertia group $I_{\eta,ur}$ is a non-trivial quadratic character, equation (\ref{inertia-semi=simplification}) follows in this case too.
\end{proof}

\subsubsection{$E$ has non-split multplicative reduction at $\eta$}

\begin{proposition} \label{fudge-nonsplit}
  Suppose the elliptic curve $E$ has non-split multiplicative reduction at the prime $\eta$. Then, we have
  \begin{align*}
    \pd_{\Iwalg}  H^0\left(\Gal{\overline{\Q}_l}{K_\eta},D_{E,\widetilde{\kappa}^{-1}}\right)^\vee \leq 1.
  \end{align*}
  Consequently, we have
  \begin{align*}
    \bigg(H^0\left(\Gal{\overline{\Q}_l}{K_\eta},D_{E,\widetilde{\kappa}^{-1}}\right)^\vee\bigg)_{\pn}=0.
  \end{align*}
\end{proposition}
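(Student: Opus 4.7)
The plan is to mimic the strategy of Proposition \ref{fudge-good}: exhibit $H^0(\Gal{\overline{\Q}_l}{K_\eta},D_{E,\widetilde{\kappa}^{-1}})^\vee$ as the quotient of $\Iwalg$ by a single non-zero-divisor, which gives a free resolution of length one and hence projective dimension $\leq 1$. For non-split multiplicative reduction at $\eta$, the Tate curve description (or equivalently the fact that $E$ becomes split multiplicative over the unramified quadratic extension of $K_\eta$) provides a short exact sequence of $G_{K_\eta}$-modules
\[
0 \to (\Q_p/\Z_p)(\chi_p\chi) \to E[p^\infty] \to (\Q_p/\Z_p)(\chi) \to 0,
\]
where $\chi$ is the unramified quadratic character. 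Tensoring with $\Lambda^\vee$ and twisting by $\widetilde{\kappa}^{-1}$ yields an analogous filtration of $D_{E,\widetilde{\kappa}^{-1}}$ with graded pieces $\Lambda^\vee(\chi_p\chi\widetilde{\kappa}^{-1})$ and $\Lambda^\vee(\chi\widetilde{\kappa}^{-1})$.

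First I will compute $H^0(I_{\eta,\mathrm{ur}},D_{E,\widetilde{\kappa}^{-1}})$. By observations \ref{ob1} and \ref{ob4}, and because $\chi$ is unramified, all three characters restrict trivially to $I_{\eta,\mathrm{ur}}$, so inertia acts on $D_{E,\widetilde{\kappa}^{-1}}$ through the unipotent matrix $\left(\begin{smallmatrix} 1 & t_p \\ 0 & 1 \end{smallmatrix}\right)$ induced by the $p$-adic tame character $t_p$. Since $t_p:I_{\eta,\mathrm{ur}}^{\mathrm{tame}}\to\Z_p$ is surjective, a direct check shows that the inertia invariants coincide with the submodule, i.e.\ $H^0(I_{\eta,\mathrm{ur}},D_{E,\widetilde{\kappa}^{-1}})$ is isomorphic to $\Lambda^\vee$ on which $\Frob_\eta$ acts by the scalar $c := \chi_p(\Frob_\eta)\chi(\Frob_\eta)\widetilde{\kappa}(\Frob_\eta)^{-1} = -q\,\widetilde{\kappa}(\Frob_\eta)^{-1}$, where $q = |\kappa(\eta)|$.

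Next I will pass to Frobenius invariants. The $\Frob_\eta$-fixed points of $\Lambda^\vee$, with $\Frob_\eta$ acting through multiplication by $c$, consist of those $\phi\in\Hom_{\mathrm{cont}}(\Lambda,\Q_p/\Z_p)$ annihilating the ideal $(c-1)\Lambda$; hence
\[
H^0(\Gal{\overline{\Q}_l}{K_\eta},D_{E,\widetilde{\kappa}^{-1}}) \;\cong\; \bigl(\Lambda/(c-1)\Lambda\bigr)^\vee.
\]
Multiplying $c-1$ by the unit $-\widetilde{\kappa}(\Frob_\eta)$ rewrites the ideal as $\bigl(q+\widetilde{\kappa}(\Frob_\eta)\bigr)\Lambda$. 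Taking Pontryagin duals gives $H^0(\Gal{\overline{\Q}_l}{K_\eta},D_{E,\widetilde{\kappa}^{-1}})^\vee \cong \Lambda/(q+\widetilde{\kappa}(\Frob_\eta))$.

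The main (and only) obstacle is verifying that $q+\widetilde{\kappa}(\Frob_\eta)$ is a non-zero-divisor in $\Lambda$. Since $\widetilde{\kappa}$ has values in the group-like elements of $\Lambda$, we have $\widetilde{\kappa}(\Frob_\eta)\in 1+\m$; specializing at $T_1=T_2=0$ sends $q+\widetilde{\kappa}(\Frob_\eta)$ to the positive integer $q+1\in\Z_p^\times\cup\{\text{nonzero}\}$, which is nonzero. Hence $q+\widetilde{\kappa}(\Frob_\eta)\ne 0$ in the domain $\Lambda$, providing the free resolution $0\to\Lambda\xrightarrow{q+\widetilde{\kappa}(\Frob_\eta)}\Lambda\to \Lambda/(q+\widetilde{\kappa}(\Frob_\eta))\to 0$ and therefore $\pd_{\Iwalg}H^0(\Gal{\overline{\Q}_l}{K_\eta},D_{E,\widetilde{\kappa}^{-1}})^\vee\leq 1$. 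For the concluding vanishing, Auslander-Buchsbaum applied to the regular local ring $\Iwalg$ of dimension three gives $\depth\geq 2$; since any associated prime $\p$ of a module satisfies $\depth\leq \dim(\Iwalg/\p)$, no associated prime can have height $\geq 2$, so the maximal pseudo-null submodule vanishes.
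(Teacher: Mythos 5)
Your proof has a genuine gap at the claim that ``the inertia invariants coincide with the submodule.'' This fails whenever $p$ divides $a:=\mathrm{ord}_{\pi_\eta}(q_{E,\eta})$, the $\eta$-adic valuation of the Tate parameter. As in the proof of Proposition~\ref{fudge-split}, a topological generator of tame inertia acts on $D_{E,\widetilde\kappa^{-1}}$ through a unipotent matrix with upper-right entry $au$ for some $u\in\Z_p^\times$, so $H^0\left(I_{\eta,ur},D_{E,\widetilde\kappa^{-1}}\right)$ is an extension of $\Z_p[[\widetilde\Gamma]]^\vee(\chi\widetilde\kappa^{-1})[a]$ by $\Z_p[[\widetilde\Gamma]]^\vee(\chi_p\chi\widetilde\kappa^{-1})$, and the quotient term is nonzero precisely when $p\mid a$. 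This is exactly the source of the second fudge factor in Proposition~\ref{fudge-split}; one cannot simply assert it vanishes here without an argument.

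What makes the non-split case harmless is that $\chi(\Frob_\eta)=-1$: since $\widetilde\kappa^{-1}(\Frob_\eta)\equiv 1\pmod{\m}$ (Observation~\ref{ob3}), we get $\chi\widetilde\kappa^{-1}(\Frob_\eta)-1\equiv -2\pmod{\m}$, a unit as $p$ is odd, hence $\chi\widetilde\kappa^{-1}(\Frob_\eta)-1\in\Iwalg^\times$. Consequently $\Z_p[[\widetilde\Gamma]]^\vee(\chi\widetilde\kappa^{-1})[a]$ has trivial Frobenius invariants, and your final formula does hold --- but you must insert this observation. The paper's own proof sidesteps the unipotent analysis entirely: it observes first that $H^0\left(\Gal{\overline{\Q}_l}{K_\eta},\Z_p[[\widetilde\Gamma]]^\vee(\chi\widetilde\kappa^{-1})\right)=0$ because $\chi\widetilde\kappa^{-1}$ reduces modulo $\m$ to the nontrivial quadratic character $\chi$, and then the long exact $H^0$-sequence for the two-step filtration identifies $H^0\left(\Gal{\overline{\Q}_l}{K_\eta},D_{E,\widetilde\kappa^{-1}}\right)$ with the $H^0$ of the unramified submodule, with no reference to the inertia invariants of $D_{E,\widetilde\kappa^{-1}}$ themselves.
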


\begin{proof}
  Since $E$ has non-split multiplicative reduction at $\eta$, we have the following short exact sequence of $\Z_p$-modules that is $\Gal{\overline{\Q}_l}{K_\eta}$-equivariant:
  \begin{align*}
    0 \rightarrow \Z_p(\chi_p\chi) \rightarrow T_p(E) \rightarrow \Z_p(\chi) \rightarrow 0.
  \end{align*}
  Here, $\chi$ is an unramified quadratic character. Taking the tensor product $\otimes_{\Z_p} \Z_p[[\widetilde{\Gamma}]]^\vee(\widetilde{\kappa}^{-1})$, we obtain the following short exact sequence of $\Iwalg$-modules, that is $\Gal{\overline{\Q}_l}{K_\eta}$-equivariant:
  \begin{align} \label{ses-nonsplit-tate}
    0 \rightarrow \Z_p[[\widetilde{\Gamma}]]^\vee(\chi_p\chi\widetilde{\kappa}^{-1}) \rightarrow D_{E,\widetilde{\kappa}^{-1}} \rightarrow \Z_p[[\widetilde{\Gamma}]]^\vee(\chi\widetilde{\kappa}^{-1}) \rightarrow 0.
  \end{align}
  The residual representation associated to the character $\chi\widetilde{\kappa}^{-1}$ coincides with the non-trivial quadratic character $\chi$. As a result, $H^0\left(\Gal{\overline{\Q}_l}{K_\eta},\Z_p[[\widetilde{\Gamma}]]^\vee(\chi\widetilde{\kappa}^{-1})\right)$ equals $0$. Also note that the restriction of the character $\chi_p\chi\widetilde{\kappa}^{-1}$  to the inertia group $I_{\eta,ur}$ is trivial. Combining these observations and considering the $\Gal{\overline{\Q}_l}{K_\eta}$-invariants of the modules given in the short exact sequence (\ref{ses-nonsplit-tate}) let us deduce the following iosmorphism of $\Iwalg$-modules:
  \begin{align*}
    H^0\left(\Gal{\overline{\Q}_l}{K_\eta}, \ D_{E,\widetilde{\kappa}^{-1}}\right) & \cong H^0\left(\Gal{\overline{\Q}_l}{K_\eta},  \ \Z_p[[\widetilde{\Gamma}]]^\vee(\chi_p\chi\widetilde{\kappa}^{-1}) \right), \\ & \cong H^0\left(\Gal{K_{\eta,ur}}{K_\eta},  \ \Z_p[[\widetilde{\Gamma}]]^\vee(\chi_p\chi\widetilde{\kappa}^{-1}) \right).
  \end{align*}
  As a result, we have
  \begin{align*}
    H^0\left(\Gal{\overline{\Q}_l}{K_\eta},D_{E,\widetilde{\kappa}^{-1}}\right)^\vee \cong \frac{\Z_p[[\widetilde{\Gamma}]]}{\left(\chi_p\chi\widetilde{\kappa}^{-1}(\Frob_\eta)-1\right)}.
  \end{align*}
  The proposition follows.
\end{proof}

\subsubsection{$E$ has split multplicative reduction at $\eta$}

\begin{proposition}\label{fudge-split}
  Suppose the elliptic curve $E$ has split multiplicative reduction at the prime $\eta$.
  \begin{align*}
    H^0\left(\Gal{\overline{\Q}_l}{K_\eta},D_{E,\widetilde{\kappa}^{-1}}\right)^\vee \cong \frac{\Z_p[[\widetilde{\Gamma}]]}{\left(\chi_p\widetilde{\kappa}^{-1}(\Frob_\eta)-1\right)} \oplus \frac{\Iwalg}{\left(\mathrm{ord}_{\pi_\eta}(q_{E,\eta}), \ \widetilde{\kappa}^{-1}(\Frob_\eta)-1 \right)}.
  \end{align*}
  Consequently,
  \begin{align*}
    \bigg(H^0\left(\Gal{\overline{\Q}_l}{K_\eta},D_{E,\widetilde{\kappa}^{-1}}\right)^\vee\bigg)_{\pn} \cong \frac{\Iwalg}{\left(\mathrm{ord}_{\pi_\eta}(q_{E,\eta}), \ \widetilde{\kappa}^{-1}(\Frob_\eta)-1 \right)}.
  \end{align*}
\end{proposition}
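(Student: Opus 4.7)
The plan is to exploit the Tate curve uniformization at the split multiplicative prime $\eta$. Write $q := q_{E,\eta}$ and $v := \mathrm{ord}_{\pi_\eta}(q)$. The Tate curve provides a $G_{K_\eta}$-equivariant short exact sequence
\begin{align*}
0 \to \mu_{p^\infty} \to E[p^\infty] \to \Q_p/\Z_p \to 0,
\end{align*}
in which the quotient carries the trivial Galois action (this is where the \emph{split} hypothesis enters, as opposed to the non-split case of Proposition~\ref{fudge-nonsplit}). Because $T_p(E)$ is a finitely generated projective $\Z_p$-module, there is a natural identification
\begin{align*}
V \;:=\; D_{E,\widetilde{\kappa}^{-1}} \;=\; T_p(E)\otimes_{\Z_p}\Iwalg^{\vee}(\widetilde{\kappa}^{-1}) \;\cong\; \mathrm{Hom}^{\mathrm{cts}}_{\Z_p}\bigl(\Iwalg,\, E[p^\infty]\bigr)(\widetilde{\kappa}^{-1}),
\end{align*}
and since $\widetilde{\kappa}$ is unramified at $\eta$ (Observation~\ref{ob1}), the inertia group $I_{\eta,\ur}$ acts only on the $E[p^\infty]$-factor. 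Hence $V^{I_{\eta,\ur}} = \mathrm{Hom}^{\mathrm{cts}}_{\Z_p}\bigl(\Iwalg, E[p^\infty]^{I_{\eta,\ur}}\bigr)(\widetilde{\kappa}^{-1})$, reducing the whole problem to computing $E[p^\infty]^{I_{\eta,\ur}}$ as a $\Frob_\eta$-module.

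The key computation is the following. Writing $q = u\pi_\eta^v$ with $u \in \mathcal{O}_{K_\eta}^\times$ and using that $u^{1/p^n} \in K_{\eta,\ur}$ (because $l \neq p$ and every unit of $\mathcal{O}_{K_{\eta,\ur}}$ is a $p^n$-th power there), the $p$-part of the tame character $t : I_{\eta,\ur} \twoheadrightarrow \Z_p(1)$ yields $\sigma(q^{1/p^n})/q^{1/p^n} = \zeta_{p^n}^{v\cdot t(\sigma)}$ for $\sigma \in I_{\eta,\ur}$. Setting $m := v_p(v)$, direct inspection of the Tate description $E[p^n] = \bigl\{\zeta_{p^n}^a q^{k/p^n}\bigr\}/q^{\Z}$ shows that $E[p^\infty]^{I_{\eta,\ur}}$ sits in an exact sequence of $\Z_p[\Frob_\eta]$-modules
\begin{align*}
0 \to \mu_{p^\infty} \to E[p^\infty]^{I_{\eta,\ur}} \to \Z/p^m\Z \to 0,
\end{align*}
with $\Frob_\eta$ acting as $\chi_p(\Frob_\eta)$ on $\mu_{p^\infty}$ and trivially on the quotient. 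I will argue that this sequence splits $\Frob_\eta$-equivariantly: if $\Frob_\eta(q^{1/p^m}) = \zeta\cdot q^{1/p^m}$ with $\zeta \in \mu_{p^m}$, then choosing $\zeta' \in \mu_{p^\infty}$ with $(\zeta')^{\chi_p(\Frob_\eta) - 1} = \zeta^{-1}$ (which is possible because $\chi_p(\Frob_\eta) - 1 \neq 0$ acts surjectively on the divisible group $\mu_{p^\infty}$) gives a $\Frob_\eta$-fixed lift $\zeta' q^{1/p^m}$ of the generator.

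Applying $\mathrm{Hom}^{\mathrm{cts}}_{\Z_p}(\Iwalg,-)(\widetilde{\kappa}^{-1})$ to this $\Frob_\eta$-split sequence and using the standard identifications $\mathrm{Hom}^{\mathrm{cts}}_{\Z_p}(\Iwalg, \mu_{p^\infty}) = \Iwalg^\vee(\chi_p)$ and $\mathrm{Hom}^{\mathrm{cts}}_{\Z_p}(\Iwalg, \Z/p^m\Z) = (\Iwalg/p^m\Iwalg)^\vee$ produces the $\Frob_\eta$-equivariant decomposition
\begin{align*}
V^{I_{\eta,\ur}} \;\cong\; \Iwalg^{\vee}(\chi_p\widetilde{\kappa}^{-1}) \;\oplus\; (\Iwalg/p^m\Iwalg)^\vee(\widetilde{\kappa}^{-1}).
\end{align*}
Taking $\Frob_\eta$-invariants then picks out the kernels of multiplication by $\chi_p\widetilde{\kappa}^{-1}(\Frob_\eta) - 1$ on the first summand and of $\widetilde{\kappa}^{-1}(\Frob_\eta) - 1$ on the second. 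Pontryagin dualising, together with the identification $(p^m) = (\mathrm{ord}_{\pi_\eta}(q_{E,\eta}))$ of ideals in $\Iwalg$ (since $v/p^m \in \Z_p^\times$), produces the displayed decomposition.

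For the pseudo-null assertion: the first summand $\Iwalg/(\chi_p\widetilde{\kappa}^{-1}(\Frob_\eta) - 1)$ is a cyclic module with a nonzero principal annihilator, so its support has codimension one in $\mathrm{Spec}\,\Iwalg$ and it contains no nonzero pseudo-null submodule. The second summand vanishes unless $p \mid v$, and in that case its annihilator contains $(p,\, \widetilde{\kappa}^{-1}(\Frob_\eta) - 1)$, an ideal of height two in the regular local ring $\Iwalg$, so the second summand is pseudo-null. It therefore coincides with the maximal pseudo-null submodule of the direct sum. The main obstacle is the Tate-curve analysis of $E[p^\infty]^{I_{\eta,\ur}}$ and the $\Frob_\eta$-equivariant splitting in the second paragraph; once these are secured, the remainder is formal commutative algebra and Pontryagin duality.
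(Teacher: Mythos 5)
Your proof follows essentially the same route as the paper's, just phrased at the level of the divisible module $E[p^\infty]$ rather than the integral Tate module $T_p(E)$. Both arguments encode the one piece of data that matters — the Kummer class of the Tate parameter $q_{E,\eta}$, which is the extension class of $0\to\Z_p(1)\to T_p(E)\to\Z_p\to 0$ — and then take invariants: you do it by computing $E[p^\infty]^{I_{\eta,\mathrm{ur}}}$ explicitly from the Tate parametrization and then passing to $\Frob_\eta$-invariants; the paper does it by writing down $2\times 2$ matrices for a generator $t_p$ of the pro-$p$ inertia and a lift $\widetilde\Frob_\eta$ of Frobenius. These are two bookkeeping devices for the same computation.

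There is, however, a gap at the step where you claim the sequence $0\to\mu_{p^\infty}\to E[p^\infty]^{I_{\eta,\mathrm{ur}}}\to\Z/p^m\Z\to 0$ splits $\Frob_\eta$-equivariantly. You exhibit a $\Frob_\eta$-fixed preimage $\zeta'q^{1/p^m}$ of the generator, but to define a module section $\Z/p^m\Z\to E[p^\infty]^{I_{\eta,\mathrm{ur}}}$ that preimage must also have order dividing $p^m$, i.e.\ one needs $\zeta'\in\mu_{p^m}$. Since $\zeta'$ is obtained by dividing in the full divisible group $\mu_{p^\infty}$, its order is not controlled. Writing $q_{E,\eta}=u\pi_\eta^v$ with $u\in\mathcal{O}_{K_\eta}^\times$, $m=v_p(v)$, $s=v_p(\chi_p(\Frob_\eta)-1)$, one checks that a $\Frob_\eta$-fixed preimage of order at most $p^m$ exists if and only if the residue $\bar u$ lies in $(k_\eta^\times)^{p^{\min(m,s)}}$, which can fail as soon as $m\ge1$ and $s\ge1$. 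The paper's proof contains the same gap: the assertion that $\widetilde\Frob_\eta$ can be chosen to act diagonally is equivalent to this same condition, since it amounts to trivializing the Kummer class of $q_{E,\eta}$ on $\langle\widetilde\Frob_\eta\rangle$, whose obstruction is again the image of $\bar u$ in $k_\eta^\times/(k_\eta^\times)^{p^{\min(m,s)}}$.

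The good news is that the conclusion that actually gets used downstream survives. Even without a splitting, your argument still yields the exact sequence
\begin{align*}
0 \rightarrow \frac{\Iwalg}{\left(\mathrm{ord}_{\pi_\eta}(q_{E,\eta}),\ \widetilde{\kappa}^{-1}(\Frob_\eta)-1\right)} \rightarrow H^0\left(\Gal{\overline{\Q}_l}{K_\eta}, D_{E,\widetilde{\kappa}^{-1}}\right)^\vee \rightarrow \frac{\Iwalg}{\left(\chi_p\widetilde{\kappa}^{-1}(\Frob_\eta)-1\right)} \rightarrow 0.
\end{align*}
The right-hand module is the quotient of the regular local ring $\Iwalg$ by a principal nonzero ideal, hence is either zero or Cohen--Macaulay of dimension $\dim\Iwalg-1$; in either case it has no embedded primes and therefore no nonzero pseudo-null submodule. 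Consequently the maximal pseudo-null submodule of the middle term is exactly the left-hand term, which is the ``Consequently'' part of the proposition and is all that feeds into the $c_2$-computation. So your proof establishes everything that is actually needed, but the isomorphism in the first display of the proposition should really read ``is an extension of $\Iwalg/(\chi_p\widetilde{\kappa}^{-1}(\Frob_\eta)-1)$ by $\Iwalg/(\mathrm{ord}_{\pi_\eta}(q_{E,\eta}),\widetilde{\kappa}^{-1}(\Frob_\eta)-1)$'' unless one also verifies that $\bar u\in(k_\eta^\times)^{p^{\min(m,s)}}$.
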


\begin{remark} Before proving Proposition \ref{fudge-split}, we make the following observations:
  \begin{itemize}
    \item If  $K_\eta/ \Q_l$ is an unramified quadratic extension, then $\chi_p(\Frob_\eta) = l^2$. Otherwise, $\chi_p(\Frob_\eta)~=~l$.
    \item $p$ does not divide the element $\widetilde{\kappa}^{-1}(\Frob_\eta)-1$.
    \item Proposition \ref{fudge-split} lets us conclude that if the elliptic curve $E$ has split multplicative reduction at $\eta$, then the invariant $c_2\left(\bigg(H^0\left(\Gal{\overline{\Q}_l}{K_\eta},D_{E,\widetilde{\kappa}^{-1}}\right)^\vee\bigg)_{\pn}\right)$ is non-trivial if and only if $p$ divides $\mathrm{ord}_{\pi_\eta}(q_{E,\eta})$.
  \end{itemize}
\end{remark}

\begin{proof}[Proof of Proposition \ref{fudge-split}]
  Since $E$ has split multiplicative reduction at $\eta$, there exists an element $q_{E,\eta} \in K_\eta^\times$ (usually called the Tate parameter) that gives us the following isomorphism $E(\overline{\Q}_l) \cong \frac{\overline{\Q}_l^\times}{q_{E,\eta}}$. We have the following short exact sequence of $\Z_p$-modules that is $\Gal{\overline{\Q}_l}{K_\eta}$-equivariant:
  \begin{align} \label{eq:ses-split}
    0 \rightarrow \Z_p(\chi_p) \rightarrow T_p(E) \rightarrow \Z_p \rightarrow 0.
  \end{align}
  The action of $\Gal{\overline{\Q}_l}{K_\eta}$ on $T_p(E)$ factors through $\Gal{K^{ur}_{\eta}(q_{E,\eta}^{\frac{1}{p^\infty}})}{K_{\eta}}$, which fits into the following short exact sequence:
  \begin{align*}
    1 \rightarrow   \underbrace{\Gal{K^{ur}_{\eta}(q_{E,\eta}^{\frac{1}{p^\infty}})}{K^{ur}_{\eta}}}_{\cong \Z_p} \rightarrow \Gal{K^{ur}_{\eta}(q_{E,\eta}^{\frac{1}{p^\infty}})}{K_{\eta}}\rightarrow \underbrace{\Gal{K^{ur}_{\eta}}{K_\eta}}_{\cong \hat{\Z}} \rightarrow 1.
  \end{align*}
  Let $t_p$ denote a topological generator of $\Gal{K^{ur}_{\eta}(q_{E,\eta}^{\frac{1}{p^\infty}})}{K^{ur}_{\eta}}$. The action of $t_p$ on $T_p(E)$ can be described by a $2 \times 2$ matrix $\left[\begin{array}{cc} 1 & a_{t_p} \\ 0 & 1\end{array} \right]$ with values in $\Z_p$, where $a_{t_p}$ is an element in $\Z_p$ satisfying
  \begin{align*}
    a_{t_p} = \mathrm{ord}_{\pi_\eta}(q_{E,\eta})u, \text{ for some element $u \in \Z_p^\times$}.
  \end{align*}
  One can choose a lift $\widetilde{\Frob}_\eta$ of $\Frob_\eta$ in $\Gal{K^{ur}_{\eta}(q_{E,\eta}^{\frac{1}{p^\infty}})}{K_{\eta}}$ so that the action of $\widetilde{\Frob}_\eta$ on $T_p(E)$ is given by the  $2 \times 2$ matrix $\left[\begin{array}{cc} \chi_p(\Frob_\eta)  & 0 \\ 0 & 1 \end{array} \right]$.

The actions of $t_p$ and $\widetilde{\Frob}_\eta$ on $D_{E,\widetilde{\kappa}^{-1}}$ are given by the following $2 \times 2$ matrices with values in $\Iwalg$:
  \begin{align*}
    t_p \rightarrow \left[\begin{array}{cc} 1 & a_{t_p} \\ 0 & 1\end{array} \right], \qquad \widetilde{\Frob}_\eta \rightarrow \left[\begin{array}{cc} \chi_p\widetilde{\kappa}^{-1}(\Frob_\eta)  & 0 \\ 0 & \widetilde{\kappa}^{-1}(\Frob_\eta) \end{array} \right].
  \end{align*}
  Combining all the observations stated above, we have the following isomorphism of $\Iwalg$-modules:
  \begin{align*}
    H^0\left(\Gal{\overline{\Q}_l}{K_\eta},D_{E,\widetilde{\kappa}^{-1}}\right)^\vee \cong \frac{\Z_p[[\widetilde{\Gamma}]]}{\left(\chi_p\widetilde{\kappa}^{-1}(\Frob_\eta)-1\right)} \oplus \frac{\Iwalg}{\left(\mathrm{ord}_{\pi_\eta}(q_{E,\eta}), \ \widetilde{\kappa}^{-1}(\Frob_\eta)-1 \right)}.
  \end{align*}
  This completes the proof of the Proposition.
\end{proof}

\section{Numerical evidence towards the validity of \ref{gcd}} \label{section-examples}

In this section, we wish to provide some evidence towards the existence of elliptic curves $E$ with supersingular reduction at an odd prime $p$ such that the following two conditions hold:
\begin{enumerate}[style=sameline, align=left, label=(\thesection\roman*), ref=(\thesection\roman*)]
  \item\label{cond:gcd} \ref{gcd} holds for the pair $\{\tDN42{++}, \tDN42{+-}\}$. That is, the elements $\tDN42{++}, \tDN42{+-}$ have no common irreducible factor in the UFD $\Z_p[[\widetilde{\Gamma}]]$.
  \item\label{cond:nontrivial} In $Z^2\left(\Z_p[[\widetilde{\Gamma}]]\right)$, we have $c_2 \left(\frac{\Z_p[[\widetilde{\Gamma}]]}{(\tDN42{++}, \tDN42{+-})}\right) \neq 0$.
\end{enumerate}
Let $E_K$ denote the elliptic curve corresponding to the quadratic twist of $E$ by the quadratic character $\epsilon_K: \Gal{\overline{\Q}}{\Q} \twoheadrightarrow \Gal{K}{\Q} \rightarrow \{\pm 1\}$. Since $p$ splits in the imaginary quadratic field $K$, the elliptic curve $E_K$ also has good supersingular reduction at $p$ with $a_p(E_K)=0$. {In all the examples we consider, we also indicate how to unconditionally verify Conjecture~\ref{coates-sujatha-conj}.

\subsection{Kobayashi's $\pm$ Selmer groups and Pollack's $p$-adic $L$-function} \label{S:kobayashi}

We will recall the description of the $p$-adic $L$-functions $\theta^{\pm}_E$ and the Selmer groups $\Sel^{\pm}(\Q,D_{\rhoDN21})$  associated to the elliptic curve $E$. The $p$-adic $L$-functions $\theta^{\pm}_{E_K}$ and the Selmer groups $\Sel^{\pm}(\Q,D_{\rhoDN21(\epsilon_K)})$ associated to the elliptic curve $E_K$ are defined similarly.

Let $\rhoDN21:G_\Sigma\rightarrow \GL_2(\Z_p[[\Gamma_{\Cyc}]])$ denote the Galois representation given by the action of $G_\Sigma$ on the following free $\Z_p[[\Gamma_{\Cyc}]]$-module of rank $2$:
\[
  T_{\rhoDN21}:=T_p(E)\otimes_{\Z_p} \Z_p[[\Gamma_{\Cyc}]](\kappa_{\Cyc}^{-1}).
\]
We will also need to consider the following discrete $\Z_p[[\Gamma_{\Cyc}]]$-module:
\[
  D_{\rhoDN{2}{1}}:=T_{\rhoDN21}\otimes_{\Z_p[[\Gamma_\Cyc]]}\Hom_{\mathrm{cont}}\left(\Z_p[[\Gamma_\Cyc]],\Q_p/\Z_p\right).
\]

We will recall Kobayashi's construction of $\pm$ Selmer groups over the cyclotomic $\Z_p$-extension of $\Q$ in \cite{kobayashi}. We have the following local conditions at $p$:
\begin{align*}
  E^+(\Q_p(\mu_{p^n})) & =\left\{P\in \hat E(\Q_p(\mu_{p^n})):\Tr_{n/m+1}P\in \hat E(\Q_p(\mu_{p^m})), \text{ for   odd } m<n\right\};  \\
  E^-(\Q_p(\mu_{p^n})) & =\left\{P\in \hat E(\Q_p(\mu_{p^n})):\Tr_{n/m+1}P\in \hat E(\Q_p(\mu_{p^m})), \text{ for   even } m<n\right\}.
\end{align*}
Here, $\Tr_{n/m+1}: \hat E(\Q_p(\mu_{p^n}))\rightarrow  \hat E(\Q_p(\mu_{p^{m+1}}))$ denotes the trace map. We set
\[
  E^\pm(\Q_{p,\Cyc})=\left(\cup_{n\ge1} E^\pm(\Q_p(\mu_{p^n}))\right)^{\Gal{\Q_p (\mu_{p^\infty})}{\Q_{p,\Cyc}}},
\]
and $\Loc_\pm(\Q_p,D_{\rhoDN{2}{1}})\subset H^1(\Q_p,D_{\rhoDN{2}{1}})$ is defined to be the image of $E^\pm(\Q_{p,\Cyc})\otimes \Q_p/\Z_p$ under the Kummer map. The $\pm$ Selmer groups are then given by
\[
  \Sel^\pm(\Q,D_{{\rhoDN{2}{1}}})=\ker\left(H^1\left(G_\Sigma, D_{\rhoDN{2}{1}}\right)\rightarrow \frac{H^1(\Q_p,D_{\rhoDN{2}{1}})}{\Loc_\pm(\Q_p,D_{\rhoDN{2}{1}})} \oplus  \bigoplus_{l \in \Sigma \setminus \{p\}} H^1\left(\Q_l,D_{\rhoDN{2}{1}}\right)\right).
\]
The Pontryagin dual of the $\pm$ Selmer groups are torsion modules over the ring $\Z_p[[\Gamma_{\Cyc}]]$. See \cite[Theorem~1.2]{kobayashi}.\\

\begin{remark}
  Note that our choice of signs $+$ and $-$ is opposite to the the one used by Kobayashi \cite{kobayashi}. The Selmer group that we have denoted $\Sel^+(\Q,D_{{\rhoDN{2}{1}}})$ (and $\Sel^-(\Q,D_{{\rhoDN{2}{1}}})$ respectively) corresponds to Kobayashi's Selmer group in \cite{kobayashi} with the minus sign (and plus sign respectively).
\end{remark}

An important point to note is that since $p$ splits in $K$, the $\Gal{\overline{\Q}_p}{\Q_p}$-modules $\Res^\Q_K \left(D_{\rhoDN{2}{1}}\right)$, $D_{\rhoDN{2}{1}}$ and $D_{\rhoDN{2}{1}(\epsilon_K)}$ are isomorphic. This allows us to make the following identifications:
\begin{align} \label{eq:identifylocal}
H^1\left(\Q_p,\Res^\Q_K \left(D_{\rhoDN{2}{1}}\right)\right) \cong H^1\left(\Q_p,D_{\rhoDN{2}{1}}\right) \cong H^1\left(\Q_p,D_{\rhoDN{2}{1}(\epsilon_K)}\right).
\end{align}

Let $\bullet, \circ \in \{+,-\}$. Since the local Selmer conditions at $p$ only depend on the isomorphism class of the elliptic curve over $\Q_p$, under the identification given in equation (\ref{eq:identifylocal}), we have the following identifications:
\begin{align*}
\Loc_\bullet\left(\Q_p, \Res^\Q_K \left(D_{\rhoDN{2}{1}}\right)\right) \cong \Loc_\bullet(\Q_p,D_{\rhoDN{2}{1}}) \cong \Loc_\bullet(\Q_p,D_{\rhoDN{2}{1}(\epsilon_K)}), \\ \Loc_\circ\left(\Q_p, \Res^\Q_K \left(D_{\rhoDN{2}{1}}\right)\right) \cong \Loc_\circ(\Q_p,D_{\rhoDN{2}{1}}) \cong \Loc_\circ(\Q_p,D_{\rhoDN{2}{1}(\epsilon_K)}).
\end{align*}

By \cite[Theorem 6.2]{kobayashi}, the $\Z_p[[\Gamma_\Cyc]]$-modules $\left(\frac{H^1\left(\Q_p,D_{\rhoDN{2}{1}} \right)}{\Loc_{\bullet}\left(\Q_p,D_{\rhoDN{2}{1}}\right)}\right)^\vee$ and $\left(\frac{H^1\left(\Q_p,D_{\rhoDN{2}{1}(\epsilon_K)}\right)}{\Loc_{\circ}\left(\Q_p,D_{\rhoDN{2}{1}(\epsilon_K)}\right)}\right)^\vee$ are free of rank one. Let us choose generators $b^\bullet$ and $b^\circ$ respectively for these free modules inside $H^1\left(\Q_p,D_{\rhoDN{2}{1}}\right)^\vee$ (or equivalently inside $H^1\left(\Q_p,D_{\rhoDN{2}{1}(\epsilon_K)}\right)^\vee$). We also have the following short exact sequences of $\Z_p[[\Gamma_\Cyc]]$-modules:
 \begin{align} \label{eq:ses-1variable}
 & 0 \rightarrow \left(\frac{H^1\left(\Q_p,D_{\rhoDN{2}{1}} \right)}{\Loc_{\bullet}\left(\Q_p,D_{\rhoDN{2}{1}}\right)}\right)^\vee \xrightarrow {\mathrm{rest}_E} H^1\left(G_\Sigma,D_{\rhoDN{2}{1}} \right)^\vee \rightarrow \Sel^\bullet(\Q,D_{\rhoDN{2}{1}})^\vee \rightarrow 0, \\ \notag
& 0 \rightarrow \left(\frac{H^1\left(\Q_p,D_{\rhoDN{2}{1}(\epsilon_K)} \right)}{\Loc_{\circ}\left(\Q_p,D_{\rhoDN{2}{1}(\epsilon_K)}\right)}\right)^\vee \xrightarrow {\mathrm{rest}_{E_K}} H^1\left(G_\Sigma,D_{\rhoDN{2}{1}(\epsilon_K)} \right)^\vee \rightarrow \Sel^\circ(\Q,D_{\rhoDN{2}{1}(\epsilon_K)})^\vee \rightarrow 0.
\end{align}
As mentioned above, the Pontryagin duals of the Selmer groups appearing in equation (\ref{eq:ses-1variable}) are  torsion (\cite[Theorem 1.2]{kobayashi}). Consequently, the $\Z_p[[\Gamma_\Cyc]]$-modules $H^1\left(G_\Sigma,D_{\rhoDN{2}{1}} \right)^\vee$ and $H^1\left(G_\Sigma,D_{\rhoDN{2}{1}(\epsilon_K)} \right)^\vee$ must have rank one. We can thus identify their reflexive hulls with the ring $\Z_p[[\Gamma_\Cyc]]$.  Moreoever, the elements ${\mathrm{rest}}_E(b^\bullet)$ and ${\mathrm{rest}}_{E_K}(b^\circ)$ must be  torsion-free over $\Z_p[[\Gamma_\Cyc]]$. This in turn allows us to identify ${\mathrm{rest}}_E(b^\bullet)$ and ${\mathrm{rest}}_{E_K}(b^\circ)$ with elements of the ring $\Z_p[[\Gamma_\Cyc]]$. Let us denote these elements by $\tilde{\mathrm{rest}}_E(b^\bullet)$ and $\tilde{\mathrm{rest}}_{E_K}(b^\circ)$ respectively. One may similarly define $\tilde{\mathrm{rest}}_E(b^\circ)$ and $\tilde{\mathrm{rest}}_{E_K}(b^\bullet)$. Let us also fix two elements $c_E$ and $c_{E_K}$ in  $\Z_p[[\Gamma_\Cyc]]$ such that $\Div(c_E) = \Div\left(H^1_{\Sigma_0}\left(G_\Sigma,D_{\rhoDN{2}{1}} \right)^\vee_{\mathrm{tor}}\right)$ and $\Div(c_{E_K}) = \Div\left(H^1_{\Sigma_0}\left(G_\Sigma,D_{\rhoDN{2}{1}(\epsilon_K)} \right)^\vee_{\mathrm{tor}}\right)$. Let us define the following elements in $\Z_p[[\Gamma_\Cyc]]$:
\begin{align} \label{eq:intermdiv}
& \vartheta_E^\bullet :=  \tilde{\mathrm{rest}}_E(b^\bullet)  \times c_E, \quad && \vartheta_E^\circ :=  \tilde{\mathrm{rest}}_E(b^\circ)  \times c_E, \\ \notag &  \vartheta_{E_K}^\bullet := \tilde{\mathrm{rest}}_{E_K}(b^\bullet) \times c_{E_K}, \quad && \vartheta_{E_K}^\circ := \tilde{\mathrm{rest}}_{E_K}(b^\circ) \times c_{E_K}.
\end{align}

Combining these observations and using the structure theorem for modules over PIDs, we have the following equality of divisors in $\Z_p[[\Gamma_\Cyc]]$:

\begin{align*}
& \Div\left(\vartheta_E^\bullet\right) =  \Div\left(\Sel^\bullet(\Q,D_{\rhoDN{2}{1}})^\vee\right), \quad && \Div\left(\vartheta_E^\circ\right) =  \Div\left(\Sel^\circ(\Q,D_{\rhoDN{2}{1}})^\vee\right), \\ & \Div\left(\vartheta_{E_K}^\bullet\right) =  \Div\left(\Sel^\bullet(\Q,D_{\rhoDN{2}{1}(\epsilon_K)})^\vee\right), \quad &&
\Div\left(\vartheta_{E_K}^\circ\right) =  \Div\left(\Sel^\circ(\Q,D_{\rhoDN{2}{1}(\epsilon_K)})^\vee\right).
\end{align*}

\begin{remark}
Choosing a different generator for $\left(\frac{H^1\left(\Q_p,D_{\rhoDN{2}{1}} \right)}{\Loc_{\bullet}\left(\Q_p,D_{\rhoDN{2}{1}}\right)}\right)^\vee$ changes $\tilde{\mathrm{rest}}_{E}(b^\bullet)$ and $\tilde{\mathrm{rest}}_{E_K}(b^\bullet)$ by the same unit in the ring $\Z_p[[\Gamma_\Cyc]]$. One obtains a similar conclusion by replacing $\bullet$ with $\circ$. By choosing a different identification of the reflexive hull of $H^1\left(G_\Sigma,D_{\rhoDN{2}{1}} \right)^\vee$ with the ring $\Z_p[[\Gamma_\Cyc]]$, one modifies $\tilde{\mathrm{rest}}_{E}(b^\bullet)$ and $\tilde{\mathrm{rest}}_{E}(b^\circ)$ by the same unit in $\Z_p[[\Gamma_\Cyc]]$. One obtains a similar conclusion by replacing $E$ with $E_K$. We may also modify $c_E$ and $c_{E_K}$ by units in $\Z_p[[\Gamma_\Cyc]]$. However, for all these different choices, the ideal $\left(\vartheta_E^\bullet \vartheta^\circ_{E_K} + \vartheta_E^\circ \vartheta^\bullet_{E_K}\right)$ in $\Z_p[[\Gamma_\Cyc]]$ remains the same.
\end{remark}

Kobayashi's Selmer groups are related to Pollack's $\pm$ one-variable $p$-adic $L$-functions defined in \cite{pollack}. Let $L_\lambda$ denote the classical $p$-adic $L$-function of Amice-V\'elu \cite{amicevelu} and Vi\v sik \cite{visik}, where $\lambda$ is of one the two roots of the Hecke polynomial $X^2+p$. We may consider $L_\lambda$ as a power series in $\Qp(\sqrt{-p})[[\gamma-1]]$ (where $\gamma$ is a fixed topological generator of $\Gamma_\Cyc$) and a $\Qp(\sqrt{-p})$-valued distribution on $\Gamma_\Cyc$ interchangably via Amice's transform. There exist $\theta_E^\pm \in\Z_p[[\Gamma_{\Cyc}]] $ such that
\[
  L_\lambda=\log_p^+\theta_E^++\lambda \log_p^-\theta_E^-,
\]
where $\log_p^\pm$ is the power series defined in Pollack's work \cite{pollack} using cyclotomic polynomials in $\gamma$. See \cite{dionlei} for a description of the corresponding distributions under Amice transform.\\

The $p$-adic $L$-functions $\theta_E^\pm$ satisfy the following interpolation properties: Let $\psi$ be a non-trivial Dirichlet character of conductor $p^n$ on $\Gamma_\Cyc$, with $n$ odd. Then,
\begin{equation}\label{eq:interkob1}
  \theta_E^-(\psi)=(-1)^{\frac{n+1}{2}}\frac{p^{n}}{\tau_\Q(\psi^{-1}){\displaystyle \prod_{\substack{1 \leq k < n \\ k \text{ odd}}}}\Phi_{p^{k}}(\zeta)}\times\frac{L(E,\psi^{-1},1)}{\Omega_{E}^-},
\end{equation}
Similarly, if $\psi$ is a non-trivial Dirichlet character of conductor $p^n$ on $\Gamma_\Cyc$, with $n$ even, then
\begin{equation}\label{eq:interkob2}
  \theta_E^+(\psi)=(-1)^{\frac{n}{2}}\frac{p^{n}}{\tau_\Q(\psi^{-1}){\displaystyle \prod_{\substack{1 \leq k < n \\ k \text{ even }}}}\Phi_{p^{k}}(\zeta)}\times\frac{L(E,\psi^{-1},1)}{\Omega_{E}^+}.
\end{equation}
Here, $\tau_\Q(\psi^{-1})$ denotes the Gauss sum of $\psi^{-1}$,  $\Phi_{p^k}$ denotes the $p^k$-th cyclotomic polynomial, $\zeta$ denotes $\psi(\gamma)$  which is a primitive $p^{n-1}$-root of unity, the complex numbers $\Omega_E^\pm$ denote the real (and imaginary) N\'eron periods associated to the elliptic curve $E$.

Note that $\theta_E^\pm$ are uniquely determined by \eqref{eq:interkob1} and \eqref{eq:interkob2} respectively. As in the two-variable case (c.f. Remark~\ref{rk:choices}), choosing a different topological generator of $\Gamma_\Cyc$ changes the $p$-adic $L$-function by a factor $u$, for some unit $u$ in the ring $\Z_p[[\Gamma_\Cyc]]$.

We will consider the following Iwasawa main conjecture formulated by Kobayashi \cite{kobayashi}:

\begin{conjecture}\label{conj:kobayashi} Let $\bullet,\circ\in\{\pm\}$. We have the following equalities  in $Z^1\left(\Z_p[[\Gamma_\Cyc]])\right)$:
  \begin{align*}
    \Div\left(\vartheta_E^\bullet\right)              = \Div\left(\theta^{\bullet}_E\right),     \qquad \Div(\vartheta_{E_K}^\circ)              = \Div\left(\theta^{\circ}_E\right).
  \end{align*}
\end{conjecture}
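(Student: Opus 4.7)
The plan is to deduce each equality from two divisibilities in $Z^1(\Z_p[[\Gamma_\Cyc]])$. Since $E_K$ is a quadratic twist of $E$ also with good supersingular reduction at $p$ and $a_p(E_K)=0$, a single strategy applies to both curves; I describe it only for $E$, with the case of $E_K$ being parallel.

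For the divisibility $\Div(\vartheta_E^\bullet)\leq\Div(\theta_E^\bullet)$, the natural input is Kato's Euler system of zeta elements in $H^1_{\mathrm{Iw}}(\Q_\Cyc,T_p(E))$, whose image under Perrin-Riou's big logarithm is the classical (unbounded) Amice-V\'elu-Vi\v{s}ik distribution $L_\lambda$. I would apply Kobayashi's $\pm$-Coleman maps, which decompose the big logarithm along the $\pm$ local subspaces introduced in Section~\ref{S:kobayashi} and which, by construction, send $L_\lambda$ to $\theta_E^\pm$. The resulting $\pm$-zeta elements land in the orthogonal complement of $\Loc_\mp(\Q_p,D_{\rhoDN{2}{1}})$ under local Tate duality, hence lie in the correct $\pm$-Selmer structure. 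Feeding these classes into Rubin's Euler system machinery then bounds the characteristic ideal of $\Sel^{\bullet}(\Q,D_{\rhoDN21})^\vee$ above by $(\theta_E^\bullet)$.

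For the reverse divisibility $\Div(\theta_E^\bullet)\leq\Div(\vartheta_E^\bullet)$, I would appeal to the Eisenstein congruence method of Skinner-Urban extended to the supersingular regime by Wan \cite{wan}, which produces a divisibility of a two-variable $p$-adic $L$-function into the characteristic ideal of a dual Selmer group via congruences with families of Klingen-Eisenstein series on $\mathrm{U}(3,1)$. A cyclotomic specialization combined with the $\pm$-decomposition of the two-variable $p$-adic $L$-function then yields the reverse inequality in one variable. As an alternative input, one could instead invoke the Beilinson-Flach Euler system of Loeffler-Zerbes, suitably $\pm$-decomposed, in the spirit of \cite{BL,CCSS}.

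The main obstacle is verifying the various technical hypotheses (residual absolute irreducibility of $\overline{\rho}_E|_{G_K}$, conditions on the tame level of $f_E$ relative to the imaginary quadratic field $K$, and controlling the ramification of $\overline{\rho}_E$ at primes of bad reduction) required to invoke Wan's construction or the Beilinson-Flach approach unconditionally for an arbitrary triple $(E,K,p)$. For the specific elliptic curves examined numerically later in this section, these hypotheses should be verifiable by direct inspection; granting them, the combination of the two divisibilities for both $E$ and $E_K$ yields Conjecture~\ref{conj:kobayashi}.
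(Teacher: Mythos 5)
The statement you are trying to prove is labeled as a \emph{conjecture} in the paper, and the paper itself offers no proof of it. In Section~\ref{S:pollacktable} the authors instead use two pieces of \emph{partial} progress to make their numerical examples unconditional: (i) when $E$ has CM, Pollack--Rubin \cite{pollackrubin} proves Conjecture~\ref{conj:kobayashi} in full, and (ii) when $E$ does not have CM but the mod-$p$ representation is surjective, Kobayashi's Theorem 4.1 gives only the one-sided inequality $\Div(\vartheta^\pm_E)\le\Div(\theta^\pm_E)$ (equation (\ref{est-kobayashiinequality})). That inequality, combined with the data in Table~\ref{table:evidence} and Proposition~\ref{prop:cyclosp}, is enough for the pseudo-nullity application without ever needing the full equality.

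Your sketch of the upper bound is essentially Kobayashi's actual argument and is fine. The gap lies in the reverse inequality. You invoke Wan's Eisenstein-congruence work \cite{wan}, but that reference establishes a \emph{two-variable} divisibility over the imaginary quadratic field, bounding the $\bullet\circ$-Selmer group for the four-dimensional representation $\rhoDN{4}{2}$. Trying to descend this to a lower bound on the one-variable $\pm$-characteristic ideals of $\Sel^\bullet(\Q,D_{\rhoDN21})^\vee$ and $\Sel^\circ(\Q,D_{\rhoDN21(\epsilon_K)})^\vee$ separately requires exactly the factorization content of Proposition~\ref{prop:cyclosp} --- which the paper proves \emph{assuming} Conjectures \ref{kim-main-conjecture} and \ref{conj:kobayashi} --- so the argument becomes circular. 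In the mixed-sign case the cyclotomic specialization of the two-variable $p$-adic $L$-function is $\theta^+_E\theta^-_{E_K}+u\theta^-_E\theta^+_{E_K}$, not a product of individual one-variable $p$-adic $L$-functions, so even a clean two-variable divisibility would not directly isolate $\Div(\theta^\bullet_E)$. In the equal-sign case the projection is a product $\theta^\bullet_E\theta^\bullet_{E_K}$, which would at best yield a bound on the product of two Selmer ideals, not on each one. The honest conclusion here is the one the paper itself draws: in the CM examples invoke Pollack--Rubin, in the non-CM examples settle for Kobayashi's inequality (after verifying surjectivity numerically), and leave the full equality as a conjecture whose complete proof, in the supersingular non-CM setting, rests on the then-recently-announced work of Castella--\c{C}iperiani--Skinner--Sprung \cite{CCSS}.
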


\subsection{Outline of strategy}

We briefly outline our strategy to prove the existence of elliptic curves verifying \ref{cond:gcd} and \ref{cond:nontrivial}. Consider the one-variable $p$-adic $L$-functions $\theta^{\pm}_E$, $\theta_{E_K}^{\pm}$  associated to the elliptic curves $E$ and $E_K$ respectively and $\vartheta^{\pm}_E$, $\vartheta_{E_K}^{\pm}$ as defined in \eqref{eq:intermdiv}.  For any $\bullet,\circ\in\{\pm\}$, let $\vartheta^{\bullet\circ}_{\cmmnt{\pmb}{4,2}}$ be an element in $\Iwalg$ such that $\Div(\vartheta^{\bullet\circ}_{\cmmnt{\pmb}{4,2}})$ equals $\Div(\Sel^{\bullet\circ}\left(\Q,D_{\rhoDN{4}{2}} \right)^\vee)$ in $Z^1\left(\Iwalg\right)$.

 Under the  natural projection map (which one can view as the ``cyclotomic specialization'')
\begin{align*}
  \pi_{\Cyc}: \Z_p[[\widetilde{\Gamma}]] \rightarrow \Z_p[[\Gamma_\Cyc]],
\end{align*}
we prove the following equalities of ideals in $\Z_p[[\Gamma_\Cyc]]$:
\begin{align} \label{equality-alg-cyclosp1}
 \left(\pi_{\Cyc}(\vartheta^{++}_{\cmmnt{\pmb}{4,2}})\right) = \left(\vartheta_E^+ \vartheta^+_{E_K}\right), \quad  \left(\pi_{\Cyc}(\vartheta^{+-}_{\cmmnt{\pmb}{4,2}})\right) = \left(\vartheta_E^+ \vartheta^-_{E_K} + \vartheta_E^- \vartheta^+_{E_K}\right).
\end{align}
Equation (\ref{equality-alg-cyclosp1}) is proved in Section \ref{ssec:cyclotomicspecialization}. If we assume the validity of Conjectures \ref{kim-main-conjecture} and \ref{conj:kobayashi},  there exists a unit $u$ in $\Z_p[[\Gamma_\Cyc]]$ such that we have the following equalities of ideals in $\Z_p[[\Gamma_\Cyc]]$:
\begin{align}\label{equality-cyclosp1}
  \left(\pi_{\Cyc}(\tDN42{++})\right) = \left(\theta_E^+ \theta^+_{E_K}\right),\qquad  \left(\pi_{\Cyc}(\tDN42{+-})\right) = \left(\theta_E^+ \theta^-_{E_K} + u \theta_E^- \theta^+_{E_K}\right).
\end{align}
}

{
The table given in Section \ref{S:pollacktable} has examples of elliptic curves $E$, primes $p$ and imaginary quadratic fields $K$ satisying the following conditions:
\begin{enumerate}[style=sameline, align=left, label=(\thesection\alph*), ref=(\thesection\alph*)]
  \item\label{cond:a} $\theta_{E}^+$ is a unit in $\Z_p[[\Gamma_\Cyc]]$.
  \item\label{cond:b} $\theta_{E_K}^+$ and $\theta_{E_K}^-$ have no common irreducible factor in the UFD $\Z_p[[\Gamma_\Cyc]]$.
  \item\label{cond:c} $\theta_{E_K}^+$ and $\theta_{E_K}^-$ are not units in $\Z_p[[\Gamma_\Cyc]]$.
\end{enumerate}
Suppose, as indicated by condition \ref{cond:a}, that $\theta_{E}^+$ is a unit in $\Z_p[[\Gamma_\Cyc]]$. The ratio of the two values obtained by specializing the $p$-adic $L$-functions $\theta_{E}^+$ and $\theta_{E}^-$ at the trivial character is a $p$-adic unit (in fact, it is equal to $\frac{p-1}{2}$). See \cite[Equation 3.6]{kobayashi}.  As a result, $\theta_{E}^-$ is also a unit in $\Z_p[[\Gamma_\Cyc]]$. In this case, $\theta_E^+ \theta^+_{E_K}$ and $\theta_E^+ \theta^-_{E_K} + u\theta_E^- \theta^+_{E_K}$ have no common irreducible factor in $\Z_p[[\Gamma_\Cyc]]$ if and only if $\theta^+_{E_K}$ and $\theta^-_{E_K}$ have no common irreducible factor in $\Z_p[[\Gamma_\Cyc]]$.

Note that \ref{gcd} holds for the pair $\{\tDN42{++}, \tDN42{+-}\}$ if $\pi_{\Cyc}(\tDN42{++})$ and $\pi_\Cyc(\tDN42{+-})$ have no common irreducible factor in $\Z_p[[\Gamma_\Cyc]]$. Assume the validity of Conjectures \ref{kim-main-conjecture} and \ref{conj:kobayashi}. Conditions \ref{cond:a} and \ref{cond:b}, along with equation (\ref{equality-cyclosp1}), then let us deduce that \ref{gcd} holds for the pair $\{\tDN42{++}, \tDN42{+-}\}$. Condition \ref{cond:c} lets us deduce that $c_2 \left(\frac{\Z_p[[\widetilde{\Gamma}]]}{(\tDN42{++}, \tDN42{+-})}\right) \neq 0$. \\

For all the curves given in the table in Section \ref{S:pollacktable} , it will be possible to unconditionally verify Conjecture~\ref{coates-sujatha-conj}. When the elliptic curve has complex multiplication (CM), Pollack and Rubin \cite{pollackrubin} have proved Conjecture \ref{conj:kobayashi}. If $E$ and $E_K$ are  elliptic curves without CM  such that the $p$-adic Galois representation $\rho:G_\Q \rightarrow \Gl_2(\Z_p)$, given by the action of $G_\Q$ on the corresponding $p$-adic Tate modules is surjective\footnote{If the Galois representations given by the action of $G_\Q$ on the $p$-adic Tate modules are not known to be surjective, then Kobayashi establishes the inequality (\ref{est-kobayashiinequality}) in $Z^1\left(\Z_p[[\Gamma_\Cyc]]\left[\frac{1}{p}\right]\right)$.  When the elliptic curve does not have complex multplication, further progress under certain hypotheses towards Conjecture \ref{conj:kobayashi} has been made by Wan \cite{wan}.}
, Kobayashi (\cite[Theorem 4.1]{kobayashi}) has shown that we have the following inequality of divisors in $Z^1\left(\Z_p[[\Gamma_\Cyc]]\right)$:
\begin{align} \label{est-kobayashiinequality}
\Div\left(\vartheta^{\pm}_E\right) \leq  \Div\left(\theta^{\pm}_E\right), \quad  \Div\left(\vartheta^{\pm}_{E_K}\right) \leq  \Div\left(\theta^{\pm}_{E_K}\right).
\end{align}

We use Sage \cite{sagemath} to numerically establish that the Galois representations given by the action of $G_\Q$ on the $p$-adic Tate modules is surjective; hence in these examples the inequalities in equation (\ref{est-kobayashiinequality}) hold unconditionally. We can argue as we did earlier. Using conditions \ref{cond:a} and \ref{cond:b} along with equation (\ref{equality-alg-cyclosp1}), we can conclude that $\pi_{\Cyc}(\vartheta^{++}_{\cmmnt{\pmb}{4,2}})$ and $\pi_{\Cyc}(\vartheta^{+-}_{\cmmnt{\pmb}{4,2}})$ have no common irreducible factor in $\Z_p[[\Gamma_\Cyc]]$. As a result, $\vartheta^{++}_{\cmmnt{\pmb}{4,2}}$ and $\vartheta^{+-}_{\cmmnt{\pmb}{4,2}}$ must have no common irreducible factor in $\Z_p[[\widetilde{\Gamma}]]$ and we can thus  conclude that the $\Z_p[[\widetilde{\Gamma}]]$-module $\Sha^1\left(\Q,D_{\rhoDN{4}{2}}\right)^\vee$ is pseudo-null (see equations (\ref{eq:surjectionspn}) and (\ref{supp-intersect})).
}

\subsection{Projection to the cyclotomic line} \label{ssec:cyclotomicspecialization}

Consider the following ring homomorphism, induced by the natural surjection $\widetilde{\Gamma} \rightarrow \Gamma_\Cyc$:
\begin{align*}
  \pi_{\Cyc} : \Z_p[[\widetilde{\Gamma}]] \rightarrow \Z_p[[\Gamma_\Cyc]].
\end{align*}

\begin{proposition}\label{prop:cyclosp}
  Let $\bullet,\circ\in\{\pm\}$. Then, we have the following equality of ideals in $\Z_p[[\Gamma_\Cyc]]$:
{  \begin{align}\label{equationspecialization}
    \left(\pi_{\Cyc}(\vartheta^{\bullet\circ}_{\cmmnt{\pmb}{4,2}})\right) = \left(\vartheta_E^\bullet \vartheta^\circ_{E_K} + \vartheta_E^\circ \vartheta^\bullet_{E_K} \right).
  \end{align}
  As a result, if we assume the validity of Conjectures \ref{kim-main-conjecture} and \ref{conj:kobayashi}, there exists a unit $u$ in $\Z_p[[\Gamma_\Cyc]]$ such that we have the following equalities of ideals in $\Z_p[[\Gamma_\Cyc]]$:
  \begin{align*}
     \left(\pi_{\Cyc}(\theta^{++}_{\cmmnt{\pmb}{4,2}})\right) = \left(\theta_E^+ \theta^+_{E_K}\right), \ \ \left(\pi_{\Cyc}(\theta^{--}_{\cmmnt{\pmb}{4,2}})\right) = \left(\theta_E^- \theta^-_{E_K}\right), \ \ \   \left(\pi_{\Cyc}(\theta^{+-}_{\cmmnt{\pmb}{4,2}})\right) = \left(\pi_{\Cyc}(\theta^{-+}_{\cmmnt{\pmb}{4,2}})\right) = \left(\theta_E^+ \theta^-_{E_K} + u\theta_E^- \theta^+_{E_K} \right).
  \end{align*}

}
\end{proposition}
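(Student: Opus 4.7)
The plan is to compute $\pi_{\Cyc}(\vartheta^{\bullet\circ}_{\cmmnt{\pmb}{4,2}})$ by studying the cyclotomic specialization of the exact sequence defining $\Sel^{\bullet\circ}(\Q, D_{\rhoDN{4}{2}})^\vee$. The starting input is the splitting $\Ind_{G_K}^{G_\Q}(\Z_p) \cong \Z_p \oplus \Z_p(\epsilon_K)$ (valid since $p$ is odd and $\Gal(K/\Q)$ has order two), which yields an isomorphism of $G_\Sigma$-modules
\[
T_{\rhoDN{4}{2}} \otimes_{\pi_{\Cyc}} \Z_p[[\Gamma_{\Cyc}]] \cong T_{\rhoDN{2}{1}} \oplus T_{\rhoDN{2}{1}(\epsilon_K)},
\]
and dually $D_{\rhoDN{4}{2}}$ specializes to $D_{\rhoDN{2}{1}} \oplus D_{\rhoDN{2}{1}(\epsilon_K)}$. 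Using the analogue of \eqref{eq:ses-1variable} in the two-variable setting, the task reduces to identifying the image of the dual of the Selmer restriction map in $H^1(G_\Sigma, D_{\rhoDN{2}{1}})^\vee \oplus H^1(G_\Sigma, D_{\rhoDN{2}{1}(\epsilon_K)})^\vee$.

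The crux is comparing two descriptions of $H^1(\Q_p, D_{\rhoDN{4}{2}})$ after cyclotomic specialization. The decomposition \eqref{first-dec} specializes to $H^1(\Q_p, D_{\rhoDN{2}{1}})^{(\p)} \oplus H^1(\Q_p, D_{\rhoDN{2}{1}})^{(\q)}$ indexed by the two primes of $K$ above $p$, each summand being $H^1(\Q_p, D_{\rhoDN{2}{1}})$ in view of \eqref{eq:identifylocal}. The global $(E,E_K)$-decomposition instead gives $H^1(\Q_p, D_{\rhoDN{2}{1}}) \oplus H^1(\Q_p, D_{\rhoDN{2}{1}(\epsilon_K)})$. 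A Mackey computation (using $G_{\Q_p} \subset G_K$ since $p$ splits) shows the $E$-piece sits diagonally, the $E_K$-piece antidiagonally, inside the first description. Consequently, the specialization of the Selmer local condition $\Loc_{\bullet\circ}(\Q_p, D_{\rhoDN{4}{2}}) = \Loc_\bullet^{(\p)} \oplus \Loc_\circ^{(\q)}$, viewed in the $(E,E_K)$-basis, becomes the submodule $\{(\alpha,\beta):\alpha+\beta \in \Loc_\bullet,\ \alpha-\beta \in \Loc_\circ\}$.

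Putting these together, the cyclotomic specialization of the injection in the two-variable analogue of \eqref{eq:ses-1variable} is, modulo pseudo-null error, represented by an injection of free $\Z_p[[\Gamma_\Cyc]]$-modules of rank two whose matrix (after identifying the reflexive hull of $H^1(G_\Sigma, D_{\rhoDN{2}{1}})^\vee$ and of $H^1(G_\Sigma, D_{\rhoDN{2}{1}(\epsilon_K)})^\vee$ with $\Z_p[[\Gamma_\Cyc]]$, and sending the generators $b^\bullet$, $b^\circ$ into the source) takes the form
\[
\mathbf{M} = \begin{pmatrix} \widetilde{\mathrm{rest}}_E(b^\bullet) & \widetilde{\mathrm{rest}}_E(b^\circ) \\ \widetilde{\mathrm{rest}}_{E_K}(b^\bullet) & -\widetilde{\mathrm{rest}}_{E_K}(b^\circ) \end{pmatrix}.
\]
The minus sign on the second column originates from the antidiagonal embedding of the $E_K$-piece, while the identical entries for $\bullet=\circ$ reflect the diagonal embedding of the $E$-piece. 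Computing $\det\mathbf{M}$ up to a unit and multiplying by the torsion factors $c_E, c_{E_K}$ of \eqref{eq:intermdiv} delivers $\vartheta_E^\bullet \vartheta_{E_K}^\circ + \vartheta_E^\circ \vartheta_{E_K}^\bullet$ as a generator of the ideal $(\pi_{\Cyc}(\vartheta^{\bullet\circ}_{\cmmnt{\pmb}{4,2}}))$, proving \eqref{equationspecialization}. In the same-sign case $\bullet=\circ$ the determinant equals $-2\widetilde{\mathrm{rest}}_E(b^\bullet)\widetilde{\mathrm{rest}}_{E_K}(b^\bullet)$, which generates the same ideal as $\vartheta_E^\bullet \vartheta_{E_K}^\bullet$ since $2$ is a unit.

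The main obstacle is the control-theoretic step: one must verify that the specialization map on the Pontryagin duals of Selmer groups has pseudo-null kernel and cokernel, so that the divisor computed via $\det\mathbf{M}$ really equals $\mathrm{Div}(\pi_{\Cyc}(\vartheta^{\bullet\circ}_{\cmmnt{\pmb}{4,2}}))$. This should follow from the vanishing statements of Proposition \ref{prop:residually-irred} together with their analogues for $D_{\rhoDN{2}{1}}$ and $D_{\rhoDN{2}{1}(\epsilon_K)}$, the freeness supplied by Corollary \ref{cor:freeness-four-two} and \cite[Theorem~6.2]{kobayashi}, and the fact that localization commutes with taking characteristic divisors at every height-one prime of $\Z_p[[\widetilde{\Gamma}]]$ containing $\ker(\pi_\Cyc)$. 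The second assertion of the proposition is then a direct substitution: invoking Conjectures \ref{kim-main-conjecture} and \ref{conj:kobayashi} to replace each $\vartheta$ by the corresponding analytic $\theta$ up to a unit $u \in \Z_p[[\Gamma_\Cyc]]^\times$ in \eqref{equationspecialization} yields the displayed ideal equalities.
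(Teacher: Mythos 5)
Your proposal is correct and follows essentially the same route as the paper: specialize to the cyclotomic line, decompose $D_{\rhoDN{4}{2}}$ into the $E$ and $E_K$ pieces, track the local Selmer condition at $p$ through the diagonal/anti-diagonal coordinate change $(x,0)\mapsto(x,x)$, $(0,\tilde\delta\otimes y)\mapsto(y,-y)$, and read off the ideal from the determinant of a $2\times 2$ matrix (yours is the transpose of the paper's, with the same determinant). The only place you are lighter than the paper is the control-theoretic step: the paper explicitly invokes Kim's Proposition~2.11 of \cite{MR3224266} to identify the specialized local condition $\Loc_{\bullet}\bigl[\ker(\pi_\Cyc)\bigr]$ with the one-variable $\pm$ condition, and it separately handles the degenerate case in which $\coker(\mathrm{rest}_0\circ j_0)$ fails to be $\Z_p[[\Gamma_\Cyc]]$-torsion (when both sides of \eqref{equationspecialization} become the zero ideal).
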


\begin{proof}
  The equality of divisors\footnote{We adopt a convention that sets the divisor of a torsion-free module and $\Div(0)$ to equal ``infinity''.} $\Div\left(\vartheta^{\bullet\circ}_{\cmmnt{\pmb}{4,2}}\right) = \Div\left(\Sel^{\bullet\circ}\left(\Q,D_{\rhoDN{4}{2}} \right)^\vee \right)$ in $Z^1\left(\Z_p[[\widetilde{\Gamma}]]\right)$ lets us deduce the following equality of divisors in $Z^1\left(\Z_p[[\Gamma_\Cyc]]\right)$:
  \begin{align} \label{divspecialization}
    \Div\left(\pi_{\Cyc}\left(\vartheta^{\bullet\circ}_{\cmmnt{\pmb}{4,2}} \right)\right) = \Div\left(\Sel^{\bullet\circ}\left(\Q,D_{\rhoDN{4}{2}} \right)^\vee \otimes_{\Z_p[[\widetilde{\Gamma}]]} \Z_p[[\Gamma_\Cyc]]\right).
  \end{align}
  Equation (\ref{divspecialization}) follows by applying Proposition 5.2 in \cite{palvannan2016algebraic} to the specialization map $\pi_{\Cyc} : \Z_p[[\widetilde{\Gamma}]] \rightarrow \Z_p[[\Gamma_\Cyc]]$ of regular local rings. The only hypotheses from Proposition 5.2 in \cite{palvannan2016algebraic} that needs to be verified is that the $\Z_p[[\widetilde{\Gamma}]]$-module $\Sel^{\bullet\circ}\left(\Q,D_{\rhoDN{4}{2}} \right)^\vee $ has no non-trivial pseudo-null submodules. This follows from Proposition 4.1.1 in Greenberg's work \cite{greenberg2014pseudonull}.

{
Let $\Sigma_0$ denote $\Sigma \setminus \{p\}$. Following \cite{MR2290593}, we define
\begin{align}
H^1_{\Sigma_0}(G_\Sigma,D_{\rhoDN{4}{2}}) :=\ker\left(H^1(G_\Sigma,D_{\rhoDN{4}{2}})\rightarrow\bigoplus_{\nu\in \Sigma_0}H^1(\Q_\nu,D_{\rhoDN{4}{2}})\right).
\end{align}
 One can define a discrete $\Z_p[[\Gamma_\Cyc]]$-module $D_{\pi_\Cyc \circ \rhoDN{4}{2}}$ associated to the Galois representation $\pi_\Cyc \circ \rhoDN{4}{2}$, similar to the definition of the discrete $\Iwalg$-module $D_{\rhoDN{4}{2}}$ associated to $\rhoDN{4}{2}$. We may similarly define $H^1_{\Sigma_0}(G_\Sigma,D_{\pi_\Cyc \circ \rhoDN{4}{2}}) $, $H^1_{\Sigma_0}(G_\Sigma,D_{\rhoDN{2}{1}})$ and $H^1_{\Sigma_0}(G_\Sigma,D_{\rhoDN{2}{1}(\epsilon_K)})$.

Using Proposition 3.4 in \cite{MR2290593} along with \ref{locp0}, we have the following isomorphisms of $\Z_p[[\Gamma_\Cyc]]$-modules:
  \begin{align}\label{global-control}
    H^1\left(G_\Sigma,D_{\pi_\Cyc \circ \rhoDN{4}{2} }\right) \cong H^1\left(G_\Sigma,D_{\rhoDN{4}{2} }\right)[\ker(\pi_\Cyc)].
  \end{align}
  \begin{align}\label{local-control}
    H^1\left(\Q_p,D_{\pi_\Cyc \circ \rhoDN{4}{2} }\right) \cong H^1\left(\Q_p,D_{\rhoDN{4}{2} }\right)[\ker(\pi_\Cyc)].
  \end{align}

For $\ell \nmid p$, the cyclotomic $\Z_p$-extension $\Q_{l,\Cyc}$ is the unique $\Z_p$-extension of $\Q_l$. Consequently, the decomposition group of any prime of $K$ over $\ell$ in $\Gal{\tilde K_\infty }{K_\Cyc}$ is trivial. This lets us obtain the following isomorphism of $\Z_p[[\Gamma_\Cyc]]$-modules:
  \begin{align} \label{l-iso}
    H^1(\Q_\ell,D_{\pi_\Cyc \circ \rhoDN42})\cong H^1(\Q_\ell,\Drho) [\ker(\pi_\Cyc)].
  \end{align}
Equations (\ref{global-control}) and (\ref{l-iso}) lets us deduce the following isomorphism of $\Z_p[[\Gamma_\Cyc]]$-modules
\begin{align} \label{almost-global-control}
H^1_{\Sigma_0}\left(G_\Sigma,D_{\pi_\Cyc \circ \rhoDN{4}{2} }\right) \cong H^1_{\Sigma_0}\left(G_\Sigma,D_{\rhoDN{4}{2} }\right)[\ker(\pi_\Cyc)].
\end{align}

  The ring map $\pi_\Cyc$ induces the following decomposition of Galois representations:
  \begin{align}\label{eqGaloisrep}
    \pi_\Cyc \circ \rhoDN{4}{2} \cong \rhoDN{2}{1} \oplus \rhoDN{2}{1}(\epsilon_K).
  \end{align}

As a manifestation of the isomorphism in (\ref{eqGaloisrep}), we have the following isomorphism of $\Z_p[[\Gamma_\Cyc]]$-modules:
  \begin{align}\label{eq:globalisomanifestation}
    H^1\left(G_\Sigma,D_{\pi_\Cyc \circ \rhoDN{4}{2} }\right)    & \cong H^1\left(G_\Sigma,D_{\rhoDN{2}{1}} \right) \oplus H^1\left(G_\Sigma,D_{\rhoDN{2}{1}(\epsilon_K)} \right),                                      \\
    \notag H^1\left(\Q_p,D_{\pi_\Cyc \circ \rhoDN{4}{2} }\right) & \cong H^1\left(\Q_p,D_{\rhoDN{2}{1}} \right) \oplus H^1\left(\Q_p,D_{\rhoDN{2}{1}(\epsilon_K)} \right),                                               \\
    \notag
    H^1\left(\Q_l,D_{\pi_\Cyc \circ \rhoDN{4}{2} }\right)        & \cong H^1\left(\Q_l,D_{\rhoDN{2}{1}} \right) \oplus H^1\left(\Q_l,D_{\rhoDN{2}{1}(\epsilon_K)} \right), \ \  \forall \ l \in \Sigma \setminus \{p\}.
  \end{align}

  These observations, along with equation (\ref{almost-global-control}), lets us deduce the following isomorphism of $\Z_p[[\Gamma_\Cyc]]$-modules:
  \begin{align}
H^1_{\Sigma_0}\left(G_\Sigma,D_{ \rhoDN{4}{2} }\right)[\ker(\pi_\Cyc)]    & \cong H^1_{\Sigma_0}\left(G_\Sigma,D_{\rhoDN{2}{1}} \right) \oplus H^1_{\Sigma_0}\left(G_\Sigma,D_{\rhoDN{2}{1}(\epsilon_K)} \right).
\end{align}

The discrete module $D_{\pi_\Cyc \circ \rhoDN{4}{2}}$ can be identified with $\Res^\Q_K \left(D_{\rhoDN{2}{1}}\right) \oplus \tilde{\delta} \otimes \Res^\Q_K \left(D_{\rhoDN{2}{1}}\right)$. Here,  $\tilde{\delta}$ denotes a  complex conjugation, which is an element of order $2$ in $G_\Sigma$. We can also describe the Galois action on $\Res^\Q_K \left(D_{\rhoDN{2}{1}}\right) \oplus \tilde{\delta} \otimes \Res^\Q_K \left(D_{\rhoDN{2}{1}}\right)$:
\begin{align*}
\tilde{\delta} \cdot (x, \tilde{\delta} \otimes y) = (y, \tilde{\delta} \otimes x), \qquad h \cdot (x, \tilde{\delta} \otimes y) = \left(hx, \tilde{\delta} \otimes \left(\tilde{\delta} h \tilde{\delta} y\right)\right), \ \forall \ h \in \Gal{\Q_\Sigma}{K}.
\end{align*}
We can identify $D_{\rhoDN{2}{1}(\epsilon_K)}$ with $D_{\rhoDN{2}{1}}(\epsilon_K)$. It will be helpful to make the isomorphism in equation (\ref{eqGaloisrep}) explicit.
 \begin{align} \label{eq:identify}
\Res^\Q_K \left(D_{\rhoDN{2}{1}}\right) \oplus \tilde{\delta} \otimes \Res^\Q_K \left(D_{\rhoDN{2}{1}}\right) & \cong D_{\rhoDN{2}{1}} \oplus D_{\rhoDN{2}{1}(\epsilon_K)}, \\ \notag
(x,0) & \rightarrow (x,x), \\ \notag
(0,\tilde{\delta} \otimes y) & \rightarrow (y,-y).
\end{align}

Throughout the proof, we will keep the identifications in equation (\ref{eq:identifylocal}) in mind. We now proceed to establish control for the local condition at $p$. Following section \ref{S:plusminus}, the discrete module $H^1\left(\Q_p,D_{\rhoDN{4}{2} }\right)$ can be identified with $\bigoplus_{\P|\p}H^1\left((\tilde K_\infty)_\P,E[p^\infty]\right) \oplus \bigoplus_{\QQ|\q}H^1\left((\tilde K_\infty)_\QQ,E[p^\infty]\right)$, whereas  $ H^1\left(\Q_p,D_{\pi_\Cyc \circ \rhoDN{4}{2} }\right)$ can be identified with $H^1\left(\Q_p, \Res^\Q_K \left(D_{\rhoDN{2}{1}}\right)\right)\oplus \tilde{\delta}\otimes H^1\left(\Q_p, \Res^\Q_K \left(D_{\rhoDN{2}{1}}\right)\right) $. We have natural injections of $\Z_p[[\Gamma_\Cyc]]$-modules:
  \begin{align} \label{eq:naturalmorphisms}
    \Loc_\bullet\left(\Qp,\Res^\Q_K \left(D_{\rhoDN{2}{1}}\right)\right) & \hookrightarrow \left(\bigoplus_{\P \mid \p} \Loc_\bullet\left((\tilde K_\infty)_\P,E[p^\infty]\right)\right) [\ker(\pi_\Cyc)], \\ \notag \tilde{\delta} \otimes \Loc_\circ\left(\Qp, \Res^\Q_K \left(D_{\rhoDN{2}{1}}\right)\right) & \hookrightarrow  \left(\bigoplus_{\QQ \mid \q} \Loc_\circ \left((\tilde K_\infty)_\QQ,E[p^\infty]\right)\right) [\ker(\pi_\Cyc)].
  \end{align}

  On the one hand, Proposition 2.11  in Kim's work \cite{MR3224266} lets us deduce that the Pontryagin duals of the $\Z_p[[\Gamma_\Cyc]]$-modules
  \begin{align}\label{localpcontrolI}
    \left(\bigoplus_{\P \mid \p} \Loc_\bullet\left((\tilde K_\infty)_\P,E[p^\infty]\right)\right) [\ker(\pi_\Cyc)], \quad \left(\bigoplus_{\QQ \mid \q} \Loc_\circ \left((\tilde K_\infty)_\QQ,E[p^\infty]\right)\right) [\ker(\pi_\Cyc)],
  \end{align}
  are free of rank one.
  On the other hand, Theorems 2.7 and 2.8 in Kim's work \cite{MR3224266} lets us deduce that the Pontryagin dual of the $\Z_p[[\Gamma_\Cyc]]$-modules
  \begin{align*}
    \Loc_\bullet\left(\Qp,\Res^\Q_K \left(D_{\rhoDN{2}{1}}\right)\right), \qquad   \tilde{\delta}\otimes\Loc_\circ\left(\Qp,\Res^\Q_K \left(D_{\rhoDN{2}{1}}\right)\right),
  \end{align*}
  are also free of rank one. Consequently, the maps in equation (\ref{eq:naturalmorphisms}) must be isomorphisms.

  Recall that we have a short exact sequence $$0 \rightarrow \left(\frac{H^1\left(\Q_p,D_{\rhoDN42}\right)}{\Loc_{\bullet\circ}\left(\Q_p,D_{\rhoDN42}\right)}\right)^\vee \rightarrow H^1\left(\Q_p,D_{\rhoDN42}\right)^\vee \rightarrow \Loc_{\bullet\circ}\left(\Q_p,D_{\rhoDN42}\right)^\vee \rightarrow 0$$ of free $\Iwalg$-modules. Taking the tensor product $\otimes_{\Iwalg} \frac{\Iwalg}{\ker(\pi_\Cyc)}$ and then considering the Pontryagin dual, we obtain the following isomorphism of discrete $\Z_p[[\Gamma_\Cyc]]$-modules:
  \begin{align} \label{pControlIII}
    \frac{H^1\left(\Q_p,D_{\rhoDN42}\right)}{\Loc_{\bullet\circ}\left(\Q_p,D_{\rhoDN42}\right)}[\ker(\pi_\Cyc)] \cong \frac{H^1\left(\Q_p,D_{\rhoDN42}\right)[\ker(\pi_\Cyc)]}{\Loc_{\bullet\circ}\left(\Q_p,D_{\rhoDN42}\right)[\ker(\pi_\Cyc)]}.
  \end{align}

}

{

We have the following commutative diagram

  \begin{tikzpicture}[auto]
    \node (H1locquotP) {$\left(\frac{H^1\left(\Q_p, \Res^\Q_K \left(D_{\rhoDN{2}{1}}\right)\right)}{\Loc_\bullet\left(\Q_p, \Res^\Q_K \left(D_{\rhoDN{2}{1}}\right)\right)}\right)^\vee$};

  \node (H1locquotQ) [right of =H1locquotP, node distance = 4.8cm]{$\tilde{\delta} \otimes \left(\frac{H^1\left(\Q_p, \Res^\Q_K \left(D_{\rhoDN{2}{1}}\right)\right)}{\Loc_\circ\left(\Q_p, \Res^\Q_K \left(D_{\rhoDN{2}{1}}\right)\right)}\right)^\vee$};

\node(oplus1left)[right of = H1locquotP, node distance=2.2cm]{$\oplus$};

    \node(H1p) [below of =H1locquotP, node distance = 2cm] {$H^1\left(\Q_p, \Res^\Q_K \left(D_{\rhoDN{2}{1}}\right)\right)^\vee$};

  \node(H1q) [below of =  H1locquotQ, node distance = 2cm] {$\tilde{\delta} \otimes H^1\left(\Q_p, \Res^\Q_K \left(D_{\rhoDN{2}{1}}\right)\right)^\vee$};

\node(oplus2left)[right of = H1p, node distance=2.2cm]{$\oplus$};

 \node(H1localE) [right of =H1q, node distance = 4.8cm] {$H^1\left(\Q_p,D_{\rhoDN21}\right)^\vee$};

 \node(H1localEK) [right of =H1localE, node distance = 3.4cm] {$H^1\left(\Q_p,D_{\rhoDN21(\epsilon_K)}\right)^\vee$};

 \node(oplus3left)[right of = H1localE, node distance=1.5cm]{$\oplus$};

  \node(H1global) [below of =oplus2left, node distance = 2cm] {$\bigg(H^1_{\Sigma_0}\left(G_\Sigma,D_{\pi_\Cyc \circ \rhoDN{4}{2} }\right)[\ker(\pi_\Cyc)]\bigg)^\vee$};

   \node(H1globalE) [below of =H1localE, node distance = 2cm] {$ H^1_{\Sigma_0}\left(G_\Sigma,D_{\rhoDN{2}{1}} \right)^\vee$};

   \node(oplus4left)[right of = H1globalE, node distance=1.5cm]{$\oplus$};

    \node(H1globalEK) [below of =H1localEK, node distance = 2cm] {$ H^1_{\Sigma_0}\left(G_\Sigma,D_{\rhoDN{2}{1}(\epsilon_K)} \right)^\vee$};

    \draw[dashed] (H1locquotP) to node  {} (H1p);
    \draw[dashed] (H1locquotQ) to node  {} (H1q);
     \draw[dashed] (H1localE)   to node[left] {$\mathrm{rest}_E$}  (H1globalE);
    \draw[dashed] (H1localEK) to node  {$\mathrm{rest}_{E_K}$} (H1globalEK);

    \draw[->] (oplus2left) to node  {$\mathrm{rest}$} (H1global);
     \draw[->] (oplus3left) to node  {$\mathrm{rest}_0$} (oplus4left);
     \draw[->] (oplus1left) to node  {$j$} (oplus2left);
     \draw[->] (H1global) to node  {$\cong$} (H1globalE);
     \draw[->] (H1q) to node  {$\cong$} (H1localE);
      \draw[->] (H1q) to node  {$\cong$} (H1localE);
     \draw[->] (H1locquotQ) to node  {$j_0$} (H1localEK);

           \end{tikzpicture}

Note that $\coker(\mathrm{rest} \circ j)$ is isomorphic to $\coker(\mathrm{rest}_0 \circ j_0)$. Analysing the maps on the left side, we have the following isomorphism of $\Z_p[[\Gamma_\Cyc]]$-modules:
\begin{align} \label{eq:cokernelsel}
\coker(\mathrm{rest} \circ j) \cong \Sel^{\bullet\circ}\left(\Q,D_{\rhoDN{4}{2}} \right)^\vee \otimes_{\Z_p[[\widetilde{\Gamma}]]} \Z_p[[\Gamma_\Cyc]].
\end{align}
Using equation (\ref{divspecialization}), we can deduce the following equality of divisors:
\begin{align} \label{eq:leftcokernel}
\Div\left(\coker(\mathrm{rest} \circ j) \right) =     \Div\left(\pi_{\Cyc}\left(\vartheta^{\bullet\circ}_{\cmmnt{\pmb}{4,2}} \right)\right).
\end{align}

Using the description given in equation (\ref{eq:identify}), we have
\begin{align*}
j_0(b^\bullet,0) = (b^\bullet, b^\bullet), \qquad j_0(0,\tilde{\delta}\otimes b^\circ) = (b^\circ, -b^\circ).
\end{align*}

As a $\Z_p[[\Gamma_\Cyc]]$-module, $\mathrm{Image}\left(\mathrm{rest}_0 \circ j_0\right)$ is generated by $(\mathrm{rest}_{E}(b^\bullet), \mathrm{rest}_{E_K}(b^\bullet))$ and $(\mathrm{rest}_E(b^\circ) , \mathrm{rest}_{E_K}(b^\circ))$. Suppose $\coker\left(\mathrm{rest}_0 \circ j_0\right)$ is a torsion $\Z_p[[\Gamma_\Cyc]]$-module. Note that $\mathrm{Image}\left(\mathrm{rest}_0 \circ j_0\right)$ must be a torsion-free $\Z_p[[\Gamma_\Cyc]]$-module. To see this, it is enough to observe that the domain of $\mathrm{rest}_0 \circ j_0$ is a free $\Z_p[[\Gamma_\Cyc]]$-module of rank two, whereas the codomain of the map is a $\Z_p[[\Gamma_\Cyc]]$-module of rank two. By localizing at every height one prime ideal of the ring $\Z_p[[\Gamma_\Cyc]]$ and using the structure theorem for PIDs, we can conclude that the divisor $\Div\left(\coker(\mathrm{rest}_0 \circ j_0)\right)$ is equal to
\begin{align*}
 \Div\left(\det\left[\begin{array}{cc} \tilde{\mathrm{rest}}_E(b^\bullet) & \tilde{\mathrm{rest}}_{E_K}(b^\bullet) \\  \tilde{\mathrm{rest}}_{E}(b^\circ) & -\tilde{\mathrm{rest}}_{E_K}(b^\circ) \end{array} \right]\right)   +  \Div\left(H^1_{\Sigma_0}\left(G_\Sigma,D_{\rhoDN{2}{1}}\right)^\vee_{\mathrm{tor}}\right) + \Div\left(H^1_{\Sigma_0}\left(G_\Sigma,D_{\rhoDN{2}{1}(\epsilon_K)} \right)^\vee_{\mathrm{tor}}\right).
\end{align*}
Combining the above observation with equation (\ref{eq:intermdiv}), we have
\begin{align} \label{eq:rightcokernel}
\Div\left(\coker(\mathrm{rest}_0 \circ j_0)\right) =  \Div\left(\vartheta_E^\bullet \vartheta^\circ_{E_K} + \vartheta_E^\circ \vartheta^\bullet_{E_K}\right).
\end{align}

Combining equations (\ref{eq:leftcokernel}) and (\ref{eq:rightcokernel}), we have the following equality of divisors in $\Z_p[[\Gamma_\Cyc]]$:
  \begin{align} \label{eq:finalequalitypropdiv}
   \Div\left(\pi_{\Cyc}(\vartheta^{\bullet\circ}_{\cmmnt{\pmb}{4,2}})\right) = \Div\left(\vartheta_E^\bullet \vartheta^\circ_{E_K} + \vartheta_E^\circ \vartheta^\bullet_{E_K}\right).
  \end{align}
If the $\Z_p[[\Gamma_\Cyc]]$-module $\coker\left(\mathrm{rest}_0 \circ j_0\right)$ is not torsion, then $\det\left[\begin{array}{cc} \tilde{\mathrm{rest}}_E(b^\bullet) & \tilde{\mathrm{rest}}_{E_K}(b^\bullet) \\  \tilde{\mathrm{rest}}_{E}(b^\circ) & -\tilde{\mathrm{rest}}_{E_K}(b^\circ) \end{array} \right]$ must equal zero. By equation (\ref{eq:cokernelsel}), $\Sel^{\bullet\circ}\left(\Q,D_{\rhoDN{4}{2}} \right)^\vee \otimes_{\Z_p[[\widetilde{\Gamma}]]} \Z_p[[\Gamma_\Cyc]]$ is also not torsion. Equality in equation (\ref{eq:finalequalitypropdiv}) still holds, where both terms now equal ``infinity''.  The proposition follows.
}
\end{proof}

\begin{remark} \label{rem:cyclotomicspecialization}
{ In the equal sign case, since $p$ is odd, Proposition \ref{prop:cyclosp} lets us deduce the equality of ideals in $\Z_p[[\Gamma_\Cyc]]$ given by $\left(\pi_{\Cyc}(\vartheta^{++}_{\cmmnt{\pmb}{4,2}})\right) = \left(\vartheta_E^+ \vartheta^+_{E_K}  \right)$ and $\left(\pi_{\Cyc}(\vartheta^{--}_{\cmmnt{\pmb}{4,2}})\right) = \left(\vartheta_E^- \vartheta^-_{E_K}  \right)$. By \cite[Theorem 1.2]{kobayashi}, the elements $\vartheta_E^\pm$ and $\vartheta^\pm_{E_K}$ are non-zero. Consequently, the $\Z_p[[\widetilde{\Gamma}]]$-modules $\Sel^{++}\left(\Q,D_{\rhoDN{4}{2}} \right)^\vee$ and $\Sel^{--}\left(\Q,D_{\rhoDN{4}{2}} \right)^\vee$ are torsion.} It is possible to deduce the following equality of ideals in $\Z_p[[\Gamma_\Cyc]]$:
  \begin{align*}
    \left(\pi_{\Cyc}(\tDN42{++})\right) = \left(\theta_E^+\theta^+_{E_K}\right), \qquad \left(\pi_{\Cyc}(\tDN42{--})\right) = \left(\theta_E^-\theta^-_{E_K}\right),
  \end{align*}
  without assuming the validity of Conjectures \ref{kim-main-conjecture} and  \ref{conj:kobayashi}. Consider a finite Galois character $\psi: \Gal{K_\Cyc}{K} \rightarrow \overline{\Q}_p^\times$ with conductor $(p^n)$, for some even positive integer $n$. Abusing notation, we also let $\psi: \Z_p[[\Gal{K_\Cyc}{K}]]\rightarrow \overline{\Q}_p$ denote the corresponding ring homomorphism. Recall that we chose $\gamma_\q$ to be the image of $\gamma_\p$ under the ring automorphism $\Z_p[[\widetilde{\Gamma}]] \rightarrow \Z_p[[\widetilde{\Gamma}]]$ induced by complex conjugation. We have $\psi(\gamma_\p)=\psi(\gamma_\q)$ whenever $\psi$ factors through $\Gal{K_\Cyc}{K}$. Using \eqref{eq:loeffler--}, we obtain the following equalities:
  \begin{align*}
    \psi(\pi_\Cyc(\tDN42{++})) & =\frac{p^{2n}}{\tau_K(\psi^{-1})}\times \frac{L(E/K,\psi^{-1},1)}{\Omega_{f_E}^+\Omega_{f_E}^-\left({\displaystyle {\displaystyle \prod_{\substack{1 \leq k < n \\ k \text{ even}}}}} \Phi_{p^k}(\zeta)\right)^2} \\ &=\frac{p^{2n}}{\tau_K(\psi^{-1})}\times \frac{L(E,\psi^{-1},1)L(E_K,\psi^{-1},1)}{\Omega_{f_E}^+\Omega_{f_E}^-\left({\displaystyle\prod_{\substack{1 \leq k < n \\ k \text{ even} }}}\Phi_{p^k}(\zeta)\right)^2}.
  \end{align*}

  We may identify $\Gal{K_\Cyc}{K}$ with $\Gamma_\Cyc$ and choose $\gamma_\Cyc:=(\gamma_\p\gamma_\q)^{1/2}$ to be the topological generator of $\Gamma_\Cyc$ (it is possible to take a square root, since $p$ is odd). Then, $\psi(\gamma_\p)=\psi(\gamma_\q)=\psi(\gamma_\Cyc)$ for all characters $\psi$ on $\Gal{K_\Cyc}{K}$. Observe that $\tau_K(\psi^{-1}) = \tau_\Q(\psi^{-1})^2$. On comparing the interpolation formula above with equation \eqref{eq:interkob2}, we deduce that the following equality in $\overline{\Q}_p$:
  \[
    \psi(\pi_\Cyc(\tDN42{++}))=\frac{\Omega_E^+}{\Omega_{f_{E}}^+}  \frac{\Omega_{E_K}^+}{\Omega_{f_{E_K}}^+}   \frac{\Omega_{f_{E_K}}^+}{\Omega_{f_E}^-}\times\psi(\theta_E^+\theta_{E_K}^{+}),
  \]
  Since this holds for infinitely many characters $\psi$, we have the following equality in $\Z_p[[\Gamma_\Cyc]]$:
  \[
    \pi_\Cyc(\tDN42{++})=\frac{\Omega_E^+}{\Omega_{f_{E}}^+}  \frac{\Omega_{E_K}^+}{\Omega_{f_{E_K}}^+}   \frac{\Omega_{f_{E_K}}^+}{\Omega_{f_E}^-}\times\theta_E^+\theta_{E_K}^{+}.
  \]
  The ratios $\frac{\Omega_E^+}{\Omega_{f_{E}}^+}$ and  $\frac{\Omega_{E_K}^+}{\Omega_{f_{E_K}}^+}$ are units in the localization $\Z_{(p)}$. See Remark 5.5 in Rob Pollack's work \cite{pollack}. The ratio $\frac{\Omega_{f_{E_K}}^+}{\Omega_{f_E}^-}$ is also a unit in the localization $\Z_{(p)}$.  See Skinner-Zhang's work \cite[Lemma~9.6]{SkinnerZhang}). Our claim now follows.

  By considering cyclotomic characters whose conductors are odd powers of $p$, we may deduce in the same way the following factorization in $\Z_p[[\Gamma_\Cyc]]$:
  \[
    \pi_\Cyc(\tDN42{--})=\frac{\Omega_E^-}{\Omega_{f_{E}}^-}  \frac{\Omega_{E_K}^-}{\Omega_{f_{E_K}}^-}   \frac{\Omega_{f_{E_K}}^-}{\Omega_{f_E}^+}\times\theta_E^-\theta_{E_K}^{-}.
  \]

  {\begin{remark} In the mixed sign case, Proposition \ref{prop:cyclosp} lets us deduce the equality of ideals in $\Z_p[[\Gamma_\Cyc]]$ given by $\left(\pi_{\Cyc}(\vartheta^{+-}_{\cmmnt{\pmb}{4,2}})\right) = \left(\pi_{\Cyc}(\vartheta^{-+}_{\cmmnt{\pmb}{4,2}})\right) = \left(\vartheta_E^+ \vartheta^-_{E_K} +\vartheta_E^- \vartheta^+_{E_K}  \right)$. After the completion of our project, we learnt that in the preprint \cite[Proposition 3.5]{CCSS}, the authors have explained how to deduce a factorization of $p$-adic $L$-functions in the mixed sign case using rank-two Beilinson-Flach elements. In general, we are not able to rule out the possibility that $\vartheta_E^+\vartheta_{E_K}^{-} + \vartheta_E^-\vartheta_{E_K}^{+}$ could be zero. If it does equal zero, Proposition \ref{prop:cyclosp} asserts that $\ker(\pi_\Cyc)$  belongs to the support of the $\Z_p[[\widetilde{\Gamma}]]$-modules $\Sel^{+-}\left(\Q,D_{\rhoDN{4}{2}} \right)^\vee$ and $\Sel^{-+}\left(\Q,D_{\rhoDN{4}{2}} \right)^\vee$. However in the examples that we consider in Section \ref{S:pollacktable}, we can use the arguments given at the beginning of Section \ref{section-examples} to conclude that $\vartheta_E^+\vartheta_{E_K}^{-} + \vartheta_E^-\vartheta_{E_K}^{+}$ is in fact non-zero. For these examples, we can conclude that the $\Z_p[[\widetilde{\Gamma}]]$-modules $\Sel^{+-}\left(\Q,D_{\rhoDN{4}{2}} \right)^\vee$ and $\Sel^{-+}\left(\Q,D_{\rhoDN{4}{2}} \right)^\vee$ are torsion.
  \end{remark}}
\end{remark}

\subsection{Examples} \label{S:pollacktable}

The following data is computed in Sage \cite{sagemath} using Rob Pollack's algorithms. Here, $\lambda\left(\theta_{E_K}^+\right)$ and $\lambda\left(\theta_{E_K}^-\right)$ denote the Lambda-invariants of $\theta_{E_K}^+$ and $\theta_{E_K}^-$ respectively. The $\mu$-invariants for these $p$-adic $L$-functions turn out to equal zero.

\begin{center}
  {\renewcommand{\arraystretch}{1.2}
      \captionof{table}{Examples}\label{table:evidence}
    \begin{tabular}{ | c | l |c| c | c |c|c| }
      \hline
      $E$ & $K$               & $p$ & $\lambda\left(\theta_{E_K}^+\right)$ & Roots for $\theta_{E_K}^+$& $\lambda\left(\theta_{E_K}^-\right)$ & Roots for $\theta_{E_K}^-$ \\
      \hline
      32A & $\Q(\sqrt{-43})$  & 3   & 8                                    & $(2:\frac{1}{2})$, $(6:\frac{1}{6})$                     & 2                                    & $(2:1)$                      \\
          & $\Q(\sqrt{-107})$  & 3   & 2                                    & $(2:1)$                    & 6                                    & $(2:\frac{1}{2})$, $(4:\frac{1}{4})$                     \\
          & $\Q(\sqrt{-283})$  & 3   & 6                                    & $(2:\frac{1}{2})$, $(4:\frac{1}{4})$                     & 2                                    & (2:1)                     \\
          &  & & & & &   \\
      \hline
     40A      & $\Q(\sqrt{-331})$ & 3   & 6                                    & $(2:\frac{1}{2})$, $(4:\frac{1}{4})$                     & 2                                    & $(2:1)$                     \\
  &  & & & & &   \\
\hline
     56A      & $\Q(\sqrt{-139})$  & 3   & 6                                    & $(2:\frac{1}{2})$, $(4:\frac{1}{4})$                     & 2                                    & $(2:1)$                     \\
     & $\Q(\sqrt{-487})$  & 3   & 6                                    & $(2:\frac{1}{2})$, $(4:\frac{1}{4})$                     & 2                                    & $(2:1)$                     \\
  &  & & & & &   \\      \hline
    \end{tabular}
  }
\end{center}

Assume the validity of Conjectures \ref{kim-main-conjecture} and \ref{conj:kobayashi}. To ensure that Condition \ref{cond:a} is satisfied for all the elliptic curves in Table \ref{table:evidence}, that is, $\theta_E^+$ is a unit in the ring $\Z_p[[\Gamma_\Cyc]]$, one can glean from Rob Pollack's tables on his website  \url{http://math.bu.edu/people/rpollack/Data/data.html} that $\lambda(\theta_E^+) = \mu(\theta_E^+)=0$.

To ensure that Condition \ref{cond:b}  is satisfied for all the elliptic curves in Table \ref{table:evidence}, that is, $\theta_{E_K}^+$ and $\theta_{E_K}^-$ have no common irreducible factor in the UFD $\Z_p[[\Gamma_\Cyc]]$, it is sufficient to observe that the valuation of the roots of $\theta_{E_K}^+$ and $\theta_{E_K}^-$ are different. The entries $(r:s)$, under the columns for the roots of the $p$-adic $L$-functions, denote $r$ roots with $p$-adic valuation $s$.

To ensure that Condition \ref{cond:c} is satisfied for all the elliptic curves in Table \ref{table:evidence},  that is, $\theta_{E_K}^\pm$ are not units in the ring $\Z_p[[\Gamma_\Cyc]]$, one can glean the following inequality of Iwasawa invariants of the $p$-adic $L$-functions $\theta_{E_K}^+$ and $\theta_{E_K}^-$ in the ring $\Z_p[[\Gamma_\Cyc]]$ from Rob Pollack's tables:
\begin{align*}
  \lambda(\theta_{E_K}^+) \neq 0, \qquad \lambda(\theta_{E_K}^-) \neq 0.
\end{align*}

The validity of conditions \ref{cond:a} and \ref{cond:b} is sufficient to assert \ref{gcd} (one doesn't need Condition \ref{cond:c}). For more evidence towards \ref{gcd}, we refer the interested reader to Pollack's tables on his website \url{http://math.bu.edu/people/rpollack/Data/data.html}, and to use the heuristics provided by Problem 3.2 in the work of Kurihara-Pollack \cite{kuriharapollack}. See also the examples in Section 3.3 of their work.\\

Without assuming the validity of the two variable Iwasawa main conjectures (Conjecture \ref{kim-main-conjecture}), one can still use Pollack's algorithms and results of Kobayashi to obtain unconditional results towards the pseudo-nullity conjecture of Coates-Sujatha in this setup. \\

Note that we have natural surjections of $\Iwalg$-modules:
\begin{align} \label{eq:surjectionspn}
    \Sel^{++}(\Q,D_{\rhoDN{4}{2}})^\vee \twoheadrightarrow \Sha^1\left(\Q,D_{\rhoDN{4}{2}}\right)^\vee, \qquad \Sel^{+-}(\Q,D_{\rhoDN{4}{2}})^\vee \twoheadrightarrow \Sha^1\left(\Q,D_{\rhoDN{4}{2}}\right)^\vee.
\end{align}

Consider the elliptic curve $E=32A$ and the corresponding quadratic twists in Table \ref{table:evidence}. These elliptic curves have CM\footnote{We thank Somnath Jha for alerting us to this fact.} by the imaginary quadratic field $\Q(\sqrt{-1})$. The one-variable Iwasawa main conjecture (Conjecture \ref{conj:kobayashi}) is known due to work of Pollack-Rubin \cite{pollackrubin} when the elliptic curve (with supersingular reduction at $p$) has CM. We have the following equality of divisors in $Z^1\left(\Z_p[[\Gamma_\Cyc]]\right)$:
\begin{align*}
            \Div\left(\Sel^{+}\left(\Q,D_{\rhoDN21} \right)^\vee\right) = \Div(\theta_E^+) = 0, \qquad  \Div\left(\Sel^{+}\left(\Q,D_{\rhoDN21(\epsilon_K)} \right)^\vee \right) &= \Div(\theta_{E_K}^+),   \\ \Div\left(\Sel^{-}\left(\Q,D_{\rhoDN21(\epsilon_K)} \right)^\vee \right) & = \Div(\theta_{E_K}^-) .
          \end{align*}

Using Proposition \ref{prop:cyclosp} along with the data in Table \ref{table:evidence}, we have
  \begin{align*}
    \mathrm{Supp}_{\Ht=1}\left(\Sel^{++}(\Q,D_{\rhoDN{4}{2}})^\vee\right) & \bigcap \mathrm{Supp}_{\Ht=1}\left(\Sel^{+-}(\Q,D_{\rhoDN{4}{2}})^\vee\right) =\varnothing,
  \end{align*}
for the elliptic curve $E=32A$, $p=3$ and the corresponding imaginary quadratic fields in Table \ref{table:evidence}. Equation (\ref{eq:surjectionspn}) now lets us conclude that the $\Iwalg$-module $\Sha^1\left(\Q,D_{\rhoDN{4}{2}}\right)^\vee$ is pseudo-null in these examples. \\

We can also unconditionally assert that the $\Iwalg$-module $\Sha^1\left(\Q,D_{\rhoDN{4}{2}}\right)^\vee$ is pseudo-null for the elliptic curves $E=40A$ and $E=56A$, $p=3$ and the corresponding imaginary quadratic fields in Table \ref{table:evidence}. Let $E$ equal either $40A$ or $56A$. We will first need to show that the 3-adic Galois representations  $\rho:G_\Q \rightarrow \Gl_2(\Z_3)$, given by the action of $G_\Q$ on the $3$-adic Tate modules are surjective.  The computations and arguments turn out to be similar for both the elliptic curves $40A$ and $56A$.

To show that $\rho$ is surjective it suffices to show that the $\rho_{9}:G_\Q \rightarrow \Gl_2(\mathbb{F}_9)$, given by the action of $G_\Q$ on the $9$-division points, is surjective. This observation combines the fact that the determinant $\det(\rho):G_\Q \rightarrow \Z_3^\times$ is surjective (since it coincides with the $3$-adic cyclotomic character) and Exercise 1(b), Chapter IV, \S 3 from Serre's book \cite{MR0263823} (so that $\mathrm{Image}(\rho_9) \supset \mathrm{SL}_2(\mathbb{F}_9)$).

We will first show that $\Gal{\Q(E[9])}{\Q} \stackrel{?}{=} \Gl_2(\mathbb{F}_9)$. One can perform computations on Sage to conclude that $\Gal{\Q(E[3])}{\Q} \cong \Gl_2(\mathbb{F}_3)$. Note that
\begin{align*}
|\Gl_2(\mathbb{F}_9)| = 2^4*3^5, \qquad |\Gl_2(\mathbb{F}_3)| = 2^4*3.
\end{align*}
To show that $\Gal{\Q(E[9])}{\Q} \stackrel{?}{=} \Gl_2(\mathbb{F}_9)$, it then suffices to show that $3^5$ divides $[\Q(E[9]):\Q]$.

Let $f(t)$ in $\Q[t]$ denote the polynomial corresponding to the $9$-division points of $E$. One can use Sage and deduce that there exists an irreducible polynomial $f_{36}(t)$ in $\Q[t]$ of degree $36$ dividing $f(t)$. Let $\alpha$ denote a root of $f_{36}(t)$. So, $[\Q(\alpha):\Q]=36$. Once again, one can use Sage to deduce that there exists an irreducible polynomial $g_{27}(t)$ in the polynomial ring $\Q(\alpha)[t]$ dividing $f_{36}(t)$. So, $[\Q(\alpha,\beta):\Q(\alpha)]=27$.
Since $\Q(\alpha,\beta) \subset \Q(E[9])$, we have $2^2*3^5$ divides $[\Q(E[9]):\Q]$. These observations let us conclude that $\Gal{\Q(E[9])}{\Q} \cong \Gl_2(\mathbb{F}_9)$.

$\Q(\mu_3)$ is the unique quadratic subfield of $\Q(E[9])$. This is because $\mathrm{SL}_2(\mathbb{F}_3)$ is the unique subgroup of $\Gl_2(\mathbb{F}_3)$ with index $2$. As a result, we have the following natural isomorphisms:
\begin{align*}
\Gal{\Q(E[9])}{\Q} \cong \Gal{K(E[9])}{K} \cong \Gal{\Q(E_K[9])}{\Q} \cong  \Gl_2(\mathbb{F}_9).
\end{align*}
These observations let us conclude that $\rho$ is surjective for both $E$ and $E_K$, when $E$ equals either $40A$ and $56A$, $p$ equals $3$ and $K$ is one of the corresponding imaginary quadratic fields in Table \ref{table:evidence} . \\

When the $p$-adic Galois representation  $\rho:G_\Q \rightarrow \Gl_2(\Z_p)$, given by the action of $G_\Q$ on the $p$-adic Tate modules is surjective,  Kobayashi (Theorem 4.1 in \cite{kobayashi}) has shown that we have the following inequality of divisors in $Z^1\left(\Z_p[[\Gamma_\Cyc]]\right)$:
\begin{align} \label{kobayashiinequality}
 \Div\left(\vartheta^{\pm}_E\right) \leq  \Div\left(\theta^{\pm}_E\right), \quad  \Div\left(\vartheta^{\pm}_{E_K}\right) \leq  \Div\left(\theta^{\pm}_{E_K}\right).
\end{align}
Using equation (\ref{kobayashiinequality}), Proposition \ref{prop:cyclosp} along with the data in Table \ref{table:evidence}, our observations let us conclude that
\begin{align} \label{supp-intersect}
    \mathrm{Supp}_{\Ht=1}\left(\Sel^{++}(\Q,D_{\rhoDN{4}{2}})^\vee\right) & \bigcap \mathrm{Supp}_{\Ht=1}\left(\Sel^{+-}(\Q,D_{\rhoDN{4}{2}})^\vee\right) =\varnothing,
  \end{align}
for the elliptic curves $E=40A$ and $E=56A$, $p=3$ and the corresponding imaginary quadratic fields in Table \ref{table:evidence}. Equation (\ref{eq:surjectionspn}) now lets us conclude that the $\Iwalg$-module $\Sha^1\left(\Q,D_{\rhoDN{4}{2}}\right)^\vee$ is pseudo-null in these examples too.

\section*{Acknowledgements}
We would like to thank Ted Chinburg, Ralph Greenberg and Mahesh Kakde for answering various questions related to the work \cite{bleher2015higher}. We are also grateful towards Rob Pollack for sharing the Sage code implementing his algorithms and answering our questions related to it. We would also like to thank Francesc Castella, Ching-Li Chai, Meng Fai Lim, David Loeffler, Marc Masdeu, Florian Sprung and Chris Williams for various helpful comments during the preparation of this article. Part of this work was carried out during the second named author's visit to Universit\'e Laval in 2017 and the first named author's visit to University of Pennsylvania in 2018. We would like to thank both universities for their hospitalities. {Finally, we thank the anonymous referees for their valuable comments and suggestions on an earlier version of the manuscript, which {corrected many inaccuracies and} led to many improvements in the paper.}

\bibliographystyle{abbrv}
\bibliography{biblio}
\end{document}